\newcommand{\R}{\mathbb{R}}
\newcommand{\Z}{\mathbb{Z}}
\newcommand{\N}{\mathbb{N}}
\newcommand{\smoothcyl}{\mathscr{C}^{\infty}_{\rm{c},{\rm cyl}}(H^{-s})}
\newcommand{\Dspace}{\mathbb{D}^{1,2}(\mu_{0})}
\newcommand{\Drspace}{\mathbb{D}^{1,2}_R(\mu_{0})}
\newcommand{\free}{\mu_{0}}
\newcommand{\ii}{\mathrm{i}}
\newcommand{\T}{\mathbb{T}}
\newcommand{\Real}{\mathrm{Re}}
\newcommand{\supp}{\mathrm{supp}}
\newcommand{\mcH}{\mathcal{H}}
\numberwithin{equation}{section}
\theoremstyle{plain} 
\newtheorem{theorem}{Theorem}[section]
\newtheorem*{theorem*}{Theorem}
\newtheorem{lemma}[theorem]{Lemma}
\newtheorem*{lemma*}{Lemma}
\newtheorem{corollary}[theorem]{Corollary}
\newtheorem*{corollary*}{Corollary}
\newtheorem{proposition}[theorem]{Proposition}
\newtheorem*{proposition*}{Proposition}
\newtheorem*{conjecture*}{Conjecture}
\theoremstyle{definition} 
\newtheorem{definition}[theorem]{Definition}
\newtheorem*{definition*}{Definition}
\newtheorem{example}[theorem]{Example}
\newtheorem*{example*}{Example}
\newtheorem{remark}[theorem]{Remark}
\newtheorem*{remark*}{Remark}
\newtheorem{assumption}[theorem]{Assumption}
\newtheorem*{assumption*}{Assumption}
\newcommand{\ee}{\mathrm{e}}
\newcommand{\bb}{C_0}
\newcommand{\bbb}{C_1}
\newcommand{\bbbb}{C_2}
\newcommand{\bbbbb}{C_3}
\newcommand{\bbbbbb}{C_4}
\newcommand{\bbo}{c_0}
\newcommand{\bboo}{c_1}
\newcommand{\bbooo}{c_2}
\newcommand{\BB}{R}
\title[Gibbs measures and local  KMS states for the focusing NLS]{Gibbs measures as local equilibrium KMS states for focusing nonlinear Schr\"odinger equations}
\author{Zied Ammari}
\address{Univ Rennes, [UR1], CNRS, IRMAR - UMR 6625, F-35000 Rennes, France.}
\email{zied.ammari@univ-rennes.fr}
\author{Andrew Rout}
\address{Univ Rennes, [UR1], CNRS, IRMAR - UMR 6625, F-35000 Rennes, France.}
\email{andrew.rout@univ-rennes.fr}
\author{Vedran Sohinger}
\address{University of Warwick, Mathematics Institute, Zeeman Building, Coventry CV4 7AL, United Kingdom.}
\email{V.Sohinger@warwick.ac.uk.}
\subjclass[2020]{Primary 35L05, 35Q55, 37D35; Secondary 60H07, 28C20}
\date{December 3, 2024.}
\begin{document}

\keywords{KMS states, Gaussian measures, Liouville equation, Malliavin calculus, Nonlinear Schr\"{o}dinger equation, Hartree equation}

\begin{abstract}
In this paper, we are concerned with the study of statistical equilibria for focusing  nonlinear Schr\"{o}dinger and Hartree equations on the $d$-dimensional torus $\T^d$ when $d=1,2,3$. Due to the focusing nature of the nonlinearity in these PDEs,  Gibbs measures have to be appropriately localized.   First, we show that these local Gibbs measures are stationary solutions for the Liouville probability density equation and that they satisfy a local equilibrium Kubo-Martin-Schwinger (KMS) condition. Secondly, under some natural assumptions, we characterize all possible local KMS equilibrium  states for these PDEs as local
Gibbs measures.  
Our methods are based on Malliavin calculus in Gross-Stroock Sobolev spaces and on a suitable Gaussian integration by parts formula.  To handle the technical problems due to localization, we rely on the works of Aida and Kusuoka on irreducibility of Dirichlet forms over infinite-dimensional domains. This leads us to the study of  sublevel sets of the renormalized mass and their connectedness properties. 
In this paper, we also revisit Bourgain's proof of the normalizability of the local Gibbs measure for the focusing Hartree equation on $\T^d$ with $d=2,3$
by using concentration inequalities.
\end{abstract}
\maketitle

\section{Introduction}

In the last several decades, there has been a huge interest in studying stochastic and deterministic non-linear parabolic and dispersive PDEs by means of probabilistic methods \cites{FrHai20,GP18,Bourgain_Textbook}. The main motivation is the construction of stochastic or random flows and the existence of invariant measures for  singular or irregular equations.  A significant portion of these PDEs describe  Hamiltonian physical systems which are naturally equipped with formal Gibbs measures. In many cases, these Gibbs measures have been rigorously constructed and random flows that keep them invariant have been built over low regularity spaces.  
The existence of random global flows for dispersive nonlinear Schr\"odinger (NLS) equations goes back to the pioneering work of Bourgain \cites{Bou94,Bou96}. 
Here, the randomness refers to an almost surely defined flow with respect to the Gibbs or Gaussian measure.
In this framework, the Gibbs measure serves as a substitute for a conservation law at low regularity and can thus be used to show  almost sure well-posedness of the corresponding dispersive PDE at low-regularity.  There is by now a vast literature on this problem. We refer the reader to the expository works \cites{BTT18,NS19,OT18} and the references therein for further details. On the other hand, one of the widely open questions in this topic is the description of the large time  statistical behaviour of these flows. To address such a problem, we propose a two-step strategy. The first step is to understand which measures are statistical equilibria  for the non-linear PDE and whether these equilibria are unique or not. To describe such measures one has to  study stationary solutions of the Liouville probability density equation related to the PDE at hand and to find a convenient way to characterize equilibrium states. The second step is to build an ergodic theory around these equilibrium states and to study the asymptotic behaviour of the flows. Of course, ergodic and mixing properties for non-dissipative infinite dimensional 
 systems are generally a challenging  question. Although there is 
only limited
 literature on this subject for dispersive PDEs,  we  believe that this is an  interesting and tractable problem. The ergodicity  issue for NLS and Hartree equations is one of the main  motivations of the present article. In this work, we focus on the first step while the second will be studied in a future work.  Our work here can also be regarded as an improvement of the series of articles \cites{AS21,AFS22,AFS23}, where global solutions were built for general non-linear initial value problems and KMS equilibrium states were generally studied  but only applied for defocusing dispersive PDEs. 

\medskip

In this paper, we consider  the focusing NLS and Hartree equations on the torus $\T^d$, $d=1,2,3$.  On the one hand, we aim to build a statistical equilibrium theory for these focusing dispersive PDEs, and on the other, we aim to construct global solutions almost surely by the method introduced in \cite{AFS23}. The main issue  to understand whether the Gibbs measures are stationary solutions for the Liouville probability density equation and to what extent they represent statistical thermal equilibria. To study this, we  use the fundamental concept of Kubo-Martin-Schwinger condition introduced   by Galavotti and Verboven in \cite{GV75} for classical statistical mechanical systems, which allows to characterize thermal equilibria for extended systems. Our results provide the first characterization of local thermal equilibria for focusing dispersive PDEs on the torus although there is no unique global Gibbs measure in these cases. In the following, we describe  Gibbs measures and KMS states in more details.

\subsection{Gibbs measures}

Let $\mathcal{H}$ be a complex Hilbert space and let $h :\mathcal{H}\to \R$ be a given Hamiltonian functional.
The \emph{Gibbs measure} associated with  $h$ is formally defined as
\begin{equation}
\label{Gibbs_measure_formal}
d \mu_{\mathrm{Gibbs}} := \frac{1}{z}\, \mathrm{e}^{-h} \, d u\,,
\end{equation}
where $z:= \int \mathrm{e}^{-h} \, d u$ is a normalization constant making $\mu_{\mathrm{Gibbs}}$ a probability measure on the space of fields $u \in \mathcal{H}$. We refer to $z$ as the \emph{partition function}.
Above,  $d u$ is the formal Lebesgue measure on the space $\mathcal{H}$ (which is ill-defined when $\mathcal{H}$ is infinite-dimensional).
The rigorous construction of such measures was first given in the framework of constructive quantum field theory. We refer the reader to \cites{GJ87,Nel73a,Nel73b,Sim74} for a detailed discussion and general overview. For more recent works, see also \cites{aizenman1982geometric,aizenman2021marginal, BFS83,BG20,BS96,CFL16,frohlich1982triviality,GH21,LRS88,MV97a,MV97b,OST22,Zhi91} and the references within.  Given a suitable Poisson structure  $\{\cdot,\cdot\}$ on the space of fields, one formally obtains that \eqref{Gibbs_measure_formal} is invariant under the Hamiltonian flow associated with the Hamiltonian $h$. This follows from a (formal) application of Liouville's theorem\footnote{for invariance of Lebesgue measure under the flow of a divergence free vector field.} and conservation of $h$ under the aforementioned flow. However, this invariance property  as well as the construction of the flow itself 
are
far from obvious for dispersive PDEs.     

\medskip
The main example that we will study in this paper is that of the \emph{nonlinear Schr\"{o}dinger equation (NLS)} on the spatial domain $\Lambda=\T^d$ with $d=1,2,3$, which is given by
\begin{equation}
\label{NLS_introduction}
\mathrm{i}\partial_t u + (\Delta -1)u=\mathcal{N}(u)\,.
\end{equation}
In \eqref{NLS_introduction}, $u :\Lambda \times \R \rightarrow \mathbb{C}$ and $\mathcal{N}(u)$ is a nonlinearity of the form 
\begin{equation}
\label{nonlinearity_1}
\mathcal{N}(u)=\pm |u|^{r-1}u 
\end{equation}
for some $r>1$ or 
\begin{equation}
\label{nonlinearity_2}
\mathcal{N}(u)=-(V*|u|^2)\,u
\end{equation}
for some even, integrable function $V: \Lambda \rightarrow \R$. Here, $*$ denotes convolution. The nonlinearity with sign $``+"$ in \eqref{nonlinearity_1} is referred to as \emph{defocusing} with the sign $``-"$ as \emph{focusing}. The choice in \eqref{nonlinearity_2} is often referred to as the \emph{Hartree nonlinearity}. 
One can  realize \eqref{NLS_introduction} as an infinite-dimensional Hamiltonian equation, where the Hamiltonian is
\begin{equation}
\label{h_1}
h(u)=\frac{1}{2} \int_{\Lambda} (|\nabla u|^2+|u|^2)\, d x \pm \frac{1}{r+1} \int_{\Lambda} |u|^{r+1}\, dx 
\end{equation}
when $\mathcal{N}(u)$ is given by \eqref{nonlinearity_1}, and 
\begin{equation}
\label{h_2}
h(u)=\frac{1}{2} \int_{\Lambda} (|\nabla u|^2+|u|^2)\, d x - \frac{1}{4} \int_{\Lambda} (V*|u|^2)\,|u|^2 \, dx
\end{equation}
when $\mathcal{N}(u)$ is given by \eqref{nonlinearity_2}. When studying the Hartree nonlinearity \eqref{nonlinearity_2}, we refer to the problem as defocusing if $\int_{\Lambda} (V*|u|^2)\,|u|^2 \, dx \leq 0$. Otherwise, we refer to the problem as focusing (which we can also interpret as the two-body interaction being indefinite). We note that this is consistent with the earlier terminology if we formally set $V=c\delta$ and consider $r=3$. In practice, when working in dimensions $d=2,3$, we renormalize the interaction in \eqref{h_1}--\eqref{h_2} by Wick ordering; see \eqref{nonlinear_Hamiltonian} and Assumption \ref{Assumption_on_V} below.
 In the context of the NLS on the torus $\Lambda=\mathbb{T}^d$ with $d=1,2,3$, the aforementioned invariance was first rigorously proved by Bourgain \cites{Bou94,Bou96,Bou97}.

\medskip

If $h$ is not bounded from below, in particular in the focusing regime, it is not always possible to normalize the Gibbs measure as defined in \eqref{Gibbs_measure_formal}. In other words, it is possible for the partition function $z$ to be infinite. In this case, a solution is to modify the problem and consider a \emph{truncated Gibbs measure}, which is formally defined as
\begin{equation}
\label{focusing_Gibbs}
d \mu_{\mathrm{Gibbs},\BB} := \frac{1}{z_\BB}\, \mathrm{e}^{-h} \,\Xi_{\BB}(\mathcal{M}(u))\, d u\,, \qquad z_\BB:=\int\mathrm{e}^{-h} \,\Xi_{\BB}(\mathcal{M}(u))\, d u\,,
\end{equation}
for fixed $\BB>0$.
Here, $\Xi_{\BB} : \R \rightarrow [0,\infty)$ is a non-zero function which vanishes outside of $(-\BB,\BB)$. For example, one could consider $\Xi_{\BB}=\mathbbm{1}_{(-\BB,\BB)}$, i.e.\ the indicator function on $(-\BB,\BB)$.
The quantity $\mathcal{M}(u)$ is usually either the mass of $u$, i.e. $\|u\|_{L^2}^2$, or a suitably renormalized quantity (which is not necessarily nonnegative). For the concrete example of the nonlinear Schr\"{o}dinger or Hartree equation on $\T^d$ with $d=1,2,3$, the precise definition of $\mathcal{M}(u)$ is given in \eqref{renormalized_mass}. 
It can be shown that this construction yields a well-defined probability measure in many relevant examples. This problem was first studied in \cite{LRS88} and \cite{Bou94} and later in \cites{AS21,Bou97,BS96,BTT13,CFL16,Deng12, DR23,DRTW23,Liang_Wang,LLW23,OST22,OOT20,RSTW23,RS22,RS23,TW23,Xian}.
Assuming that $\mu_{\mathrm{Gibbs},R}$ in \eqref{focusing_Gibbs} is well-defined, we henceforth refer to it as a \emph{local or truncated Gibbs measure}. The precise definition is given in \eqref{local_Gibbs_measure_rigorous} below.

\medskip
It is  first worth noting that it is possible to consider Gibbs measures for focusing Hartree equations without using a cut-off as in \eqref{focusing_Gibbs}. When $d=3$ (with interaction potential as in Assumption \ref{Assumption_on_V} (i) below), this is done in \cite{OOT20}. In particular, in \cite[Equation (1.7)]{OOT20}, one tames the weight in the Gibbs measure with a suitable power of the Wick-ordered mass, which makes it normalizable. This framework is necessary when applying methods from stochastic quantization. 
Secondly, the invariance of Gibbs measures has also been studied in infinite volume, i.e. with $\Lambda=\R^d$. The first result in infinite volume is that of Bourgain \cite{Bou2000}. Subsequent results have been obtained in \cites{BL22,CdS1,CdS2,DR23,OTWZ22,Rider}.  Thirdly,  Gibbs measures are also of interest in the study of mean-field limits of Bose gases.  Namely, one can obtain Gibbs measures \eqref{Gibbs_measure_formal} and \eqref{focusing_Gibbs} as a suitable mean-field limit of quantum Gibbs states arising from many-body quantum mechanics; see \cites{AR21,AFP24,FKSS17,FKSS18,FKSS20,FKSS20_2,FKSS20_3,FKSS22,LNR15,LNR18,LNR18b,LNR19,LNR21,RS22,RS23,Soh19}. 
We do not discuss further the latter possible extensions and connections in this paper.

\subsection{The Kubo--Martin--Schwinger (KMS) condition}

The \emph{Kubo--Martin--Schwinger (KMS) condition} is a criterion in quantum statistical mechanics used to characterize  equilibrium states for infinite quantum systems; see \cites{BR97,HHW67}. In the 1970s, a classical analogue of the quantum KMS condition was suggested by Galavotti and Verboven, and its relationship to the Dobrushin-Lanford-Ruelle (DLR) condition was studied; see \cites{AGGLM77,GV75}. The classical KMS condition was also studied in the setting of infinite-dimensional classical systems in \cites{FPV77,PR77}, and in the setting of nonlinear PDEs in \cites{Ars83,Chu86,Pes85} and more recently for singular PDEs in \cites{AS21,FKV24}. The many-body derivation of Gibbs states mentioned earlier was obtained for the Bose-Hubbard model on a finite graph using a limiting correspondence between quantum and classical KMS states in \cite{AR21} (see also \cite{AFP24}). In what follows, we always work with the classical KMS condition and henceforth we just refer to it as the KMS condition.

\medskip
We now give a formal discussion of the global KMS condition. Suppose that the   complex Hilbert space $\mathcal{H}$ is regarded  as a phase space equipped with a convenient {\it Poisson bracket} $\{\cdot,\cdot\}:\mathscr{C}^{\infty}(\mathcal{H}) \times \mathscr{C}^{\infty}(\mathcal{H}) \to \mathscr{C}^{\infty}(\mathcal{H})$. Let $X: \mathcal{H} \to \mathcal{H}$ be a (smooth) vector field. We consider the initial value problem given by
\begin{equation*}
\dot{u}(t) = X(u(t)),
\end{equation*}
where $u:\mathbb{R} \to \mathcal{H}$ is a solution with initial condition $u(0) \in \mathcal{H}$. Let $\langle \cdot,\cdot \rangle$ be the inner product on the phase space  $\mathcal{H}$ and suppose $F,G: \mathcal{H} \to \R$ are smooth functions.  Then a measure $\mu$ is called a \emph{(global) KMS state} if
\begin{equation}
\label{KMS_condition_1}
\int_{\mathcal{H}} \{F,G\}(u) \,  d\mu = \int_{\mathcal{H}} \langle \nabla F(u), X(u) \rangle\, G(u) \, d \mu\,,
\end{equation}
for all test functions $F,G$ in an appropriate subclass of $\mathscr{C}^{\infty}(\mathcal{H})$. Here $\nabla F$ denotes the Fr\'echet derivatives of $F$. 
We refer the reader to \cite{AS21} for a detailed analysis of the Kubo--Martin--Schwinger boundary condition. In \cite{AS21}, the KMS condition was studied with a general inverse temperature parameter $\beta>0$. Throughout this paper we set $\beta=1$ for simplicity of notation. The inverse temperature parameter can easily be reintroduced in the definitions and proofs with minor modifications. 
Suppose $(\mu_t)_{t\in \R}$ is a solution to the {\it Liouville equation} with given initial condition $\mu_0$, i.e.
\begin{equation}
\label{Liouville_equation}
\frac{d}{d t} \int_{\mathcal{H}} F(u) \, d \mu_t = \int_{\mathcal{H}} \langle \nabla F (u), X(u)\rangle \, d \mu_t\,,
\end{equation}
for all suitable test functions $F$. Then the KMS condition is an accepted criterion for characterising  thermal equilibrium states (probability measures) among all possible stationary solutions of the Liouville equation \eqref{Liouville_equation}. Indeed, by formally\footnote{The function $G=1$ strictly speaking does note satisfy the condition of a test function in general; see Definition \ref{KMS_definition_rigorous} below.} setting $G=1$ in \eqref{KMS_condition_1}, one sees that any KMS state is a stationary solution of \eqref{Liouville_equation}, in the sense that 
\begin{equation}
\label{stationary_solution_Liouville_equation}
\frac{d}{d t} \int_{\mathcal{H}} F(u) \,d \mu_t=0 \quad \text{and} \quad  \int_{\mathcal{H}} \langle \nabla F (u), X(u)\rangle \, d \mu_0=0,
\end{equation} 
for all test functions $F$. The characterization of  (global) KMS states \eqref{KMS_condition_1}, as Gibbs measures \eqref{Gibbs_measure_formal}, was obtained in the context of dispersive defocusing PDEs by the first and third author in \cite[Theorems 4.11 and 4.14]{AS21}. The framework of the KMS condition was made rigorous by means of the Malliavin derivative and Gross-Sobolev spaces. Recently, the relationship between (global) KMS states, Gibbs measures, and the  Dobrushin–Lanford–Ruelle (DLR) condition was studied in the context of the Korteweg-de Vries (KdV) and modified Korteweg-de Vries (mKdV) equations in \cite{FKV24}. This setup requires a different symplectic structure from the one we use; see \cite[Proposition 2.5 and Theorem 2.6]{FKV24} for precise results. 

\medskip
For the focusing nonlinear  Schr\"{o}dinger equation in one dimension, a local KMS condition was proposed in \cite{AS21} and it was shown  \cite[Proposition 5.7]{AS21} that truncated  Gibbs measures \eqref{focusing_Gibbs} are \emph{local KMS states}. The latter are obtained by modifying  \eqref{KMS_condition_1} to consider one of the test functions to be localized on a ball where $|\mathcal{M}(u)| <\BB'$ for suitable $\BB' \in (0,\BB)$; see Definition \ref{local_KMS_definition} below for a precise statement.  
In this paper, we firstly show that truncated Gibbs measures \eqref{focusing_Gibbs} are local KMS states in dimensions $d=1,2,3$ for the focusing NLS and Hartree equations. Secondly and more importantly, we characterize all local KMS states of these focusing equations as local Gibbs measures. 
Let us note that our convention of local KMS states slightly differs from that in \cite{AS21}, as it is more natural and allows the free Gibbs measure to satisfy the local KMS condition; see Proposition \ref{local_KMS_approximation_lemma} below for a detailed explanation. 

\medskip
Recently the first and third author together with S.~Farhat \cite{AFS23} showed that the KMS condition is closely related to the existence of almost sure global solutions for the aforementioned Hamiltonian systems. The result proved in \cite{AFS23} was obtained for a more general class of initial problems through a new method based on the study of stationary solutions of the Liouville equation \eqref{Liouville_equation}. However, in the applications, the almost sure existence of global solutions was only proved for defocusing dispersive PDEs. Here, we extend this method to focusing NLS and Hartree PDEs on the torus. We refer the reader to the introduction of \cites{AS21,AFS22,AFS23} for further discussion of the relation of the KMS condition and the Liouville equation; see also \cite{Ammari_Liard} for a detailed discussion on the Liouville equation and the background for the techniques used in \cite{AFS23}. 

\medskip
Our main results are stated in Theorem \ref{homogeneous_KMS_thm}, Corollary \ref{almost_sure_WP_corollary}, Theorems  \ref{Gibbs_implies_local_KMS_theorem}  and 
\ref{kms_implies_gibbs_theorem}. 
Theorem \ref{homogeneous_KMS_thm} proves that local Gibbs measures \eqref{focusing_Gibbs} are stationary solutions of the Liouville equation \eqref{Liouville_equation}, in the sense of \eqref{stationary_solution_Liouville_equation}. In particular, by the general theory in \cite{AFS23}, this gives an alternative proof of global existence of solutions of focusing nonlinear Schr\"{o}dinger and Hartree equations for almost all initial data with respect to the local Gibbs measure \cites{Bou94,Bou97}; see Corollary \ref{almost_sure_WP_corollary}. Theorem \ref{Gibbs_implies_local_KMS_theorem} shows one direction of the equivalence between local Gibbs measures  and local KMS states. In particular, it proves that local Gibbs measures are local KMS states. The converse direction of this equivalence, i.e.\ the claim that local KMS states are local Gibbs measures, is proved in  Theorem 
\ref{kms_implies_gibbs_theorem}.

\bigskip
\emph{Acknowledgements:}  This research has been funded by the ANR-DFG project  (ANR-22-CE92-0013, DFG PE 3245/3-1 and BA 1477/15-1).  The authors are very grateful to Fumio Hiroshima for pointing us to the article of S. Aida \cite{Ai98}.  Z.A. \& V.S. would like to thank Nicolas Camps and Nata\v{s}a Pavlovi\'{c}   for useful discussions. V.S. would like to thank J\"{u}rg Fr\"{o}hlich, Antti Knowles and Benjamin Schlein for helpful discussions concerning the reference \cite{Bou97}.

\section{Setting and results}
\subsection{General Setting}
Before precisely stating our main results, we first give our notational conventions and recall the general setup. Since we are dealing with several PDEs, it is convenient to use one single general framework which unifies these examples.  
\subsection*{Notation}
Throughout the paper, we use $C>0$ to denote a constant that might change line to line. If this constant depends on a finite set of parameters $a_1,\ldots,a_n$, we write $C=C(a_1,\ldots,a_n)$.  We sometimes abbreviate $a \leq Cb$ by $a\lesssim b$, and write $a \lesssim_{a_1,\ldots,a_n} b$ to indicate the dependence of the implied constant on the quantities $a_1,\ldots,a_n$. We use $\mathbbm{1}_J$ to denote the indicator on a set $J$. Namely
\begin{equation}
\label{indicator_function}
\mathbbm{1}_{J}(x) := \begin{cases}
1 \quad \textrm{if } x \in J, \\
0 \quad \textrm{if } x \notin J.
\end{cases}
\end{equation}
We denote by $1$ the identity operator on a Hilbert space.
Throughout, we use the convention $\N=\{1,2,\ldots\}$. Given $f \in L^1(\T^d)$ and $k \in \Z^d$, we write  
\begin{equation}
\label{Fourier_transform}
\widehat{f}(k):=\int_{\T^d} f(x)\,\ee^{-2\pi \ii x\cdot k}\,dx
\end{equation}
for the Fourier coefficient of $f$ at $k$.
Given a Hilbert space $\mathcal{H}$, we denote by $\mathscr{P}(\mathcal{H})$ the set of all Borel probability measures on $\mathcal{H}$.

\subsection*{Linear Hamiltonian structure}
Let $\mathcal{H}$ be a complex Hilbert space with inner product $\langle \cdot, \cdot \rangle \equiv \langle \cdot, \cdot \rangle _{\mathcal{H}}$. In our convention, this inner product is linear in the first component and conjugate linear in the second component. In our analysis, we also consider the corresponding real Hilbert space obtained by setting the real inner product to be
\begin{equation}
\label{real_inner_product}
\langle \cdot,\cdot\rangle_{\mcH,\R} := \mathrm{Re}\,\langle\cdot,\cdot\rangle\,.
\end{equation}
We denote this real Hilbert space by $\mcH_\R$.

Throughout the paper, we fix the Hilbert space $\mathcal{H}$ and the linear operator $A$ as follows.

\begin{assumption}
\label{A_choice}
Let $\mathcal{H}=L^2(\T^d)$ for $d=1,2,3$ with the inner product $\langle f,g \rangle=\int_{\T^d} f(x)\overline{g(x)}\,dx$ and let $A=-\Delta+1$ which acts on $\mathcal{H}$ as a densely-defined operator. Here $-\Delta$ denotes the Laplacian operator on $L^2(\T^d)$. 
\end{assumption}
Let us note some properties of $A$ as in Assumption \ref{A_choice}.
\begin{lemma}
\label{A_choice_lemma}
Let $A$ be as in Assumption \ref{A_choice}. Then $A$ satisfies the following properties.
\begin{itemize}
\item[(i)] $A$ is strictly positive, i.e.  there exists $c>0$ such that 
\begin{equation}
\label{positive_operator}
A \geq c 1\,.
\end{equation}
\item[(ii)] $A$ has a compact resolvent.
\item[(iii)] We have that $\mathrm{Tr}({A^{-1-s}})<\infty$ for 
\begin{equation}
\label{choice_of_s}
s>\frac{d}{2}-1\,.
\end{equation}
\end{itemize}
\end{lemma}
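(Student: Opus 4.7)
The plan is to diagonalize $A$ using Fourier series on $\T^d$: the family $\{e_k(x)=\ee^{2\pi\ii k\cdot x}\}_{k\in\Z^d}$ forms an orthonormal basis of $\mcH=L^2(\T^d)$, and each $e_k$ is an eigenvector of $A=-\Delta+1$ with eigenvalue
\begin{equation*}
\lambda_k = 4\pi^2|k|^2+1\,.
\end{equation*}
All three statements then reduce to elementary facts about the sequence $(\lambda_k)_{k\in\Z^d}$.

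For (i), I would observe that $\lambda_k \ge 1$ with equality exactly at $k=0$, so by the spectral theorem applied to the self-adjoint operator $A$, one has $A\ge 1\cdot \mathbbm{1}$ in the quadratic-form sense, proving \eqref{positive_operator} with $c=1$. For (ii), since $\lambda_k\to\infty$ as $|k|\to\infty$, the operator $A^{-1}$ is the norm limit (in $\mathscr{B}(\mcH)$) of the finite-rank truncations $\sum_{|k|\le N}\lambda_k^{-1}\langle\cdot,e_k\rangle e_k$, hence $A^{-1}$ is compact; equivalently, for any $z$ in the resolvent set, $(A-z)^{-1}$ is compact.

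For (iii), I would compute
\begin{equation*}
\mathrm{Tr}(A^{-1-s}) \;=\; \sum_{k\in\Z^d}(4\pi^2|k|^2+1)^{-1-s}\,,
\end{equation*}
and compare with the integral $\int_{\R^d}(4\pi^2|x|^2+1)^{-1-s}\,dx$ by the standard integral test (the summand is radially decreasing away from $0$). Passing to polar coordinates, this integral behaves at infinity like $\int^\infty r^{d-1-2(1+s)}\,dr$, which converges precisely when $2(1+s)>d$, i.e.\ when $s>d/2-1$, exactly the condition \eqref{choice_of_s}.

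There is no real obstacle here since each item is a direct consequence of the explicit spectral decomposition of the Laplacian on the torus; the only care needed is in (iii) to justify the integral comparison, for which splitting the sum into the $k=0$ term plus dyadic shells $2^j\le|k|<2^{j+1}$ (each of cardinality $\lesssim 2^{jd}$, with summand $\lesssim 2^{-2j(1+s)}$) gives the cleanest convergence criterion $d-2(1+s)<0$.
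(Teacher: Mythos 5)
Your proposal is correct and follows the same route as the paper: everything reduces to the explicit Fourier spectral decomposition of $A=-\Delta+1$ on $\T^d$ with eigenvalues $\lambda_k=4\pi^2|k|^2+1$, and part (iii) is the convergence of $\sum_{k\in\Z^d}(4\pi^2|k|^2+1)^{-1-s}$ for $s>d/2-1$ (the paper's one-line proof cites exactly this sum as \eqref{choice_of_s_consequence}). You simply spell out (i) and (ii), which the paper treats as immediate, and your dyadic-shell argument for (iii) is a clean way to establish the convergence criterion.
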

\begin{proof}
The result is a consequence of 
\begin{equation}
\label{choice_of_s_consequence}
\sum_{k \in \Z^d} \frac{1}{(4\pi^2 |k|^2+1)^{s+1}}<\infty\,,
\end{equation}
for $s$ as in \eqref{choice_of_s}.
\end{proof}
\begin{assumption}
\label{choice_of_s_assumption}
For the remainder of the paper, we consider $d=1,2,3$ and  
\begin{equation}\label{eq.cond.spower}
    s \in (\frac{d}{2}-1,1].
\end{equation}
In particular, $s$ satisfies \eqref{choice_of_s}.
\end{assumption}

From Lemma \ref{A_choice_lemma}, it follows that we can find an orthonormal basis
\begin{equation}
\label{Hilbert_space_ONB}
\mathbf{B}=\{e_j\}_{j \in \N}
\end{equation}
of $\mathcal{H}$ consisting of eigenvectors of $A$.
Let $\{\lambda_j\}_{j \in \N}$ be the associated sequence of eigenvalues. 
We henceforth fix the orthonormal basis in \eqref{Hilbert_space_ONB} to be such that for all $j \in \N$, we have
\begin{equation}
\label{e_j_choice}
A e_j=\lambda_j e_j\,.
\end{equation}
Moreover, we can rewrite Lemma \ref{A_choice_lemma} (iii) as 
\begin{equation}
\label{trace_assumption}
\mathrm{Tr}({A^{-1-s}}) = \sum_{j=1}^\infty \frac{1}{\lambda_j^{1+s}} < \infty\,.
\end{equation}
Given $\alpha \in \R$, we define the Sobolev space $H^\alpha$ using $A$. More precisely, we define the inner product
\begin{equation*}
\langle x,y \rangle_{H^\alpha} := \langle A^{\alpha/2}x,A^{\alpha/2}y\rangle, \quad \forall x,y \in \mathcal{D}(A^{\alpha/2})\,.
\end{equation*}
Let $\alpha \geq 0$ be given. We denote by $H^{\alpha}$ the Hilbert space $(\mathcal{D}((A^{\alpha/2}), \langle \cdot,\cdot \rangle_{H^\alpha})$. The corresponding space $H^{-\alpha}$ is the completion of the Hilbert space $(\mathcal{D}((A^{-\alpha/2}), \langle \cdot,\cdot \rangle_{H^{-\alpha}})$. Furthermore, we have the embeddings $H^{\alpha} \subset \mcH \subset H^{-\alpha}$. We also note that $H^{-\alpha}$ is the dual of $H^{\alpha}$ with respect to the pairing induced by $\mcH$. Finally, we note that $H^0=\mathcal{H}$.

\subsection*{Gaussian measures on Sobolev spaces}

Let us summarize a few notions from Gaussian measures on Hilbert spaces that we will use in the discussion that follows. For proofs and a more detailed discussion, we refer the reader to \cite[Chapter 2]{Bog98}.
We recall that $m \in \mathcal{H}$ is said to be the \emph{mean-vector} of $\mu \in \mathscr{P}(\mathcal{H})$ if for any $g \in \mathcal{H}$ the function $u \mapsto \langle g, u \rangle_{\mathcal{H},\R}$ is integrable with respect to $\mu$ and 
\begin{equation*}
\langle g, m \rangle_{\mathcal{H},\R}=\int_{\mathcal{H}}  \langle g, u \rangle_{\mathcal{H},\R} \, d\mu\,.
\end{equation*}
When $m=0$, we say that $\mu$ is a mean-zero or centred measure. The covariance of $\mu \in \mathfrak{\mathcal{H}}$ is an $\R$-linear operator $Q:\mathcal{H} \rightarrow \mathcal{H}$ such that for any $g_1, g_2 \in \mathcal{H}$, we have
\begin{equation*}
\langle g_1,Qg_2 \rangle_{\mathcal{H},\R}= \int_{\mathcal{H}} \langle g_1,u-m \rangle_{\mathcal{H},\R}\,
\langle u-m,g_2 \rangle_{\mathcal{H},\R}\,d \mu\,.
\end{equation*}

The following result holds as a result of Lemma \ref{A_choice_lemma}; see \cite[Chapter 2]{Bog98}.
\begin{proposition}
\label{free_Gibbs_prop}
There exists  a unique $\mu_0 \in \mathscr{P}(H^{-s})$ with the property that for all $g_1,g_2 \in H^{-s}$, we have
\begin{equation}
\label{Gibbs_covariance}
\langle g_1,A^{-1-s}g_2 \rangle_{H^{-s},\R} = \int_{H^{-s}} \langle g_1,u\rangle_{H^{-s},\R}\, \langle u,g_2\rangle_{H^{-s},\R} \, d \mu_0\,,
\end{equation}
or equivalently for all $g_{1},g_{2} \in \mathcal{H}$, we have 
\begin{equation}
\label{Gibbs_covariance_2}
\langle g_1,A^{-1}g_2 \rangle_{\mathcal{H},\R} = \int_{H^{-s}} \langle g_1,u \rangle_{\mathcal{H},\R} \,\langle u,g_2 \rangle_{\mathcal{H},\R} \, d \mu_0\,.
\end{equation}
$\mu_0$ is called the Gaussian measure on $H^{-s}$ with mean zero and covariance $A^{-1-s}$.
\end{proposition}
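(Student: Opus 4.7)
The plan is to invoke the standard existence and uniqueness theorem for centered Gaussian measures on separable real Hilbert spaces, after verifying that $A^{-1-s}$ defines a nonnegative, self-adjoint, trace-class operator on the real Hilbert space $(H^{-s})_\R$. Concretely, I would realize $\mu_0$ as the law of the Gaussian series
\[
u \;=\; \sum_{j \in \N} \lambda_j^{-1/2}\,\xi_j\, e_j,
\]
where $(\xi_j)_{j \in \N}$ are independent complex Gaussian random variables whose real and imaginary parts are independent $N(0,1)$, so that $\E|\xi_j|^2 = 2$.

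First I would verify the trace-class property: $\{\lambda_j^{s/2} e_j,\ \ii\lambda_j^{s/2} e_j\}_{j \in \N}$ is a complete orthonormal basis of $(H^{-s})_\R$ consisting of eigenvectors of $A^{-1-s}$ with eigenvalue $\lambda_j^{-1-s}$, so the real trace equals $2\sum_j \lambda_j^{-1-s}$, which is finite by \eqref{trace_assumption}. The same summability yields $\E\|u\|_{H^{-s}}^2 = 2\sum_j \lambda_j^{-1-s}<\infty$, so the series above converges in $L^2(\Omega; H^{-s})$ and its law is a well-defined Borel probability measure $\mu_0 \in \mathscr{P}(H^{-s})$. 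Uniqueness is standard: a centered Gaussian measure on a separable Hilbert space is determined by its covariance via its characteristic functional, see \cite[Chapter 2]{Bog98}.

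Next I would derive the two covariance identities by direct computation. Expanding $g_i = \sum_j g_{i,j} e_j$ in the basis \eqref{Hilbert_space_ONB} and using orthogonality of the $e_j$ together with the independence of the $\xi_j$ and $\E|\xi_j|^2 = 2$, a short calculation gives
\[
\int_{H^{-s}} \langle g_1, u\rangle_{H^{-s},\R}\, \langle u, g_2\rangle_{H^{-s},\R}\, d\mu_0(u) \;=\; \sum_j \lambda_j^{-1-2s}\, \Real\bigl(g_{1,j}\overline{g_{2,j}}\bigr) \;=\; \langle g_1, A^{-1-s} g_2\rangle_{H^{-s},\R},
\]
which is \eqref{Gibbs_covariance}. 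The parallel calculation in the $\mathcal{H}$-inner product produces $\sum_j \lambda_j^{-1}\Real(g_{1,j}\overline{g_{2,j}}) = \langle g_1, A^{-1} g_2\rangle_{\mathcal{H},\R}$, giving \eqref{Gibbs_covariance_2}. A mild subtlety appears when $s>0$: for $g \in \mathcal{H}$ and $u \in H^{-s}$, the expression $\langle g,u\rangle_{\mathcal{H},\R}$ is not \emph{a priori} a duality pairing, but the linear map $g \mapsto \langle g,\cdot\rangle_{\mathcal{H},\R}$ defined for $g \in H^s$ extends continuously into $L^2(\mu_0)$ because its $L^2(\mu_0)$-norm squared equals $\|g\|_{H^{-1}}^2 \leq c^{-1}\|g\|_{\mathcal{H}}^2$ by \eqref{positive_operator}; both sides of \eqref{Gibbs_covariance_2} are interpreted via this extension.

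I do not expect any serious obstacle: the proposition is a direct application of the classical Gaussian-measure framework on Hilbert spaces, and the only mildly delicate point is the interpretation of $\langle g,u\rangle_{\mathcal{H}}$ on distributions $u \in H^{-s}$ when $s>0$, which is handled by the continuous extension just described.
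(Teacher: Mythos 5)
Your proof is correct and essentially the paper's: the paper dispenses with the argument via a citation to Bogachev's Gaussian-measure theory, and you simply unpack the standard random-Fourier-series realization of that theory, which is the very representation the paper itself later adopts in its appendix (cf.\ \eqref{map_varphi_omega}). Your explicit computation does surface a normalization point worth flagging: choosing $\xi_j$ with $\E|\xi_j|^2=2$, as you do, is the consistent choice for the covariance to come out as $A^{-1-s}$ exactly under the paper's definition $\langle g_1,Qg_2\rangle_{\mathcal{H},\R}=\int\langle g_1,u\rangle_{\mathcal{H},\R}\langle u,g_2\rangle_{\mathcal{H},\R}\,d\mu$, whereas the appendix's complex Gaussians with density $\pi^{-1}e^{-|z|^2}$ (so $\E|g_k|^2=1$) would give $\tfrac12 A^{-1-s}$; this is a harmless convention discrepancy, but your proposal is the internally consistent one for the statement as written.
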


\subsection*{Renormalized mass}

With $A$ given by Assumption \ref{A_choice}, we define the quantity $\mathcal{M}(u)$ in which we truncate to define the local Gibbs measure \eqref{focusing_Gibbs}.
\begin{equation}
\label{renormalized_mass}
\mathcal{M}(u) := 
\begin{cases}
\|u\|_{L^2}^2 \quad &\textrm{if } d=1\,, \\
:\|u\|_{L^2}^2: \quad &\textrm{if } d=2,3\,.
\end{cases}
\end{equation}
Here, $:\|u\|_{L^2}^2:$ denotes the \emph{renormalized} or \emph{Wick-ordered mass}. In order to define it, we treat the mass $\|u\|_{L^2}^2$ as a random variable with respect to the Gaussian measure $\mu_0$ with covariance $A^{-1-s}$ from Proposition \ref{free_Gibbs_prop}, and we then perform Wick ordering with respect to $\mu_0$. The details of the latter construction are summarized in Lemma \ref{Wick_ordered_mass_lemma} of Appendix \ref{Malliavin derivative calculations and normalizability of the Gibbs measure for the focusing Hartree equation}.

\begin{remark}
We know that $\int \|u\|_{L^2}^2\, d \mu_0<\infty$ when $d=1$ (see \eqref{classical_free_field} below). However, one can also show that this expectation is infinite when $d>1$.
Lemma \ref{Wick_ordered_mass_lemma} guarantees that $\int |\mathcal{M}(u)|\,d \mu_0<\infty$ for $d=2,3$. Hence, one considers the two different cases depending on the dimension in \eqref{renormalized_mass}.
\end{remark}

\subsection*{Malliavin calculus}
In order to rigorously interpret the global KMS condition formally given by \eqref{KMS_condition_1} above, and in order to rigorously define the local KMS condition, we work in the framework of Malliavin calculus, analogously to \cite[Section 4.3]{AS21}. We summarize the setup of Malliavin calculus and recall the main results that we will use in the subsequent analysis. For a more detailed discussion, we refer the reader to \cite[Section 4.3 and Appendix A]{AS21}, as well as \cite{Nua06}.

\subsubsection*{Cylindrical test functions}

Let us consider the orthonormal basis \eqref{Hilbert_space_ONB} of $\mathcal{H}$ satisfying \eqref{e_j_choice}. We write 
\begin{equation}
\label{ONB_convention}
e_j^{(1)}:=e_j\,,\qquad e_j^{(2)}:=i e_j\,.
\end{equation}
Therefore, 
\begin{equation*}
\{e^{(1)}_j,e^{(2)}_j\}_{j \in \N} 
\end{equation*}
is an orthonormal basis of $\mathcal{H}_{\R}$ with inner product given by \eqref{real_inner_product}. With this notational convention, we now define the space of cylindrical test functions. For a fixed $n \in \N$, we define the mapping $\pi_n: H^{-s} \to \R^{2n}$ by
\begin{equation}
\label{projection_map}
\pi_n(u) := \left( \langle x,e^{(1)}_1 \rangle_{\mcH,\R} , \ldots, \langle x,e^{(1)}_n \rangle_{\mcH,\R}; \langle x,e^{(2)}_1 \rangle_{\mcH,\R} , \ldots , \langle x,e^{(2)}_n \rangle_{\mcH,\R} \right).
\end{equation}
\begin{definition}[Cylindrical test functions]
\label{cylindrical_test_functions}
We define $\mathscr{C}^{\infty}_{\mathrm{c},\mathrm{cyl}}(H^{-s})$ to be the set of all functions $F: H^{-s} \to \R$ such that there exists $n \in \N$ and $\varphi \in \mathscr{C}^{\infty}_{\text{c}}(\R^{2n})$ such that
\begin{equation}
\label{cylindrical_test_functions_1}
F = \varphi \circ \pi_n\,.
\end{equation}
We define the space 
$\mathscr{C}^{\infty}_{\mathrm{b},\mathrm{cyl}}(H^{-s})$ analogously, where in \eqref{cylindrical_test_functions_1} we take $\varphi \in \mathscr{C}^{\infty}_{\mathrm{b}}(\R^{2n})$. Here $\mathscr{C}^{\infty}_{\mathrm{b}}(\R^{2n})$ denotes the space of smooth functions with bounded derivatives of all order. We refer to $\mathscr{C}^{\infty}_{\mathrm{c},\mathrm{cyl}}(H^{-s})$ and $\mathscr{C}^{\infty}_{\mathrm{b},\mathrm{cyl}}(H^{-s})$ as classes of \emph{cylindrical test functions}.
\end{definition}
Given $F \in \mathscr{C}^{\infty}_{\mathrm{b},\mathrm{cyl}}(H^{-s})$, we define its gradient $\nabla F: H^{-s} \rightarrow H^{-s}$ by 
\begin{equation}
\label{gradient_cylindrical_F}
\nabla F (u) := \sum_{j=1}^n \partial_j^{(1)} \varphi(\pi_n(u)) e^{(1)}_j + \partial_j^{(2)} \varphi(\pi_n(u)) e^{(2)}_j\,.
\end{equation}
In \eqref{gradient_cylindrical_F}, $\partial_j^{(1)}$ and $\partial_{j}^{(2)}$ denote respectively the $j^{th}$ and $(n+j)^{th}$ partial derivatives of $\varphi$. 
 
\begin{definition}[Malliavin Derivative]
\label{Malliavin_definition}
For $p \in [1,\infty)$, we define the Malliavin derivative as the closure of the 
linear operator $\nabla : \mathcal{D} \subset L^p(\mu_{0}) \to L^p(\mu_0;H^{-s})$ acting according to  \eqref{gradient_cylindrical_F}
on the domain $ \mathcal{D} =\mathscr{C}^{\infty}_{\mathrm{c},\mathrm{cyl}}(H^{-s})$. In the following, we also denote the closure by $\nabla$. 
\end{definition}
The linear operator $\nabla : \mathcal{D} \subset L^p(\mu_{0}) \to L^p(\mu_0;H^{-s})$ is closable; see \cite[Lemma 4.9]{AS21}.
In light of Definition \ref{Malliavin_definition}, we define the \emph{Gross-Sobolev space}.
\begin{definition}[Gross-Sobolev Space]
\label{Gross-Sobolev_space}
For $p \in [1,\infty)$ we  set $\mathbb{D}^{1,p}(\mu_0)$ to be the domain of the Malliavin derivative $\nabla$ and endow it with the (graph) norm
\begin{equation}
\label{Dspace_norm}
\|F\|_{\mathbb{D}^{1,p}(\mu_0)}^p := \|F\|_{L^p(\mu_0)}^p + \|\nabla F\|_{L^p(\mu_0;H^{-s})}^p\,.
\end{equation}
\end{definition}
The space $\mathbb{D}^{1,p}(\mu_0)$ for $p \in [1,\infty)$ is a Banach space by \cite[Lemma 4.9]{AS21}.
If $p=2$, we can even endow the Gross-Sobolev space with the inner product
\begin{equation*}
\langle F_1,F_2\rangle_{\mathbb{D}^{1,2}(\mu_0)} := \langle F_1, F_2 \rangle_{L^2(\mu_{0})} + \langle \nabla F_1, \nabla F_2\rangle_{L^2(\mu_0;H^{-s})}\,,
\end{equation*}
making $\mathbb{D}^{1,2}(\free)$ into a Hilbert space. 

\begin{lemma}
\label{Malliavin_derivative_M_lemma}
The quantity $\mathcal{M}$ defined in \eqref{renormalized_mass} belongs to $\mathbb{D}^{1,p}(\mu_{0})$ for all $p \in [1,\infty)$ and we have 
\begin{equation}
\label{Wick_ordered_mass_Malliavin_derivative}
\nabla \mathcal{M}(u)=2u\,.
\end{equation}
\end{lemma}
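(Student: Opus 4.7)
The plan is to approximate $\mathcal{M}$ first by smooth cylindrical functions of finite order, then approximate each of those by compactly supported cylindrical test functions, and finally invoke the closedness of $\nabla$ (Definition \ref{Malliavin_definition}) to identify the limiting gradient.

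For $n \in \N$, introduce the finite-order approximant
\begin{equation*}
\mathcal{M}_n(u) \,:=\, \sum_{j=1}^n \bigl( |\langle u, e_j\rangle|^2 \,-\, a_j \bigr), \qquad a_j \,:=\, \begin{cases} 0 & \text{if } d=1, \\ \lambda_j^{-1} & \text{if } d=2,3. \end{cases}
\end{equation*}
Writing $|\langle u,e_j\rangle|^2 = \langle u, e_j^{(1)}\rangle_{\mcH,\R}^2 + \langle u, e_j^{(2)}\rangle_{\mcH,\R}^2$, one sees that $\mathcal{M}_n = \varphi_n \circ \pi_n$ for a polynomially growing $\varphi_n \in \mathscr{C}^\infty(\R^{2n})$. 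The construction underlying Lemma \ref{Wick_ordered_mass_lemma} gives $\mathcal{M}_n \to \mathcal{M}$ in $L^2(\mu_0)$; and since $\mathcal{M}_n - \mathcal{M}$ lies in the second homogeneous Wiener chaos over $\mu_0$ (up to a deterministic constant which is absent if $d=2,3$ and summable if $d=1$), Nelson's hypercontractivity promotes this to convergence in $L^p(\mu_0)$ for every $p \in [1,\infty)$.

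To place $\mathcal{M}_n$ into $\mathbb{D}^{1,p}(\mu_0)$, fix a cutoff $\eta \in \mathscr{C}^\infty_c(\R^{2n})$ with $\eta\equiv 1$ on the unit ball and $\supp \eta \subset B_2(0)$, set $\eta_R(\xi) := \eta(\xi/R)$, and form $F_{n,R} := (\eta_R\, \varphi_n)\circ \pi_n$. By construction $F_{n,R} \in \mathscr{C}^\infty_{\mathrm{c},\mathrm{cyl}}(H^{-s})$, and the Leibniz rule applied to \eqref{gradient_cylindrical_F} yields
\begin{equation*}
\nabla F_{n,R}(u) \,=\, \eta_R(\pi_n(u))\cdot 2\, P_n u \,+\, \varphi_n(\pi_n(u))\cdot G_{n,R}(u),
\end{equation*}
where $P_n u = \sum_{j=1}^n \langle u,e_j\rangle e_j$ is the $n$-th spectral projection and $G_{n,R}$ collects the derivatives of $\eta_R$. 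The key point is that $G_{n,R}$ is supported on $\{|\pi_n(u)|\in[R,2R]\}$ and satisfies $\|G_{n,R}(u)\|_{H^{-s}} \lesssim_{n} R^{-1}$. Since $P_n$ commutes with $A$ we have $\|P_n u\|_{H^{-s}} \leq \|u\|_{H^{-s}}$, and Fernique's theorem applied to the centred Gaussian $\mu_0$ on $H^{-s}$ places $\|u\|_{H^{-s}}$ in every $L^q(\mu_0)$. Dominated convergence then delivers $F_{n,R}\to \mathcal{M}_n$ in $L^p(\mu_0)$ and $\nabla F_{n,R} \to 2 P_n u$ in $L^p(\mu_0; H^{-s})$ as $R\to\infty$, so closedness of $\nabla$ gives $\mathcal{M}_n \in \mathbb{D}^{1,p}(\mu_0)$ with $\nabla \mathcal{M}_n(u) = 2 P_n u$.

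Finally, send $n\to\infty$: the $L^p$-convergence $\mathcal{M}_n \to \mathcal{M}$ is already in hand, and for every $u\in H^{-s}$ one has $P_n u\to u$ in $H^{-s}$ with the domination $\|P_n u\|_{H^{-s}} \leq \|u\|_{H^{-s}}\in L^p(\mu_0)$, so dominated convergence gives $2 P_n u \to 2u$ in $L^p(\mu_0; H^{-s})$. A second application of closedness of $\nabla$ identifies $\nabla \mathcal{M}(u) = 2u$. The only genuinely delicate input is the $L^p$-control of the Wick renormalization in $d=2,3$, which is built into Lemma \ref{Wick_ordered_mass_lemma} together with Wiener chaos hypercontractivity; the compactification step involving $\nabla \eta_R$ is handled cleanly by Fernique tails.
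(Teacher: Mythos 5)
Your proof is correct and follows essentially the same strategy as the paper's: truncate $\mathcal{M}$ to finitely many modes, approximate the resulting polynomial cylindrical function by compactly supported ones via a radial cutoff, invoke closedness of $\nabla$ twice, and pass to the limit using hypercontractivity and dominated convergence. The only superficial differences are that you invoke Fernique's theorem where the paper cites its hypercontractivity-based Lemma \ref{Lemma_2.16_5} for the $L^p$ moments of $\|u\|_{H^{-s}}$, and you argue $P_n u \to u$ in $L^p(\mu_0; H^{-s})$ by domination rather than by the explicit $L^2$-plus-hypercontractivity calculation \eqref{P_n(u)_convergence_proof}; both routes are equivalent and valid.
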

We give the proof of Lemma \ref{Malliavin_derivative_M_lemma} in Section \ref{Proof_of_Lemma_2.10} of  Appendix \ref{Malliavin derivative calculations and normalizability of the Gibbs measure for the focusing Hartree equation}.

\subsubsection*{Poisson structure}
Let $F,G \in \mathscr{C}^{\infty}_{\mathrm{b},\mathrm{cyl}}(H^{-s})$ be given.  We define their Poisson bracket $\{F,G\}:H^{-s} \rightarrow \R$ as follows. By Definition \ref{cylindrical_test_functions}, we can write
\begin{equation*}
F = \varphi \circ \pi_m\,, \quad G=\psi \circ \pi_n\,,
\end{equation*}
for some $m,n \in \N$ and $\varphi \in \mathscr{C}_{\mathrm{b}}^\infty(\R^{2m}), \psi \in \mathscr{C}_{\mathrm{b}}^\infty(\R^{2n})$.
We let
\begin{equation}
\label{poisson_cylindrical}
\{F,G\}(u) := \sum_{j=1}^{\min(m,n)} \partial^{(1)}_j \varphi(\pi_m(u)) \partial^{(2)}_j \psi(\pi_n(u)) - \partial^{(1)}_j \psi(\pi_n(u)) \partial^{(2)}_j \varphi(\pi_m(u))\,.
\end{equation}

Later, we extend the Poisson bracket to a larger class of functions (See \eqref{poisson_general} for a more general definition).

\subsection*{Nonlinear Hamiltonian structure and the local Gibbs measure}
We now define the nonlinear Hamiltonian $h:H^{-s} \rightarrow \R$. It has the form
\begin{equation}
\label{nonlinear_Hamiltonian}
h=h_0-h^I\,.
\end{equation} 
The non-interacting part of the Hamiltonian $h_0$ in \eqref{nonlinear_Hamiltonian} is given by
\begin{equation}
\label{nonlinear_Hamiltonian_B}
h_0:\mathcal{D}(A^{1/2}) \rightarrow \R\,,\qquad h_0(u):=\frac{1}{2} \langle u,Au \rangle\,,
\end{equation}
for $A$ as in Assumption \ref{A_choice}.

For the nonlinear (interacting) part $h^I$ of the Hamiltonian in \eqref{nonlinear_Hamiltonian}, we make the following assumptions.

\begin{assumption}
\label{Assumption_on_V} 
We consider two cases depending on whether $d=1$ or $d=2,3$.
\begin{itemize}
\item[(i)] When $d=1$, we consider either the local nonlinearity 
\begin{equation}
\label{local_nonlinearity_1D}
h^I(u):=\frac{1}{r+1} \int_{\T} |u|^{r+1}\, dx \,,
\end{equation}
for $r \in \{3,5\}$ or the nonlocal Hartree nonlinearity
\begin{equation}
\label{nonlocal_nonlinearity_1D}
h^I(u):=\frac{1}{4} \int_{\T} \int_{\T} |u(x)|^2\,V(x-y)\,|u(y)|^2\,dx\,dy\,, 
\end{equation}
where $V \in L^1(\T^d)$ is even and real-valued. The local nonlinearity \eqref{local_nonlinearity_1D} is called cubic or quintic when $r=3$ or $r=5$ respectively.
\item[(ii)] When $d=2,3$, we fix $V \in L^1(\T^d)$ even and real-valued such that there exist $\varepsilon \in (0,1)$ and  $C>0$ with the property that for all $k \in \Z^d$, we have
\begin{equation}
\label{V_assumption}
|\widehat{V}(k)| \leq 
\begin{cases}
\frac{C}{(1+|k|)^{\varepsilon}} &\mbox{if } d=2
\\
\frac{C}{(1+|k|)^{2+\varepsilon}}  &\mbox{if } d=3\,.
\end{cases}
\end{equation}
With $V$ as in \eqref{V_assumption}, we let
\begin{equation}
\label{h^I}
h^I(u):=\frac{1}{4}\,\int_{\mathbb{T}^d} \,\int_{\mathbb{T}^d} \,:|u(x)|^2:\,V(x-y)\,:|u(y)|^2:\,d x\,dy\,.
\end{equation}
\end{itemize}
\end{assumption}
\begin{remark} Let us make a few comments on Assumption \ref{Assumption_on_V}.
\begin{itemize}
\item[(i)] 

By our sign conventions, the nonlinearity \eqref{nonlocal_nonlinearity_1D} gives a focusing contribution to the Hamiltonian \eqref{nonlinear_Hamiltonian} in the standard PDE sense, meaning that $-h^I \leq 0$. The same is true for \eqref{nonlocal_nonlinearity_1D} when $V$ is pointwise nonnegative or of positive type and for \eqref{h^I} when $V$ is of positive type. Here, we say that $V$ is of positive type if $\widehat{V}$ is pointwise nonnegative. The latter two observations follow since for any $f : \T^d \rightarrow \R$ square integrable, we have 
\begin{equation}
\label{positive_type}
\frac{1}{4} \int_{\T^d} \int_{\T^d} f(x)\,V(x-y)\,f(y)\,dx \,dy= \frac{1}{4} \sum_{k \in \Z^d} \widehat{V}(k) |\widehat{f}(k)|^2 \geq 0\,.
\end{equation}
One formally takes $f=|u|^2$ and $f=:|u|^2:$ in \eqref{positive_type} when $d=1$ and $d=2,3$ respectively. This step is rigorously justified by applying a frequency truncation as in \eqref{P_n}--\eqref{u_n} and \eqref{:|u_n|^2:} below, and by taking a limit in $L^p(\mu_0)$ using Wick's theorem. We refer the reader to \cite[Section 3]{FKSS17} for a self-contained explanation of this procedure when $d=1$ and to \cite[Section 3]{Soh19} when $d=2,3$. We emphasize that none of these arguments rely on the sign of the nonlinearity.
\item[(ii)] If $-V$ is pointwise nonnegative or of positive type in \eqref{nonlocal_nonlinearity_1D} or if $-V$ is of positive type in \eqref{h^I}, then we are in the defocusing regime, meaning that $h^I \leq 0$. Here, one can consider Gibbs measures without truncation and their equivalence with global KMS states. This analysis was done in \cite{AS21}.
\item[(iii)]When $d=2,3$, we can slightly generalize condition \eqref{V_assumption} as follows. Let us note that we can write 
\begin{equation}
\label{V_hat_splitting}
V=V_{p}+V_{n}\,,\,\mbox{where}\,\,\widehat{V}_{p}\geq 0 \,\,\mbox{and}\,\,\widehat{V}_n \leq 0\,\, \mbox{pointwise}.
\end{equation}
Our analysis carries over if instead of \eqref{V_assumption}, we assume that for all $k \in \Z^d$
\begin{equation}
\label{V_p_assumption}
\widehat{V}_{p}(k) \leq 
\begin{cases}
\frac{C}{(1+|k|)^{\varepsilon}} &\mbox{if } d=2
\\
\frac{C}{(1+|k|)^{2+\varepsilon}}  &\mbox{if } d=3\,.
\end{cases}
\end{equation}
In order to see this, we use \eqref{V_hat_splitting} and \eqref{h^I_rewritten} below to write
\begin{align*}
h^I(u)&=\frac{1}{4} \,\sum_{k \in \Z^d \setminus \{0\}} \bigl|\widehat{|u|^2}(k)\bigr|^2\,\widehat{V}(k)+\frac{1}{4} \,\widehat{V}(0)\,\Bigl(:\|u\|_{L^2}^2:\Bigr)^2
\\
&\leq
\frac{1}{4} \,\sum_{k \in \Z^d \setminus \{0\}} \bigl|\widehat{|u|^2}(k)\bigr|^2\,\widehat{V}_{p}(k)+\frac{1}{4} \,\widehat{V}_{p}(0)\,\Bigl(:\|u\|_{L^2}^2:\Bigr)^2 \,.
\end{align*}
The argument given in Appendix \ref{Proof of Bourgain's large deviation estimate} follows under the assumption \eqref{V_p_assumption}.

\item[(iv)] In \eqref{h^I}, let us note that we  Wick order each factor of $|u|^2$ separately. In particular, we do not Wick-order the full quartic nonlinearity as in \cite{Bou96}. A self-contained rigorous construction of \eqref{h^I} is given in Section \ref{Proof_of_Proposition_2.13_(i)} below. 
\item[(v)] When $d=2$, the condition \eqref{V_assumption} corresponds to a slightly better decay than that of $V=\pm \delta$, which satisfies $\widehat{V}=\pm 1$. Here $\delta$ is the Dirac delta function. When $V=\delta$, it is shown in \cite{BS96} that (with a suitable Wick-ordering of the quartic term), local Gibbs measures of the form \eqref{focusing_Gibbs} (and \eqref{local_Gibbs_measure_rigorous} below) are not well-defined. When $d=3$, the condition \eqref{V_assumption} corresponds to a slightly better decay than that of the Coulomb potential, which satisfies $|\widehat{V}(k)| \sim |k|^{-2}$ for $k \neq 0$.
\end{itemize}
\end{remark}

We now introduce the cut-off function, which we use to truncate in \eqref{renormalized_mass}.
\begin{definition}
\label{cut-off_chi}
Let $\delta \in (0,1)$ be given and let $\chi^{(\delta)} \in \mathscr{C}^{\infty}_0(\R)$ be a nonnegative function with the property that $0 \leq \chi^{(\delta)} \leq 1$ pointwise and 
\begin{equation}
\label{chi}
\chi^{(\delta)}(x) =
\begin{cases}
1 \text{ for } |x| \leq \delta\,, \\
0 \text{ for } |x| \geq 1\,.
\end{cases}
\end{equation}
Recalling \eqref{indicator_function}, we let
\begin{equation}
\label{chi^{(1)}}
\chi^{(1)}:=\mathbbm{1}_{(-1,1)}\,.
\end{equation}
Given $\delta \in (0,1]$ and $\BB>0$, we define 
\begin{equation*}
\label{chi_R}
\chi^{(\delta)}_R (\cdot):=\chi^{(\delta)}(\cdot/\BB)\,.
\end{equation*}
\end{definition}

Let us note the following proposition, which is crucial for our analysis and allows us to rigorously define the local Gibbs measure.

\begin{proposition}[Malliavin differentiabililty of $h^I$ and normalizability of the local Gibbs measure]
\label{Hartree_equation_Malliavin_derivative}
The following claims hold for $h^I$ satisfying Assumption \ref{Assumption_on_V}.
\begin{itemize}
\item[(i)] Let $p \in [1,\infty)$ be given. We have that $h^I \in \mathbb{D}^{1,p}(\mu_{0})$.
\item[(ii)] Let us recall Definition \ref{cut-off_chi}. For all $p \in [1,\infty)$ and $\delta \in (0,1]$, we have that
\begin{equation}
\label{integrability_of_weight}
\ee^{h^I}\chi_{\BB}^{(\delta)} (\mathcal{M}) \in L^p(\mu_{0})\,,
\end{equation}
for all $\BB>0$ sufficiently small when $h^I$ is given by \eqref{local_nonlinearity_1D} with $r=5$ and for all $\BB>0$ in all other cases.
In particular, we have that the partition function satisfies
\begin{equation}
\label{z_R_definition}
z^{(\delta)} \equiv z^{(\delta)}_{\BB}:=\int_{H^{-s}} \ee^{h^I}\chi_{\BB}^{(\delta)} (\mathcal{M})\,d\mu_{0}  \,\in (0,\infty)
\end{equation}
and that the local Gibbs measure 
\begin{equation}
\label{local_Gibbs_measure_rigorous}
d \mu^{(\delta)} \equiv d \mu^{(\delta)}_{\BB}:=\frac{1}{z^{(\delta)}}\,\ee^{h^I}\chi_{\BB}^{(\delta)} \bigl(\mathcal{M}\bigr)\,d\mu_{0}
\end{equation}
is a well-defined probability measure on $H^{-s}$ with $\mu^{(\delta)} \ll \mu_{0}$.
\end{itemize}
\end{proposition}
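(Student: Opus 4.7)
My plan is to treat parts \textit{(i)} and \textit{(ii)} in turn: \textit{(i)} is a fairly routine approximation argument once the appropriate cylindrical approximants are in hand, whereas \textit{(ii)} is the heart of the matter and rests on a large-deviation estimate for $h^I$ on the sublevel sets of $\mathcal{M}$.

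\emph{Part (i).} I would introduce spectral cutoffs $P_n u := \sum_{j \leq n} \langle u, e_j \rangle\, e_j$ using the eigenbasis \eqref{Hilbert_space_ONB}, and set $h^I_n(u) := h^I(P_n u)$, interpreting each factor $|P_n u|^2$ as the Wick-ordered object $:|P_n u|^2:$ in dimensions $d = 2, 3$. Each $h^I_n$ is a polynomial in the finitely many real Gaussians $\langle u, e_j^{(\alpha)} \rangle_{\mcH,\R}$, $j \leq n$, $\alpha \in \{1,2\}$, hence lies in $\mathbb{D}^{1,p}(\mu_0)$ for every $p$. The Malliavin gradient is explicit; in the Hartree case one finds
\begin{equation*}
\nabla h^I_n(u) \;=\; P_n\bigl((V * :|P_n u|^2:)\,P_n u\bigr)\,,
\end{equation*}
and analogously $\nabla h^I_n(u) = P_n\bigl(|P_n u|^{r-1} P_n u\bigr)$ in the local case, consistent with Lemma \ref{Malliavin_derivative_M_lemma}. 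Since $h^I$ lies in a finite sum of Wiener chaoses of degree at most four relative to $\mu_0$, Nelson hypercontractivity reduces all $L^p(\mu_0)$ bounds to $L^2(\mu_0)$ bounds. The decay \eqref{V_assumption} together with the trace condition \eqref{trace_assumption} then ensures that $(h^I_n)$ is Cauchy in $L^p(\mu_0)$ and $(\nabla h^I_n)$ is Cauchy in $L^p(\mu_0; H^{-s})$. Closability of $\nabla$ (Definition \ref{Malliavin_definition}, see \cite[Lemma 4.9]{AS21}) identifies the limits and gives $h^I \in \mathbb{D}^{1,p}(\mu_0)$.

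\emph{Part (ii).} Setting $w := \chi_{\BB}^{(\delta)}(\mathcal{M}) \in [0, 1]$, the integrability claim \eqref{integrability_of_weight} is equivalent to $\int \ee^{p h^I}\, w\, d\mu_0 < \infty$ for each $p \in [1,\infty)$. By the layer-cake representation
\begin{equation*}
\int \ee^{p h^I}\, w\, d\mu_0 \;\leq\; 1 + p \int_0^\infty \ee^{p\lambda}\,\mu_0\bigl(\{h^I \geq \lambda\} \cap \{|\mathcal{M}| \leq \BB\}\bigr)\, d\lambda\,,
\end{equation*}
the problem reduces to proving a Bourgain-type concentration bound of the form
\begin{equation*}
\mu_0\bigl(\{h^I \geq \lambda\} \cap \{|\mathcal{M}| \leq \BB\}\bigr) \;\leq\; C\, \ee^{-c\, \lambda^{1 + \kappa}} \qquad \text{for all } \lambda \geq 1\,,
\end{equation*}
for suitable constants $C, c, \kappa > 0$ (uniform in any spectral approximation), valid for every $\BB > 0$ in the non-critical cases and only for $\BB$ small in the $L^2$-critical quintic case, which matches the statement. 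In $d = 1$ with local nonlinearity, this follows from Gagliardo--Nirenberg $\int|u|^{r+1} \lesssim \|u\|_{L^2}^{\alpha}\,\|u\|_{H^1}^{\beta}$ combined with Gaussian concentration of $\|u\|_{H^1}$. In the $d = 2, 3$ Hartree case, I would split
\begin{equation*}
h^I(u) \;=\; \tfrac{1}{4}\,\widehat{V}(0)\,\mathcal{M}(u)^2 \;+\; \tfrac{1}{4}\sum_{k \neq 0} \widehat{V}(k)\,\bigl|\widehat{:|u|^2:}(k)\bigr|^2\,,
\end{equation*}
bound the first term pointwise on $\{|\mathcal{M}| \leq \BB\}$ by $\tfrac{1}{4}|\widehat{V}(0)|\,\BB^2$, and apply subgaussian-type tail estimates for variables in a fixed Wiener chaos to the remainder, whose total variance is summable thanks to \eqref{V_assumption}. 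The detailed bookkeeping of these tail bounds is what Appendix A is built to supply.

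\emph{Main obstacle.} The hardest step is the concentration estimate in the Wick-ordered regime $d = 2, 3$: unlike the defocusing setting, the sign cannot simply be discarded, and there is no pointwise Sobolev inequality available. What makes the argument close is the structural observation that the only non-$\mu_0$-integrable piece of $h^I$ comes from the zero Fourier mode of $V$, which is precisely the direction controlled by $\chi_{\BB}^{(\delta)}(\mathcal{M})$; the remaining nonzero-mode piece is in a fixed Wiener chaos and benefits from the decay \eqref{V_assumption}. Turning this qualitative picture into concentration bounds sharp enough to dominate the factor $\ee^{p\lambda}$ in the layer-cake integral, uniformly in the frequency regularization used to define the Wick product, is where the genuine technical work is concentrated.
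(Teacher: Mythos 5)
\emph{Part (i)} of your proposal tracks the paper's argument in Section~\ref{Proof_of_Proposition_2.13_(i)} closely: the Fourier-truncated, Wick-ordered $h_n^I$ you introduce is exactly \eqref{A_h^I}, your gradient formula coincides with \eqref{Hartree_equation_Malliavin_derivative_1}, and the passage to the limit via hypercontractivity of a fixed-degree chaos together with closability of $\nabla$ is precisely how the paper concludes. No issues there.

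\emph{Part (ii)} has a genuine gap. Your layer-cake reduction and the splitting of $h^I$ via \eqref{h^I_rewritten} into the $\widehat{V}(0)\bigl(\mathcal{M}\bigr)^2$ term plus the $k\neq 0$ term both match the paper's setup (cf.\ \eqref{Integral_1}--\eqref{Integral_2}). But the claim that, after bounding $\tfrac14\widehat{V}(0)\mathcal{M}^2$ on $\{|\mathcal{M}|\leq\BB\}$, the remainder $\tfrac14\sum_{k\neq 0}\widehat{V}(k)\bigl|\widehat{|u|^2}(k)\bigr|^2$ can be handled by generic tail bounds for a fixed Wiener chaos does not close. This remainder lies in chaos of degree at most four, and the hypercontractivity tail for such a variable $X$ is only $\mathbb{P}(X>t)\lesssim\exp\bigl(-c\,(t/\|X\|_{L^2})^{1/2}\bigr)$ — not subgaussian, and certainly not enough to defeat the factor $\ee^{p\lambda}$ in the layer-cake integral, since $\ee^{p\lambda-c\sqrt{\lambda}}\to\infty$. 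Nor is this a lossy estimate that could be sharpened: without re-using the constraint $|\mathcal{M}|\leq\BB$, the $k\neq 0$ piece alone already makes $\ee^{h^I}$ non-integrable with respect to $\mu_0$. For instance, two colliding high frequencies $|g_{k_1}|,|g_{k_2}|\sim\alpha M$ with $|k_1|,|k_2|\sim M$ and $|k_1-k_2|=O(1)$ produce $\bigl|\widehat{|u|^2}(k_1-k_2)\bigr|^2\sim\alpha^4$ on an event of probability $\sim\ee^{-c\alpha^2 M^2}$, and with no a priori bound on $\alpha$ the optimization over $\alpha$ at fixed $\lambda\sim\alpha^4$ gives only a $\ee^{-c\sqrt{\lambda}}$ tail — exactly the obstruction above. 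The paper's Appendix~\ref{Proof of Bourgain's large deviation estimate} therefore feeds the constraint $|\mathcal{M}|\leq\BB$ back into the argument at several further places — in the mass-decoupling estimate \eqref{33}, in the dyadic Case~2 split \eqref{48} via \eqref{50}--\eqref{51}, and again in Step~6 via \eqref{33*} — combined with Hanson--Wright, Hoeffding, and Bernstein inequalities on the individual Fourier blocks. Your ``structural observation'' that the mass cutoff only needs to tame the $\widehat{V}(0)$ direction is thus the opposite of what the proof requires; that is exactly the delicate part that Bourgain's argument is built to address.

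Two smaller points: in $d=1$ you invoke Gagliardo--Nirenberg with ``Gaussian concentration of $\|u\|_{H^1}$'', but $\|u\|_{H^1}=\infty$ holds $\mu_0$-a.s.\ (the $d=1$ free field is just below $H^{1/2}$); the actual $d=1$ argument uses the mass cutoff inside the interpolation together with Fourier truncation. And the positivity $z^{(\delta)}>0$, which the statement also asserts, is not addressed in your proposal; the paper proves it separately via Bernstein's inequality in Lemma~\ref{app.lem.partition_funct}.
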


\begin{proof}
The proof of (i) is given in Section \ref{Proof_of_Proposition_2.13_(i)}. We now prove (ii).
The claim when $d=1$ follows from \cite[Lemma 3.10]{Bou94}. We refer the reader to \cite[Lemma 2.1 and Appendix A]{RS22} for a self-contained summary of the analysis.

Let us now consider the case when $d=2,3$. We note that this claim is proved in the work of Bourgain \cite[Section 1]{Bou97}. As this is a very important result for our analysis, we revisit its proof and give a self-contained account (most of which is contained in Appendix \ref{Proof of Bourgain's large deviation estimate} below).

By Definition \ref{cut-off_chi}, we know that $\mathrm{supp}\, \chi_{\BB}^{(\delta)} \subset [-\BB,\BB]$. 
The claim that $z^{(\delta)}<\infty$ follows if we show that given $C_{\mathrm{Big}}>0$ arbitrarily large depending on $\BB$, one has that for all $\lambda>1$
\begin{equation}
\label{Bourgain_large_deviation_estimate}
\mu_{0} \bigl(h^I>\lambda \,\cap \, \bigl| :\|u\|_{L^2}^2: \bigr| \leq \BB \bigr)\lesssim_{C_{\mathrm{Big}}} \ee^{-C_{\mathrm{Big}} \lambda}\,.
\end{equation}
More precisely, by Definition \ref{cut-off_chi}, we know that 
\begin{multline}
\label{Integral_1}
\bigl\|\ee^{h^I}\chi_{\BB}^{(\delta)}\bigl(:\|u\|_{L^2}^2:\bigr)\bigr\|^p_{L^p(\mu_{0})} \leq
\bigl\|\ee^{h^I} \mathbbm{1}_{[-\BB,\BB]}(:\|u\|_{L^2}^2:\bigr)\bigr\|^p_{L^p(\mu_{0})} 
\\
=\int_{0}^{\infty} \mu_{0} \Bigl[\ee^{p h^I}\, \mathbbm{1}_{[-\BB,\BB]}(:\|u\|_{L^2}^2:\bigr)>
\lambda\Bigr] \, d \lambda\,.
\end{multline}
Let us note that
\begin{equation}
\label{Integral_1.5}
\int_{0}^{\ee^p} \mu_{0} \Bigl[\ee^{p h^I}\, \mathbbm{1}_{[-\BB,\BB]}(:\|u\|_{L^2}^2:\bigr)>
\lambda\Bigr] \, d \lambda \in [0,\ee^p]\,.
\end{equation}
Assuming \eqref{Bourgain_large_deviation_estimate}, the contribution to \eqref{Integral_1} from $\ee^p$ to $\infty$ is
\begin{multline}
\label{Integral_2}
=\int_{\ee^p}^{\infty} \mu_{0} \biggl[h^I>\frac{\log \lambda}{p} \,\cap \, \bigl| :\|u\|_{L^2}^2: \bigr|^2 \leq \BB \biggr] \, d \lambda 
\\
\lesssim_{C_{\mathrm{Big}}} \int_{\ee^p}^{\infty} \ee^{-C_{\mathrm{Big}} \frac{\log \lambda}{p}}\,d\lambda=\int_{\ee^p}^{\infty} 
\lambda^{-\frac{C_{\mathrm{Big}}}{p}}\,d \lambda<\infty\,,
\end{multline}
provided that $C_{\mathrm{Big}}>p$. Combining \eqref{Integral_1}--\eqref{Integral_2}, we deduce that \eqref{Bourgain_large_deviation_estimate} implies (ii). The proof of \eqref{Bourgain_large_deviation_estimate} is given in full detail in Appendix \ref{Proof of Bourgain's large deviation estimate}. 
The claim that $z^{(\delta)}>0$ follows from Lemma \ref{app.lem.partition_funct} in Appendix \ref{Appendix_B_Partition_function}.
\end{proof}

\begin{remark}
\label{Positivity_remark}
Let us make a few observations about the nature of the cut-off in \eqref{renormalized_mass} that is present in the definition of the local Gibbs measure \eqref{local_Gibbs_measure_rigorous}.
\begin{itemize}
\item[(i)] When $d=1$, we note that \eqref{renormalized_mass} is always non-negative. For $\delta \in (1/2,1)$, we can modify Definition \ref{cut-off_chi} and replace \eqref{chi}--\eqref{chi^{(1)}} by
\begin{equation*}
\label{chi^{(delta)}}
\chi^{(\delta)}(x) =
\begin{cases}
0 \text{ for } x < 0\,, \\
1 \text{ for } 1-\delta \leq x  \leq \delta\,, \\
0 \text{ for } x > 1\,.
\end{cases}
\end{equation*}
and $\chi^{(1)}=\mathbbm{1}_{(0,1)}$ respectively\footnote{In practice, our ultimate goal is to analyse the limit $\delta \rightarrow 1$, so it suffices to consider $\delta>1/2$.}. The analysis would then carry over in the same way.
\item[(ii)] When $d=2,3$, the quantity \eqref{renormalized_mass} is not necessarily nonnegative, so the modification from part (i) does not immediately apply. However if in Assumption \ref{Assumption_on_V} (ii) we also assume that 
\begin{equation}
\label{V_hat_zero_assumption}
\widehat{V}(0) =\int_{\T^d} V(x)\, dx \leq 0\,,
\end{equation} 
then 
we can modify Definition \ref{cut-off_chi} as follows. 
For $\delta \in (0,1)$, instead of \eqref{chi}, we consider $\chi^{(\delta)} \in \mathscr{C}^{\infty} (\R)$ nonnegative with the property that $0 \leq \chi^{(\delta)} \leq 1$ pointwise and
\begin{equation*}
\chi^{(\delta)}(x) =
\begin{cases}
1 \text{ for }  x  \leq \delta\,, \\
0 \text{ for } x > 1\,.
\end{cases}
\end{equation*}
Moreover, instead of \eqref{chi^{(1)}}, we consider $\chi^{(1)}=\mathbbm{1}_{(-\infty,1)}$.
The reason for this is that, under the assumption \eqref{V_hat_zero_assumption}, we can drop the absolute values in the constraint on the Wick-ordered mass in \eqref{local_Gibbs_measure_rigorous}. More precisely, we can replace \eqref{Bourgain_large_deviation_estimate} above by
\begin{equation}
\label{Bourgain_large_deviation_estimate_V_extra} 
\mu_{0} \bigl(h^I>\lambda \,\,\cap \, :\|u\|_{L^2}^2:  \,\leq \BB \bigr)\lesssim_{C_{\mathrm{Big}}} \ee^{-C_{\mathrm{Big}} \lambda}\,.
\end{equation}
The fact that the additional assumption \eqref{V_hat_zero_assumption} implies \eqref{Bourgain_large_deviation_estimate_V_extra} follows from the arguments given in Appendix \ref{Proof of Bourgain's large deviation estimate}. More precisely, it follows from \eqref{h^I_rewritten} and the fact that in the proof of \eqref{Bourgain_large_deviation_estimate_rewritten} given below, we only use the upper bound 
\begin{equation*}
:\|u\|_{L^2}^2: \,\leq\, \BB\,;
\end{equation*} 
see \eqref{30} when $d=3$ and \eqref{68c*} when $d=2$ for the details. 
\end{itemize}
\end{remark}

Let us note the following result, which we will use throughout the paper.

\begin{lemma}
\label{Lemma_2.16_5}
For all $p \in [1,\infty)$, we have that the function  $u\in H^{-s} \mapsto \| u\|_{ H^{-s}}$ belongs to $L^p(\mu_0)$.
\end{lemma}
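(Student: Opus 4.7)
The approach is to appeal to Fernique's theorem for Gaussian measures. By Proposition~\ref{free_Gibbs_prop}, $\mu_{0}$ is a centered Gaussian Radon measure on the separable Hilbert space $H^{-s}$ whose covariance operator (viewed as an operator on $H^{-s}$) is $A^{-1-s}$, which is trace class thanks to \eqref{trace_assumption}. Fernique's theorem (see, e.g., \cite[Chapter~2]{Bog98}) then yields an $\alpha>0$ such that
$$
\int_{H^{-s}} \ee^{\alpha\,\|u\|_{H^{-s}}^{2}}\,d\mu_{0}<\infty.
$$
Combining this with the elementary pointwise inequality $x^{p}\leq C_{p,\alpha}\,\ee^{\alpha x^{2}}$, valid on $[0,\infty)$ for every $p\in[1,\infty)$, immediately gives $\int \|u\|_{H^{-s}}^{p}\,d\mu_{0}<\infty$, which is the claim.

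A self-contained alternative, not relying on the black-box invocation of Fernique, is to expand in the basis from \eqref{Hilbert_space_ONB}. One verifies the identity
$$
\|u\|_{H^{-s}}^{2}=\sum_{j\in\N}\lambda_{j}^{-s}\bigl(\langle u,e_{j}^{(1)}\rangle_{\mcH,\R}^{2}+\langle u,e_{j}^{(2)}\rangle_{\mcH,\R}^{2}\bigr),
$$
and then uses \eqref{Gibbs_covariance_2} to recognize the family $\{\langle u,e_{j}^{(a)}\rangle_{\mcH,\R}\}_{j\in\N,\,a\in\{1,2\}}$ under $\mu_{0}$ as an independent collection of centered Gaussians with respective variances $\lambda_{j}^{-1}$. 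Consequently $\|u\|_{H^{-s}}^{2}$ has the law of $\sum_{j}\lambda_{j}^{-1-s}(X_{j}^{2}+Y_{j}^{2})$ for i.i.d.\ standard real Gaussians $(X_{j},Y_{j})_{j\in\N}$. Since $\sum_{j}\lambda_{j}^{-1-s}<\infty$ by \eqref{trace_assumption}, a direct computation of the product Laplace transform $\prod_{j}(1-2t\lambda_{j}^{-1-s})^{-1}$ shows this random variable has finite exponential moments in a neighbourhood of the origin, recovering the same conclusion.

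No substantial obstacle is anticipated: this is entirely standard Gaussian analysis, and all the necessary ingredients (the Gaussian structure of $\mu_{0}$ on the separable space $H^{-s}$ and the summability \eqref{trace_assumption}) have already been established in the paper.
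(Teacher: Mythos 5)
Your proposal is correct but takes a different route than the paper. The paper's proof invokes Gaussian hypercontractivity (citing [Lemma~5.2, AS21] and [Theorem~I.22, Sim74]): since $\|u\|_{H^{-s}}^2$ lives in the second Wiener chaos, all $L^p$ norms of $\|u\|_{H^{-s}}$ are controlled by the $L^2$ norm up to a constant depending only on $p$, and the $L^2$ norm is finite by the trace bound. You instead invoke Fernique's theorem to get exponential square-integrability of the norm directly, or alternatively carry out the explicit diagonalization and Laplace transform computation $\prod_j(1-2t\lambda_j^{-1-s})^{-1}$. Both approaches are sound and standard. The hypercontractivity route is slightly more economical given that the authors already need the hypercontractive estimate elsewhere (e.g.\ in \eqref{P_n(u)_convergence_a}), and it directly gives the quantitative comparison $\|\cdot\|_{L^p}\lesssim_p\|\cdot\|_{L^2}$ used in the proof; your Fernique argument proves the stronger conclusion of finite Gaussian exponential moments, which is more than the lemma asks for but equally establishes the claim. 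Your self-contained alternative is also correct; one minor notational point is that the Laplace transform factor $(1-2t\lambda_j^{-1-s})^{-1}$ (rather than $(1-2t\lambda_j^{-1-s})^{-1/2}$) is right because each $j$ contributes a chi-squared with two degrees of freedom, $X_j^2+Y_j^2$, consistent with your setup.
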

\begin{proof}
By using hypercontractivity bounds (see \cite[Lemma 5.2]{AS21} or \cite[Theorem I.22]{Sim74}), we have that for $p \in [1,\infty)$
\begin{equation}
\label{Lemma_2.16_5*}
\left\|\|\cdot\|_{H^{-s}}\right\|_{L^p(\mu_0)} \lesssim_p \|\|\cdot\|_{H^{-s}}\|_{L^2(\mu_0)} \sim \Biggl(\sum_{k \in \Z^d} \frac{1}{(|k|^2+1)^{s+1}}\Biggr)^{1/2} <\infty\,.
\end{equation}
In \eqref{Lemma_2.16_5*}, we recalled Assumption \ref{choice_of_s_assumption} and \eqref{choice_of_s_consequence}; see also \eqref{P_n(u)_convergence_proof} in Appendix \ref{Malliavin derivative calculations and normalizability of the Gibbs measure for the focusing Hartree equation}.
\end{proof}

\subsection*{Rigorous formulation of the  global and local KMS condition}

Let us now give a rigorous formulation of the global and local KMS conditions. The (global) KMS condition was previously formally stated in \eqref{KMS_condition_1}. We now explain how all of the objects appearing there are rigorously defined.

Let $A$ be as in Assumption \ref{A_choice} and let $h^I$ be as in Assumption \ref{Assumption_on_V}. 
Let us define the nonlinear vector field $X$ acting on $H^{-s}$ as
\begin{equation}
\label{X_definition}
X := -\ii A + \ii \nabla h^I\,.
\end{equation}
Recalling \eqref{nonlinear_Hamiltonian}--\eqref{nonlinear_Hamiltonian_B}, we can rewrite \eqref{X_definition}
as 
\begin{equation}
\label{X_definition_B}
X=-\ii \nabla h\,.
\end{equation}
Let us also write
\begin{equation}
\label{X^I_definition}
X_0:=-\ii A \,,\qquad X^I := \ii \nabla h^I\,.
\end{equation}
We consider the vector field equation 
\begin{equation}
\label{vector_field_equation}
\dot{u}(t)=X(u)=-\ii A u + \ii \nabla h^I(u)\,.
\end{equation}

Let us recall the following definition, also found in \cite[Equation (4.38)]{AS21}.
\begin{definition}[Global KMS condition]
\label{KMS_definition_rigorous}
We say that $\mu \in \mathscr{P}(H^{-s})$ satisfies the \emph{KMS condition} for the dynamical system \eqref{vector_field_equation} induced by the vector field \eqref{X_definition} if for all $F,G \in \mathscr{C}^{\infty}_{\text{c},\text{cyl}} (H^{-s})$, we have that
\begin{equation}
\label{KMS_definition_rigorous_1}
\int_{H^{-s}} \{F,G\}(u)\,d \mu= \int_{H^{-s}} \langle \nabla F(u),-\ii A u +\ii \nabla h^I(u) \rangle_{\mathcal{H},\R}\, G(u)\, d \mu\,.
\end{equation}
Alternatively, we say that $\mu$ is a \emph{global KMS state} of \eqref{vector_field_equation} induced by the vector field \eqref{X_definition}. In \eqref{KMS_definition_rigorous_1}, we recall Definition \ref{cylindrical_test_functions}, \eqref{gradient_cylindrical_F} and \eqref{poisson_cylindrical}. 
\end{definition}

Let us now state a rigorous local alternative to Definition \ref{KMS_definition_rigorous} that will be our main object of study in the rest of the paper.  We introduce some notation. For all $R>0$, we define 
\begin{equation}
\label{B_R'}
\mathbb{B}_{R}:=\{u \in H^{-s}\,,\,\, |\mathcal{M}(u)|< R\}\,,\qquad \mathbb{B}_{R}^c \equiv H^{-s}\setminus \mathbb{B}_{R}\,.
\end{equation}
Here, we recall that $\mathcal{M}(\cdot)$ is given by \eqref{renormalized_mass}.  From \eqref{B_R'}, we then define
\begin{equation}
\label{D^{1,2}_R}
\mathbb{D}^{1,2}_{\BB}(\mu_0):=\bigl\{G \in \mathbb{D}^{1,2}(\mu_0)\,,\,\, \exists \,\BB' \in (0,\BB)\,\,\mbox{s.t.}\,\,G(u) = 0\,\,\mbox{$\mu_0$-a.s. on} \,\, \mathbb{B}_{\BB'}^{c}\bigr\}\,.
\end{equation}

\begin{definition}[Local KMS condition]
\label{local_KMS_definition}
We say that $\mu \in \mathscr{P}(H^{-s})$ satisfies the \emph{local KMS condition} for the dynamical system \eqref{vector_field_equation} induced by the vector field \eqref{X_definition} if there exists $\BB>0$ such that we have
\begin{equation}
\label{local_KMS_definition_rigorous_2}
\int_{H^{-s}} \{F,G\}(u)\,d \mu= \int_{H^{-s}} \langle \nabla F(u),-\ii A u +\ii \nabla h^I(u) \rangle_{\mathcal{H},\R}\, G(u)\, d \mu\,.
\end{equation}
for all $G \in \mathbb{D}^{1,2}_{\BB}(\mu_0)$ and $F\in \mathscr{C}^{\infty}_{\text{c},\text{cyl}} (H^{-s})$. 
In \eqref{local_KMS_definition_rigorous_2}, we take 
\begin{equation}
\label{poisson_general}
\{F,G\}(u) \equiv \langle \nabla F(u), -\ii \nabla G(u)\rangle_{\mathcal{H},\R}\,,
\end{equation}
which is consistent with \eqref{poisson_cylindrical} when $G \in \mathscr{C}^{\infty}_{\text{c},\text{cyl}} (H^{-s})$.
Alternatively, if \eqref{local_KMS_definition_rigorous_2} holds, we say that $\mu$ is a \emph{local KMS state} of \eqref{vector_field_equation} induced by \eqref{X_definition}. 
\end{definition}

Let us note that the expressions on the left and right-hand side in \eqref{local_KMS_definition_rigorous_2} (as well as \eqref{KMS_definition_rigorous_1}) are finite whenever $\mu$ is taken to be a local Gibbs measure of the form \eqref{local_Gibbs_measure_rigorous}  (with $\BB>0$ as in Proposition \ref{Hartree_equation_Malliavin_derivative} (ii)). Actually, it suffices to consider general $G \in \mathbb{D}^{1,2}(\mu_0)$ instead of $G \in  \mathbb{D}_{\BB}^{1,2}(\mu_0)$.

\begin{lemma}
\label{well-definedness_remark}
Let $\delta \in (0,1]$ and let $\mu^{(\delta)}$ as in \eqref{local_Gibbs_measure_rigorous} be given.
The following properties hold for all $F\in \mathscr{C}^{\infty}_{\mathrm{c},\mathrm{cyl}} (H^{-s})$ and $G \in \mathbb{D}^{1,2}(\mu_0)$.
\begin{itemize}
\item[(i)] $\int_{H^{-s}} |\{F,G\}(u)|\,d \mu^{(\delta)}<\infty$.
\item[(ii)] $\int_{H^{-s}} \bigl|\langle \nabla F(u),-\ii A u +\ii \nabla h^I(u) \rangle_{\mathcal{H},\R}\, G(u)\bigr|\, d \mu^{(\delta)}<\infty$.
\end{itemize}
\end{lemma}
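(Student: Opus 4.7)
The plan is to exploit the cylindrical and compactly supported structure of $F$ together with the $L^p(\mu_0)$-integrability of the weight $\ee^{h^I}\chi_R^{(\delta)}(\mathcal{M})$ provided by Proposition \ref{Hartree_equation_Malliavin_derivative} (ii).

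The first step is to record a uniform bound on $\nabla F$. If $F=\varphi\circ\pi_n$ with $\varphi\in\mathscr{C}^\infty_{\mathrm{c}}(\R^{2n})$, then by \eqref{gradient_cylindrical_F} the gradient $\nabla F(u)$ lies in the finite-dimensional subspace spanned by $\{e_j^{(1)},e_j^{(2)}\}_{j=1}^n$, whose elements are eigenvectors of $A$ with eigenvalues $\lambda_j$. Since the partial derivatives of $\varphi$ are uniformly bounded, there exists $C_F>0$ independent of $u$ with
\begin{equation*}
\|\nabla F(u)\|_{H^s}+\|A\nabla F(u)\|_{H^s}\,\leq\, C_F\,.
\end{equation*}
This lets me interpret $\langle\nabla F(u),v\rangle_{\mathcal{H},\R}$ for $v\in H^{-s}$ and $\langle\nabla F(u),Au\rangle_{\mathcal{H},\R}=\langle A\nabla F(u),u\rangle_{\mathcal{H},\R}$ for $u\in H^{-s}$ as $H^s$--$H^{-s}$ duality pairings satisfying
\begin{equation*}
|\langle\nabla F(u),v\rangle_{\mathcal{H},\R}|\leq C_F\|v\|_{H^{-s}}\,,\qquad |\langle\nabla F(u),Au\rangle_{\mathcal{H},\R}|\leq C_F\|u\|_{H^{-s}}\,.
\end{equation*}

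For (i), combining the first of these bounds with \eqref{poisson_general} gives $|\{F,G\}(u)|\leq C_F\|\nabla G(u)\|_{H^{-s}}$. Since $\mu^{(\delta)}$ has density $(z^{(\delta)})^{-1}\ee^{h^I}\chi_R^{(\delta)}(\mathcal{M})$ with respect to $\mu_0$, Cauchy--Schwarz yields
\begin{equation*}
\int_{H^{-s}}|\{F,G\}(u)|\,d\mu^{(\delta)}\,\leq\,\frac{C_F}{z^{(\delta)}}\,\|\nabla G\|_{L^2(\mu_0;H^{-s})}\,\bigl\|\ee^{h^I}\chi_R^{(\delta)}(\mathcal{M})\bigr\|_{L^2(\mu_0)}\,,
\end{equation*}
and both norms on the right are finite by $G\in\mathbb{D}^{1,2}(\mu_0)$ and Proposition \ref{Hartree_equation_Malliavin_derivative} (ii).

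For (ii), I would split the integrand into the two contributions $\langle\nabla F,-\ii Au\rangle_{\mathcal{H},\R}\,G$ and $\langle\nabla F,\ii\nabla h^I\rangle_{\mathcal{H},\R}\,G$. By the bounds above, each piece is controlled pointwise by $C_F\|w(u)\|_{H^{-s}}|G(u)|$, with $w(u)=u$ in the first case and $w(u)=\nabla h^I(u)$ in the second. Applying H\"older's inequality with exponents $(4,2,4)$ against the density of $\mu^{(\delta)}$, each term is bounded by
\begin{equation*}
\frac{C_F}{z^{(\delta)}}\,\bigl\|\|w\|_{H^{-s}}\bigr\|_{L^4(\mu_0)}\,\|G\|_{L^2(\mu_0)}\,\bigl\|\ee^{h^I}\chi_R^{(\delta)}(\mathcal{M})\bigr\|_{L^4(\mu_0)}\,.
\end{equation*}
For $w=u$ the first factor is finite by Lemma \ref{Lemma_2.16_5}, while for $w=\nabla h^I$ it is finite because $h^I\in\mathbb{D}^{1,4}(\mu_0)$ by Proposition \ref{Hartree_equation_Malliavin_derivative} (i). The last factor is finite by Proposition \ref{Hartree_equation_Malliavin_derivative} (ii), and $\|G\|_{L^2(\mu_0)}\leq\|G\|_{\mathbb{D}^{1,2}(\mu_0)}<\infty$. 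The only subtle point is making sense of the $Au$-term at the $H^{-s}$ regularity of the measure; once the uniform $H^s$-bound on $A\nabla F$ is noted, this reduces to standard duality and Proposition \ref{Hartree_equation_Malliavin_derivative} supplies all the required integrability.
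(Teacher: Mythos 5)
Your proof is correct, and for part (i) it reproduces the paper's argument verbatim. For part (ii) there is one genuine (if small) divergence in method, worth noting. The paper observes that $\langle\nabla F(u),-\ii Au\rangle_{\mathcal{H},\R}$ is actually uniformly bounded in $L^\infty(\mu_0)$ — see \eqref{homogeneous_KMS_I_2}, proved in \eqref{homogeneous_KMS_I_2A*} — because after moving $A$ across, the pairing involves only the quantities $\langle u,e_j^{(i)}\rangle_{\mathcal{H},\R}$ with $j\leq n$, which are exactly the components of $\pi_n(u)$, and these are confined to a compact set on the support of the compactly supported $\varphi$. The paper then uses H\"older with $(\infty,2,2)$ for the linear term and with $(q,2,p)$ (where $1/p+1/q=1/2$) for the nonlinear term. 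You instead use only the weaker duality bound $|\langle A\nabla F(u),u\rangle_{\mathcal{H},\R}|\leq C_F\|u\|_{H^{-s}}$, which is correct but costs you an extra H\"older factor $\|\|u\|_{H^{-s}}\|_{L^4(\mu_0)}$; this is finite by Lemma \ref{Lemma_2.16_5}, so your $(4,2,4)$ H\"older works fine. Your route is slightly less sharp but has the minor aesthetic advantage of treating both terms with the same exponents. The paper's $L^\infty$ observation is also used again later in the proof of Theorem \ref{homogeneous_KMS_thm} (Step 3, estimate \eqref{rough_cut-off_Liouville_eq_2}), so it is worth noticing for its own sake.
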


\begin{proof}

We first show (i).
From \eqref{poisson_general}, duality, and since $F\in \mathscr{C}^{\infty}_{\text{c},\text{cyl}} (H^{-s})$, we have that for all $u \in H^{-s}$
\begin{equation}
\label{well-definedness_remark_2_a}
|\{F,G\}(u)| \leq \|\nabla F(u)\|_{H^{s}}\,\|\nabla G(u)\|_{H^{-s}} \lesssim_{F,s} \|\nabla G(u)\|_{H^{-s}}\,.
\end{equation}
From \eqref{z_R_definition}--\eqref{local_Gibbs_measure_rigorous}, the Cauchy-Schwarz inequality, \eqref{well-definedness_remark_2_a}, the assumption that $G \in \mathbb{D}^{1,2}(\mu_0)$, and Proposition \ref{Hartree_equation_Malliavin_derivative} (ii), we obtain
\begin{equation}
\label{well-definedness_remark_2_b}
\int_{H^{-s}} |\{F,G\}(u)|\,d \mu^{(\delta)} \lesssim_{F,s} \frac{1}{z^{(\delta)}}\, \|\nabla G(u)\|_{L^2(\mu_0;H^{-s})}\,\|\ee^{h^I}\chi_{\BB}^{(\delta)}(\mathcal{M})\|_{L^2(\mu_0)}<\infty\,,
\end{equation}
and (i) follows.

For the proof of (ii), we first observe that
\begin{equation}
\label{homogeneous_KMS_I_2}
\bigl\|\langle \nabla F(u) , -\ii Au \rangle_{\mathcal{H},\R}\bigr\|_{L^{\infty}(\mu_0)}\lesssim_F 1 \,.
\end{equation}
In order to obtain \eqref{homogeneous_KMS_I_2}, we write $F$ as in \eqref{cylindrical_test_functions_1}, use the self-adjointness of $A$, followed by \eqref{e_j_choice}, \eqref{ONB_convention}, \eqref{gradient_cylindrical_F}, and the triangle inequality to write
\begin{multline}
\label{homogeneous_KMS_I_2A*}
\bigl\|\langle \nabla F(u) , -\ii Au \rangle_{\mathcal{H},\R}\bigr\|_{L^{\infty}(\mu_0)}=\bigl\|\langle A\nabla F(u) , -\ii u \rangle_{\mathcal{H},\R}\bigr\|_{L^{\infty}(\mu_0)}
\\
=\Biggl\|\biggl\langle\sum_{j=1}^n \partial_j^{(1)} \varphi(\pi_n(u)) \lambda_j e^{(1)}_j + \partial_j^{(2)} \varphi(\pi_n(u)) \lambda_j e^{(2)}_j,-\ii u\biggr\rangle_{\mathcal{H},\R}\Biggr\|_{L^{\infty}(\mu_0)} 
\\
\leq \sum_{j=1}^{n} \lambda_j \Bigl\|\bigl|\partial_j^{(1)} \varphi(\pi_n(u))\bigr|\,\bigl|\langle u, e_j^{(2)} \rangle_{\mathcal{H},\R}\bigr| +\bigl|\partial_j^{(2)} \varphi(\pi_n(u))\bigr|\,\bigl|\langle u, e_j^{(1)} \rangle_{\mathcal{H},\R}\bigr|
\Bigr\|_{L^{\infty}(\mu_0)} \lesssim_F 1\,.
\end{multline}
For the last step in \eqref{homogeneous_KMS_I_2A*}, we recalled \eqref{projection_map} and used the assumption that $\varphi \in \mathscr{C}^{\infty}_{\mathrm{c}}(\R^{2n})$. We hence deduce \eqref{homogeneous_KMS_I_2} from \eqref{homogeneous_KMS_I_2A*}.
Let us now fix $p,q \in (2,\infty)$ such that
\begin{equation}
\label{p,q_choice}
\frac{1}{p}+\frac{1}{q}=\frac{1}{2}\,.
\end{equation}
We use duality and the fact that $F \in \smoothcyl$ to obtain 
\begin{multline}
\label{homogeneous_KMS_I_2*}
\bigl\|\langle \nabla F(u) , \ii \nabla h^I(u) \rangle_{\mathcal{H},\R}\bigr\|_{L^q(\mu_0)} \leq \bigl\|\|\nabla F(u)\|_{H^s} \|\nabla h^I(u)\|_{H^{-s}}\bigr\|_{L^q(\mu_0)} 
\\
\lesssim_{F,s} \|\nabla h^I\|_{L^q(\mu_0;H^{-s})}<\infty\,.
\end{multline}
The last inequality holds by Proposition \ref{Hartree_equation_Malliavin_derivative} (i).
By using H\"{o}lder's inequality, \eqref{z_R_definition}--\eqref{local_Gibbs_measure_rigorous}, \eqref{homogeneous_KMS_I_2}, \eqref{p,q_choice}--\eqref{homogeneous_KMS_I_2*}, followed by \eqref{integrability_of_weight} and the fact that $\|G\|_{L^2(\mu_0)}<\infty$, we have
\begin{multline}
\label{homogeneous_KMS_I_2**}
\int_{H^{-s}} \bigl|\langle \nabla F(u),-\ii A u +\ii \nabla h^I(u) \rangle_{\mathcal{H},\R}\, G(u)\bigr|\, d \mu^{(\delta)}
\\
\lesssim_{F,s} \frac{1}{z_R^{(\delta)}} \bigl(1+\|\nabla h^I\|_{L^q(\mu_0;H^{-s})}\bigr)\,\|G\|_{L^2(\mu_0)}\, \|\ee^{h^I} \chi_{\BB}^{(\delta)}(\mathcal{M})\|_{L^p(\mu_0)}<\infty\,.
\end{multline}

\end{proof}

\subsection{Main results}
We can now state our main results. The first result says that, in our setting, local Gibbs measures \eqref{local_Gibbs_measure_rigorous} are stationary solutions to the Liouville probability density equation \eqref{homogeneous_KMS} (see also the formal discussion in \eqref{Liouville_equation}--\eqref{stationary_solution_Liouville_equation}). 
\begin{theorem}[Local Gibbs measures are stationary solutions to the Liouville equation]
\label{homogeneous_KMS_thm}
Consider $A$ as in Assumption \ref{A_choice} and $h^I$ as in Assumption \ref{Assumption_on_V}. 
Let $\delta \in (0,1]$ and let $\mu^{(\delta)}$ be the local Gibbs measure defined in \eqref{local_Gibbs_measure_rigorous}. Then for all $F \in \smoothcyl$, we have
\begin{equation}
\label{homogeneous_KMS}
\int_{H^{-s}} \langle \nabla F ,  -\ii A u + \ii \nabla h^I(u) \rangle_{\mathcal{H},\R} \, d\mu^{(\delta)}= 0\,.
\end{equation}
\end{theorem}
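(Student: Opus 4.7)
The target integrand is exactly the Poisson bracket $\{F,h\}:=\langle \nabla F,-\ii\nabla h\rangle_{\mcH,\R}$ for $h=h_0-h^I$. Substituting the density $d\mu^{(\delta)}=(z^{(\delta)})^{-1}G_\rho\,d\mu_0$ with $G_\rho:=\ee^{h^I}\chi_R^{(\delta)}(\mathcal{M})$, the identity \eqref{homogeneous_KMS} is equivalent to $\int_{H^{-s}}\{F,h\}\,G_\rho\,d\mu_0=0$. The plan is to apply the Gaussian/Malliavin integration-by-parts (Skorohod) formula separately to the linear vector field $v_0:=-\ii Au$ and to the nonlinear vector field $v_1:=\ii\nabla h^I$, and to check that the two right-hand-side contributions cancel.

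Recall the Skorohod divergence adjoint of the Malliavin derivative: for the Gaussian $\mu_0$ on $\mcH_\R$ with covariance $A^{-1}$, a sufficiently regular random vector $v(u)$ satisfies $\delta(v)=\langle Au,v\rangle_{\mcH,\R}-\mathrm{div}_{\R}(v)$, and for $F\in\smoothcyl$ and $G_\rho\in\mathbb{D}^{1,2}(\mu_0)$,
\[
\int G_\rho\,\langle \nabla F,v\rangle_{\mcH,\R}\,d\mu_0 \;=\; \int F\,\bigl[\,G_\rho\,\delta(v)-\langle \nabla G_\rho,v\rangle_{\mcH,\R}\,\bigr]\,d\mu_0.
\]
Both $v_0$ and $v_1$ are (up to sign) Hamiltonian vector fields, so the real divergence vanishes in each case: $\mathrm{div}_\R(-\ii Au)=0$ by a direct matrix computation on the real ONB of $\mcH_\R$, and $\mathrm{div}_\R(\ii\nabla h^I)=0$ because $-\ii\nabla h^I$ preserves the symplectic volume form by Liouville's theorem.

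The remaining data is computed as follows. For $v_0$: $\langle Au,-\ii Au\rangle_{\mcH,\R}=\mathrm{Re}(-\ii\|Au\|^2)=0$, so $\delta(v_0)=0$; using $\nabla\mathcal{M}=2u$ (Lemma \ref{Malliavin_derivative_M_lemma}) together with $\{\mathcal{M},h_0\}=\langle 2u,-\ii Au\rangle_{\mcH,\R}=-2\,\mathrm{Im}\langle u,Au\rangle=0$, the chain rule gives $\langle \nabla G_\rho,-\ii Au\rangle_{\mcH,\R}=\{G_\rho,h_0\}=G_\rho\,\{h^I,h_0\}$. For $v_1$: a short complex-inner-product computation yields $\langle Au,\ii\nabla h^I\rangle_{\mcH,\R}=-\mathrm{Im}\langle \nabla h^I,Au\rangle=\{h^I,h_0\}$, hence $\delta(v_1)=\{h^I,h_0\}$; and $\langle \nabla G_\rho,\ii\nabla h^I\rangle_{\mcH,\R}=-\{G_\rho,h^I\}=0$, because $\langle \nabla h^I,\ii\nabla h^I\rangle_{\mcH,\R}=0$ (any vector is $\R$-orthogonal to its $\ii$-rotation) and $\langle u,\ii\nabla h^I\rangle_{\mcH,\R}=\mathrm{Im}\langle u,\nabla h^I\rangle=0$ by the $U(1)$-gauge invariance of $h^I$ (valid for both the local and Hartree nonlinearities of Assumption \ref{Assumption_on_V}, including after Wick ordering).

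Summing the two IBPs and using $v_0+v_1=-\ii\nabla h$, the right-hand-side contributions are $-G_\rho\{h^I,h_0\}+G_\rho\{h^I,h_0\}=0$, which gives $\int G_\rho\,\{F,h\}\,d\mu_0 = 0$, i.e.\ \eqref{homogeneous_KMS} after dividing by $z^{(\delta)}$. The main obstacle is the rigorous application of the Skorohod IBP to the unbounded random vector $v_1=\ii\nabla h^I$ with test density $G_\rho$ only in $\mathbb{D}^{1,2}(\mu_0)$ (rather than cylindrical). The natural route is a Galerkin truncation $h^I_N:=h^I\circ P_N$, with $P_N$ the spectral projection onto the first $N$ eigenvectors of $A$: the corresponding identity then reduces to the classical finite-dimensional Liouville theorem for the Hamiltonian vector field of $h_0-h^I_N$, and passage to the limit $N\to\infty$ is controlled by the $L^p(\mu_0)$-bounds and Malliavin differentiability in Proposition \ref{Hartree_equation_Malliavin_derivative}, the integrability of $G_\rho$ and of the pairings in Lemma \ref{well-definedness_remark}, the hypercontractive bound of Lemma \ref{Lemma_2.16_5}, and the cylindricity of $F$ (which renders $\langle \nabla F,-\ii Au\rangle_{\mcH,\R}$ bounded, as in \eqref{homogeneous_KMS_I_2A*}).
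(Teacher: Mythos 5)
Your cancellation structure is correct and essentially reproduces the paper's Step 1 and Step 2 in a more abstract packaging; all four of your vanishing identities (self-adjointness of $A$ giving $\langle u,-\ii Au\rangle_{\mcH,\R}=0$, reality/gauge-invariance giving $\langle u,\ii\nabla h^I\rangle_{\mcH,\R}=0$, $\R$-orthogonality of $v$ and $\ii v$, and the $\{h^I,h_0\}$ cancellation between the two IBPs) appear in the paper, respectively as \eqref{homogeneous_KMS_sum_3}, \eqref{homogeneous_KMS_sum_4}--\eqref{homogeneous_KMS_sum_4**}, \eqref{homogeneous_KMS_II_5}, and the mutual cancellation of the last two terms of \eqref{homogeneous_KMS_I_9} against those of \eqref{homogeneous_KMS_II_8}. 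Your $\mathrm{div}_\R=0$ claims correspond to \eqref{homogeneous_KMS_I_4_A_2i*} and the symmetry of mixed partials in \eqref{homogeneous_KMS_II_4_a*}.

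There are, however, three genuine gaps. First, the Skorohod divergence $\delta(v_1)=\langle Au,\ii\nabla h^I\rangle_{\mcH,\R}-\mathrm{div}_\R(v_1)$ you write down is ill-defined pointwise: $Au\in H^{-s-2}$ and $\nabla h^I\in H^{-s}$ almost surely, and neither factor has the $H^{s+2}$ regularity needed to make the pairing meaningful. The argument survives only because $F$ is cylindrical of rank $n$, so everything is tested against $\nabla F\in\mathrm{Ran}\,\Pi_n$; this is why the paper applies the IBP of Corollary \ref{integration_by_parts_corollary} one coordinate direction $e_j^{(i)}$, $j\le n$, at a time, and the cancellation $G_\rho\delta(v_1)-\langle\nabla G_\rho,v_0\rangle_{\mcH,\R}=0$ must be read inside the $\Pi_n$-projected sum (cf.\ \eqref{homogeneous_KMS_sum_2}). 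Second, your Galerkin reduction "$h^I_N:=h^I\circ P_N$ reduces to the finite-dimensional Liouville theorem" is not literally available, because the cut-off $\chi_R^{(\delta)}(\mathcal M)$ depends on all Fourier modes (the Wick-ordered mass is not cylindrical), so $e^{h^I_N}\chi_R^{(\delta)}(\mathcal M)$ is not a cylindrical density even after truncating $h^I$. The paper instead keeps $\chi_R^{(\delta)}(\mathcal M)$ intact, uses Lemma \ref{Malliavin_derivative_M_lemma*} for its Malliavin gradient, and only regularizes the exponential via $\theta_m(h^I)$ (Lemma \ref{exponential_approximation}) and $\nabla h^I$ via cylindrical $G_m\to h^I$ in $\mathbb{D}^{1,4}(\mu_0)$; passing to the limit then requires the integrability inputs \eqref{integrability_of_weight}--\eqref{integrability_of_weight'}, not just Proposition \ref{Hartree_equation_Malliavin_derivative}. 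Third, your argument is silent on $\delta=1$: the indicator $\chi_R^{(1)}=\mathbbm{1}_{(-R,R)}$ is not Malliavin differentiable, so the entire IBP computation is only valid for $\delta\in(0,1)$, and the sharp cut-off requires a separate $\delta\to1$ limit (Step 3 of the paper's proof, resting on \eqref{dominated_convergence_theorem_remark_1}).
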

Applying \cite[Corollary 1.17]{AFS23}, we obtain the almost sure existence of global solutions for the focusing  Hartree and NLS equations on the torus $\T^{d}, d=1,2,3$, as a corollary of Theorem \ref{homogeneous_KMS_thm}.
\begin{corollary}[Almost sure existence of global solutions]
\label{almost_sure_WP_corollary} 
Let $A,h^I$ be as in the assumptions of Theorem \ref{homogeneous_KMS_thm}. Let $\delta \in (0,1]$ and let $\mu^{(\delta)}$ be given by \eqref{local_Gibbs_measure_rigorous}. Then the initial value problem  \eqref{vector_field_equation} admits a global solution for $\mu^{(\delta)}$-almost every initial condition $u_0 \in H^{-s}$.
\end{corollary}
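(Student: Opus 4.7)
The strategy is to view the corollary purely as an application of the abstract almost-sure global existence result \cite[Corollary 1.17]{AFS23}, so the work reduces to checking that the data $(H^{-s}, X, \mu^{(\delta)})$ fit the hypotheses of that corollary, with Theorem \ref{homogeneous_KMS_thm} supplying the required stationarity input. Concretely, \cite[Corollary 1.17]{AFS23} asserts that if a Borel probability measure on a separable Hilbert space is a stationary solution of the Liouville equation associated with a vector field that is sufficiently integrable against this measure, then for almost every initial datum the corresponding characteristic ODE admits a global (in time) solution. Our phase space is $H^{-s}$, our vector field is $X(u) = -\ii A u + \ii \nabla h^I(u)$, and the candidate invariant measure is $\mu^{(\delta)}$.

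First, I would record the three structural properties needed to invoke the abstract theorem. The space $H^{-s}$ is a separable Hilbert space, and $\mu^{(\delta)} \in \mathscr{P}(H^{-s})$ by Proposition \ref{Hartree_equation_Malliavin_derivative}(ii). Absolute continuity $\mu^{(\delta)} \ll \mu_0$ with density
\begin{equation*}
\frac{d\mu^{(\delta)}}{d\mu_0} = \frac{1}{z^{(\delta)}}\,\ee^{h^I}\chi^{(\delta)}_{\BB}(\mathcal{M}) \in L^p(\mu_0) \text{ for all } p \in [1,\infty)
\end{equation*}
(again by Proposition \ref{Hartree_equation_Malliavin_derivative}(ii)), combined with Lemma \ref{Lemma_2.16_5} and Proposition \ref{Hartree_equation_Malliavin_derivative}(i), gives $\|Au\|_{H^{-s}} \in L^p(\mu^{(\delta)})$ and $\|\nabla h^I(u)\|_{H^{-s}} \in L^p(\mu^{(\delta)})$ for every $p \in [1,\infty)$, via H\"older's inequality with an exponent split analogous to \eqref{p,q_choice}. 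This yields integrability of the drift $X$ with respect to $\mu^{(\delta)}$ in the sense demanded by \cite[Corollary 1.17]{AFS23}.

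Second, I would translate Theorem \ref{homogeneous_KMS_thm} into the Liouville-stationarity statement in the form used by \cite{AFS23}. The identity \eqref{homogeneous_KMS} for all $F \in \smoothcyl$ is precisely the statement that $\mu^{(\delta)}$ is a stationary (constant-in-time) solution of the Liouville equation \eqref{Liouville_equation} tested against cylindrical smooth compactly supported functions. Since $\smoothcyl$ is dense in $\mathscr{C}^{\infty}_{\mathrm{b},\mathrm{cyl}}(H^{-s})$ (and in broader admissible test classes) in the topologies required by \cite[Corollary 1.17]{AFS23}, the integrability just established in the previous step permits extending \eqref{homogeneous_KMS} by dominated convergence to the exact class of test functionals that the abstract corollary takes as input.

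Finally, applying \cite[Corollary 1.17]{AFS23} to this verified setup produces a Borel set $\Omega \subset H^{-s}$ with $\mu^{(\delta)}(\Omega) = 1$ such that for every $u_0 \in \Omega$ the initial value problem \eqref{vector_field_equation} has a global solution $u \in \mathscr{C}(\R; H^{-s})$. The main technical obstacle I anticipate is not the stationarity identity itself (which is already in hand) but the cleanly matching of hypotheses between our cylindrical/Malliavin framework and the function-class conventions of \cite{AFS23}; in particular, one must ensure that the density of $\smoothcyl$ argument is compatible with the slightly nonstandard local KMS conventions used here (noted just before Definition \ref{local_KMS_definition}) and that no additional continuity hypothesis on $X$ sneaks in. Both points should be dispatched by the integrability bounds above together with a routine approximation of bounded measurable cylindrical functionals by $\smoothcyl$.
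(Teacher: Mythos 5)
There is a genuine gap in your argument, and it is precisely the issue the paper addresses in the remark immediately following the corollary. You claim that Lemma~\ref{Lemma_2.16_5} together with Proposition~\ref{Hartree_equation_Malliavin_derivative}(i) and H\"older gives $\|Au\|_{H^{-s}} \in L^p(\mu^{(\delta)})$. This is false: Lemma~\ref{Lemma_2.16_5} controls $\|u\|_{H^{-s}}$, not $\|Au\|_{H^{-s}}$, and the operator $A = -\Delta + 1$ maps $H^{-s}$ into $H^{-s-2}$, losing two derivatives. Concretely, for $\mu_0$-typical $u$ one has $\mathbb{E}_{\mu_0}\|Au\|_{H^{-s}}^2 \sim \sum_{k} \langle k \rangle^{2-2s}$, which diverges for every admissible $s \in (\tfrac{d}{2}-1,1]$. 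Thus $Au \notin H^{-s}$ almost surely and the linear part $-\ii A u$ of the vector field $X$ is not an $H^{-s}$-valued quantity, let alone $L^p$-integrable against $\mu^{(\delta)}$. Your appeal to \cite[Corollary 1.17]{AFS23} with the vector field $X$ taken as an $H^{-s} \to H^{-s}$ map therefore does not get off the ground.

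The correct route (taken in the paper and in \cite[Section 1]{AFS23}) is to pass to the interaction representation $v(t) = \ee^{\ii t A} u(t)$, which transforms \eqref{vector_field_equation} into \eqref{vector_field_equation_2}, $\dot v = \ee^{\ii tA}X^I(\ee^{-\ii tA} v)$, whose vector field \emph{does} map $H^{-s}$ to itself because $X^I = \ii \nabla h^I$ lies in $L^p(\mu_0; H^{-s})$ by Proposition~\ref{Hartree_equation_Malliavin_derivative}(i) and $\ee^{\pm\ii tA}$ is unitary on $H^{-s}$. The unitary group preserves both $\mu_0$ and $\mathcal{M}$, hence $\mu^{(\delta)}$, so the Liouville stationarity furnished by Theorem~\ref{homogeneous_KMS_thm} carries over to the gauged problem, and \cite[Corollary 1.17]{AFS23} is applied there. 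Undoing the gauge transform then gives a global solution of \eqref{vector_field_equation} for $\mu^{(\delta)}$-a.e. $u_0$. Without this coordinate change, the integrability hypothesis of the abstract theorem is simply not met. Your remaining points about the separability of $H^{-s}$, the density of $\smoothcyl$, and the interpretation of \eqref{homogeneous_KMS} as Liouville stationarity are fine and match the paper's view of the matter.
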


\begin{remark}
When deducing Corollary \ref{almost_sure_WP_corollary} from Theorem \ref{homogeneous_KMS_thm}, one notes
that \eqref{vector_field_equation} can be more conveniently written in the interaction representation
\begin{equation}
\label{vector_field_equation_2}
\dot{v}(t)=\ee^{\ii t A} X^I (\ee^{-\ii tA} v(t))\,,
\end{equation}
where $v(t)=\ee^{\ii t A}u(t)$ and where $X^I$ is given as in \eqref{X^I_definition}. The reason why \eqref{vector_field_equation_2} is usually more convenient is that by Proposition \ref{Hartree_equation_Malliavin_derivative} (i) and unitarity, the vector field $\ee^{\ii tA} X^I \ee^{-\ii tA}$ maps $H^{-s}$ to itself. The vector field \eqref{X_definition} does not satisfy this property due to the presence of the $-\ii A$ term which maps $H^{-s}$ to $H^{-s-2}$. This is the approach taken in \cite[Section 1]{AFS23}; we refer the reader to this work for more details.
\end{remark}

The main aim of our paper is to explore the relationship between the local KMS condition from Definition \ref{local_KMS_definition}  and the local Gibbs measure \eqref{local_Gibbs_measure_rigorous}.
We prove that local Gibbs measures are local KMS states.

\begin{theorem}[Local Gibbs measures are local KMS states]
\label{Gibbs_implies_local_KMS_theorem}
Consider $A$ as in Assumption \ref{A_choice} and $h^I$ as in Assumption \ref{Assumption_on_V}. 
Then, the measure $\mu^{(1)}$ given by \eqref{local_Gibbs_measure_rigorous} satisfies the local KMS condition in the sense of Definition \ref{local_KMS_definition} above,  for $\BB>0$ as in Proposition \ref{Hartree_equation_Malliavin_derivative} (ii).
\end{theorem}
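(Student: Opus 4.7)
The plan is to approximate the sharp cutoff $\chi_R^{(1)} = \mathbbm{1}_{(-R,R)}$ by the smooth cutoffs $\chi_R^{(\delta)}$ as $\delta \uparrow 1$, apply a coordinate-by-coordinate Gaussian integration by parts for $\free$ to derive the local KMS identity for $\mu^{(\delta)}$ up to a single boundary term, exploit the localization property of $G$ to make that boundary term vanish, and finally pass to the limit $\delta \to 1^-$ by dominated convergence.

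Fix $F = \varphi \circ \pi_n \in \smoothcyl$ and $G \in \Drspace$ with $G = 0$ $\free$-a.s. on $\mathbb{B}_{R'}^c$ for some $R' \in (0,R)$, and choose any $\delta \in (R'/R, 1)$. The driving input will be the standard Gaussian integration by parts for $\free$ (centred with real covariance $A^{-1}$ on $\mcH$ by \eqref{Gibbs_covariance_2}): for every $k \in \N$ and $j \in \{1,2\}$,
\begin{equation*}
\lambda_k \int \langle u, e_k^{(j)}\rangle_{\mcH,\R}\, H(u)\, d\free = \int \partial_k^{(j)} H(u)\, d\free,
\end{equation*}
whenever $H \in \mathbb{D}^{1,1}(\free)$ with both sides integrable. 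I will apply this to $H = (\partial_k^{(1)}\varphi \circ \pi_n)\, G\, \ee^{h^I}\, \chi_R^{(\delta)}(\M)$ against the coordinate $\langle u, e_k^{(2)}\rangle_{\mcH,\R}$, and to $H = (\partial_k^{(2)}\varphi \circ \pi_n)\, G\, \ee^{h^I}\, \chi_R^{(\delta)}(\M)$ against $\langle u, e_k^{(1)}\rangle_{\mcH,\R}$. Subtracting, summing in $k$, and using the product and chain rules for the Malliavin derivative together with $\nabla \M = 2u$ from Lemma \ref{Malliavin_derivative_M_lemma}, cancellation of mixed partials of $\varphi$, and the identities $\{F,h^I\} = \langle \nabla F, -\ii \nabla h^I\rangle_{\mcH,\R}$ and $\{F,\M\} = \langle \nabla F, -\ii \nabla \M\rangle_{\mcH,\R}$ from \eqref{poisson_general}, the computation will produce
\begin{equation*}
\int \{F,G\}\, d\mu^{(\delta)} = \int \langle \nabla F,\, -\ii A u + \ii \nabla h^I\rangle_{\mcH,\R}\, G\, d\mu^{(\delta)} - \frac{1}{z^{(\delta)}} \int \{F,\M\}(u)\, G(u)\, \ee^{h^I(u)}\, (\chi_R^{(\delta)})'(\M(u))\, d\free(u).
\end{equation*}

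Since $(\chi_R^{(\delta)})'$ is supported in $\{R\delta \le |\cdot| \le R\}$ while $G$ vanishes $\free$-a.s.\ on $\{|\M| \ge R'\} \supset \{|\M| \ge R\delta\}$ (as $R\delta > R'$), the boundary integral on the right-hand side will be zero, yielding the local KMS identity for $\mu^{(\delta)}$. To pass to the limit $\delta \to 1^-$, I will use that $\chi_R^{(\delta)}(\M) \to \mathbbm{1}_{(-R,R)}(\M)$ pointwise $\free$-a.s.\ (the exceptional set $\{|\M|=R\}$ is $\free$-null by absolute continuity of the law of $\M$), dominated by $\mathbbm{1}_{[-R,R]}$. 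Combined with the $L^p(\free)$-integrability of $\ee^{h^I}\mathbbm{1}_{[-R,R]}(\M)$ from Proposition \ref{Hartree_equation_Malliavin_derivative}(ii) and the bounds of Lemma \ref{well-definedness_remark}, dominated convergence will give $z^{(\delta)} \to z^{(1)}$ and convergence of both sides to the corresponding integrals against $\mu^{(1)}$.

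The principal technical obstacle will be the rigorous justification of the Gaussian IBP step above for the composite $H$, whose factor $G$ is only in $\mathbb{D}^{1,2}(\free)$ (not cylindrical) and whose weight $\ee^{h^I}\chi_R^{(\delta)}(\M)$ is built from the non-polynomial $h^I$ and the composition $\chi_R^{(\delta)}(\M)$. Verifying $H \in \mathbb{D}^{1,1}(\free)$ with the required integrability will rely on the product and chain rules for the Malliavin derivative, using $h^I, \M \in \mathbb{D}^{1,p}(\free)$ for all $p$ (Proposition \ref{Hartree_equation_Malliavin_derivative}(i) and Lemma \ref{Malliavin_derivative_M_lemma}), the smoothness of $\chi^{(\delta)}$ for $\delta < 1$, and Proposition \ref{Hartree_equation_Malliavin_derivative}(ii). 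The elementary case of cylindrical $G$ will extend to general $G \in \Drspace$ by approximating $G$ by cylindrical $G_n \to G$ in $\mathbb{D}^{1,2}(\free)$ that retain the support condition (e.g.\ by multiplying by a fixed smooth cutoff $\eta(\M)$ localized in $\mathbb{B}_{R''}$ for some $R'' \in (R', R)$), and using that both sides of the KMS identity are continuous linear functionals of $G$ in the induced norm.
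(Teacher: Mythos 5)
Your proposal follows the same overall route as the paper: for $\delta\in(R'/R,1)$, Gaussian integration by parts converts the local KMS identity for $\mu^{(\delta)}$ into an identity plus the boundary term
$-\tfrac{1}{z^{(\delta)}}\int\{F,\mathcal M\}\,G\,\ee^{h^I}(\chi^{(\delta)}_R)'(\mathcal M)\,d\mu_0$, the boundary term vanishes because $(\chi^{(\delta)}_R)'$ is supported in $\{\delta R\le|\mathcal M|\le R\}$ where $G=0$ a.s., and $\mu^{(1)}$ is recovered by $\delta\to1^-$ with dominated convergence --- exactly the structure of \eqref{Theorem_2_6}--\eqref{Theorem_2_7} and Step 2 of the paper's proof.

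The gap is in the density argument that extends from cylindrical $G$ to general $G\in\Drspace$. With $G_n\in\smoothcyl$, $G_n\to G$ in $\Dspace$, and $\eta$ a fixed smooth cutoff, you propose to use $\tilde G_n:=\eta(\mathcal M)G_n$; these indeed lie in $\Drspace$, but the convergence $\tilde G_n\to G$ in $\Dspace$ is not clear. Using $\eta(\mathcal M)G=G$ a.s.\ together with Lemmas~\ref{Leibniz_rule}, \ref{chain_rule} and \ref{Malliavin_derivative_M_lemma}, one finds $\nabla\tilde G_n-\nabla G=\eta(\mathcal M)\nabla(G_n-G)+2\eta'(\mathcal M)\,u\,(G_n-G)$, and the second term requires $\int\|u\|_{H^{-s}}^2|G_n-G|^2\,d\mu_0\to0$, a weighted $L^2$ quantity with an unbounded weight that does not follow from $\|G_n-G\|_{\Dspace}\to0$. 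The same loss of integrability is why Proposition~\ref{integration_by_parts_prop} (stated for $\Dspace$) does not apply directly to the composite $H=(\partial^{(i)}_k\varphi\circ\pi_n)\,G\,\ee^{h^I}\chi^{(\delta)}_R(\mathcal M)$, which a priori belongs only to $\mathbb{D}^{1,q}$ for $q<2$. The paper fixes both problems differently: it approximates $G$ by cylindrical $G_k$ \emph{without} a support condition and $\ee^{h^I}$ by the bounded $\theta_m(h^I)$ from Lemma~\ref{exponential_approximation}, so every integrand lies in $\mathbb{D}^{1,p}$ for all $p$ and the $\Dspace$ integration by parts applies; the boundary term then still contains $G_k$ and tends to zero as $k\to\infty$, because inserting the cutoff $\Phi^{(\delta)}_{R,R'}(\mathcal M)$ from \eqref{Theorem_2_9} (equal to $1$ on the support of $(\chi^{(\delta)}_R)'(\mathcal M)$ and to $0$ on $\mathbb B_{R'}$) gives $\|G_k\Phi^{(\delta)}_{R,R'}(\mathcal M)\|_{L^2}=\|(G_k-G)\Phi^{(\delta)}_{R,R'}(\mathcal M)\|_{L^2}\lesssim\|G_k-G\|_{L^2}\to0$ while the remaining factor is in every $L^p$; see \eqref{Theorem_2_8}--\eqref{Theorem_2_convergence_2}. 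An equivalent patch for your argument: keep cylindrical $G_k$, check that both sides of the KMS identity \emph{and} the boundary term are continuous in $G$ in the $\Dspace$ norm, pass to the limit, and only then use $(\chi^{(\delta)}_R)'(\mathcal M)G=0$ a.s.\ to kill the boundary term. As a minor point, the absolute-continuity remark in your $\delta\to1$ step is superfluous: $\chi^{(\delta)}_R(x)\to\chi^{(1)}_R(x)=\mathbbm{1}_{(-R,R)}(x)$ holds pointwise for every $x\in\R$, including $x=\pm R$.
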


By Lemma \ref{well-definedness_remark} above, we recall that both sides of \eqref{local_KMS_definition_rigorous_2} in Definition \ref{local_KMS_definition} are well-defined when $\mu=\mu^{(1)}$.  Let us note that a variant of Theorem \ref{Gibbs_implies_local_KMS_theorem} above with a smooth cut-off was shown for one dimensional NLS equation in \cite[Section 5.3]{AS21}. In the current paper, the analysis is given in dimensions $d=1,2,3$ for Hartree and NLS and with a sharp cut-off. 

\medskip

Recall the definition of $\mathbb{B}_{R}$  and $\Drspace$ from \eqref{B_R'} and \eqref{D^{1,2}_R}. 
The result below shows the opposite implication, namely any local KMS equilibrium state should coincide (up to a multiplicative constant) with the truncated Gibbs measure on the set $\mathbb{B}_{R}$ for some $R>0$.

\begin{theorem}[local KMS states are locally Gibbs measures]
\label{kms_implies_gibbs_theorem}
Let $\mu$ be a Borel probability measure in $\mathscr{P}(H^{-s})$. Suppose that $d \mu = \rho \, d\mu_{0}$ with $\rho \in \mathbb{D}^{1,2}(\mu_{0})\cap L^4(\free)$ and $\mu$ satisfies  the  local KMS condition, i.e., there exists $R>0$ such that
\begin{equation*}
    \int_{H^{-s}} \{F,G\} \, d\mu = \int_{H^{-s}} {\rm Re}\langle \nabla F(u), X(u) \rangle \,G(u) \, d\mu\,,
\end{equation*}
for any $G \in \Drspace$ and  $F \in \smoothcyl$.  Then $\mu$ is locally a Gibbs measure, i.e., 
for $\mu_0$-almost all  $u \in \mathbb{B}_R$,
\begin{equation}
\label{local_KMS_weight_equation}
\rho(u) = c_0e^{h^I(u)}, 
\end{equation}
for some normalization constant  $c_0\geq 0$ which constrains the mass of $\mu$ to be $1$.
\end{theorem}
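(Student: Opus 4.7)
The plan is to convert the local KMS identity into an infinite-dimensional weak equation for the density $\rho$ via Gaussian integration by parts, extract from it the pointwise identity $\nabla\rho=\rho\,\nabla h^I$ on $\mathbb{B}_R$, and finally pass from this to the multiplicative formula $\rho=c_0 e^{h^I}$ using the Aida-Kusuoka theory of irreducibility for Dirichlet forms on infinite-dimensional sublevel sets.

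\textbf{Step 1 (KMS plus Gaussian IBP).}
First I would substitute $d\mu=\rho\,d\mu_0$ into the local KMS identity and expand the Poisson bracket \eqref{poisson_general} in the real orthonormal basis \eqref{ONB_convention} as
$$
\{F,G\}(u)=\sum_{j\in\N}\bigl(\partial_j^{(1)}F\,\partial_j^{(2)}G-\partial_j^{(2)}F\,\partial_j^{(1)}G\bigr),\qquad \partial_j^{(a)}\Phi:=\langle\nabla\Phi,e_j^{(a)}\rangle_{\mathcal{H},\R}.
$$
The Gaussian IBP for $\mu_0$, namely $\int\partial_j^{(a)}\Phi\,d\mu_0=\lambda_j\int\Phi\,\langle u,e_j^{(a)}\rangle_{\mathcal{H},\R}\,d\mu_0$, then allows me to move each $\partial_j^{(a)}G$ onto the factor $\partial_j^{(a')}F\cdot\rho$. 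The mixed second derivatives of $F$ cancel by Schwarz; the remaining Ornstein-Uhlenbeck boundary terms $\lambda_j\langle u,e_j^{(a)}\rangle\rho$ reassemble exactly into $\langle\nabla F,-\ii Au\rangle_{\mathcal{H},\R}\,\rho\,G$, matching precisely the $-\ii Au$ contribution on the right-hand side of KMS (recall $X=-\ii Au+\ii\nabla h^I$). After this cancellation I expect to be left with the key weak equation
\begin{equation*}
\int_{H^{-s}}\langle\nabla F(u),-\ii W(u)\rangle_{\mathcal{H},\R}\,G(u)\,d\mu_0=0,\qquad W:=\nabla\rho-\rho\,\nabla h^I\in L^2(\mu_0;H^{-s}),
\end{equation*}
valid for every $F\in\smoothcyl$ and every $G\in\Drspace$. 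Integrability of all intermediate quantities is furnished by $\rho\in\mathbb{D}^{1,2}(\mu_0)\cap L^4(\mu_0)$, $h^I\in\mathbb{D}^{1,p}(\mu_0)$ for all $p\in[1,\infty)$ (Proposition~\ref{Hartree_equation_Malliavin_derivative}(i)), Lemma~\ref{Lemma_2.16_5}, and the compactly supported cylindrical structure of $F$, much as in the proof of Lemma~\ref{well-definedness_remark}.

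\textbf{Step 2 (Extracting $W=0$ on $\mathbb{B}_R$).}
For each $R'\in(0,R)$, $\delta\in(0,1)$ and bounded cylindrical $\Psi$, the function $G:=\chi_{R'}^{(\delta)}(\mathcal{M})\,\Psi$ belongs to $\Drspace$ by Lemma~\ref{Malliavin_derivative_M_lemma} and the chain rule. Varying $\Psi$ over a countable dense family in the identity of Step 1 yields $\langle\nabla F(u),-\ii W(u)\rangle_{\mathcal{H},\R}=0$ for $\mu_0$-a.e.\ $u\in\{\chi_{R'}^{(\delta)}(\mathcal{M})>0\}\supset\mathbb{B}_{\delta R'}$. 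Letting $\delta\uparrow 1$ and $R'\uparrow R$, and then running over a countable dense family of cylindrical $F$'s, gives: for $\mu_0$-a.e.\ $u\in\mathbb{B}_R$, $\langle\nabla F(u),-\ii W(u)\rangle_{\mathcal{H},\R}=0$ for every $F$ in that family. Choosing in particular $F_{k,a,n}:=\varphi_{k,a,n}\circ\pi_N$ with $N\geq k$ and $\varphi_{k,a,n}(x):=\chi_n(|x|)\,x_k^{(a)}$ (so that $\nabla F_{k,a,n}(u)=e_k^{(a)}$ whenever $|\pi_N(u)|<n$) and sending $n\to\infty$, I obtain $\langle e_k^{(a)},-\ii W(u)\rangle_{\mathcal{H},\R}=0$ for every $k\in\N$, $a\in\{1,2\}$, and $\mu_0$-a.e.\ $u\in\mathbb{B}_R$. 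Since $\{e_k^{(a)}\}$ is an orthonormal basis of $\mathcal{H}_\R$, this forces $W(u)=0$, i.e.\ $\nabla\rho=\rho\,\nabla h^I$ $\mu_0$-a.s.\ on $\mathbb{B}_R$.

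\textbf{Step 3 (Dirichlet form irreducibility and conclusion).}
The final step, which I expect to be the main technical obstacle, is the passage from the differential identity $\nabla\rho=\rho\,\nabla h^I$ on $\mathbb{B}_R$ to the pointwise multiplicative formula \eqref{local_KMS_weight_equation}. Formally, $\tau:=\rho\,e^{-h^I}$ satisfies $\nabla\tau=0$ on $\mathbb{B}_R$, so $\tau$ is locally constant; the nontrivial point is that in this infinite-dimensional setting the step ``locally constant implies constant'' requires irreducibility of the Dirichlet form associated with $\mu_0$ on the sublevel set $\mathbb{B}_R$, which is exactly the Aida-Kusuoka framework referenced in the abstract. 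The crucial input is the quasi-everywhere connectedness of $\mathbb{B}_R=\{|\mathcal{M}|<R\}$, which is particularly delicate when $d=2,3$ since $\mathcal{M}$ is only Wick-defined and need not be sign-definite --- this is precisely the ``connectedness properties of sublevel sets of the renormalized mass'' mentioned in the introduction. One also has to treat carefully the possible zero set $\{\rho=0\}\cap\mathbb{B}_R$, on which the formal identity $\nabla\log\rho=\nabla h^I$ breaks down; irreducibility handles this by forcing either $\rho>0$ throughout $\mathbb{B}_R$ (giving $c_0>0$) or $\rho\equiv 0$ on $\mathbb{B}_R$ (giving $c_0=0$). Once this is in place, $\tau$ is $\mu_0$-a.s.\ constant on $\mathbb{B}_R$, which gives \eqref{local_KMS_weight_equation}; the nonnegativity $c_0\geq 0$ follows from $\rho\geq 0$, and the value of $c_0$ is determined by the normalization $\mu(H^{-s})=1$.
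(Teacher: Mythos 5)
Your three-step architecture matches the paper's proof exactly: derive the weak equation for $W=\nabla\rho-\rho\nabla h^I$ from the KMS identity and Gaussian integration by parts (Proposition \ref{local_KMS_differential_equation}), localize to extract $W=0$ on $\mathbb{B}_R$, and then conclude via Aida--Kusuoka irreducibility of the Dirichlet form over $\mathbb{B}_R$ (Proposition \ref{Aida_0_derivative_result} together with Lemmas \ref{lem.borelrepr.mass}--\ref{lem_Hconnected}). Two points of execution, however, deserve attention. First, in Step 1 the Gaussian IBP formula (Proposition \ref{integration_by_parts_prop}) requires one of the two factors to be cylindrical, so you cannot directly integrate $\{F,G\}\rho$ by parts when $G\in\mathbb{D}^{1,2}_R(\mu_0)$ and $\rho\in\mathbb{D}^{1,2}(\mu_0)$ are both merely Gross--Sobolev; the paper circumvents this by approximating $\rho$ by cylindrical $\rho_n$, applying the Leibniz rule to $\{F,G\rho_n\}$, and invoking the already-established fact that $\mu_0$ is a global KMS state for $X_0$ (Proposition \ref{local_KMS_approximation_lemma}) with test function $G\rho_n\in\mathbb{D}^{1,2}(\mu_0)$, then passing to the limit. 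Your unadorned IBP cancellation would need some version of that approximation to be rigorous.

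Second, and more substantively, Step 3 as you sketch it would fail as stated: $\tau=\rho e^{-h^I}$ need not lie in $\mathbb{D}^{1,2}(\mu_0)$ because $e^{-h^I}$ is not integrable against $\mu_0$ in the focusing regime without a cutoff (this is the very reason the local Gibbs measure must be truncated at all). Aida's result requires the globally-defined $G\in\mathbb{D}^{1,2}(\mu_0)$, so one cannot invoke it for $\tau$. The paper's fix is to work with $e^{-h^I}\chi_R^{(\delta)}(\mathcal{M})\rho$, which \emph{does} lie in $\mathbb{D}^{1,2}(\mu_0)$ by Lemma \ref{exponential_Malliavin_differentiability}; the extra $(\chi_R^{(\delta)})'$ term in its gradient vanishes on $\mathbb{B}_{\delta R}$ by the support properties of $\chi_R^{(\delta)}$, so $\nabla\bigl(e^{-h^I}\chi_R^{(\delta)}(\mathcal{M})\rho\bigr)=0$ there, and Aida's theorem applies; letting $\delta\to1$ recovers the claim on $\mathbb{B}_R$. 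You correctly identify the $H^1$-connectedness of $\mathbb{B}_R$ as the main obstacle, and the paper's Lemma \ref{lem_Hconnected} (plus Lemma \ref{lem.borelrepr.mass} constructing an $H^1$-translation-stable Borel representative of the Wick-ordered mass) indeed contains most of the hard work. By contrast, your worry about the zero set of $\rho$ is not an issue in the paper's formulation, since one applies Aida to $\nabla(e^{-h^I}\chi_R^{(\delta)}(\mathcal M)\rho)=0$ rather than ever forming $\nabla\log\rho$.
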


\begin{remark}
Let us comment on the normalization constant $c_0$ from Theorem \ref{kms_implies_gibbs_theorem}. 
Since $\mu$ is a probability measure, we have
$$1=\int_{H^{-s}} d\mu = c_0 \int_{\mathbb{B}_R} \ee^{h^I}d\mu_0 + \int_{\mathbb{B}_{R}^c} d\mu.$$ So
$$
c_0 = \frac{1 - \mu(\mathbb{B}_{R}^c)}{\int_{\mathbb{B}_R} \ee^{h^I}d\mu_0}.
$$ 
In particular, if $\mu(\mathbb{B}_{R}^c) = 0$, $c_0$ is precisely the normalization constant for the measure $\mu$ and $\mu$ is equal to the truncated Gibbs measure  $\mu^{(1)}$ given by \eqref{local_Gibbs_measure_rigorous}.
\end{remark}

\medskip

 The result of Corollary \ref{almost_sure_WP_corollary} was known before. More precisely, when $d=1$, it was shown in \cite{Bou94} and when $d=2,3$, it was shown in \cite{Bou97}. Its interest lies in the fact that the result is recovered from a new method developed in \cite{AFS23}. However, to the best of our knowledge, all the other Theorems \ref{homogeneous_KMS_thm}, \ref{Gibbs_implies_local_KMS_theorem} and \ref{kms_implies_gibbs_theorem} are new. In particular, Theorems \ref{Gibbs_implies_local_KMS_theorem} and \ref{kms_implies_gibbs_theorem} lead to the characterization of all thermal equilibria of nonlinear focusing Hartree and Schr\"{o}dinger equations \eqref{vector_field_equation} on the torus $\T^d,   d=1,2,3$.

\subsection{Strategy of proofs}

As in \cite{AS21}, our analysis is based on Malliavin calculus and Gross-Sobolev spaces.  In order to prove Theorem \ref{homogeneous_KMS_thm}, which shows that local Gibbs measures are stationary solutions of the Liouville equation, we use a Gaussian integration by parts formula 
(see Proposition \ref{integration_by_parts_prop} below for a precise statement).
Before proving Theorem \ref{homogeneous_KMS_thm}, we first show an analogous result in the context of finite-dimensional Hamiltonian systems. The finite-dimensional result is given by Proposition \ref{homogeneous_finite_Liouville_lemma}. The proof of the latter gives insight on how to apply the integration by parts without having to worry about additional analytical issues. We observe that such an argument  extends to the infinite-dimensional setting by using Proposition \ref{integration_by_parts_prop} appropriately. A subtle point in the proof  lies in the presence of the cut-off on the (renormalized) mass. It is overcome by a suitable use of the Leibniz rule for the Poisson bracket and suitable conservation laws. The proof of Theorem \ref{Gibbs_implies_local_KMS_theorem} is also based on a Gaussian integration by parts.
The proof works in two steps. In the first step, we prove a corresponding result for the measures $\mu^{(\delta)}$ for $\delta \in (0,1)$. In the second step, we get the claimed result for $\mu^{(1)}$ and general $G \in \mathbb{D}^{1,2}_{\BB}(\mu_0)$ by means of an approximation argument.  The proof of Theorem \ref{kms_implies_gibbs_theorem} which shows that all local KMS states are local Gibbs measures (Theorem \ref{kms_implies_gibbs_theorem}) is more involved and it is based on  results of Aida \cite{Ai} and Kusuoka \cites{Kusuoka91,Kusuoka92} on irreducibility of Dirichlet forms over infinite-dimensional domains. First, we prove that the probability density $\rho$ in Theorem \ref{kms_implies_gibbs_theorem} satisfies a differential equation \eqref{Malliavin_diff_equ} on the domain $\mathbb{B}_R$ (see  Proposition \ref{local_KMS_differential_equation}).  Secondly, we use the result of Aida recalled in Proposition \ref{Aida_0_derivative_result} in order to solve the differential equation \eqref{Malliavin_diff_equ}. So, the problem is converted to showing that the domain $\mathbb{B}_R$ is an $H^1$-connected open set; see Lemma \ref{lem_Hconnected}. It is worth noticing that the calculations that we use along all  the proofs rely crucially on the normalizability and structure of the local Gibbs measure for the focusing Hartree and NLS equations, studied by Bourgain \cite{Bou97} when $d=2,3$. In Appendix \ref{Proof of Bourgain's large deviation estimate}, we revisit the proofs of these results by means of concentration inequalities for sums of subgaussian and subexponential random variables. The latter are summarized in Section \ref{Concentration inequalities}.

\subsection{Structure of paper}
In Section \ref{Liouville_section}, we prove Theorem \ref{homogeneous_KMS_thm}. We first consider, in Section \ref{Finite-dimensional setting}, a finite-dimensional version stated in Proposition \ref{homogeneous_finite_Liouville_lemma}. Then we analyse the infinite-dimensional setting and give the proof of Theorem \ref{homogeneous_KMS_thm} in Section \ref{Infinite-dimensional setting}.
Section \ref{Gibbs_implies_KMS_subsection} is devoted to the proof of Theorem \ref{Gibbs_implies_local_KMS_theorem} which shows that local Gibbs measures are local KMS states. In Section \ref{Local KMS states are local Gibbs measures}, we give the proof of
Theorem \ref{kms_implies_gibbs_theorem} which says that all local KMS states are locally Gibbs measures. In Appendix \ref{Malliavin derivative calculations and normalizability of the Gibbs measure for the focusing Hartree equation}, we review the algebra of Wick ordering  and give the details for Malliavin derivative calculations that we use in this paper. In Section \ref{Proof_of_Proposition_2.13_(i)}, we prove Proposition \ref{Hartree_equation_Malliavin_derivative} (i) which claims the differentiabililty of the nonlinearity $h^I$ in the sense of Malliavin. In Appendix \ref{Proof of Bourgain's large deviation estimate}, we revisit Bourgain's bound \eqref{Bourgain_large_deviation_estimate} and give a self-contained proof of this fact.

\section{Gibbs measures and the Liouville equation}
\label{Liouville_section}
The main goal of this section is to prove Theorem \ref{homogeneous_KMS_thm}, according to which local Gibbs measures \eqref{focusing_Gibbs} are stationary solutions to the Liouville equation \eqref{Liouville_equation}. In Section \ref{Finite-dimensional setting}, we analyse this relationship in the finite-dimensional setting. Here, we prove Proposition \ref{homogeneous_finite_Liouville_lemma}. The analysis of the infinite-dimensional setting and the proof of Theorem \ref{homogeneous_KMS_thm} are given in Section \ref{Infinite-dimensional setting}.

\subsection{Finite-dimensional setting}
\label{Finite-dimensional setting}
Let us first consider the relationship between local Gibbs measures and the Liouville equation in the setting of finite-dimensional Hamiltonian systems. 
This framework is a simplification of the infinite-dimensional setup that we wish to study in Section \ref{Infinite-dimensional setting} below. Nevertheless, the proof will be enlightening for the general analysis.

Let us consider $n \in \N$ fixed. We write 
\begin{equation}
\label{p_q_variables}\mathscr{C}^{\infty}
p=(p_1,\ldots,p_n)\,, \quad  q=(q_1,\ldots,q_n) \in \R^n\,. 
\end{equation}
The Hamiltonian equations of motion associated with a Hamiltonian $h \equiv h(p,q) \in \mathscr{C}^{\infty}(\R^{2n})$ are given by the following system of ODEs
\begin{equation}
\label{finite_hamiltonian}
\dot{p}_j(t) = \frac{\partial h}{\partial q_j}\,, \qquad\dot{q}_j(t) = -\frac{\partial h}{\partial p_j}\,,\qquad 1 \leq j \leq n\,.
\end{equation}
In what follows, we always assume that a global Hamiltonian flow for \eqref{finite_hamiltonian} exists and is uniquely defined. 
We write $dp \, dq := \prod_{j=1}^n dp_j \, dq_j$ in what follows. If
\begin{equation}
\label{partition_function_finite_dim}
z:=\int_{\R^{2n}} \mathrm{e}^{-h(p,q)} dp \, dq<\infty\,,
\end{equation}  
we can define the Gibbs measure $\mu \in \mathscr{P}(\R^{2n})$ associated with the Hamiltonian $h$ as
\begin{equation}
\label{Gibbs_measure_finite_dim}
d\mu := \frac{1}{z}\,\mathrm{e}^{-h(p,q)}\, dp \, dq\,.
\end{equation}
By Liouville's theorem (as the vector field on the right-hand side of the equations in \eqref{finite_hamiltonian} is divergence free) and the conservation of the Hamiltonian $h$ under the flow of \eqref{finite_hamiltonian}, we deduce that \eqref{Gibbs_measure_finite_dim} is invariant under the flow of \eqref{finite_hamiltonian}.

If condition \eqref{partition_function_finite_dim} is not satisfied, the above construction does not work. However, under suitable assumptions, we can work with a finite-dimensional variant of a local Gibbs measure. Let us now state the precise result.
\begin{lemma}[Local Gibbs measures in finite dimensions]
\label{local_Gibbs_finite_dim}
We define the mass $M:\R^{2n} \rightarrow \R$ associated with \eqref{finite_hamiltonian} by
\begin{equation}
\label{mass_finite_dim}
M(p,q):= \sum_{j=1}^{n} (p_j^2 +q_j^2)\,,
\end{equation}
Suppose that 
\begin{equation}
\label{{h,M}}
\{h,M\}=0\,, 
\end{equation}
where the Poisson bracket $\{\cdot,\cdot\}:\mathscr{C}^{\infty}(\R^{2n}) \times \mathscr{C}^{\infty}(\R^{2n}) \rightarrow \mathscr{C}^{\infty}(\R^{2n})$ in \eqref{{h,M}} is given by
\begin{equation}
\label{Poisson_bracket_finite_dim}
\{f,g\}:=\sum_{j=1}^{n} \biggl(\frac{\partial{f}}{\partial p_j}\,\frac{\partial{g}}{\partial q_j}-\frac{\partial{f}}{\partial q_j}\,\frac{\partial{g}}{\partial p_j}\biggl)\,.
\end{equation}
Let $\psi \in \mathscr{C}^{\infty}_{\rm{c}}(\R)$ be given.
Then $\mu_{\psi}$ given by
\begin{equation}
\label{mu_psi_definition}
d \mu_{\psi} := \frac{1}{z_{\psi}} \,\mathrm{e}^{-h(p,q)}\psi (M)\, dp \, dq \in \mathscr{P}(\R^{2n})\,,\quad z_{\psi}:= \int \mathrm{e}^{-h(p,q)}\,\psi(M)\, dp \, dq
\end{equation}
is a well-defined local Gibbs measure associated with $h$, which is invariant under the flow of \eqref{finite_hamiltonian}.
\end{lemma}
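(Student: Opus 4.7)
The plan has three main pieces: first check that $\mu_\psi$ is a well-defined probability measure, second show that its density is a stationary solution of the (finite-dimensional) Liouville equation, and third deduce invariance under the Hamiltonian flow. Throughout I treat $\psi$ as non-negative (otherwise one would read \eqref{mu_psi_definition} as a signed measure).

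For well-definedness, I first observe that, since $\psi$ has compact support, there exists $R>0$ with $\mathrm{supp}\,\psi \subset [-R,R]$, hence the integrand $\mathrm{e}^{-h(p,q)}\psi(M(p,q))$ is supported on the compact set $K_R := \{(p,q)\in \R^{2n} : M(p,q) \leq R\}$. Since $h \in \mathscr{C}^\infty(\R^{2n})$ is continuous, it is bounded on $K_R$, and $\psi(M)$ is bounded on $\R^{2n}$, so $z_\psi < \infty$. Assuming $\psi$ is non-negative and not identically zero, one also gets $z_\psi > 0$, so $\mu_\psi$ is a genuine probability measure.

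For stationarity, set $\rho(p,q) := \frac{1}{z_\psi}\,\mathrm{e}^{-h(p,q)}\,\psi(M(p,q))$. By the Leibniz rule for the Poisson bracket \eqref{Poisson_bracket_finite_dim} and the chain rule applied to the smooth composition $\psi \circ M$, I compute
\begin{equation*}
\{h,\rho\}
 = \tfrac{1}{z_\psi}\Bigl[\{h,\mathrm{e}^{-h}\}\,\psi(M) + \mathrm{e}^{-h}\,\psi'(M)\,\{h,M\}\Bigr]
 = 0\,,
\end{equation*}
since $\{h,\mathrm{e}^{-h}\} = -\mathrm{e}^{-h}\{h,h\} = 0$ and the second term vanishes by the hypothesis \eqref{{h,M}}. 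Equivalently, $\rho$ is a first integral of the Hamiltonian flow of $h$, which is exactly the stationary Liouville equation in the finite-dimensional smooth setting.

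For invariance, let $\Phi_t$ denote the (assumed global) Hamiltonian flow of \eqref{finite_hamiltonian}. By Liouville's theorem, the Lebesgue measure $dp\,dq$ is $\Phi_t$-invariant because the vector field on the right-hand side of \eqref{finite_hamiltonian} is divergence-free. Conservation of $h$ along the flow, together with $\{h,M\}=0$ which yields conservation of $M$ along the flow, then gives $\rho\circ\Phi_t = \rho$. Hence for any bounded measurable $f$, the change of variables $(p',q')=\Phi_t(p,q)$ yields
\begin{equation*}
\int f \, d\bigl((\Phi_t)_*\mu_\psi\bigr)
 = \int (f\circ\Phi_t)(p,q)\,\rho(p,q)\,dp\,dq
 = \int f(p',q')\,\rho(p',q')\,dp'\,dq'
 = \int f\, d\mu_\psi\,,
\end{equation*}
which proves $(\Phi_t)_*\mu_\psi = \mu_\psi$. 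No step is genuinely hard here; the conceptual point (which will be the real obstacle in the infinite-dimensional Theorem \ref{homogeneous_KMS_thm}) is the interplay between the Leibniz rule and the conservation law $\{h,M\}=0$, which is what lets the cut-off $\psi(M)$ be inserted without destroying invariance. This is exactly the structure that the infinite-dimensional proof will need to reproduce using the Gaussian integration by parts formula in place of the naive Lebesgue change of variables.
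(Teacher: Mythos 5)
Your proof is correct and follows essentially the same route as the paper: well-definedness from compact support of $\psi(M)$ together with continuity of $h$, and invariance from Liouville's theorem combined with conservation of both $h$ and $M$ (the latter via $\{h,M\}=0$). Your intermediate computation $\{h,\rho\}=0$ is logically redundant with the change-of-variables argument that follows, but it is a harmless elaboration that anticipates the structure of Proposition \ref{homogeneous_finite_Liouville_lemma}; the observation that $z_\psi>0$ requires $\psi\geq 0$ not identically zero is a reasonable implicit assumption the paper leaves unstated.
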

\begin{proof}
In order to see that $\mu_{\psi}$ given in \eqref{mu_psi_definition} is well-defined, we need to verify that $z_{\psi}<\infty$. This follows by using $h \in \mathscr{C}^{\infty}(\R^{2n})$ and \eqref{mass_finite_dim}, by which the integral defining $z_{\psi}$ in \eqref{mu_psi_definition} is taken over a compact set.
The proof of the invariance of $\mu_{\psi}$ under the flow of \eqref{finite_hamiltonian} is analogous to that of the invariance of \eqref{Gibbs_measure_finite_dim} when one assumes \eqref{partition_function_finite_dim}. The only change is that we now need to use the conservation of \eqref{mass_finite_dim} under the flow of \eqref{finite_hamiltonian}, which follows by \eqref{{h,M}}.
\end{proof}

\begin{example}
Invariance of local Gibbs measures in finite dimensions as in Lemma \ref{local_Gibbs_finite_dim} has been used to study 
the truncated system obtained from a focusing nonlinear Schr\"{o}dinger equation. We refer the reader to \cite[Section 3]{Bou94} for a detailed discussion; see also \cite[Step 1 of the proof of Proposition 2.2]{RS23} for a recent summary of the methods.
Here, one reduces the original infinite-dimensional flow to a finite dimensional one by projecting onto finitely many frequencies; see \cite[Equation (3.1)]{Bou94}. One then verifies \eqref{{h,M}} directly from the equation. 
\end{example}

Before stating the next result, let us set up some notation. We write 
\begin{equation}
\label{phase_space_decomposition}
\R^{2n}_{(p,q)} \simeq \R^n_{p} \oplus \R^{n}_{q}
\end{equation}
by splitting $\R^{2n}$ in the $p$ and $q$ variables from \eqref{p_q_variables}.
We let 
\begin{equation*}
\{e_1^{(1)},\ldots,e_n^{(1)};e_1^{(2)},\ldots,e_n^{(2)}\}
\end{equation*} 
denote the canonical orthonormal basis of $\R^{n}_p \oplus \R^{n}_q$ respectively. The complex structure on $\R^{2n}$ is given by the map $J: \R^n \oplus \R^n \rightarrow \R^n \times \R^n$ defined as 
\begin{equation*}
J u \oplus v=v \oplus -u\,. 
\end{equation*}
In particular, $J^2=-1$ and we view the map $J$ as multiplication by $-\ii$. We can hence view $\R^{2n} \simeq \mathbb{C}^{n}$ with the standard Hermitian inner product 
\begin{equation*}
\langle \xi,\eta \rangle_{\mathbb{C}^n} \equiv \sum_{j=1}^{n} \xi_j \,\overline{\eta_j}\,. 
\end{equation*}
Under these identifications, we obtain a Hermitian inner product on $\R^{2n}$ given by
\begin{equation}
\label{Hermitian_inner_product_R^{2n}}
\Big\langle \sum_{j=1}^{n} (a_j e_j^{(1)} +b_j e_j^{(2)}), \sum_{j=1}^{n} (\tilde{a}_je_j^{(1)}+ \tilde{b}_j e_j^{(2)}) \Big \rangle=\sum_{j=1}^{n} (a_j+\ii b_j) (\tilde{a}_j-\ii \tilde{b}_j)\,.
\end{equation}
In this setting, we now show that the local Gibbs measure \eqref{mu_psi_definition} is a stationary solution to an appropriate Liouville equation.
\begin{proposition}[Local Gibbs measures are stationary solutions to the Liouville equation-finite dimensions]
\label{homogeneous_finite_Liouville_lemma}
Suppose that $h \in \mathscr{C}^{\infty}(\R^{2n})$ is such that \eqref{{h,M}} holds. Suppose furthermore that $\psi \in \mathscr{C}^{\infty}(\R^{2n})$ is chosen. 
Let 
\begin{equation}
\label{vector_field_X_finite_dim}
X:\R^{2n} \rightarrow \R^{2n}\,,\qquad X:=\Bigl(\Bigl(\frac{\partial h}{\partial q_j}\Bigr)_{j=1,\ldots,n},\Bigr(-\frac{\partial h}{\partial p_j}\Bigr)_{j=1,\ldots,n}\Bigr)
\end{equation}
denote the vector field in \eqref{finite_hamiltonian}.
Then, recalling \eqref{mu_psi_definition}, we have that for all $\varphi \in \mathscr{C}^{\infty}_{\rm{c}}(\R^{2n})$
\begin{equation}
\label{homogeneous_finite_Liouville_lemma_1}
\int_{\R^{2n}} \Real \,\langle \nabla \varphi , X \rangle \, d\mu_{\psi} = 0\,.
\end{equation}
In \eqref{homogeneous_finite_Liouville_lemma_1}, the inner product $\langle \cdot, \cdot \rangle$ whose real part we take, is given by \eqref{Hermitian_inner_product_R^{2n}}. Furthermore, with the above identifications, we write 
\begin{equation}
\label{nabla_varphi_finite_dim}
\nabla \varphi=\sum_{j=1}^{n} \biggl (\frac{\partial \varphi}{\partial p_j}\, e_j^{(1)}+ \sum_{j=1}^{n} \frac{\partial \varphi}{\partial q_j}e_j^{(2)}\biggr)\,.
\end{equation}
We interpret \eqref{homogeneous_finite_Liouville_lemma_1} as saying that $\mu_{\psi}$ is a stationary solution to the Liouville equation with vector field $X$ given by \eqref{vector_field_X_finite_dim}.
\end{proposition}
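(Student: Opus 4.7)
The plan is as follows. First, by unpacking the definitions in \eqref{Hermitian_inner_product_R^{2n}}, \eqref{vector_field_X_finite_dim}, and \eqref{nabla_varphi_finite_dim}, I would identify the integrand in \eqref{homogeneous_finite_Liouville_lemma_1} with the Poisson bracket from \eqref{Poisson_bracket_finite_dim}:
\[
\mathrm{Re}\,\langle \nabla\varphi, X\rangle \;=\; \sum_{j=1}^{n}\Bigl(\tfrac{\partial\varphi}{\partial p_{j}}\tfrac{\partial h}{\partial q_{j}}-\tfrac{\partial\varphi}{\partial q_{j}}\tfrac{\partial h}{\partial p_{j}}\Bigr) \;=\; \{\varphi,h\}.
\]
Absorbing the normalization $1/z_{\psi}$, the claim then reduces to showing
\[
I \;:=\; \int_{\R^{2n}} \{\varphi,h\}\,\ee^{-h}\,\psi(M)\,dp\,dq \;=\; 0.
\]

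Next, I would apply the Leibniz rule for the Poisson bracket, together with the elementary identities $\{\varphi,\ee^{-h}\}=-\ee^{-h}\{\varphi,h\}$ and $\{\varphi,\psi(M)\}=\psi'(M)\{\varphi,M\}$, to decompose the integrand as
\[
\{\varphi,h\}\,\ee^{-h}\psi(M) \;=\; -\{\varphi,\ee^{-h}\psi(M)\} \;+\; \ee^{-h}\psi'(M)\,\{\varphi,M\},
\]
so that $I$ splits into two pieces, each of which I would argue vanishes.

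For the first piece, I would invoke the general fact that for $F\in\mathscr{C}^{2}(\R^{2n})$ and $\varphi\in\mathscr{C}^{\infty}_{\mathrm{c}}(\R^{2n})$, classical integration by parts gives $\int\{\varphi,F\}\,dp\,dq=0$, since in each summand of \eqref{Poisson_bracket_finite_dim} the boundary terms disappear and the remaining $\varphi\,\partial^{2}F/\partial p_{j}\partial q_{j}$ contributions cancel by Schwarz's theorem. Taking $F=\ee^{-h}\psi(M)$, which is smooth and compactly supported in $(p,q)$ by \eqref{mass_finite_dim} and the choice $\psi\in\mathscr{C}^{\infty}_{\mathrm{c}}(\R)$, kills the first integral.

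For the second piece, I would expand $\{\varphi,M\}=\sum_{j}(2q_{j}\partial_{p_{j}}\varphi-2p_{j}\partial_{q_{j}}\varphi)$ and integrate by parts once more to move the derivatives off $\varphi$. Within each index $j$, the contributions containing $\psi''(M)$ cancel because $\partial_{p_{j}}M=2p_{j}$ and $\partial_{q_{j}}M=2q_{j}$ produce a symmetric combination $-4q_{j}p_{j}+4p_{j}q_{j}=0$; what remains is
\[
\int_{\R^{2n}}\varphi\,\ee^{-h}\psi'(M)\sum_{j=1}^{n}\bigl(2q_{j}\partial_{p_{j}}h-2p_{j}\partial_{q_{j}}h\bigr)\,dp\,dq \;=\; \int_{\R^{2n}}\varphi\,\ee^{-h}\psi'(M)\,\{h,M\}\,dp\,dq,
\]
which is zero by the hypothesis $\{h,M\}=0$. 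Combining the two vanishings gives $I=0$, as desired. The only nontrivial ingredient beyond integration by parts is this last step: the Poisson compatibility $\{h,M\}=0$ between the Hamiltonian and the conserved quantity used to truncate is exactly what makes the localization transparent to the Liouville equation. The whole argument is classical in finite dimensions, but the same skeleton will drive the infinite-dimensional proof of Theorem~\ref{homogeneous_KMS_thm} in Section~\ref{Infinite-dimensional setting}, with the Gaussian integration by parts formula (Proposition~\ref{integration_by_parts_prop}) replacing ordinary Lebesgue integration by parts and with the conserved renormalized mass $\mathcal{M}$ playing the role of $M$.
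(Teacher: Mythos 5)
Your proof is correct and takes essentially the same approach as the paper: both reduce the integral to Poisson brackets, integrate by parts, apply the Leibniz rule, and use the identities $\{h,\ee^{-h}\}=0$ and $\{h,M\}=0$. The only difference is order of operations---the paper integrates by parts first and then applies Leibniz to the resulting $\{h,\ee^{-h}\psi(M)\}$, whereas you apply Leibniz to the integrand first and then integrate by parts on each of the two resulting pieces, which forces you to observe the cancellation of the $\psi''$ contributions that the paper's ordering avoids encountering.
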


\begin{proof}
By \eqref{mu_psi_definition}, \eqref{Hermitian_inner_product_R^{2n}}, \eqref{vector_field_X_finite_dim}, and \eqref{nabla_varphi_finite_dim}, we have
\begin{multline}
\label{homogeneous_finite_Liouville_lemma_2}
\int_{\R^{2n}} \Real \,\langle \nabla \varphi , X \rangle \, d\mu_{\psi} =\frac{1}{z_{\psi}} \,\int_{\R^{2n}} \textrm{Re}\,\langle \nabla \varphi , X \rangle\, \mathrm{e}^{-h}\, \psi(M) \, dp\,dq 
\\
= \frac{1}{z_{\psi}} \,\sum_{j=1}^{n} \int_{\R^{2n}}  \biggl(\frac{\partial \varphi}{\partial p_j}\,\frac{\partial h}{\partial q_j} - \frac{\partial \varphi}{\partial q_j}\,\frac{\partial h}{\partial p_j} \biggr) \, \ee^{-h}\, \psi(M)\, dp\, dq\,.
\end{multline}
We integrate by parts in \eqref{homogeneous_finite_Liouville_lemma_2} to rewrite this expression as
\begin{multline}
\label{homogeneous_finite_Liouville_lemma_3}
\frac{1}{z_{\psi}} \,\sum_{j=1}^{n} \int_{\R^{2n}} \varphi \biggl[ \biggl( -\frac{\partial^2 h}{\partial p_j \partial q_j}+\frac{\partial^2 h}{\partial q_j \partial p_j}\biggl) \,\ee^{-h}\, \psi(M)-\frac{\partial h}{\partial q_j}\,\frac{\partial}{\partial p_j}(\ee^{-h}\,\psi(M))
\\
+\frac{\partial h}{\partial p_j}\,\frac{\partial}{\partial q_j}(\ee^{-h}\,\psi(M)) \biggr]\, dp\,dq
\\
=\frac{1}{z_{\psi}} \,\int_{\R^{2n}} \varphi \,\{h,\ee^{-h}\,\psi(M)\}\, dp\,dq\,,
\end{multline}
where we recalled \eqref{Poisson_bracket_finite_dim}.
Using the Leibniz rule for the Poisson bracket, we can rewrite \eqref{homogeneous_finite_Liouville_lemma_3} as 
\begin{multline}
\label{homogeneous_finite_Liouville_lemma_4}
\frac{1}{z_{\psi}} \,\int_{\R^{2n}} \bigl(\varphi\, \{h,\ee^{-h}\}\, \psi(M)+ \varphi\,\{h,\psi(M)\}\, \ee^{-h}\bigr)\, dp\,dq
\\
=\frac{1}{z_{\psi}} \,\int_{\R^{2n}} \bigl(\varphi\, \{h,\ee^{-h}\}\, \psi(M)+ \varphi\,\psi'(M)\,\{h,M\}\, \ee^{-h}\bigr)\, dp\,dq=0
\,.
\end{multline}
In \eqref{homogeneous_finite_Liouville_lemma_4}, we used $\{h,\ee^{-h}\}=0$ and \eqref{{h,M}}.

\end{proof}
\begin{remark}
\label{integration_by_parts_principle}
In the proof of Proposition \ref{homogeneous_finite_Liouville_lemma}, a key idea was to use integration by parts to shift the derivative from the test function to the Hamiltonian. We will also use this principle in the infinite-dimensional setting.\end{remark}

\begin{remark}
\label{smoothness_assumptions_on_h}
We note that in the proof of Proposition \ref{homogeneous_finite_Liouville_lemma}, in terms of regularity of $h$, we only really need to assume that $h \in \mathscr{C}^2(\R^n)$.
\end{remark}

\subsection{Infinite-dimensional setting}
\label{Infinite-dimensional setting}
In this section, we extend the earlier analysis to the infinite-dimensional setting, and prove Theorem  \ref{homogeneous_KMS_thm}. Before proceeding, let us first recall an integration by parts formula \cite[Proposition A.1]{AS21}, which we will use in the analysis.

\begin{proposition}[Gaussian integration by parts formula]
\label{integration_by_parts_prop}
Suppose that $F \in \Dspace$ and $G \in \mathscr{C}^{\infty}_{\mathrm{b},\mathrm{cyl}}(H^{-s})$ or $F \in \mathscr{C}^{\infty}_{\mathrm{b},\mathrm{cyl}}(H^{-s})$ and $G \in \Dspace$. Then for any $\psi \in H^1$, we have
\begin{equation}
\label{integration_by_parts_prop_1}
\int_{H^{-s}} G(u) \langle \nabla F(u), \psi \rangle_{\mathcal{H}} \,d\free = \int_{H^{-s}} F(u)\left(-\langle \nabla G(u) , \psi \rangle_{\mathcal{H}} + G(u) \langle u, A\psi \rangle_{\mathcal{H}} \right) d\free\,.
\end{equation}
\end{proposition}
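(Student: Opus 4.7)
The plan is to prove the identity first when both $F$ and $G$ are cylindrical, where it reduces to the classical Gaussian integration by parts on a finite-dimensional Euclidean space, and then to extend by density to the stated hypotheses. By symmetry of the statement (with a sign change from swapping the roles), it suffices to treat the case $F \in \Dspace$ and $G \in \mathscr{C}^{\infty}_{\mathrm{b},\mathrm{cyl}}(H^{-s})$; the other case follows by interchanging $F$ and $G$.

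First I would set up coordinates. Expand $\psi \in H^1$ in the basis \eqref{Hilbert_space_ONB} as $\psi = \sum_j \psi_j e_j$, so that $\|\psi\|_{H^1}^2 = \sum_j \lambda_j |\psi_j|^2 < \infty$, and introduce the real coordinates $x_j := \langle u,e_j^{(1)}\rangle_{\mcH,\R}$, $y_j := \langle u, e_j^{(2)}\rangle_{\mcH,\R}$. By \eqref{Gibbs_covariance_2} together with the orthogonality computation $\langle e_j^{(1)}, A^{-1} e_k^{(2)}\rangle_{\mcH,\R} = 0$ and $\langle e_j^{(a)}, A^{-1} e_k^{(a)}\rangle_{\mcH,\R} = \delta_{jk}\lambda_j^{-1}$, the $(x_j,y_j)_{j \in \N}$ are independent real Gaussians under $\mu_0$ with variance $\lambda_j^{-1}$ each. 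Next I would reduce to $\psi$ in the algebraic span $\mathrm{span}\{e_j\}_{j \in \N}$: for $\psi_N := \sum_{j \leq N} \psi_j e_j$ one has $\|\psi - \psi_N\|_{H^1} \to 0$ and, since $A\psi = \sum_j \lambda_j \psi_j e_j$, a direct covariance computation shows
\begin{equation*}
\bigl\| \langle u, A(\psi - \psi_N)\rangle_{\mcH}\bigr\|_{L^2(\mu_0)}^2 \lesssim \sum_{j > N} \lambda_j |\psi_j|^2 \xrightarrow{N \to \infty} 0,
\end{equation*}
so the boundary term is continuous in $\psi \in H^1$; the pairing $\langle \nabla F(u), \psi\rangle_\mcH$ is handled similarly using $H^1 \subset H^s$ (by Assumption \ref{choice_of_s_assumption}) and $\nabla F \in L^2(\mu_0; H^{-s})$.

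With $\psi$ a finite linear combination and $F, G$ cylindrical, say $F = \varphi \circ \pi_n$ and $G = \vartheta \circ \pi_n$ (after enlarging $n$), the identity reduces to the following statement on $\R^{2n}$: if $\gamma_n$ denotes the product Gaussian measure with variances $\lambda_j^{-1}$ in each coordinate, then for each coordinate direction one has the standard scalar integration by parts
\begin{equation*}
\int_{\R^{2n}} \vartheta \,(\partial_j \varphi)\, d\gamma_n = \int_{\R^{2n}} \varphi \bigl(-\partial_j \vartheta + \lambda_j x_j \vartheta \bigr)\, d\gamma_n,
\end{equation*}
obtained by an elementary computation on the Gaussian density. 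Linearly combining these coordinate-wise identities with the coefficients of $\psi$ in the basis \eqref{ONB_convention} and using \eqref{gradient_cylindrical_F} together with $A e_j = \lambda_j e_j$, the factors $\lambda_j x_j$ and $\lambda_j y_j$ assemble exactly into $\langle u, A\psi\rangle_\mcH$, which gives \eqref{integration_by_parts_prop_1} in the fully cylindrical case.

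The final step is to extend from $F \in \mathscr{C}^{\infty}_{\mathrm{b},\mathrm{cyl}}(H^{-s})$ to general $F \in \Dspace$. By Definition \ref{Malliavin_definition}, one may pick $F_n \in \mathscr{C}^{\infty}_{\mathrm{c},\mathrm{cyl}}(H^{-s})$ with $F_n \to F$ in $\Dspace$, i.e.\ $F_n \to F$ in $L^2(\mu_0)$ and $\nabla F_n \to \nabla F$ in $L^2(\mu_0; H^{-s})$. Since $G$ is bounded cylindrical and $\psi \in H^s$, the left-hand side converges to $\int G \langle \nabla F, \psi\rangle_\mcH \, d\mu_0$ by Cauchy--Schwarz in $L^2(\mu_0)$. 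For the right-hand side, the term $\int F_n \langle \nabla G, \psi\rangle_\mcH \, d\mu_0$ converges similarly, and the critical boundary term $\int F_n G \langle u, A\psi\rangle_\mcH \,d\mu_0$ converges because $G \langle u, A\psi\rangle_\mcH \in L^2(\mu_0)$ by boundedness of $G$ and the $L^2(\mu_0)$ bound on $\langle u, A\psi\rangle_\mcH$ established above. The main technical point — and the only place where the hypothesis $\psi \in H^1$ is essential rather than, say, $\psi \in H^s$ — is precisely this square-integrability of the stochastic pairing $\langle u, A\psi\rangle_\mcH$, which is the Cameron--Martin condition for the Gaussian $\mu_0$ whose covariance is $A^{-1-s}$ on $H^{-s}$ (equivalently $A^{-1}$ on $\mcH$).
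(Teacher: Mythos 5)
Your proof is correct, but note that the paper itself does not prove Proposition \ref{integration_by_parts_prop}: it is recalled verbatim from \cite[Proposition A.1]{AS21}, so there is no in-paper argument to compare against. Your route --- diagonalizing $\mu_0$ in the eigenbasis of $A$ so that the coordinates $\langle u,e_j^{(i)}\rangle_{\mcH,\R}$ are independent real Gaussians of variance $\lambda_j^{-1}$, proving the one-dimensional Gaussian integration by parts for cylindrical $F,G$ and finite-rank $\psi$, and then passing to general $F\in\Dspace$ and $\psi\in H^1$ by density --- is the standard argument and all the limiting steps are justified correctly (in particular you correctly isolate the Cameron--Martin bound $\|\langle u,A\psi\rangle_{\mcH}\|_{L^2(\mu_0)}\lesssim\|\psi\|_{H^1}$ as the reason the hypothesis $\psi\in H^1$ is needed, and you rightly treat $\langle u,A\psi\rangle_{\mcH}$ as an $L^2(\mu_0)$-limit rather than a pointwise pairing, since $A\psi\notin H^{s}$ in general). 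The only cosmetic remark is that the identity is literally symmetric under interchanging $F$ and $G$ (it is equivalent to $\int(G\langle\nabla F,\psi\rangle+F\langle\nabla G,\psi\rangle)\,d\mu_0=\int FG\,\langle u,A\psi\rangle\,d\mu_0$), so no sign change is involved in your reduction; and one should record, as you implicitly do, that the complex-valued identity follows from the real one applied to $\psi$ and $\ii\psi$.
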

In our analysis, we will frequently use a special variant of Proposition \ref{integration_by_parts_prop}, which we now record for convenience. Given $j \in \N$ and $i=1,2$ and $F \in \mathbb{D}^{1,2}(\mu_0)$, we recall \eqref{ONB_convention} and write
\begin{equation}
\label{partial_derivative_convention}
\partial_j^{(i)} F(u):=\langle \nabla F(u), e_j^{(i)} \rangle_{\mathcal{H},\R}\,.
\end{equation}
We note that \eqref{partial_derivative_convention} is well-defined and that it corresponds to our convention 
for $F \in \mathscr{C}^{\infty}_{\mathrm{b},\mathrm{cyl}}(H^{-s})$ in \eqref{gradient_cylindrical_F}, in the sense that for $F$ as in \eqref{cylindrical_test_functions_1}, we have
\begin{equation*}
\langle \nabla F(u), e_j^{(i)} \rangle_{\mathcal{H},\R}=\partial_j^{(i)} \varphi(\pi_n(u))\,.
\end{equation*}

\begin{corollary}
\label{integration_by_parts_corollary}
Let $F$ and $G$ be as in the assumptions of Proposition \ref{integration_by_parts_prop}. For all $j \in \N$ and $i=1,2$, we have
\begin{equation*}
\int_{H^{-s}} \partial_j^{(i)}F(u)\, G(u) \,d\mu_0 = \int_{H^{-s}} F(u)\bigl(-\partial_j^{(i)}G(u)+ G(u) \langle u, Ae_j^{(i)}\rangle_{\mathcal{H},\R} \bigr) \,d\mu_0\,.
\end{equation*}
\end{corollary}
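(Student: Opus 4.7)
The plan is to specialize the Gaussian integration by parts formula of Proposition \ref{integration_by_parts_prop} to the test vector $\psi = e_j^{(i)}$ and read off the claim. First I would verify that $e_j^{(i)} \in H^1$: by \eqref{e_j_choice} each $e_j$ is an eigenvector of $A$ with eigenvalue $\lambda_j > 0$, so $e_j \in \bigcap_{\alpha \in \R} H^{\alpha}$, and the same is true of $e_j^{(2)} = \ii e_j$ in view of \eqref{ONB_convention}. Hence the hypotheses of Proposition \ref{integration_by_parts_prop} are satisfied with this choice of $\psi$, and \eqref{integration_by_parts_prop_1} gives
\begin{equation*}
\int_{H^{-s}} G(u)\,\langle \nabla F(u), e_j^{(i)} \rangle_{\mathcal{H}} \,d\mu_0 = \int_{H^{-s}} F(u)\,\bigl(-\langle \nabla G(u) , e_j^{(i)} \rangle_{\mathcal{H}} + G(u)\, \langle u, Ae_j^{(i)} \rangle_{\mathcal{H}} \bigr) \,d\mu_0\,.
\end{equation*}

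Second, I would take the real part of both sides of this identity. Since $F$ and $G$ take real values (as elements of $\mathbb{D}^{1,2}(\mu_0)$ or $\mathscr{C}^{\infty}_{\mathrm{b},\mathrm{cyl}}(H^{-s})$) and $\mu_0$ is a real measure, the operation $\mathrm{Re}$ commutes with the integrals and acts termwise on the inner products appearing in the integrand. Combining the definition \eqref{real_inner_product} of the real inner product with the convention \eqref{partial_derivative_convention}, one has $\mathrm{Re}\,\langle \nabla F(u), e_j^{(i)} \rangle_{\mathcal{H}} = \langle \nabla F(u), e_j^{(i)} \rangle_{\mathcal{H},\R} = \partial_j^{(i)} F(u)$, and analogously with $G$ in place of $F$; the remaining term $\langle u, Ae_j^{(i)}\rangle_{\mathcal{H}}$ similarly becomes $\langle u, Ae_j^{(i)}\rangle_{\mathcal{H},\R}$ after taking real parts. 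These identifications turn the displayed equality into precisely the conclusion of the corollary. No serious obstacle arises here; the statement is essentially a notational repackaging of Proposition \ref{integration_by_parts_prop}, and is singled out because the coordinatewise formulation in terms of the partial derivatives $\partial_j^{(i)}$ is the version that will be repeatedly invoked in the proofs of Theorems \ref{homogeneous_KMS_thm}, \ref{Gibbs_implies_local_KMS_theorem}, and \ref{kms_implies_gibbs_theorem}.
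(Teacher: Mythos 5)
Your proof is correct and follows the paper's argument exactly: substitute $\psi = e_j^{(i)}$ into Proposition \ref{integration_by_parts_prop}, take real parts using that $F,G$ are real-valued, and invoke the conventions \eqref{real_inner_product} and \eqref{partial_derivative_convention}. The only difference is that you spell out the (routine) verification that $e_j^{(i)} \in H^1$, which the paper takes for granted.
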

\begin{proof}
The claim follows from Proposition \ref{integration_by_parts_prop}. Namely, we take $\psi=e_j^{(i)} \in H^1$ and then we take real parts on both sides of \eqref{integration_by_parts_prop_1}. We recall that by assumption $F$ and $G$ are real-valued. Furthermore, we use \eqref{partial_derivative_convention}.
\end{proof}

Let us note a few more facts about the Malliavin derivative that we use in the analysis. The first is a Leibniz rule.

\begin{lemma}[Leibniz rule for the Malliavin derivative]
\label{Leibniz_rule}
Let $p_1,p_2,q \in [1,\infty)$ with $\frac{1}{q}=\frac{1}{p_1}+\frac{1}{p_2}$ be given. Suppose that $F \in \mathbb{D}^{1,p_1}(\mu_0)$ and $G \in \mathbb{D}^{1,p_2}(\mu_0)$. Then $FG \in \mathbb{D}^{1,q}(\mu_0)$, and we have
\begin{equation}
\label{Leibniz_rule_1}
\nabla(FG)=F \nabla G + G \nabla F\,.
\end{equation}
Moreover, if $F \in \mathbb{D}^{1,p_1}(\mu_0)$ and $G \in \mathscr{C}^{\infty}_{\mathrm{c},\mathrm{cyl}}(H^{-s})$, then $FG \in \mathbb{D}^{1,p_1}(\mu_0)$ and \eqref{Leibniz_rule_1} holds.
\end{lemma}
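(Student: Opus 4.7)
The plan is a two-step density argument: first verify the Leibniz rule on the dense class $\mathscr{C}^{\infty}_{\mathrm{c},\mathrm{cyl}}(H^{-s})$ where it reduces to the ordinary product rule in $\R^{2n}$, and then extend to general elements of $\mathbb{D}^{1,p_i}(\mu_0)$ using the closability of the Malliavin derivative (Definition \ref{Malliavin_definition} and the fact quoted after it). For cylindrical $F=\varphi\circ\pi_m$ and $G=\psi\circ\pi_n$, the product $FG=(\varphi\otimes\psi)\circ\pi_{\max(m,n)}$ is again cylindrical, and the definition \eqref{gradient_cylindrical_F} of the gradient is just a sum of partial derivatives, so \eqref{Leibniz_rule_1} is immediate pointwise.

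For the general case, fix $F\in\mathbb{D}^{1,p_1}(\mu_0)$ and $G\in\mathbb{D}^{1,p_2}(\mu_0)$ and, by Definition \ref{Gross-Sobolev_space}, pick approximating sequences $F_k,G_k\in\mathscr{C}^{\infty}_{\mathrm{c},\mathrm{cyl}}(H^{-s})$ with $F_k\to F$ in $\mathbb{D}^{1,p_1}(\mu_0)$ and $G_k\to G$ in $\mathbb{D}^{1,p_2}(\mu_0)$. Applying H\"older's inequality with the relation $\tfrac{1}{q}=\tfrac{1}{p_1}+\tfrac{1}{p_2}$, I would then estimate
\begin{equation*}
\|F_kG_k-FG\|_{L^q(\mu_0)}\leq \|F_k-F\|_{L^{p_1}(\mu_0)}\|G_k\|_{L^{p_2}(\mu_0)}+\|F\|_{L^{p_1}(\mu_0)}\|G_k-G\|_{L^{p_2}(\mu_0)},
\end{equation*}
and analogously
\begin{multline*}
\|F_k\nabla G_k+G_k\nabla F_k-(F\nabla G+G\nabla F)\|_{L^q(\mu_0;H^{-s})}\\
\leq \|F_k-F\|_{L^{p_1}}\|\nabla G_k\|_{L^{p_2}(H^{-s})}+\|F\|_{L^{p_1}}\|\nabla G_k-\nabla G\|_{L^{p_2}(H^{-s})}\\
+\|\nabla F_k-\nabla F\|_{L^{p_1}(H^{-s})}\|G_k\|_{L^{p_2}}+\|\nabla F\|_{L^{p_1}(H^{-s})}\|G_k-G\|_{L^{p_2}}.
\end{multline*}
Since $\|G_k\|_{\mathbb{D}^{1,p_2}}$ is bounded and each remaining factor tends to zero, both expressions converge to $0$. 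As $\nabla(F_kG_k)=F_k\nabla G_k+G_k\nabla F_k$ by the cylindrical case, closedness of $\nabla$ then yields $FG\in\mathbb{D}^{1,q}(\mu_0)$ together with the identity \eqref{Leibniz_rule_1}.

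For the second assertion, if $G\in\mathscr{C}^{\infty}_{\mathrm{c},\mathrm{cyl}}(H^{-s})$ then $\|G\|_{L^\infty(\mu_0)}<\infty$ and, by \eqref{gradient_cylindrical_F} together with the fact that $\varphi\in\mathscr{C}^{\infty}_{\mathrm{c}}(\R^{2n})$, also $\|\nabla G\|_{L^\infty(\mu_0;H^{-s})}<\infty$. Consequently $F\nabla G+G\nabla F\in L^{p_1}(\mu_0;H^{-s})$, and taking the approximating sequence $F_k$ as before (with $G$ fixed) the same argument as above upgrades the conclusion from $L^q$ to $L^{p_1}$. The only mild subtlety is ensuring that the approximation $F_k\to F$ in $\mathbb{D}^{1,p_1}$ is paired with the $L^\infty$ bounds on $G,\nabla G$, which makes every H\"older estimate trivial and gives $FG\in\mathbb{D}^{1,p_1}(\mu_0)$.

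I do not expect a genuine obstacle here: the only point requiring care is to keep track of which sequence converges in which space, so that the closedness of $\nabla$ in $L^q(\mu_0)$ (respectively $L^{p_1}(\mu_0)$) is applicable. Everything else is a direct consequence of H\"older's inequality and the definitions.
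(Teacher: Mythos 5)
Your argument is correct and follows essentially the same route as the paper: a density argument approximating $F$ and $G$ by cylindrical test functions, applying the finite-dimensional product rule via \eqref{gradient_cylindrical_F}, passing to the limit with H\"older's inequality, and invoking closedness of $\nabla$ to conclude $FG\in\mathbb{D}^{1,q}(\mu_0)$ with the stated identity. You merely spell out the H\"older estimates and the $L^\infty$ control on $G,\nabla G$ (for the second assertion) more explicitly than the paper, which compresses these into ``analogous arguments.''
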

\begin{proof}
Let us first consider the claim when $F \in \mathbb{D}^{1,p_1}(\mu_0)$ and $G \in \mathbb{D}^{1,p_2}(\mu_0)$.
By H\"{o}lder's inequality, we know that 
\begin{equation}
\label{Leibniz_rule_2}
\|FG\|_{L^q(\mu_0)} \leq \|F\|_{L^{p_1}(\mu_0)}\,\|G\|_{L^{p_2}(\mu_0)}\,.
\end{equation}
We can find sequences $(F_n)_n$ and $(G_n)_n$ in $\mathscr{C}^{\infty}_{\mathrm{c},\mathrm{cyl}}(H^{-s})$ such that 
\begin{equation}
\label{Leibniz_rule_3}
F_n \stackrel{\mathbb{D}^{1,p_1}}{\longrightarrow} F\,,\qquad G_n \stackrel{\mathbb{D}^{1,p_2}}{\longrightarrow} G\,.
\end{equation}
Since $F_nG_n \in \mathscr{C}^{\infty}_{\mathrm{c},\mathrm{cyl}}(H^{-s})$, by \eqref{gradient_cylindrical_F}, we obtain that for all $n \in \N$
\begin{equation}
\label{Leibniz_rule_4}
\nabla(F_n G_n)=F_n \nabla G_n+G_n \nabla F_n\,.
\end{equation}
Using \eqref{Leibniz_rule_3}--\eqref{Leibniz_rule_4} and H\"{o}lder's inequality, we deduce that 
\begin{equation}
\label{Leibniz_rule_5}
\nabla(F_n G_n) \stackrel{L^q}{\longrightarrow} F\nabla G+G\nabla F\,.
\end{equation}
The claim when $F \in \mathbb{D}^{1,p_1}(\mu_0)$ and $G \in \mathbb{D}^{1,p_2}(\mu_0)$ follows from \eqref{Leibniz_rule_2}, \eqref{Leibniz_rule_5}, and Definition \ref{Malliavin_definition}. The claim when $F \in \mathbb{D}^{1,p_1}(\mu_0)$ and $G \in \mathscr{C}^{\infty}_{\mathrm{c},\mathrm{cyl}}(H^{-s})$ follows by analogous arguments.
\end{proof}

We also recall chain rule for the Malliavin derivative, whose proof can be found in \cite[Lemma A.2]{AS21}.
\begin{lemma}[Chain rule for the Malliavin derivative]
\label{chain_rule}
Suppose $\psi \in \mathscr{C}^1_{\mathrm{b}}(\R)$ and $F \in \mathbb{D}^{1,p}(\free)$ for some $p \in [1,\infty)$. Then $\psi(F) \in \mathbb{D}^{1,p}(\free)$ and
\begin{equation*}
\nabla \psi(F) = \psi'(F) \nabla F\,.
\end{equation*}
\end{lemma}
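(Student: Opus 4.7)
The plan is a double approximation argument that exploits the closability of the Malliavin derivative from Definition \ref{Malliavin_definition}. First I would reduce to the case $F \in \mathscr{C}^{\infty}_{\mathrm{c},\mathrm{cyl}}(H^{-s})$. Pick any approximating sequence $F_n \to F$ in $\mathbb{D}^{1,p}(\mu_0)$ from the dense subspace and, after passing to a subsequence, assume also that $F_n \to F$ and $\nabla F_n \to \nabla F$ pointwise $\mu_0$-almost surely. Granting the conclusion for each $F_n$, i.e.\ $\nabla \psi(F_n) = \psi'(F_n)\nabla F_n$, the limit $\psi(F_n) \to \psi(F)$ in $L^p(\mu_0)$ follows by dominated convergence because $\psi$ is bounded. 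For the derivatives, write
\begin{equation*}
\psi'(F_n)\nabla F_n - \psi'(F)\nabla F = \psi'(F_n)\bigl(\nabla F_n - \nabla F\bigr) + \bigl(\psi'(F_n)-\psi'(F)\bigr)\nabla F\,;
\end{equation*}
the first term tends to $0$ in $L^p(\mu_0;H^{-s})$ using $\|\psi'\|_{L^\infty(\R)}<\infty$, and the second by dominated convergence, since $\psi'(F_n)\to \psi'(F)$ pointwise (continuity of $\psi'$) and $\|\nabla F\|_{H^{-s}}\in L^p(\mu_0)$. Closability then yields $\psi(F)\in \mathbb{D}^{1,p}(\mu_0)$ with the claimed identity.

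It remains to prove the chain rule for $F = \varphi\circ \pi_n \in \mathscr{C}^{\infty}_{\mathrm{c},\mathrm{cyl}}(H^{-s})$. I would mollify $\psi$ to obtain $\psi_k \in \mathscr{C}^{\infty}_{\mathrm{b}}(\R)$ with $\psi_k\to \psi$ and $\psi_k'\to \psi'$ uniformly on compact subsets of $\R$. Since $\varphi$ has compact support, outside this support $\psi_k\circ\varphi$ reduces to the constant $\psi_k(0)$, so $\psi_k\circ\varphi$ is smooth but not compactly supported. I would decompose
\begin{equation*}
\psi_k(F) = \bigl[(\psi_k\circ \varphi - \psi_k(0))\circ \pi_n\bigr] + \psi_k(0)\,,
\end{equation*}
where the bracketed piece lies in $\mathscr{C}^{\infty}_{\mathrm{c},\mathrm{cyl}}(H^{-s})$ and has Malliavin derivative computed directly from \eqref{gradient_cylindrical_F} by the classical chain rule in $\R^{2n}$, giving $\psi_k'(F)\nabla F$. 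The constant $\psi_k(0)$ is shown to lie in $\mathbb{D}^{1,p}(\mu_0)$ with zero derivative by approximating it with $\psi_k(0)\,\chi(\langle \cdot, e_1\rangle_{\mathcal{H},\R}/m) \in \mathscr{C}^{\infty}_{\mathrm{c},\mathrm{cyl}}(H^{-s})$ for $\chi\in\mathscr{C}^{\infty}_{\mathrm{c}}(\R)$ with $\chi(0)=1$; as $m\to\infty$ this converges to $\psi_k(0)$ in $L^p(\mu_0)$ while its gradient tends to $0$ in $L^p(\mu_0;H^{-s})$. Adding the two contributions gives $\nabla \psi_k(F) = \psi_k'(F)\nabla F$.

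Finally, because $F=\varphi\circ\pi_n$ is bounded (as $\varphi\in\mathscr{C}^{\infty}_{\mathrm{c}}(\R^{2n})$) the range of $F$ is contained in a fixed compact of $\R$, so $\psi_k(F)\to \psi(F)$ uniformly and $\psi_k'(F)\to \psi'(F)$ uniformly on this range. Consequently $\psi_k(F)\to \psi(F)$ in $L^p(\mu_0)$ and $\psi_k'(F)\nabla F\to \psi'(F)\nabla F$ in $L^p(\mu_0;H^{-s})$, and one more invocation of closability produces $\psi(F)\in \mathbb{D}^{1,p}(\mu_0)$ with $\nabla \psi(F) = \psi'(F)\nabla F$, closing the argument. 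The only mildly delicate point is handling the constant $\psi_k(0)$ arising from the noncompactness of $\psi_k\circ\varphi$ on $\R^{2n}$; the explicit cylindrical cutoff above resolves this and is in fact the only place where one must step slightly outside the definition of $\mathscr{C}^{\infty}_{\mathrm{c},\mathrm{cyl}}(H^{-s})$.
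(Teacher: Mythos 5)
Your argument is correct. The paper does not give an in-text proof of this lemma — it refers to \cite[Lemma A.2]{AS21} — so there is nothing to compare against line by line, but your double approximation scheme is the standard and complete route: mollify $\psi$ to $\psi_k\in\mathscr{C}^\infty_{\mathrm{b}}(\R)$, subtract the constant $\psi_k(0)$ so that $\psi_k\circ\varphi - \psi_k(0)\in\mathscr{C}^\infty_{\mathrm{c}}(\R^{2n})$, compute the Malliavin derivative via \eqref{gradient_cylindrical_F}, approximate $F$ by $\mathscr{C}^\infty_{\mathrm{c},\mathrm{cyl}}(H^{-s})$ functions with a.e.-convergent subsequences, and close the argument with the closability of $\nabla$ from Definition \ref{Malliavin_definition}. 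The one genuine subtlety — that $\psi_k\circ\varphi$ is not compactly supported, so the cylindrical framework does not apply to it directly — you identify and resolve cleanly with the auxiliary cutoff $\chi(\langle\cdot,e_1\rangle_{\mathcal{H},\R}/m)$, which shows that constants belong to $\mathbb{D}^{1,p}(\mu_0)$ with vanishing Malliavin derivative; everything else (boundedness of $\psi$ and $\psi'$, pointwise a.e.\ convergence, dominated convergence, $\|\nabla F\|_{H^{-s}}\in L^p(\mu_0)$) is invoked correctly.
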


In what follows, we repeatedly use the following result, which follows directly from Lemma \ref{Malliavin_derivative_M_lemma} and Lemma \ref{chain_rule}.

\begin{lemma}
\label{Malliavin_derivative_M_lemma*}
Recalling \eqref{renormalized_mass} and Definition \ref{cut-off_chi}, for all $\delta \in (0,1]$ and $\BB>0$, the quantity $\chi_{\BB}^{(\delta)}(\mathcal{M})$
 belongs to $\mathbb{D}^{1,p}(\mu_{0})$ for all $p \in [1,\infty)$, and we have 
\begin{equation*}
\nabla \chi_{\BB}^{(\delta)}(\mathcal{M}(u))=2 \bigl(\chi_{\BB}^{(\delta)}\bigr)'(\mathcal{M}(u))\,u\,.
\end{equation*}
\end{lemma}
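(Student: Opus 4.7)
The proof is essentially a direct chain rule computation, as the hint indicates. The plan is the following.

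By Lemma \ref{Malliavin_derivative_M_lemma}, we already know that $\mathcal{M} \in \mathbb{D}^{1,p}(\mu_0)$ for every $p \in [1,\infty)$, with $\nabla \mathcal{M}(u) = 2u$. For $\delta \in (0,1)$, Definition \ref{cut-off_chi} gives $\chi_{\BB}^{(\delta)} \in \mathscr{C}^{\infty}_c(\R) \subset \mathscr{C}^1_{\mathrm{b}}(\R)$. Lemma \ref{chain_rule} then applies directly with $\psi = \chi_{\BB}^{(\delta)}$ and $F = \mathcal{M}$, yielding $\chi_{\BB}^{(\delta)}(\mathcal{M}) \in \mathbb{D}^{1,p}(\mu_0)$ together with
\begin{equation*}
\nabla \chi_{\BB}^{(\delta)}(\mathcal{M}(u)) \;=\; (\chi_{\BB}^{(\delta)})'(\mathcal{M}(u))\, \nabla \mathcal{M}(u) \;=\; 2 (\chi_{\BB}^{(\delta)})'(\mathcal{M}(u))\, u,
\end{equation*}
which is exactly the stated formula.

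The case $\delta = 1$ is where the only real subtlety lies, since $\chi^{(1)} = \mathbbm{1}_{(-1,1)}$ is not in $\mathscr{C}^1_{\mathrm{b}}(\R)$ and Lemma \ref{chain_rule} is not directly applicable. My plan here is an approximation argument: pick a sequence $\delta_n \uparrow 1$, invoke the previous step for each $\chi_{\BB}^{(\delta_n)}$, and pass to the limit in $\mathbb{D}^{1,p}(\mu_0)$ using the closedness of the Malliavin derivative (Definition \ref{Malliavin_definition}). The $L^p(\mu_0)$ convergence of $\chi_{\BB}^{(\delta_n)}(\mathcal{M})$ to $\chi_{\BB}^{(1)}(\mathcal{M})$ follows from dominated convergence, provided the law of $\mathcal{M}$ under $\mu_0$ has no atoms at $\pm \BB$, which is standard for Gaussian polynomials/Wick-ordered functionals. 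The limiting identity then reads $\nabla \chi_{\BB}^{(1)}(\mathcal{M}) = 0$ in $L^p(\mu_0;H^{-s})$, consistent with the fact that $(\chi^{(1)})'$ vanishes Lebesgue-a.e.

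The main technical obstacle is therefore the $L^p(\mu_0; H^{-s})$ convergence $\nabla \chi_{\BB}^{(\delta_n)}(\mathcal{M}) = 2(\chi_{\BB}^{(\delta_n)})'(\mathcal{M}) \, u \to 0$, since the $L^\infty$-norm of $(\chi_{\BB}^{(\delta_n)})'$ blows up like $(\BB(1-\delta_n))^{-1}$ while its support shrinks to $\{R\delta_n \le |\cdot| \le R\}$. The strategy is to select the mollifiers so that $\|(\chi_{\BB}^{(\delta_n)})'\|_\infty \lesssim (\BB(1-\delta_n))^{-1}$ and then to combine Hölder's inequality with Lemma \ref{Lemma_2.16_5} (to control $\|u\|_{H^{-s}}$ in every $L^q(\mu_0)$) and a concentration estimate of the form $\mu_0(R\delta_n \le |\mathcal{M}| \le R) \lesssim (1-\delta_n)^\alpha$ with $\alpha$ large, available from the sub-Gaussian/sub-exponential tail bounds on $\mathcal{M}$ summarized in the concentration-inequality section of the paper. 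Modulo this controlled passage to the limit, everything reduces to the chain rule, in line with the author's claim that the result follows directly from Lemmas \ref{Malliavin_derivative_M_lemma} and \ref{chain_rule}.
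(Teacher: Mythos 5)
Your proof is correct for $\delta\in(0,1)$ and coincides with the paper's (unstated) argument: apply Lemma~\ref{chain_rule} with $\psi=\chi_{\BB}^{(\delta)}\in\mathscr C^\infty_{\mathrm c}(\R)\subset\mathscr C^1_{\mathrm b}(\R)$ and $F=\mathcal M\in\mathbb D^{1,p}(\mu_0)$ from Lemma~\ref{Malliavin_derivative_M_lemma}, and use $\nabla\mathcal M=2u$.

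The $\delta=1$ case, however, is where your plan breaks down, and in fact the statement should not be believed for $\delta=1$. Your proposed approximation needs $\nabla\chi_{\BB}^{(\delta_n)}(\mathcal M)=2(\chi_{\BB}^{(\delta_n)})'(\mathcal M)\,u\to 0$ in $L^p(\mu_0;H^{-s})$, but that cannot hold: the law of $\mathcal M$ under $\mu_0$ has an absolutely continuous distribution with a strictly positive density at $\pm\BB$ (a weighted sum of $\chi^2$-type variables; see also Lemma~\ref{app.lem.partition_funct}), so $\mu_0(\delta_n\BB\le|\mathcal M|\le\BB)\sim(1-\delta_n)$ while $\|(\chi_{\BB}^{(\delta_n)})'\|_\infty\sim(1-\delta_n)^{-1}$. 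Running your H\"older bound then gives $\|\nabla\chi_{\BB}^{(\delta_n)}(\mathcal M)\|_{L^p(\mu_0;H^{-s})}\gtrsim(1-\delta_n)^{1/2-p}\to\infty$ for every $p\ge 1$, so the closedness of $\nabla$ cannot be invoked and the sequence does not converge in $\mathbb D^{1,p}(\mu_0)$. Indeed $\chi_{\BB}^{(1)}(\mathcal M)=\mathbbm{1}_{(-\BB,\BB)}(\mathcal M)$ is a $\{0,1\}$-valued random variable that is not a.s.\ constant (it has $\mu_0$-measure strictly between $0$ and $1$), and a classical fact about Gaussian Sobolev spaces says that any such indicator fails to lie in $\mathbb D^{1,2}(\mu_0)$: if $\mathbbm 1_A\in\mathbb D^{1,2}$, then chain rule applied to $t\mapsto t^2$ and $\mathbbm 1_A^2=\mathbbm 1_A$ forces $(2\mathbbm 1_A-1)\nabla\mathbbm 1_A=0$, hence $\nabla\mathbbm 1_A=0$, hence $\mathbbm 1_A$ constant. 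The hoped-for sharpened concentration $\mu_0(\delta_n\BB\le|\mathcal M|\le\BB)\lesssim(1-\delta_n)^\alpha$ with $\alpha$ large is therefore not available. The lemma's ``$\delta\in(0,1]$'' is an overstatement in the paper; every actual application in the paper takes $\delta\in(0,1)$ (the passage to $\delta=1$ is always done by dominated convergence at the level of integrals, not by claiming Malliavin differentiability of the sharp cutoff). The correct move in your write-up is to restrict to $\delta\in(0,1)$, rather than try to push the chain rule through $\delta=1$.
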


Let us finally note an elementary approximation result which gives us an approximation of the exponential function by continuously differentiable functions from with bounded derivatives. We denote the latter class of functions as $\mathscr{C}^1_{\mathrm{b}}(\R)$.

\begin{lemma}
\label{exponential_approximation}
Given $m \in \N$, we define the function $\theta_m: \R \rightarrow \R$ as 
\begin{equation*}
\theta_m(x) :=
\begin{cases}
\ee^{x} &\text{ for } x \leq m\,, \\
-\arctan(\ee^{m}(x-m)) + \ee^{m} &\text{ for } x>m\,.
\end{cases}
\end{equation*}
Then, the following properties hold.
\begin{itemize}
\item[(i)] $\theta_m \in \mathscr{C}^1_{\mathrm{b}}(\R)$.
\item[(ii)] For all $x \in \R$, we have 
\begin{equation*}
0 \leq \theta_m(x) \leq \ee^{x}\,,\qquad |\theta_m'(x)| \leq \ee^{x}\,.
\end{equation*}
\item[(iii)] For all $x \in \R$, we have
\begin{equation*}
\theta_m(x) \rightarrow \ee^{x}\,,\qquad \theta_m'(x) \rightarrow \ee^{x}
\end{equation*}
as $m \rightarrow \infty$.
\end{itemize}
\end{lemma}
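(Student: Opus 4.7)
The proof is elementary and splits into three small verifications, one for each claim. Nothing here is substantial — the whole point of the construction is that $\theta_m$ agrees with $\ee^x$ on $(-\infty,m]$ and smoothly transitions to a bounded plateau on $(m,\infty)$ while keeping all the relevant inequalities.

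\textbf{Step 1 (Claim (i), $C^1_{\mathrm{b}}$ regularity).} I would first check matching at the junction $x=m$. The left limit of $\theta_m$ is $\ee^m$ and the right limit is $\ee^m$ (since $\arctan(0)=0$), so $\theta_m$ is continuous. The one-sided derivatives at $m$ both equal $\ee^m$, which is immediate from differentiating each piece (the right derivative is $\ee^m/(1+\ee^{2m}(x-m)^2)$ evaluated at $x=m$). Hence $\theta_m \in \mathscr{C}^1(\R)$. Boundedness of $\theta_m$ is clear because $\theta_m \leq \ee^m$ on $(-\infty,m]$ and $\theta_m \leq \ee^m + \tfrac{\pi}{2}$ on $(m,\infty)$; boundedness of $\theta_m'$ follows similarly from $\theta_m'(x) = \ee^x \leq \ee^m$ on $(-\infty,m]$ and $|\theta_m'(x)| \leq \ee^m$ on $(m,\infty)$.

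\textbf{Step 2 (Claim (ii), pointwise bounds).} On $(-\infty,m]$, the two inequalities $0 \leq \theta_m(x) \leq \ee^x$ and $|\theta_m'(x)| \leq \ee^x$ are trivial since $\theta_m(x)=\ee^x$ there. On $(m,\infty)$, nonnegativity is immediate from the explicit formula. For the upper bound on $\theta_m$, I would combine the elementary inequality $|\arctan(y)| \leq |y|$ with the convexity tangent bound $\ee^x \geq \ee^m + \ee^m(x-m)$ for $x \geq m$, yielding
\begin{equation*}
\theta_m(x) - \ee^m \leq \ee^m(x-m) \leq \ee^x - \ee^m.
\end{equation*}
For the derivative bound, the explicit expression $|\theta_m'(x)| = \ee^m/(1+\ee^{2m}(x-m)^2) \leq \ee^m \leq \ee^x$ for $x \geq m$ suffices.

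\textbf{Step 3 (Claim (iii), pointwise convergence).} This is essentially automatic: for any fixed $x \in \R$, as soon as $m > x$ we have $\theta_m(x) = \ee^x$ and $\theta_m'(x) = \ee^x$ by the definition on the region $\{x \leq m\}$, so both sequences become eventually constant and trivially converge to $\ee^x$.

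There is no real obstacle in this lemma; the only mild subtlety is the derivative-matching computation at $x=m$, which fixes the constant $\ee^m$ inside the $\arctan$ argument so that the one-sided derivatives agree. Everything else is a direct consequence of $|\arctan(y)|\leq|y|$ and convexity of the exponential.
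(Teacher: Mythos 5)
Your Step 1 contains a sign slip that is worth flagging because it points to a typo in the lemma's own statement. Differentiating the formula $\theta_m(x) = -\arctan\bigl(\ee^m(x-m)\bigr) + \ee^m$ as given yields
\begin{equation*}
\theta_m'(x) = -\frac{\ee^m}{1+\ee^{2m}(x-m)^2} \qquad \text{for } x>m
\end{equation*}
(this is exactly what the paper's own one-line proof records), so the right derivative at $x=m$ is $-\ee^m$, not the $+\ee^m$ you claim. The left derivative is $+\ee^m$, so under the stated formula $\theta_m$ is continuous but \emph{not} $\mathscr{C}^1$ at $x=m$, and claim (i) fails. The intended definition is evidently $\theta_m(x) = +\arctan\bigl(\ee^m(x-m)\bigr) + \ee^m$ for $x>m$, which gives a right derivative of $+\ee^m$ and restores $C^1$ matching. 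Your computation in Step 1 silently worked with this corrected sign; and your Step 2, which invokes the tangent-line bound $\ee^x \geq \ee^m + \ee^m(x-m)$ to control $\theta_m(x)-\ee^m$ from above, is \emph{only} needed under the plus-sign definition (under the minus sign one trivially has $\theta_m(x) \leq \ee^m \leq \ee^x$), so you were evidently reasoning throughout with the corrected formula. With that correction, all three steps of your argument are sound and more detailed than the paper's one-line justification; you should simply state explicitly that the formula's sign is being corrected rather than passing over the discrepancy.
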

\begin{proof}
The above result follows immediately by construction, and noting that for $x>m$, we have
\begin{equation*}
\theta_m'(x)=-\frac{\ee^m}{\ee^{2m}(x-m)^2+1}\,.
\end{equation*}
\end{proof}
Throughout the sequel, we omit the argument $u$ where possible to simplify the notation.  We now have all the tools at our disposal to prove Theorem \ref{homogeneous_KMS_thm}.

\begin{proof}[Proof of Theorem \ref{homogeneous_KMS_thm}]
Let us consider $\BB>0$ as in Proposition \ref{Hartree_equation_Malliavin_derivative} (ii), which we fix throughout the proof.
Following the general outline of the proof of Proposition \ref{homogeneous_finite_Liouville_lemma}, the proof of Theorem \ref{homogeneous_KMS_thm} is divided into three steps. In the first two steps, we follow ideas from the proof of Proposition \ref{homogeneous_finite_Liouville_lemma} to show that \eqref{homogeneous_KMS} holds for fixed $\delta \in (0,1)$. Step 1 corresponds to shifting derivatives from the test function $F$ by using Gaussian integration by parts. Step 2 consists of showing that the resulting expression is equal to zero. The analysis in these steps relies crucially on the differentiability of $\chi^{(\delta)}_{\BB}$ from Definition \ref{cut-off_chi}.
In the third step, we let $\delta \rightarrow 1$ to deduce \eqref{homogeneous_KMS} for $\mu^{(1)}$.

\paragraph{\textbf{Step 1: Shifting the derivative from the test function $F$ using Gaussian integration by parts}}
Let us consider $\delta \in (0,1)$. Similarly as in \eqref{homogeneous_finite_Liouville_lemma_2}--\eqref{homogeneous_finite_Liouville_lemma_3}, we want to use integration by parts (in this case given by Corollary \ref{integration_by_parts_corollary}) to shift the derivatives from $F$ to the Hamiltonian on the left-hand side of \eqref{homogeneous_KMS}. In the infinite-dimensional setting we are now considering, there are several analytical subtleties that were not present in the proof of Proposition \ref{homogeneous_finite_Liouville_lemma}. We recall \eqref{X_definition} and note that by Proposition \ref{Hartree_equation_Malliavin_derivative} (i), we only have good control in the Gross-Sobolev space of the interacting term (whereas the linear term loses two derivatives). To this end, we rewrite the left-hand side of \eqref{homogeneous_KMS} as 
\begin{equation}
\label{homogeneous_KMS_2}
\int_{H^{-s}} \langle \nabla F , -\ii Au \rangle_{\mathcal{H},\R}\,d \mu^{(\delta)}+\int_{H^{-s}} \langle \nabla F , \ii \nabla h^I \rangle_{\mathcal{H},\R}\,d \mu^{(\delta)}=:I+II\,,
\end{equation}
and treat each term separately in what follows.
Furthermore, we note that $\ee^{h^I}$ does not belong to a Gross-Sobolev space (without an additional cut-off present) due to the focusing nature of the nonlinearity. This is remedied by replacing $\ee^{h^I}$ with $\theta_m(h^I)$ for $\theta_m$ as in Lemma \ref{exponential_approximation} and letting $m \rightarrow \infty$; see \eqref{homogeneous_KMS_I_1} and \eqref{homogeneous_KMS_II_1} below for details; for the latter, we need an additional approximation of $h^I$ by functions $\mathscr{C}^{\infty}_{\mathrm{c},\mathrm{cyl}}(H^{-s})$ given by \eqref{G_m_convergence}. Once we have set up these approximations, we can carry out the analysis on each of the terms $I$ and $II$ in \eqref{homogeneous_KMS_2} separately by using the Gaussian integration by parts from Corollary \ref{integration_by_parts_corollary}. The final result of rewriting \eqref{homogeneous_KMS_2} using integration by parts is given by \eqref{homogeneous_KMS_sum_1} below.

Let us first analyse $I$. Recalling \eqref{local_Gibbs_measure_rigorous} and Lemma \ref{exponential_approximation}, we write 
\begin{multline}
\label{homogeneous_KMS_I_1}
I=\frac{1}{z^{(\delta)}}\,\int_{H^{-s}} \langle \nabla F , -\ii Au \rangle_{\mathcal{H},\R}\, \ee^{h^I}\chi_{\BB}^{(\delta)}(\mathcal{M})\,d\mu_{0}
\\
=\lim_{m} \frac{1}{z^{(\delta)}}\,\int_{H^{-s}} \langle \nabla F , -\ii A u \rangle_{\mathcal{H},\R}\, \theta_m(h^I)\,\chi_{\BB}^{(\delta)}(\mathcal{M}) \,d\mu_{0}\,.
\end{multline}
More precisely, we obtain \eqref{homogeneous_KMS_I_1} by using \eqref{homogeneous_KMS_I_2} with $q=2$, \eqref{integrability_of_weight} (from Proposition \ref{Hartree_equation_Malliavin_derivative} (ii)) with $p=2$, the Cauchy-Schwarz inequality, Lemma \ref{exponential_approximation} (ii)--(iii), and the dominated convergence theorem.

We know that for some $n \in \N$ (henceforth fixed throughout the rest of the proof) and $\varphi \in \mathscr{C}^{\infty}_{\mathrm{c}}(\R^{2n})$, we can write $F$ as in \eqref{cylindrical_test_functions_1}.
Recalling \eqref{ONB_convention} and \eqref{gradient_cylindrical_F}, we write 
\begin{equation}
\label{-iAu}
\langle \nabla F,-\ii A u \rangle_{\mathcal{H},\R}=\bigg \langle \nabla F, \sum_{j=1}^{n} \langle u, Ae_j^{(2)}\rangle_{\mathcal{H},\R}\,e_j^{(1)}-\langle u,Ae_j^{(1)}\rangle_{\mathcal{H},\R}\,e_j^{(2)} \bigg \rangle_{\mathcal{H},\R}
\end{equation}
for $u \in H^{-s}$.
For fixed $m \in \N$, we use \eqref{-iAu} and recall \eqref{gradient_cylindrical_F} again to write
\begin{multline}
\label{homogeneous_KMS_I_4_a}
\int_{H^{-s}} \langle \nabla F , -\ii Au \rangle_{\mathcal{H},\R}\, \theta_m(h^I)\,\chi_{\BB}^{(\delta)}(\mathcal{M})\,d\mu_{0}
\\
=
\int_{H^{-s}}\sum_{j=1}^{n} \bigl(\partial_j^{(1)}F\,\langle u, Ae_j^{(2)}\rangle_{\mathcal{H},\R}-\partial_j^{(2)}F\,\langle u,Ae_j^{(1)}\rangle_{\mathcal{H},\R}\bigr)\,
\\
\times
\theta_m(h^I)\,\chi_{\BB}^{(\delta)}(\mathcal{M})\,d\mu_0\,,
\end{multline}
which by Corollary \ref{integration_by_parts_corollary} equals
\begin{multline}
\label{homogeneous_KMS_I_4}
\sum_{j=1}^{n} \int_{H^{-s}} F \,\biggl[-\partial_j^{(1)} \Bigl( \langle u, Ae_j^{(2)}\rangle_{\mathcal{H},\R}\,\theta_m(h^I)\,\chi_{\BB}^{(\delta)}(\mathcal{M})\Bigr)
\\
+\partial_j^{(2)} \Bigl( \langle u, Ae_j^{(1)}\rangle_{\mathcal{H},\R}\,\theta_m(h^I)\,\chi_{\BB}^{(\delta)}(\mathcal{M})\Bigr)
+\langle u, Ae_j^{(1)} \rangle_{\mathcal{H},\R}\,\langle u,Ae_j^{(2)} \rangle_{\mathcal{H},\R}\, \theta_m(h^I)\,\chi_{\BB}^{(\delta)}(\mathcal{M})
\\
-
\langle u, Ae_j^{(2)} \rangle_{\mathcal{H},\R}\,\langle u,Ae_j^{(1)} \rangle_{\mathcal{H},\R}\, \theta_m(h^I)\,\chi_{\BB}^{(\delta)}(\mathcal{M})
\biggr]\,d\mu_0
\\
=\sum_{j=1}^{n} \int_{H^{-s}} F \,\biggl[-\partial_j^{(1)} \Bigl( \langle u, Ae_j^{(2)}\rangle_{\mathcal{H},\R}\,\theta_m(h^I)\,\chi_{\BB}^{(\delta)}(\mathcal{M})\Bigr)
\\
+\partial_j^{(2)} \Bigl( \langle u, Ae_j^{(1)}\rangle_{\mathcal{H},\R}\,\theta_m(h^I)\,\chi_{\BB}^{(\delta)}(\mathcal{M})\Bigr)
\biggr]\,d\mu_0
\,.
\end{multline}
In order to justify the application of Corollary \ref{integration_by_parts_corollary} in \eqref{homogeneous_KMS_I_4}, we should verify that for all $j=1,\ldots,n$ and $i=1,2$, we have 
\begin{equation}
\label{homogeneous_KMS_I_4_A}
\langle u, Ae_j^{(i)}\rangle_{\mathcal{H},\R}\,\theta_m(h^I)\,\chi_{\BB}^{(\delta)}(\mathcal{M}) \in \mathbb{D}^{1,2}(\mu_0)\,.
\end{equation}
We prove \eqref{homogeneous_KMS_I_4_A} for $i=1$. The case $i=2$ is analogous. Let us fix $j \in \{1,\ldots,n\}$. We first show that
\begin{equation}
\label{homogeneous_KMS_I_4_A_1}
\langle u, Ae_j^{(1)}\rangle_{\mathcal{H},\R}\,\theta_m(h^I)\,\chi_{\BB}^{(\delta)}(\mathcal{M}) \in L^2(\mu_0)\,.
\end{equation}
By \eqref{e_j_choice}, \eqref{ONB_convention}, and duality, we have 
\begin{equation}
\label{homogeneous_KMS_I_4_A_1a}
|\langle u, Ae_j^{(1)}\rangle_{\mathcal{H},\R}| = \lambda_j |\langle u, e_j^{(1)}\rangle_{\mathcal{H},\R}| \lesssim_{s,j} \|u\|_{H^{-s}}
\end{equation}
By Lemma \ref{exponential_approximation} (ii) and Definition \ref{cut-off_chi}, we have that
\begin{equation}
\label{homogeneous_KMS_I_4_A_1b}
|\theta_m(h^I)\,\chi_{\BB}^{(\delta)}(\mathcal{M})| \leq \ee^{h^I}\chi_{\BB}^{(\delta)}(\mathcal{M})\,.
\end{equation}
By \eqref{homogeneous_KMS_I_4_A_1a}--\eqref{homogeneous_KMS_I_4_A_1b} and H\"{o}lder's inequality, we obtain that 
\begin{multline}
\label{homogeneous_KMS_I_4_A_1c}
\bigl\|\langle u, Ae_j^{(1)}\rangle_{\mathcal{H},\R}\,\theta_m(h^I)\,\chi_{\BB}^{(\delta)}(\mathcal{M})\bigr\|_{L^2(\mu_0)} 
\\
\lesssim_{s,j} \|u\|_{L^4(\mu_0;H^{-s})}\, \bigl\|\ee^{h^I}\chi_{\BB}^{(\delta)}(\mathcal{M})\bigr\|_{L^4(\mu_0)}<\infty\,.
\end{multline}
For the last inequality in \eqref{homogeneous_KMS_I_4_A_1c}, we used Lemma \ref{Lemma_2.16_5} and \eqref{integrability_of_weight}. We hence deduce \eqref{homogeneous_KMS_I_4_A_1} from \eqref{homogeneous_KMS_I_4_A_1c}.

We now show that 
\begin{equation}
\label{homogeneous_KMS_I_4_A_2}
\nabla\bigl(\langle u, Ae_j^{(1)}\rangle_{\mathcal{H},\R}\,\theta_m(h^I)\,\chi_{\BB}^{(\delta)}(\mathcal{M})\bigr) \in L^2(\mu_0;H^{-s})\,.
\end{equation}
Let us note that, by \eqref{e_j_choice}, \eqref{ONB_convention}, and \eqref{gradient_cylindrical_F}, we have
\begin{equation}
\label{homogeneous_KMS_I_4_A_2i}
\nabla(\langle u, Ae_j^{(1)}\rangle_{\mathcal{H},\R})=\lambda_j \nabla(\langle u,e_j^{(1)}\rangle_{\mathcal{H},\R})= \lambda_j e_j^{(1)}\,.
\end{equation}
In particular,
\begin{equation}
\label{homogeneous_KMS_I_4_A_2i*}
-\partial_j^{(1)}\langle u, Ae_j^{(2)}\rangle_{\mathcal{H},\R}+\partial_j^{(2)}\langle u, Ae_j^{(2)}\rangle_{\mathcal{H},\R}
=0\,.
\end{equation}

We compute, by using Lemma \ref{Leibniz_rule}, \eqref{homogeneous_KMS_I_4_A_2i}, Lemma \ref{chain_rule},  \ref{Malliavin_derivative_M_lemma*}, Lemma \ref{exponential_approximation} (i), and Proposition \ref{Hartree_equation_Malliavin_derivative} (i) that
\begin{multline}
\label{homogeneous_KMS_I_4_A_2a}
\nabla\bigl(\langle u, Ae_j^{(1)}\rangle_{\mathcal{H},\R}\,\theta_m(h^I)\,\chi_{\BB}^{(\delta)}(\mathcal{M})\bigr)=\lambda_j \,\theta_m(h^I)\,\chi_{\BB}^{(\delta)}(\mathcal{M}) \,e_j^{(1)}
\\
+
\langle u, Ae_j^{(1)}\rangle_{\mathcal{H},\R}\,\theta_m'(h^I)\,\chi_{\BB}^{(\delta)}(\mathcal{M})\,\nabla h^I
+2\langle u, Ae_j^{(1)}\rangle_{\mathcal{H},\R}\,\theta_m(h^I)\,\bigl(\chi_{\BB}^{(\delta)}\bigr)'(\mathcal{M}) \,u\,.
\end{multline}

The use of Lemma \ref{Leibniz_rule} in \eqref{homogeneous_KMS_I_4_A_2a} is justified as we are applying the Malliavin gradient to a suitable product of Malliavin differentiable functions.
We estimate each of the terms on the right-hand side of \eqref{homogeneous_KMS_I_4_A_2a} separately.
By Lemma \ref{exponential_approximation} (ii) and \eqref{integrability_of_weight}, 
we have
\begin{equation}
\label{homogeneous_KMS_I_4_A_2b}
\|\theta_m(h^I)\,\chi_{\BB}^{(\delta)}(\mathcal{M}) \,e_j^{(1)}\|_{L^2(\mu_0;H^{-s})} \lesssim_{s,j} \|\ee^{h^I}\chi^{(\delta)}_{\BB}(\mathcal{M})\|_{L^2(\mu_0)}<\infty\,.
\end{equation}
By \eqref{homogeneous_KMS_I_4_A_1a} and Lemma \ref{exponential_approximation} (ii), we have that
\begin{multline}
\label{homogeneous_KMS_I_4_A_2d_1}
\|\langle u, Ae_j^{(1)}\rangle_{\mathcal{H},\R}\,\theta_m'(h^I)\,\chi_{\BB}^{(\delta)}(\mathcal{M})\,\nabla h^I\|_{L^2(\mu_0;H^{-s})} 
\\
\lesssim_{s,j} \bigl\|\|u\|_{H^{-s}}\,\ee^{h^I}\chi_{\BB}^{(\delta)}(\mathcal{M})\,\nabla h^I\bigr\|_{L^2(\mu_0;H^{-s})}\,,
\end{multline}
which by H\"{o}lder's inequality is
\begin{equation}
\label{homogeneous_KMS_I_4_A_2d}
\leq \|u\|_{L^8(\mu_0;H^{-s})}\, \|\ee^{h^I}\chi_{\BB}^{(\delta)}(\mathcal{M})\|_{L^8(\mu_0)}\,\|\nabla h^I\|_{L^4(\mu_0;H^{-s})}<\infty\,.
\end{equation}
For the last inequality in \eqref{homogeneous_KMS_I_4_A_2d}, we used Lemma \ref{Lemma_2.16_5}, \eqref{integrability_of_weight}, and Proposition \ref{Hartree_equation_Malliavin_derivative} (i).

By \eqref{homogeneous_KMS_I_4_A_1a} and Lemma \ref{exponential_approximation} (ii), we have
\begin{multline}
\label{homogeneous_KMS_I_4_A_2e_1}
\bigl\|\langle u, Ae_j^{(1)}\rangle_{\mathcal{H},\R}\,\theta_m(h^I)\,\bigl(\chi_{\BB}^{(\delta)}\bigr)'(\mathcal{M})\,u\bigr\|_{L^2(\mu_0;H^{-s})} 
\\
\lesssim_{s,j} \bigl\|\|u\|_{H^{-s}}\,\ee^{h^I}\bigl(\chi_{\BB}^{(\delta)}\bigr)'(\mathcal{M})\,u\bigr\|_{L^2(\mu_0;H^{-s})}
=\bigl\|\|u\|_{H^{-s}}^2\,\ee^{h^I}\bigl(\chi_{\BB}^{(\delta)}\bigr)'(\mathcal{M})\bigr\|_{L^2(\mu_0)}
\,,
\end{multline}
which by H\"{o}lder's inequality is
\begin{equation}
\label{homogeneous_KMS_I_4_A_2e}
\leq \|u\|_{L^6(\mu_0;H^{-s})}^2\, \bigl\|\ee^{h^I}\bigl(\chi_{\BB}^{(\delta)}\bigr)'(\mathcal{M})\bigr\|_{L^6(\mu_0)}<\infty\,.
\end{equation}
In order to obtain the last inequality in \eqref{homogeneous_KMS_I_4_A_2e}, we used Lemma \ref{Lemma_2.16_5} and the observation that for all $p \in [1,\infty)$, we have
\begin{equation}
\label{integrability_of_weight'}
\bigl\|\ee^{h^I}\bigl(\chi_{\BB}^{(\delta)}\bigr)'(\mathcal{M})\bigr\|_{L^p(\mu_0)}<\infty\,.
\end{equation}
In order to deduce \eqref{integrability_of_weight'}, we recall Definition \ref{cut-off_chi} and argue analogously as for the proof of \eqref{integrability_of_weight} when $d=2,3$; the main point here is that the proof relies on the fact that the support of $\chi'_{\BB}$ is contained in $[-\BB,\BB]$; see \eqref{Integral_1} above. When $d=1$, the result follows from \cite[Lemma 3.10]{Bou94}; see also \cite[Appendix A]{RS22}. By combining \eqref{homogeneous_KMS_I_4_A_2b}--\eqref{homogeneous_KMS_I_4_A_2e}, we obtain \eqref{homogeneous_KMS_I_4_A_2}. From \eqref{homogeneous_KMS_I_4_A_1} and \eqref{homogeneous_KMS_I_4_A_2}, we obtain \eqref{homogeneous_KMS_I_4_A}, hence justifying the calculations used to obtain \eqref{homogeneous_KMS_I_4}.

Using Lemma \ref{Leibniz_rule}, Lemma \ref{chain_rule}, and \eqref{homogeneous_KMS_I_4_A_2i}, we can hence rewrite \eqref{homogeneous_KMS_I_4} as
\begin{multline}
\label{homogeneous_KMS_I_5}
\sum_{j=1}^{n} \int_{H^{-s}} F\,\Bigl[-\langle u,Ae_j^{(2)} \rangle_{\mathcal{H},\R}\,\partial_j^{(1)} \chi_{\BB}^{(\delta)}(\mathcal{M})\,\theta_m(h^I)
\\
+\langle u,Ae_j^{(1)} \rangle_{\mathcal{H},\R}\,\partial_j^{(2)} \chi_{\BB}^{(\delta)}(\mathcal{M})\,\theta_m(h^I)
-\langle u, Ae_j^{(2)} \rangle_{\mathcal{H},\R} \,\theta_m'(h^I)\, \partial_j^{(1)} h^I\,\chi_{\BB}^{(\delta)}(\mathcal{M})
\\
+\langle u, Ae_j^{(1)} \rangle_{\mathcal{H},\R} \,\theta_m'(h^I)\, \partial_j^{(2)} h^I\,\chi_{\BB}^{(\delta)}(\mathcal{M})
\Bigr]\,d \mu_0\,.
\end{multline}

We now want to let $m \rightarrow \infty$ in \eqref{homogeneous_KMS_I_5}. H\"{o}lder's inequality and the arguments as for \eqref{homogeneous_KMS_I_4_A_2e_1}--\eqref{homogeneous_KMS_I_4_A_2e} show that 
\begin{equation}
\label{homogeneous_KMS_I_7}
\langle u,Ae_j^{(2)} \rangle_{\mathcal{H},\R}\,\partial_j^{(1)} \chi_{\BB}^{(\delta)}(\mathcal{M})\,\ee^{h^I}=
2 \langle u,Ae_j^{(2)} \rangle_{\mathcal{H},\R}\, \bigl(\chi_{\BB}^{(\delta)}\bigr)'(\mathcal{M})\,\langle u, e_j^{(1)} \rangle_{\mathcal{H},\R}\,\ee^{h^I}
 \in L^1(\mu_0)\,.
\end{equation}
Analogously, we have 
\begin{equation}
\label{homogeneous_KMS_I_7*}
\langle u,Ae_j^{(1)} \rangle_{\mathcal{H},\R}\,\partial_j^{(2)} \chi_{\BB}^{(\delta)}(\mathcal{M})\,\ee^{h^I} \in L^1(\mu_0)\,.
\end{equation}
Furthermore, the arguments as for \eqref{homogeneous_KMS_I_4_A_2d_1}--\eqref{homogeneous_KMS_I_4_A_2d} show that 
\begin{equation}
\label{homogeneous_KMS_I_8}
\langle u, Ae_j^{(2)} \rangle_{\mathcal{H},\R} \,\ee^{h^I}\, \partial_j^{(1)} h^I\chi_{\BB}^{(\delta)}(\mathcal{M})\,,\,\langle u, Ae_j^{(1)} \rangle_{\mathcal{H},\R} \,\ee^{h^I}\, \partial_j^{(2)} h^I\chi_{\BB}^{(\delta)}(\mathcal{M}) \in L^1(\mu_0)\,.
\end{equation}
We use \eqref{homogeneous_KMS_I_7}--\eqref{homogeneous_KMS_I_8}, the fact that $F \in \mathscr{C}^{\infty}_{\mathrm{c},\mathrm{cyl}}(H^{-s})$, Lemma \ref{exponential_approximation} (ii)--(iii), and the dominated convergence theorem in \eqref{homogeneous_KMS_I_5} to rewrite \eqref{homogeneous_KMS_I_1} as
\begin{multline}
\label{homogeneous_KMS_I_9}
I=\frac{1}{z^{(\delta)}}\sum_{j=1}^{n} \int_{H^{-s}} F\,\Big[-\langle u,Ae_j^{(2)} \rangle_{\mathcal{H},\R}\,\partial_j^{(1)} \chi_{\BB}^{(\delta)}(\mathcal{M})\,\ee^{h^I}
\\
+\langle u,Ae_j^{(1)} \rangle_{\mathcal{H},\R}\,\partial_j^{(2)} \chi_{\BB}^{(\delta)}(\mathcal{M})\,\ee^{h^I}
-\langle u, Ae_j^{(2)} \rangle_{\mathcal{H},\R} \,\ee^{h^I}\, \partial_j^{(1)} h^I\chi_{\BB}^{(\delta)}(\mathcal{M})
\\+\langle u, Ae_j^{(1)} \rangle_{\mathcal{H},\R} \,\ee^{h^I}\, \partial_j^{(2)} h^I\chi_{\BB}^{(\delta)}(\mathcal{M})
\Bigr]\,d \mu_0\,.
\end{multline}

We now analyse term $II$ from \eqref{homogeneous_KMS_2}. 
By Proposition \ref{Hartree_equation_Malliavin_derivative} (i), we know that $h^I \in \mathbb{D}^{1,4}(\mu_0)$. Hence, we can find a sequence $(G_m)_{m}$ in $\mathscr{C}^{\infty}_{\mathrm{c},\mathrm{cyl}}(H^{-s})$ such that 
\begin{equation}
\label{G_m_convergence}
G_m \rightarrow h^I \quad \mbox{in} \quad \mathbb{D}^{1,4}(\mu_0)\,.
\end{equation}
Similarly as in \eqref{homogeneous_KMS_I_1}, we write
\begin{multline}
\label{homogeneous_KMS_II_1}
II=\frac{1}{z^{(\delta)}}\,\int_{H^{-s}} \langle \nabla F , \ii \nabla h^I \rangle_{\mathcal{H},\R}\, \ee^{h^I}\chi_{\BB}^{(\delta)}(\mathcal{M})\,d\mu_{0}
\\
=\lim_{m} \frac{1}{z^{(\delta)}}\,\int_{H^{-s}} \langle \nabla F , \ii \nabla G_m \rangle_{\mathcal{H},\R}\, \theta_m(h^I)\,\chi_{\BB}^{(\delta)}(\mathcal{M}) \,d\mu_{0}\,.
\end{multline}
Let us justify \eqref{homogeneous_KMS_II_1}. We first note that 
\begin{multline}
\label{homogeneous_KMS_II_2}
\bigl\|\langle \nabla F , \ii \nabla h^I \rangle_{\mathcal{H},\R}\bigr\|_{L^2(\mu_0)} \leq \big\| \|\nabla F\|_{H^s}\,\|\nabla h^I\|_{H^{-s}} \bigr\|_{L^2(\mu_0)}
\\
\leq \|\nabla F\|_{L^4(\mu_0;H^{s})}\,\|\nabla h^I\|_{L^4(\mu_0;H^{-s})} \lesssim_{F,s} \|\nabla h^I\|_{L^4(\mu_0;H^{-s})} 
<\infty\,,
\end{multline}
which follows by using duality, H\"{o}lder's inequality, and Proposition \ref{Hartree_equation_Malliavin_derivative} (i).
Arguing as for \eqref{homogeneous_KMS_II_2} and recalling \eqref{G_m_convergence}, we also obtain
\begin{equation}
\label{homogeneous_KMS_II_3}
\bigl\|\langle \nabla F , \ii \nabla (G_m-h^I) \rangle_{\mathcal{H},\R}\bigr\|_{L^2(\mu_0)}  \lesssim_{F,s} \|\nabla (G_m-h^I)\|_{L^4(\mu_0;H^{-s})} \rightarrow 0\,.
\end{equation}
We have
\begin{multline}
\label{homogeneous_KMS_II_3_B}
\biggl|\int_{H^{-s}} \Big(\langle \nabla F , \ii \nabla G_m \rangle_{\mathcal{H},\R}\, \theta_m(h^I)\,\chi_{\BB}^{(\delta)}(\mathcal{M}) 
- \langle \nabla F , \ii \nabla h^I \rangle_{\mathcal{H},\R}\, \ee^{h^I}\chi_{\BB}^{(\delta)}(\mathcal{M}) \Bigr)\,d\mu_{0}\biggr|
\\
\leq \biggl| \int_{H^{-s}} \langle \nabla F , \ii \nabla (G_m-h^I) \rangle_{\mathcal{H},\R}\, \theta_m(h^I)\,\chi_{\BB}^{(\delta)}(\mathcal{M})\,d\mu_0 \biggr|
\\
+\biggl| \int_{H^{-s}} \langle \nabla F , \ii \nabla h^I \rangle_{\mathcal{H},\R}\, \bigl(\theta_m(h^I)-\ee^{h^I}\bigr)\,\chi_{\BB}^{(\delta)}(\mathcal{M})\, d\mu_0\biggr|\,.
\end{multline}
By using the Cauchy-Schwarz inequality and Lemma \ref{exponential_approximation} (ii), the first term on the right-hand side of \eqref{homogeneous_KMS_II_3_B} is 
\begin{equation*}
\leq \bigl\|\langle \nabla F , \ii \nabla (G_m-h^I) \rangle_{\mathcal{H},\R}\bigr\|_{L^2(\mu_0)} \,\bigl\|\ee^{h^I} \chi_{\BB}^{(\delta)}(\mathcal{M})\bigr\|_{L^2(\mu_0)}\,,
\end{equation*}
which tends to zero as $m \rightarrow \infty$ by \eqref{homogeneous_KMS_II_3} and \eqref{integrability_of_weight}. 
Similarly, by the Cauchy-Schwarz inequality, the second term on the right-hand side of \eqref{homogeneous_KMS_II_3_B} is
\begin{equation*}
\leq \bigl\|\langle \nabla F , \ii \nabla h^I \rangle_{\mathcal{H},\R}\bigr\|_{L^2(\mu_0)} \,\bigl\|\bigl(\theta_m(h^I)-\ee^{h^I}\bigr)\,\chi_{\BB}^{(\delta)}(\mathcal{M})\bigr\|_{L^2(\mu_0)}\,,
\end{equation*}
which tends to zero by \eqref{homogeneous_KMS_II_2}, Lemma \ref{exponential_approximation} (ii)--(iii), \eqref{integrability_of_weight} with $p=2$, and the dominated convergence theorem. In particular, we deduce that the right-hand side of \eqref{homogeneous_KMS_II_3_B} tends to zero as $m \rightarrow \infty$. We hence obtain \eqref{homogeneous_KMS_II_1}, as was claimed.

For fixed $m \in \N$, we write
\begin{multline*}
\int_{H^{-s}} \langle \nabla F , \ii \nabla G_m \rangle_{\mathcal{H},\R}\, \theta_m(h^I)\,\chi_{\BB}^{(\delta)}(\mathcal{M})\,d\mu_{0}
\\
=
- \int_{H^{-s}}\sum_{j=1}^{n} \bigl(\partial_j^{(1)}F\,\partial_j^{(2)}G_m-\partial_j^{(2)}F\,\partial_j^{(1)}G_m\bigr)\,
\theta_m(h^I)\,\chi_{\BB}^{(\delta)}(\mathcal{M})\,d\mu_0\,,
\end{multline*}
which by Corollary \ref{integration_by_parts_corollary} is 
\begin{multline}
\label{homogeneous_KMS_II_4_a*}
=\sum_{j=1}^{n}\int_{H^{-s}} F \Bigl[ 
\partial_j^{(1)} \partial_j^{(2)} G_m \, \theta_m(h^I) \,\chi_{\BB}^{(\delta)}(\mathcal{M})+\partial_j^{(2)} G_m\, \partial_j^{(1)} \bigl[\theta_m(h^I)\, \chi_{\BB}^{(\delta)}(\mathcal{M}) \bigr]
\\
-\partial_j^{(2)}G_m\,\theta_m(h^I)\,\chi_{\BB}^{(\delta)}(\mathcal{M})\,\langle u, Ae_j^{(1)}\rangle_{\mathcal{H},\R}
\\
-\partial_j^{(2)} \partial_j^{(1)} G_m \, \theta_m(h^I) \,\chi_{\BB}^{(\delta)}(\mathcal{M})-\partial_j^{(1)} G_m\, \partial_j^{(2)} \bigl[\theta_m(h^I)\, \chi_{\BB}^{(\delta)}(\mathcal{M}) \bigr]
\\
+\partial_j^{(1)}G_m\,\theta_m(h^I)\,\chi_{\BB}^{(\delta)}(\mathcal{M})\,\langle u, Ae_j^{(2)}\rangle_{\mathcal{H},\R}
\Bigr]\,d \mu_0\,.
\end{multline}
Since $G_m \in \mathscr{C}^{\infty}_{\mathrm{c},\mathrm{cyl}}(H^{-s})$, the expression in \eqref{homogeneous_KMS_II_4_a*} is
\begin{multline}
\label{homogeneous_KMS_II_4_a} 
=\sum_{j=1}^{n}\int_{H^{-s}} F \Bigl[ 
\partial_j^{(2)} G_m\, \partial_j^{(1)} \bigl[\theta_m(h^I)\, \chi_{\BB}^{(\delta)}(\mathcal{M}) \bigr]
\\
-\partial_j^{(2)}G_m\,\theta_m(h^I)\,\chi_{\BB}^{(\delta)}(\mathcal{M})\,\langle u, Ae_j^{(1)}\rangle_{\mathcal{H},\R}
-\partial_j^{(1)} G_m\, \partial_j^{(2)} \bigl[\theta_m(h^I)\, \chi_{\BB}^{(\delta)}(\mathcal{M}) \bigr]
\\
+\partial_j^{(1)}G_m\,\theta_m(h^I)\,\chi_{\BB}^{(\delta)}(\mathcal{M})\,\langle u, Ae_j^{(2)}\rangle_{\mathcal{H},\R}
\Bigr]\,d \mu_0\,.
\end{multline}
In order to justify the application of Corollary \ref{integration_by_parts_corollary} in \eqref{homogeneous_KMS_II_4_a*}, we note that for all $j=1,\ldots,n$ and $i=1,2$, we have 
\begin{equation}
\label{homogeneous_KMS_II_4_b}
\partial_j^{(i)}G_m\,\theta_m(h^I)\,\chi_{\BB}^{(\delta)}(\mathcal{M}) \in \mathbb{D}^{1,2}(\mu_0)\,.
\end{equation}
We show \eqref{homogeneous_KMS_II_4_b} by arguing similarly as for \eqref{homogeneous_KMS_I_4_A}.  
Using the fact that $\partial_j^{(i)}G_m \in L^{\infty}(\mu_0)$ combined with \eqref{homogeneous_KMS_I_4_A_1b} and \eqref{integrability_of_weight} with $p=2$, we deduce that 
\begin{equation}
\label{homogeneous_KMS_II_4_c}
\partial_j^{(i)}G_m\,\theta_m(h^I)\,\chi_{\BB}^{(\delta)}(\mathcal{M}) \in L^2(\mu_0)\,.
\end{equation}
We now show that
\begin{equation}
\label{homogeneous_KMS_II_4_d}
\nabla\bigl(\partial_j^{(i)}G_m\,\theta_m(h^I)\,\chi_{\BB}^{(\delta)}(\mathcal{M})\bigr) \in L^2(\mu_0;H^{-s})\,.
\end{equation}
By Lemma \ref{Leibniz_rule}, Lemma \ref{chain_rule}, Lemma \ref{Malliavin_derivative_M_lemma*}, Lemma \ref{exponential_approximation}, and Proposition \ref{Hartree_equation_Malliavin_derivative} (i), we compute
\begin{multline}
\label{homogeneous_KMS_II_4_e}
\nabla\bigl(\partial_j^{(i)}G_m\,\theta_m(h^I)\,\chi_{\BB}^{(\delta)}(\mathcal{M})\bigr)
\\
=\theta_m(h^I)\,\chi_{\BB}^{(\delta)}(\mathcal{M})\,\nabla \partial_j^{(i)}G_m+\partial_j^{(i)}G_m\,\theta_m'(h^I)\,\chi_{\BB}^{(\delta)}(\mathcal{M})\,\nabla h^I
\\
+2 \partial_j^{(i)}G_m\,\theta_m(h^I)\,\bigl(\chi_{\BB}^{(\delta)}\bigr)'(\mathcal{M})\,u\,.
\end{multline}
We now show that all of the terms on the right-hand side of \eqref{homogeneous_KMS_II_4_e} belong to $L^2(\mu_0;H^{-s})$. The application of Lemma \ref{Leibniz_rule} is justified as in \eqref{homogeneous_KMS_I_4_A_2a} above.
We estimate each term separately.
By H\"{o}lder's inequality, Lemma \ref{exponential_approximation} (ii), and \eqref{integrability_of_weight} from  Proposition \ref{Hartree_equation_Malliavin_derivative} (ii), we have that
\begin{multline}
\label{homogeneous_KMS_II_4_e1}
\|\theta_m(h^I)\,\chi_{\BB}^{(\delta)}(\mathcal{M})\,\nabla \partial_j^{(i)}G_m\|_{L^2(\mu_0;H^{-s})} 
\\
\leq \|\ee^{h^I}\chi_{\BB}^{(\delta)}(\mathcal{M})\|_{L^4(\mu_0)}\,\|\nabla \partial_j^{(i)}G_m\|_{L^4(\mu_0;H^{-s})} <\infty\,.
\end{multline}
By H\"{o}lder's inequality, Lemma \ref{exponential_approximation} (ii), and Proposition \ref{Hartree_equation_Malliavin_derivative}, we have
\begin{multline}
\label{homogeneous_KMS_II_4_e2}
\|\partial_j^{(i)}G_m\,\theta_m'(h^I)\,\chi_{\BB}^{(\delta)}(\mathcal{M})\,\nabla h^I\|_{L^2(\mu_0;H^{-s})}
\\
\leq \|\partial_j^{(i)}G_m\|_{L^4(\mu_0)}\, \|\ee^{h^I}\chi_{\BB}^{(\delta)}(\mathcal{M})\|_{L^8(\mu_0)}\,\|\nabla h^I\|_{L^8(\mu_0;H^{-s})}<\infty\,.
\end{multline}
By H\"{o}lder's inequality, Lemma \ref{exponential_approximation} (ii), \eqref{integrability_of_weight'}, and Lemma \ref{Lemma_2.16_5}, we have
\begin{multline}
\label{homogeneous_KMS_II_4_e3}
\bigl\|\partial_j^{(i)}G_m\,\theta_m(h^I)\,\bigl(\chi_{\BB}^{(\delta)}\bigr)'(\mathcal{M})\,u\bigr\|_{L^2(\mu_0;H^{-s})}
\\
\leq \|\partial_j^{(i)}G_m\|_{L^4(\mu_0)}\, \bigl\|\ee^{h^I}\bigl(\chi_{\BB}^{(\delta)}\bigr)'(\mathcal{M})\bigr\|_{L^8(\mu_0)}\,\|u\|_{L^8(\mu_0;H^{-s})}<\infty\,.
\end{multline}
In \eqref{homogeneous_KMS_II_4_e1}--\eqref{homogeneous_KMS_II_4_e3}, we also used the fact that $G_m \in \mathscr{C}^{\infty}_{\mathrm{c},\mathrm{cyl}}(H^{-s})$ to note that 
\begin{equation*}
\|\nabla \partial_j^{(i)}G_m\|_{L^2(\mu_0;H^{-s})}<\infty\,,\quad \|\partial_j^{(i)}G_m\|_{L^4(\mu_0)}<\infty\,.
\end{equation*}
We obtain \eqref{homogeneous_KMS_II_4_d} from \eqref{homogeneous_KMS_II_4_e1}--\eqref{homogeneous_KMS_II_4_e3}. Hence, we obtain \eqref{homogeneous_KMS_II_4_b} from \eqref{homogeneous_KMS_II_4_c}--\eqref{homogeneous_KMS_II_4_d}.

By minor modifications of the argument used to prove \eqref{homogeneous_KMS_I_4_A}, we obtain
\begin{equation}
\label{homogeneous_KMS_I_4_A*}
\theta_m(h^I)\, \chi_{\BB}^{(\delta)}(\mathcal{M})  \in \mathbb{D}^{1,2}(\mu_0)\,. 
\end{equation}
We omit the details.
In particular, we can use Lemma \ref{Leibniz_rule}, followed by Lemma \ref{chain_rule}, Lemma \ref{Malliavin_derivative_M_lemma*}, and Lemma \ref{exponential_approximation} to rewrite \eqref{homogeneous_KMS_II_4_a} as

\begin{multline}
\label{homogeneous_KMS_II_4_f}
\sum_{j=1}^{n}\int_{H^{-s}} F \Bigl[ 
\partial_j^{(2)} G_m\,  \theta_m'(h^I)\,\partial_j^{(1)}h^I \chi_{\BB}^{(\delta)}(\mathcal{M})+\partial_j^{(2)} G_m\,\theta_m(h^I)\, \partial_j^{(1)} \chi_{\BB}^{(\delta)}(\mathcal{M}) 
\\
-\partial_j^{(2)}G_m\,\theta_m(h^I)\,\chi_{\BB}^{(\delta)}(\mathcal{M})\,\langle u, Ae_j^{(1)}\rangle_{\mathcal{H},\R}
\\
-\partial_j^{(1)} G_m\, \theta_m'(h^I)\, \partial_j^{(2)}h^{I}\, \chi_{\BB}^{(\delta)}(\mathcal{M})
-\partial_j^{(1)} G_m\, \theta_m(h^I)\,\partial_j^{(2)} \chi_{\BB}^{(\delta)}(\mathcal{M})
\\
+\partial_j^{(1)}G_m\,\theta_m(h^I)\,\chi_{\BB}^{(\delta)}(\mathcal{M})\,\langle u, Ae_j^{(2)}\rangle_{\mathcal{H},\R}
\Bigr]\,d \mu_0\,.
\end{multline}
We now want to let $m \rightarrow \infty$ in \eqref{homogeneous_KMS_II_4_f}.
Let us fix $j \in \{1,\ldots,n\}$. We have
\begin{multline}
\label{homogeneous_KMS_II_4_f_1}
\biggl|\int_{H^{-s}} F
\partial_j^{(2)} G_m\,  \theta_m'(h^I)\,\partial_j^{(1)}h^I \chi_{\BB}^{(\delta)}(\mathcal{M})\,d\mu_0-
\int_{H^{-s}} F
\partial_j^{(2)} h^I\,  \ee^{h^I}\,\partial_j^{(1)}h^I \chi_{\BB}^{(\delta)}(\mathcal{M})\,d\mu_0\biggr|
\\
\leq
\biggl|\int_{H^{-s}} F
\bigl(\partial_j^{(2)} G_m-\partial_j^{(2)}h^I\bigr)\,  \theta_m'(h^I)\,\partial_j^{(1)}h^I \chi_{\BB}^{(\delta)}(\mathcal{M})\,d\mu_0\biggr|
\\
+\biggl|\int_{H^{-s}} F
\partial_j^{(2)} h^I\,  \bigl(\theta_m'(h^I)-\ee^{h^I}\bigr)\,\partial_j^{(1)}h^I \chi_{\BB}^{(\delta)}(\mathcal{M})\,d\mu_0\biggr|\,.
\end{multline}
We argue as for \eqref{homogeneous_KMS_II_4_e2} to deduce that the first term on the right-hand side of \eqref{homogeneous_KMS_II_4_f_1} is
\begin{equation*}
\leq \|F\|_{L^2(\mu_0)}\, \|\partial_j^{(2)}(G_m-h^I)\|_{L^4(\mu_0)}\, \|\ee^{h^I}\chi_{\BB}^{(\delta)}(\mathcal{M})\|_{L^8(\mu_0)}\,\|\nabla h^I\|_{L^8(\mu_0;H^{-s})}\,,
\end{equation*}
which converges to zero as $m \rightarrow \infty$ by \eqref{G_m_convergence}. Here, we also recall $F \in \mathscr{C}^{\infty}_{\mathrm{c},\mathrm{cyl}}(H^{-s})$. The second term on the right-hand side of \eqref{homogeneous_KMS_II_4_f_1} is 
\begin{equation*}
\leq \|F\|_{L^2(\mu_0)}\, \|\partial_j^{(2)}h^I\|_{L^4(\mu_0)}\, \|(\theta_m'(h^I)-\ee^{h^I})\chi_{\BB}^{(\delta)}(\mathcal{M})\|_{L^8(\mu_0)}\,\|\nabla h^I\|_{L^8(\mu_0;H^{-s})}\,,
\end{equation*}
which converges to zero as $m \rightarrow \infty$, similarly as for the first term. Here, in addition, we use Lemma \ref{exponential_approximation} (ii)--(iii) and the dominated convergence theorem. Using these observations in \eqref{homogeneous_KMS_II_4_f_1}, and noting that we can reverse the roles of the $\partial_j^{(1)}$ and $\partial_j^{(2)}$ by symmetry, we deduce that
\begin{multline}
\label{homogeneous_KMS_II_5}
\lim_m \int_{H^{-s}} F \Bigl[ 
\partial_j^{(2)} G_m\,  \theta_m'(h^I)\,\partial_j^{(1)}h^I \chi_{\BB}^{(\delta)}(\mathcal{M})-
\partial_j^{(1)} G_m\,  \theta_m'(h^I)\,\partial_j^{(2)}h^I \chi_{\BB}^{(\delta)}(\mathcal{M})\Bigr]\,d \mu_0
\\
=\int_{H^{-s}} F \Bigl[ 
\partial_j^{(2)} h^I\,  \ee^{h^I}\,\partial_j^{(1)}h^I \chi_{\BB}^{(\delta)}(\mathcal{M})-
\partial_j^{(1)} h^I\,  \ee^{h^I}\,\partial_j^{(2)}h^I \chi_{\BB}^{(\delta)}(\mathcal{M})\Bigr]\,d \mu_0
=0\,.
\end{multline}
By a minor modification of the argument used to prove \eqref{homogeneous_KMS_II_5} (in particular, we use the estimates from \eqref{homogeneous_KMS_II_4_e3} instead of those from \eqref{homogeneous_KMS_II_4_e2}), we deduce that 
\begin{multline}
\label{homogeneous_KMS_II_6}
\lim_m \int_{H^{-s}} F \Bigl[ 
\partial_j^{(2)} G_m\,  \theta_m(h^I)\, \partial_j^{(1)}\chi_{\BB}^{(\delta)}(\mathcal{M})-
\partial_j^{(1)} G_m\,  \theta_m(h^I)\,\partial_j^{(2)}\chi_{\BB}^{(\delta)}(\mathcal{M})\Bigr]\,d \mu_0
\\
=\int_{H^{-s}} F \Bigl[ 
\partial_j^{(2)} h^I\,  \ee^{h^I}\,\partial_j^{(1)} \chi_{\BB}^{(\delta)}(\mathcal{M})-
\partial_j^{(1)} h^I\,  \ee^{h^I}\,\partial_j^{(2)} \chi_{\BB}^{(\delta)}(\mathcal{M})\Bigr]\,d \mu_0\,.
\end{multline}
Likewise, by similar arguments (now using \eqref{homogeneous_KMS_I_4_A_1c} instead of \eqref{homogeneous_KMS_II_4_e2}), we obtain
\begin{multline}
\label{homogeneous_KMS_II_7}
\lim_m \int_{H^{-s}} F \Bigl[-\partial_j^{(2)}G_m\,\theta_m(h^I)\,\chi_{\BB}^{(\delta)}(\mathcal{M})\,\langle u, Ae_j^{(1)}\rangle_{\mathcal{H},\R}
\\
+\partial_j^{(1)}G_m\,\theta_m(h^I)\,\chi_{\BB}^{(\delta)}(\mathcal{M})\,\langle u, Ae_j^{(2)}\rangle_{\mathcal{H},\R}
\Bigr]\,d \mu_0
\\
= \int_{H^{-s}} F \Bigl[-\partial_j^{(2)}h^I\,\ee^{h^I}\chi_{\BB}^{(\delta)}(\mathcal{M})\,\langle u, Ae_j^{(1)}\rangle_{\mathcal{H},\R}
\\
+\partial_j^{(1)}h^I\,\ee^{h^I}\chi_{\BB}^{(\delta)}(\mathcal{M})\,\langle u, Ae_j^{(2)}\rangle_{\mathcal{H},\R}
\Bigr]\,d \mu_0\,.
\end{multline} 
Using \eqref{homogeneous_KMS_II_4_f} and \eqref{homogeneous_KMS_II_5}--\eqref{homogeneous_KMS_II_7}, we can rewrite \eqref{homogeneous_KMS_II_1} as 
\begin{multline}
\label{homogeneous_KMS_II_8}
II=\frac{1}{z^{(\delta)}} \sum_{j=1}^{n}  \int_{H^{-s}} F \Bigl[\partial_j^{(2)} h^I\,  \ee^{h^I}\,\partial_j^{(1)} \chi_{\BB}^{(\delta)}(\mathcal{M})-
\partial_j^{(1)} h^I\,  \ee^{h^I}\,\partial_j^{(2)} \chi_{\BB}^{(\delta)}(\mathcal{M})
\\
-\partial_j^{(2)}h^I\,\ee^{h^I}\chi_{\BB}^{(\delta)}(\mathcal{M})\,\langle u, Ae_j^{(1)}\rangle_{\mathcal{H},\R}
+\partial_j^{(1)}h^I\,\ee^{h^I}\chi_{\BB}^{(\delta)}(\mathcal{M})\,\langle u, Ae_j^{(2)}\rangle_{\mathcal{H},\R}
\Bigr]\,d \mu_0\,.
\end{multline}
By using \eqref{homogeneous_KMS_2}, \eqref{homogeneous_KMS_I_9}, and \eqref{homogeneous_KMS_II_8}, we deduce that 
\begin{multline}
\label{homogeneous_KMS_sum_1}
\int_{H^{-s}} \langle \nabla F , X(u) \rangle_{\mathcal{H},\R} \, d\mu=\frac{1}{z^{(\delta)}} \sum_{j=1}^{n} \int_{H^{-s}}F\,\Big[-\langle u,Ae_j^{(2)} \rangle_{\mathcal{H},\R}\,\partial_j^{(1)} \chi_{\BB}^{(\delta)}(\mathcal{M})\,\ee^{h^I}
\\
+\langle u,Ae_j^{(1)} \rangle_{\mathcal{H},\R}\,\partial_j^{(2)} \chi_{\BB}^{(\delta)}(\mathcal{M})\,\ee^{h^I}+\partial_j^{(2)} h^I\,  \ee^{h^I}\,\partial_j^{(1)} \chi_{\BB}^{(\delta)}(\mathcal{M})
\\
-
\partial_j^{(1)} h^I\,  \ee^{h^I}\,\partial_j^{(2)} \chi_{\BB}^{(\delta)}(\mathcal{M})\Bigr]\,d \mu_0\,.
\end{multline}

\paragraph{\textbf{Step 2: Proof that the expression in \eqref{homogeneous_KMS_sum_1} equals to zero}}

We now want to show that the expression \eqref{homogeneous_KMS_sum_1}, that we obtained by Gaussian integration by parts, is equal to zero. The proof is similar in spirit to that of \eqref{homogeneous_finite_Liouville_lemma_4}; see Remark \ref{comment_on_proof} (ii)--(iii) below for a detailed comparison.
Recalling \eqref{nonlinear_Hamiltonian} and noting that by \eqref{nonlinear_Hamiltonian_B}, we have
\begin{equation}
\label{h_0_derivative}
\partial_j^{(i)}h_0=\langle u, Ae_j^{(i)} \rangle_{\mathcal{H},\R}
\end{equation}
for all $j=1,\ldots,n$ and $i=1,2$, we can rewrite \eqref{homogeneous_KMS_sum_1} as
\begin{multline}
\label{homogeneous_KMS_sum_2}
\frac{1}{z_R} \int_{H^{-s}} F \sum_{j=1}^{n} \Bigl(\partial_j^{(1)}h \,\partial_j^{(2)} \chi_{\BB}^{(\delta)}(\mathcal{M})-\partial_j^{(2)}h\, \partial_j^{(1)} \chi_{\BB}^{(\delta)}(\mathcal{M})\Bigr)\,\ee^{h^I}\,d \mu_0
\\
=\frac{1}{z^{(\delta)}} \int_{H^{-s}} F \langle \mathrm{\Pi}_n \nabla h,-\ii \mathrm{\Pi}_n \nabla \chi_{\BB}^{(\delta)}(\mathcal{M})\rangle_{\mathcal{H},\R}\,\ee^{h^I}\,d \mu_0\,,
\end{multline}
where $\mathrm{\Pi}_n: H^{-s} \rightarrow  L^2 \equiv \mathcal{H}$ is the projection given by
\begin{equation}
\label{projection_Pi_n}
\mathrm{\Pi}_n (\cdot) := \sum_{j=1}^n \langle \cdot, e_j^{(1)} \rangle_{\mathcal{H},\R} \,e^{(1)}_j +  \langle \cdot, e_j^{(2)} \rangle_{\mathcal{H},\R}\,e^{(2)}_j\,.
\end{equation}
Using \eqref{nonlinear_Hamiltonian_B}, Lemma \ref{Malliavin_derivative_M_lemma*}, and \eqref{projection_Pi_n}, we write
\begin{multline}
\label{homogeneous_KMS_sum_3}
\bigl \langle \mathrm{\Pi}_n \nabla h_0,-\ii \mathrm{\Pi}_n \nabla \chi_{\BB}^{(\delta)}(\mathcal{M})\bigr\rangle_{\mathcal{H},\R}=
2\bigl\langle \mathrm{\Pi}_n A u,-\ii \mathrm{\Pi}_n \bigl[\bigl(\chi_{\BB}^{(\delta)}\bigr)'(\mathcal{M})u\bigr]\bigr\rangle_{\mathcal{H},\R}
\\
=2\bigl\langle A^{1/2} \,\mathrm{\Pi}_n u,-\ii  \bigl(\chi_{\BB}^{(\delta)}\bigr)'(\mathcal{M}) A^{1/2} \,\mathrm{\Pi}_n u\bigr\rangle_{\mathcal{H},\R}=0\,.
\end{multline}
In \eqref{homogeneous_KMS_sum_3}, we used the fact that $A$ given by Assumption \ref{A_choice} is a positive operator which commutes with $\Pi_n$ and that $(\chi_{\BB}^{(\delta)})'$ is a real-valued function, which follows from Definition \ref{cut-off_chi}.

Let us suppose for now that $h^I$ is given as in Assumption \ref{Assumption_on_V} (ii). Arguing similarly as in \eqref{homogeneous_KMS_sum_3}, and using the identity \eqref{nabla_h^I_formula} from Remark \ref{formula_for_Malliavin_derivative_Hartree} below, we compute
\begin{multline}
\label{homogeneous_KMS_sum_4}
\bigl\langle \mathrm{\Pi}_n \nabla h^I,-\ii \mathrm{\Pi}_n \nabla \chi_{\BB}^{(\delta)}(\mathcal{M})\bigr\rangle_{\mathcal{H},\R}=
2 \bigl\langle \mathrm{\Pi}_n \nabla h^I,-\ii  \bigl(\chi_{\BB}^{(\delta)}\bigr)'(\mathcal{M}) \mathrm{\Pi}_n u \bigr\rangle_{\mathcal{H},\R}
\\
=2\bigl \langle (V*\,:|u|^2:\,)\mathrm{\Pi}_n u,-\ii  \bigl(\chi_{\BB}^{(\delta)}\bigr)'(\mathcal{M})  \,\mathrm{\Pi}_n u\bigr\rangle_{\mathcal{H},\R}=0\,.
\end{multline}
In \eqref{homogeneous_KMS_sum_4}, we used the assumption that $V$ was real-valued.
By using \eqref{nonlinear_Hamiltonian} and \eqref{homogeneous_KMS_sum_3}--\eqref{homogeneous_KMS_sum_4}, we deduce that $\eqref{homogeneous_KMS_sum_2}=0$ for $h^I$ as in Assumption \ref{Assumption_on_V} (ii).

The analysis for $h^I$ as in Assumption \ref{Assumption_on_V} (i) is similar. Namely, for $h^I$ is as in \eqref{local_nonlinearity_1D}, instead of 
\eqref{homogeneous_KMS_sum_4}, we use
\begin{equation}
\label{homogeneous_KMS_sum_4*}
\bigl \langle \mathrm{\Pi}_n \nabla h^I,-\ii \mathrm{\Pi}_n \nabla \chi_{\BB}^{(\delta)}(\mathcal{M})\bigr \rangle_{\mathcal{H},\R}=
2\bigl\langle |u|^{r-1} \mathrm{\Pi}_n u,-\ii  \bigl(\chi_{\BB}^{(\delta)}\bigr)'(\mathcal{M})  \,\mathrm{\Pi}_n u\bigr\rangle_{\mathcal{H},\R}=0\,.
\end{equation}
Here, we recalled \cite[Proposition 5.1, proof of part (3)]{AS21}.
Likewise, for $h^I$ as in \eqref{nonlocal_nonlinearity_1D}, we use
\begin{equation}
\label{homogeneous_KMS_sum_4**}
\langle \mathrm{\Pi}_n \nabla h^I,-\ii \mathrm{\Pi}_n \nabla \chi_{\BB}^{(\delta)}(\mathcal{M})\rangle_{\mathcal{H},\R}=
2\bigl\langle (V*\,|u|^2\,)\mathrm{\Pi}_n u,-\ii  \bigl(\chi_{\BB}^{(\delta)}\bigr)'(\mathcal{M})  \,\mathrm{\Pi}_n u \bigr\rangle_{\mathcal{H},\R}=0\,,
\end{equation}
since $V$ is real-valued. Here, we recalled \cite[Equation (5.15)]{AS21}. We hence conclude that \eqref{homogeneous_KMS_sum_1} equals to zero, and thus that \eqref{homogeneous_KMS} holds for all $\delta \in (0,1)$.

\paragraph{\textbf{Step 3: Letting $\delta \rightarrow 1$ and obtaining \eqref{homogeneous_KMS} for $\delta=1$.}}

By recalling \eqref{local_Gibbs_measure_rigorous}, and by using \eqref{homogeneous_KMS} for $\delta \in (0,1)$ proved above, the claim for $\delta=1$ follows provided that we show that as $\delta \rightarrow 1$
\begin{equation}
\label{Step_3}
\langle \nabla F ,  -\ii A u + \ii \nabla h^I \rangle_{\mathcal{H},\R} \, \ee^{h^I} \chi_{\BB}^{(\delta)}(\mathcal{M}) \rightarrow \langle \nabla F ,  -\ii A u + \ii \nabla h^I \rangle_{\mathcal{H},\R} \, \ee^{h^I} \chi_{\BB}^{(1)}(\mathcal{M})
\end{equation}
in $L^1(\mu_0)$.

By H\"{o}lder's inequality, Definition \ref{cut-off_chi}, \eqref{integrability_of_weight}, \eqref{homogeneous_KMS_I_2} and the dominated convergence theorem, we have
\begin{multline}
\label{rough_cut-off_Liouville_eq_2}
\bigl\|\langle \nabla F,-\ii Au \rangle_{\mathcal{H},\R} \,\ee^{h^I} \bigl(\chi_{\BB}^{(\delta)}(\mathcal{M})-\chi_{\BB}^{(1)}(\mathcal{M})\bigr)\bigr\|_{L^1(\mu_0)}
\\
\leq \|\langle \nabla F , -\ii Au \rangle_{\mathcal{H},\R}\|_{L^{\infty}(\mu_0)}\, \bigl\|\ee^{h^I}\bigl(\chi_{\BB}^{(\delta)}(\mathcal{M})-\chi_{\BB}^{(1)}(\mathcal{M})\bigr)\bigr\|_{L^1(\mu_0)} \rightarrow 0\,\,\,\mbox{as } \,\delta \rightarrow 1
\,.
\end{multline}
Similarly, by using duality, H\"{o}lder's inequality, $F \in \mathscr{C}^{\infty}_{\mathrm{c},\mathrm{cyl}}(H^{-s})$, and Proposition \ref{Hartree_equation_Malliavin_derivative} (i), we have
\begin{multline}
\label{rough_cut-off_Liouville_eq_3}
\bigl\|\langle \nabla F,\ii \nabla h^I \rangle_{\mathcal{H},\R} \,\ee^{h^I}\bigl(\chi_{\BB}^{(\delta)}(\mathcal{M})-\chi_{\BB}^{(1)}(\mathcal{M})\bigr)\bigr\|_{L^1(\mu_0)}
\leq \|\nabla F\|_{L^{\infty}(\mu_0;H^{s})} \times\,
\\
\times \|\nabla h^I\|_{L^2(\mu_0;H^{-s})} \, \bigl\|\ee^{h^I}\bigl(\chi_{\BB}^{(\delta)}(\mathcal{M})-\chi_{\BB}^{(1)}(\mathcal{M})\bigr)\bigr\|_{L^2(\mu_0)} \rightarrow 0\,\,\,\mbox{as } \,\delta \rightarrow 1
\,.
\end{multline}
We obtain \eqref{Step_3} from \eqref{rough_cut-off_Liouville_eq_2}--\eqref{rough_cut-off_Liouville_eq_3}. The claim now follows.
\end{proof}

\begin{remark}
\label{dominated_convergence_theorem_remark}
The arguments used in the proof of \eqref{rough_cut-off_Liouville_eq_2} more generally show that for all $p \in [1,\infty)$
\begin{equation}
\label{dominated_convergence_theorem_remark_1}
 \lim_{\delta \rightarrow 1}  \bigl\|\ee^{h^I} \bigl(\chi_{\BB}^{(\delta)}(\mathcal{M})-\chi_{\BB}^{(1)}(\mathcal{M})\bigr)\bigr\|_{L^p(\mu_0)} =0\,.
\end{equation}
\end{remark}

\begin{remark}
\label{comment_on_proof}
Let us comment on the parallels in the proofs of Proposition \ref{homogeneous_finite_Liouville_lemma} and Theorem  \ref{homogeneous_KMS_thm}. The parallels can only be seen in Steps 1 and 2, where $\delta \in (0,1)$. Throughout we recall \eqref{nonlinear_Hamiltonian}.
\begin{itemize}
\item[(i)]
The cancellation 
\begin{equation*}
\sum_{j=1}^{n} \biggl(-\frac{\partial^2 h}{\partial p_j \partial q_j}+\frac{\partial^2 h}{\partial q_j \partial p_j}\biggr)=0
\end{equation*}
in \eqref{homogeneous_finite_Liouville_lemma_3} corresponds to the cancellation \eqref{homogeneous_KMS_I_4_A_2i*} in \eqref{homogeneous_KMS_I_4} for $h_0$ (by recalling \eqref{h_0_derivative}), and \eqref{homogeneous_KMS_II_4_a*}--\eqref{homogeneous_KMS_II_4_a} for $h^I$.
\item[(ii)] By again recalling \eqref{h_0_derivative}, the identity $\{h,\ee^{-h}\}=0$ from \eqref{homogeneous_finite_Liouville_lemma_4} corresponds to the cancellation between the sums of the last two terms in \eqref{homogeneous_KMS_I_9} and \eqref{homogeneous_KMS_II_8} respectively and to the cancellation in \eqref{homogeneous_KMS_II_5}.
\item[(iii)] By \eqref{poisson_general}, it follows that the identity $\{h,\psi(M)\}=0$ from \eqref{homogeneous_finite_Liouville_lemma_4} formally corresponds to 
\begin{equation}
\label{homogeneous_infinite_Liouville_lemma_4*}
\bigl\langle \mathrm{\Pi}_n \nabla h,-\ii \mathrm{\Pi}_n \nabla \chi_{\BB}^{(\delta)}(\mathcal{M})\bigr\rangle_{\mathcal{H},\R}=0\,,
\end{equation}
which follows from \eqref{homogeneous_KMS_sum_3}--\eqref{homogeneous_KMS_sum_4**}. However, \eqref{homogeneous_infinite_Liouville_lemma_4*} does not come from a conserved quantity in finite dimensions, but from the precise form of $\nabla h^I$ and $\nabla \mathcal{M}$, which allows us to deduce that the inner product \eqref{homogeneous_infinite_Liouville_lemma_4*} vanishes.
\end{itemize}

\end{remark}

We note the following lemma, which follows from the proof of Theorem \ref{homogeneous_KMS_thm} given above.

\begin{lemma}
\label{exponential_Malliavin_differentiability}
For all $\delta \in (0,1)$ and $p \in [1,\infty)$, we have $\ee^{-h^I} \chi^{(\delta)}_{\BB}(\mathcal{M}) \in \mathbb{D}^{1,p}(\mu_0)$. Moreover, 
\begin{equation}
\label{exponential_Malliavin_differentiability_1}
\nabla \bigl(\ee^{-h^I} \chi^{(\delta)}_{\BB}(\mathcal{M})\bigr)=-\ee^{-h^I}\chi^{(\delta)}_{\BB}(\mathcal{M}) \nabla h^I+2\ee^{-h^I} \bigl(\chi^{(\delta)}_{\BB}\bigr)'(\mathcal{M})\,u\,.
\end{equation}
\end{lemma}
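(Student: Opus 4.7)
The strategy is to mirror the approximation scheme employed in the proof of Theorem \ref{homogeneous_KMS_thm}, but now applied to $\ee^{-h^I}$ instead of $\ee^{h^I}$. The plan is to introduce a regularized version $\rho_m := \theta_m(-h^I)\,\chi_{\BB}^{(\delta)}(\mathcal{M})$ (with $\theta_m$ as in Lemma \ref{exponential_approximation}), to compute $\nabla \rho_m$ explicitly via the Leibniz and chain rules, and then to pass to the limit $m \to \infty$ using dominated convergence together with the closedness of $\nabla$ from Definition \ref{Malliavin_definition}. The whole argument is bifurcated according to first establishing integrability of $\ee^{-h^I}\chi_{\BB}^{(\delta)}(\mathcal{M})$ in every $L^p(\mu_0)$, and then handling Malliavin differentiability.

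For integrability (Step 1), I would split into cases according to Assumption \ref{Assumption_on_V}. When $h^I$ is given by the local nonlinearity \eqref{local_nonlinearity_1D}, or by \eqref{nonlocal_nonlinearity_1D}/\eqref{h^I} with $V$ pointwise non-negative or of positive type, we have $h^I \geq 0$ and so $\ee^{-h^I} \leq 1$ pointwise, whence the conclusion is immediate from $\chi_{\BB}^{(\delta)}(\mathcal{M}) \in L^\infty(\mu_0)$. For the general Hartree case in dimensions $d=2,3$, I would decompose $V = V_p + V_n$ as in \eqref{V_hat_splitting}, so that $h^I = h^I_p + h^I_n$ with $h^I_p \geq 0$ and $h^I_n \leq 0$, and write $\ee^{-h^I}\chi_{\BB}^{(\delta)}(\mathcal{M}) \leq \ee^{-h^I_n}\chi_{\BB}^{(\delta)}(\mathcal{M})$. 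Since $-V_n$ is of positive type, $-h^I_n$ behaves like a defocusing Hartree interaction, and the Bourgain-type large deviation estimate \eqref{Bourgain_large_deviation_estimate} carries over (via the arguments in Appendix \ref{Proof of Bourgain's large deviation estimate}) to yield $\mu_0(-h^I_n > \lambda \cap |\mathcal{M}| \leq \BB) \lesssim_{C} \ee^{-C\lambda}$ for $C$ as large as we please, from which $L^p$ integrability follows as in \eqref{Integral_1}--\eqref{Integral_2}. An analogous bound on $\ee^{-h^I}(\chi_{\BB}^{(\delta)})'(\mathcal{M})$ in every $L^p(\mu_0)$ also follows, mirroring \eqref{integrability_of_weight'}.

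For Malliavin differentiability (Steps 2-3), I would first observe that by Proposition \ref{Hartree_equation_Malliavin_derivative} (i) one has $-h^I \in \mathbb{D}^{1,p}(\mu_0)$ for all finite $p$, so that Lemma \ref{chain_rule} combined with Lemma \ref{exponential_approximation} (i) yields $\theta_m(-h^I) \in \mathbb{D}^{1,p}(\mu_0)$ with $\nabla \theta_m(-h^I) = -\theta_m'(-h^I)\nabla h^I$. Together with Lemma \ref{Malliavin_derivative_M_lemma*}, an application of the Leibniz rule (Lemma \ref{Leibniz_rule}, with H\"older exponents chosen appropriately as in \eqref{homogeneous_KMS_II_4_e1}--\eqref{homogeneous_KMS_II_4_e3}) gives $\rho_m \in \mathbb{D}^{1,p}(\mu_0)$ with
\begin{equation*}
\nabla \rho_m = -\theta_m'(-h^I)\,\chi_{\BB}^{(\delta)}(\mathcal{M})\,\nabla h^I + 2\,\theta_m(-h^I)\,(\chi_{\BB}^{(\delta)})'(\mathcal{M})\,u\,.
\end{equation*}
To pass to the limit $m \to \infty$, the pointwise convergence $\theta_m(-h^I) \to \ee^{-h^I}$ and $\theta_m'(-h^I) \to \ee^{-h^I}$ from Lemma \ref{exponential_approximation} (iii), together with the pointwise dominations by $\ee^{-h^I}$ from part (ii), allow the dominated convergence theorem to be applied: convergence of $\rho_m$ to $\ee^{-h^I}\chi_{\BB}^{(\delta)}(\mathcal{M})$ in $L^p(\mu_0)$ follows from Step 1, while convergence of $\nabla \rho_m$ to the right-hand side of \eqref{exponential_Malliavin_differentiability_1} in $L^p(\mu_0;H^{-s})$ follows from H\"older estimates combined with Proposition \ref{Hartree_equation_Malliavin_derivative} (i), Lemma \ref{Lemma_2.16_5}, and the integrability bounds from Step 1. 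The closability of $\nabla$ then immediately yields the claim.

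The main obstacle is Step 1: establishing $\ee^{-h^I}\chi_{\BB}^{(\delta)}(\mathcal{M}) \in L^p(\mu_0)$ in the general Hartree case requires recognizing that $-h^I_n$ is precisely the type of expression for which a Bourgain large deviation bound holds, and invoking (rather than re-proving) a symmetric version of the estimate from Appendix \ref{Proof of Bourgain's large deviation estimate}. Once this integrability is in hand, the rest of the proof is an almost routine adaptation of the machinery already deployed around equations \eqref{homogeneous_KMS_I_4_A_2a}--\eqref{homogeneous_KMS_II_4_e3} in the proof of Theorem \ref{homogeneous_KMS_thm}.
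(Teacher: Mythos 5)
Your proposal is correct and follows essentially the same approach as the paper: regularize with $\theta_m(-h^I)\,\chi_{\BB}^{(\delta)}(\mathcal{M})$, compute the Malliavin derivative via the Leibniz and chain rules, and pass to the limit $m\to\infty$ using dominated convergence and the closability of $\nabla$. The paper dispatches the integrability step more tersely, simply noting that \eqref{integrability_of_weight} holds with $h^I$ replaced by $-h^I$ because the proof in Appendix \ref{Proof of Bourgain's large deviation estimate} only ever uses $|\widehat{V}(k)|$ and is therefore insensitive to the sign of $V$; your explicit $V = V_p + V_n$ decomposition is a valid alternative way of seeing the same fact. One minor sign slip to note: since $-V_n$ is of positive type, the quantity $-h^I_n \geq 0$ is a \emph{focusing}, not defocusing, Hartree interaction term (so $\ee^{-h^I_n}$ is unbounded above and genuinely needs the mass cut-off to be integrable), but this does not affect your argument since the Bourgain large-deviation bound you invoke is precisely the tool for that focusing case.
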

\begin{proof}
Similarly as for \eqref{homogeneous_KMS_I_4_A*}, we have that for all $m \in \N$
\begin{equation}
\label{exponential_Malliavin_differentiability_2}
\theta_m(-h^I)\, \chi_{\BB}^{(\delta)}(\mathcal{M})  \in \mathbb{D}^{1,p}(\mu_0)\,. 
\end{equation}
Namely, by Assumption \ref{Assumption_on_V}, we can replace $h^I$ by $-h^I$ in \eqref{homogeneous_KMS_I_4_A*} by the same proof.

Moreover, from \eqref{exponential_Malliavin_differentiability_2}, Lemmas \ref{Leibniz_rule}--\ref{Malliavin_derivative_M_lemma*}, and Lemma \ref{exponential_approximation} (i), we compute
\begin{equation}
\label{exponential_Malliavin_differentiability_3}
\nabla \bigl(\theta_m(-h^I)\, \chi_{\BB}^{(\delta)}(\mathcal{M}) \bigr)=-\theta_m'(-h^I) \chi^{(\delta)}_{\BB}(\mathcal{M})\nabla h^I+2\theta_m(-h^I) \bigl(\chi^{(\delta)}_{\BB}\bigr)'(\mathcal{M})\,u\,. 
\end{equation}
By using \eqref{integrability_of_weight} with $h^I$ replaced by $-h^I$ (which is possible to do by Assumption \ref{Assumption_on_V}), Lemma \ref{exponential_approximation} (ii), and the dominated convergence theorem, we have that
\begin{equation}
\label{exponential_Malliavin_differentiability_4}
\lim_m \|(\theta_m(-h^I)-\ee^{-h^I})\, \chi_{\BB}^{(\delta)}(\mathcal{M})\|_{L^p(\mu_0)}=0\,.
\end{equation}
We deduce the claim from \eqref{exponential_Malliavin_differentiability_4} once we prove that the right-hand side of \eqref{exponential_Malliavin_differentiability_3} converges to the right-hand side of \eqref{exponential_Malliavin_differentiability_1} as $m \rightarrow \infty$ in $L^p(\mu_0;H^{-s})$. To this end, we use H\"{o}lder's inequality and estimate
\begin{multline}
\bigl\|-\theta_m'(-h^I) \chi^{(\delta)}_{\BB}(\mathcal{M})\nabla h^I+2\theta_m(h^I) \bigl(\chi^{(\delta)}_{\BB}\bigr)'(\mathcal{M})\,u
\\
+
\ee^{-h^I}\chi^{(\delta)}_{\BB}(\mathcal{M}) \nabla h^I-2\ee^{h^I} \bigl(\chi^{(\delta)}_{\BB}\bigr)'(\mathcal{M})\,u\bigr\|_{L^p(\mu_0;H^{-s})}
\\
\leq \|(\theta_m'(-h^I)-\ee^{-h^I}) \chi^{(\delta)}_{\BB}(\mathcal{M})\|_{L^{2p}(\mu_0)}\,\|\nabla h^I\|_{L^{2p}(\mu_0;H^{-s})} 
\\
+2\bigl\|(\theta_m(-h^I)-\ee^{-h^I}) \bigl(\chi^{(\delta)}_{\BB}\bigr)'(\mathcal{M})\bigr\|_{L^{2p}(\mu_0)}\,\|u\|_{L^{2p}(\mu_0;H^{-s})}\,,
\end{multline}
which tends to zero as $m \rightarrow \infty$ by using \eqref{integrability_of_weight} and \eqref{integrability_of_weight'} both with $h^I$ replaced by $-h^I$, Proposition \ref{Hartree_equation_Malliavin_derivative} (i), Lemma \ref{Lemma_2.16_5}, Lemma \ref{exponential_approximation} (ii)--(iii), and the dominated convergence theorem.
\end{proof}

\begin{remark}
\label{exponential_Malliavin_differentiability_remark}
When $h^I \geq 0$, one does not need to use the cut-off in Lemma \ref{exponential_Malliavin_differentiability}. Similar arguments as in the proof of Lemma \ref{exponential_Malliavin_differentiability} show that then
$\ee^{-h^I} \in \mathbb{D}^{1,p}(\mu_0)$ and $\nabla \ee^{-h^I}=-\ee^{-h^I}\nabla h^I$. We do not use this in what follows.
\end{remark}

\section{Local Gibbs measures are local KMS states. Proof of Theorem \ref{Gibbs_implies_local_KMS_theorem} }
\label{Gibbs_implies_KMS_subsection}
In this section, we prove that local Gibbs measures satisfy a local KMS condition in the sense mentioned above. This is the content of Theorem \ref{Gibbs_implies_local_KMS_theorem}. The proof is similar to that of \cite[Proposition 5.7]{AS21}, although it is more subtle as we also work in dimensions $d=2,3$. We note that the notion of a local KMS state in Definition \ref{local_KMS_definition} is slightly different from the corresponding one in \cite[Section 5.3]{AS21}. This convention turns out to be more convenient for our setting.
The analysis in \cite[Section 5.3]{AS21} is done for $d=1$ and therefore does not require one to work with the renormalized mass. In this section, we are also considering the cases $d=2$ and $d=3$. By \eqref{renormalized_mass}, this requires us to work with the renormalized mass.

Before proceeding to the proof of Theorem \ref{Gibbs_implies_local_KMS_theorem}, we note that the free Gibbs measure \ref{free_Gibbs_prop} with $A$ as in Assumption \ref{A_choice} is a local KMS state in the sense of Definition  \ref{local_KMS_definition} above, with vector field $X_0=-\ii A$ as in \eqref{X^I_definition}. In particular, this justifies our localization convention that $G \in \mathbb{D}^{1,2}_{\BB}(\mu_0)$  in Definition \ref{local_KMS_definition} (which is slightly different from those used in \cite[Section 5.3]{AS21}). 
We prove the following result.

\begin{proposition}
\label{local_KMS_approximation_lemma}
Let $A$ be as in Assumption \ref{A_choice}, $\mu_0$ is the Gaussian measure given by Proposition \ref{free_Gibbs_prop}. For all $F \in \mathscr{C}^{\infty}_{\mathrm{c},\mathrm{cyl}}(H^{-s})$ and for all $G \in \mathbb{D}^{1,2}(\mu_0)$, we have
\begin{equation}
\label{local_KMS_approximation_lemma_1}
\int_{H^{-s}} \{F,G\}\,d \mu_0= \int_{H^{-s}} \langle \nabla F(u),-\ii A u\rangle_{\mathcal{H},\R}\, G(u)\, d \mu_0\,.\end{equation}
\end{proposition}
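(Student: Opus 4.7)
The plan is a direct application of Gaussian integration by parts, leveraging the fact that $F$ is cylindrical so only finitely many coordinates appear.

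First, I would write out both sides explicitly in the orthonormal basis $\{e_j^{(1)}, e_j^{(2)}\}$. Since $F = \varphi \circ \pi_n \in \smoothcyl$, one has $\partial_j^{(i)} F = 0$ for $j > n$, so sums are finite. Using $-\ii e_j^{(1)} = -e_j^{(2)}$ and $-\ii e_j^{(2)} = e_j^{(1)}$ in $\mathcal{H}_\R$, together with \eqref{poisson_general} and the expansion of $Au$ against the basis, one gets
\begin{equation*}
\{F,G\}(u) = \sum_{j=1}^{n} \bigl(\partial_j^{(1)} F\,\partial_j^{(2)} G - \partial_j^{(2)} F\,\partial_j^{(1)} G\bigr),
\end{equation*}
and, using the self-adjointness of $A$ as in \eqref{homogeneous_KMS_I_2A*},
\begin{equation*}
\langle \nabla F(u),-\ii A u\rangle_{\mathcal{H},\R} = \sum_{j=1}^{n} \bigl(\partial_j^{(1)} F\,\langle u, A e_j^{(2)}\rangle_{\mathcal{H},\R} - \partial_j^{(2)} F\,\langle u, A e_j^{(1)}\rangle_{\mathcal{H},\R}\bigr).
\end{equation*}

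Next I would check that both sides of \eqref{local_KMS_approximation_lemma_1} are finite. For the left-hand side, Cauchy-Schwarz and duality give $|\{F,G\}(u)| \le \|\nabla F(u)\|_{H^s}\,\|\nabla G(u)\|_{H^{-s}} \lesssim_{F,s} \|\nabla G(u)\|_{H^{-s}}$, which is in $L^2(\mu_0) \subset L^1(\mu_0)$ since $G \in \mathbb{D}^{1,2}(\mu_0)$. For the right-hand side, \eqref{homogeneous_KMS_I_2} together with $G \in L^2(\mu_0)$ gives the bound by H\"older.

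The key step is Gaussian integration by parts: for each fixed $j \in \{1,\dots,n\}$ and $i \in \{1,2\}$, apply Corollary \ref{integration_by_parts_corollary} with the cylindrical factor $\partial_j^{(3-i)}F \in \mathscr{C}^{\infty}_{\mathrm{b},\mathrm{cyl}}(H^{-s})$ in the role of $F$ and with $G \in \mathbb{D}^{1,2}(\mu_0)$ in the role of $G$. This yields
\begin{equation*}
\int_{H^{-s}} (\partial_j^{(1)} F)(\partial_j^{(2)} G)\,d\mu_0 = \int_{H^{-s}} G\bigl(-\partial_j^{(2)}\partial_j^{(1)} F + (\partial_j^{(1)} F)\,\langle u, A e_j^{(2)}\rangle_{\mathcal{H},\R}\bigr)\,d\mu_0,
\end{equation*}
and the symmetric identity with the roles of the superscripts $(1)$ and $(2)$ swapped. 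Subtracting and summing over $j$, the mixed second derivatives $\partial_j^{(1)}\partial_j^{(2)}F$ and $\partial_j^{(2)}\partial_j^{(1)}F$ cancel by Schwarz's theorem (note $\varphi \in \mathscr{C}^{\infty}_{\mathrm{c}}(\R^{2n})$), leaving exactly the right-hand side of \eqref{local_KMS_approximation_lemma_1}.

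I do not expect a serious obstacle here: the result is essentially a cylindrical-$F$ Gaussian integration by parts in disguise, and all integrability comes for free from $F \in \smoothcyl$ and $G \in \mathbb{D}^{1,2}(\mu_0)$ via Corollary \ref{integration_by_parts_corollary}. The only point requiring minor care is matching the complex structure through the identifications $-\ii e_j^{(1)} = -e_j^{(2)}$, $-\ii e_j^{(2)} = e_j^{(1)}$ so that the Poisson bracket \eqref{poisson_general} and the vector-field term $\langle \nabla F, -\ii A u\rangle_{\mathcal{H},\R}$ produce matching sign patterns after the integration by parts.
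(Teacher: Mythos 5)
Your proof is correct, and it takes a genuinely different route from the paper. The paper's proof of Proposition \ref{local_KMS_approximation_lemma} is indirect: it first cites \cite[Theorem 4.11]{AS21} (with $h^I \equiv 0$) to get the identity for both $F$ and $G$ cylindrical, and then passes to general $G \in \mathbb{D}^{1,2}(\mu_0)$ by a density argument, approximating $G$ by cylindrical $G_m$ and verifying $L^2$-convergence of both sides via Cauchy--Schwarz and Lemma \ref{Lemma_2.16_5}. Your argument dispenses with both the external citation and the density step: you work directly with a general $G \in \mathbb{D}^{1,2}(\mu_0)$ from the outset and apply Corollary \ref{integration_by_parts_corollary} with the roles $F_{\mathrm{cor}} = G$ (in $\mathbb{D}^{1,2}(\mu_0)$) and $G_{\mathrm{cor}} = \partial_j^{(3-i)}F$ (in $\mathscr{C}^{\infty}_{\mathrm{b},\mathrm{cyl}}(H^{-s})$), which is admissible precisely because only $F$ ever needs to be differentiated twice. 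The Schwarz cancellation of the mixed second derivatives of $\varphi$ then leaves exactly the right-hand side of \eqref{local_KMS_approximation_lemma_1} after matching the complex structure, as you checked. What you gain is a self-contained two-line computation that mirrors the finite-dimensional heuristic of Proposition \ref{homogeneous_finite_Liouville_lemma}; what the paper's route gains is economy (it reuses a result already established in \cite{AS21} and a density argument of a type that is needed anyway, e.g.\ in Step 1 of the proof of Theorem \ref{Gibbs_implies_local_KMS_theorem}, where the cut-off prevents a direct computation).

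One small remark: your integrability check for the right-hand side is fine but slightly understated. The factor $(\partial_j^{(1)}F)\,\langle u, A e_j^{(2)}\rangle_{\mathcal{H},\R}$ is bounded not merely because $\partial_j^{(1)}F$ is bounded (the other factor grows linearly in $u$), but because $\partial_j^{(1)}\varphi$ is compactly supported in the coordinates $\pi_n(u)$, which constrains $\langle u, e_j^{(2)}\rangle_{\mathcal{H},\R}$ on the support. This is exactly the observation behind \eqref{homogeneous_KMS_I_2A*} and is worth making explicit, but it does not affect the validity of your argument.
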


\begin{remark}
In particular, from Proposition \ref{local_KMS_approximation_lemma}, we obtain that the identity \eqref{local_KMS_approximation_lemma_1} holds for all $F \in \mathscr{C}^{\infty}_{\mathrm{c},\mathrm{cyl}}(H^{-s})$ and $G \in \mathbb{D}^{1,2}(\mu_0)$ (in this case, we do not need to consider $G \in \mathbb{D}^{1,2}_{\BB}(\mu_0)$ as in Theorem \ref{Gibbs_implies_local_KMS_theorem}).
\end{remark}

\begin{proof}[Proof of Proposition \ref{local_KMS_approximation_lemma}]
By \cite[Theorem 4.11]{AS21} (with $h^I \equiv 0$), we obtain that $\mu_0$ satisfies the KMS condition in the sense of Definition \ref{KMS_definition_rigorous} with vector field $X_0=-
\ii A$. In particular, we have that \eqref{local_KMS_approximation_lemma_1} holds for all $F,G \in \mathscr{C}^{\infty}_{\mathrm{c},\mathrm{cyl}}(H^{-s})$. 
We now show how to obtain the general claim by density.

For $G \in \mathbb{D}^{1,2}(\mu_0)$, we find a sequence $(G_m)_{m}$ in $\mathscr{C}^{\infty}_{\mathrm{c},\mathrm{cyl}}(H^{-s})$ such that $G_m \rightarrow G$ in $\mathbb{D}^{1,2}(\mu_0)$. We recall \eqref{poisson_general} and use duality followed by the Cauchy-Schwarz inequality to obtain that
\begin{multline}
\label{local_KMS_approximation_lemma_2}
\biggl|\int_{H^{-s}} \{F,G_m\}\,d \mu_0-\int_{H^{-s}} \{F,G\}\,d \mu_0\biggl| 
\\
\leq \|\nabla F\|_{L^2(\mu_0;H^s)}\,\|\nabla G_m - \nabla G\|_{L^2(\mu_0;H^{-s})} \rightarrow 0 \,\,\, \mbox{as} \,\,\, m \rightarrow \infty\,. 
\end{multline}
Similarly, using \eqref{X^I_definition}, the self-adjointness of $A$, duality, H\"{o}lder's inequality, and Lemma \ref{Lemma_2.16_5}, we have 
\begin{multline}
\label{local_KMS_approximation_lemma_3}
\biggl|\int_{H^{-s}} \langle \nabla F,\ii A u\rangle_{\mathcal{H},\R}\, G_m\, d \mu_0-\int_{H^{-s}} \langle \nabla F,\ii A u\rangle_{\mathcal{H},\R}\, G(u)\, d \mu_0\biggl| 
\\
\leq
\|A \nabla F\|_{L^4(\mu_0;H^s)}\,\|u\|_{L^4(\mu_0;H^{-s})}\,\|G_m-G\|_{L^2(\mu_0)} \rightarrow 0 \,\,\, \mbox{as} \,\,\, m \rightarrow \infty\,. 
\end{multline}
For \eqref{local_KMS_approximation_lemma_2}--\eqref{local_KMS_approximation_lemma_3}, we also used the assumption that $F\in \mathscr{C}^{\infty}_{\mathrm{c},\mathrm{cyl}}(H^{-s})$. The general claim then follows from  \eqref{local_KMS_approximation_lemma_2}--\eqref{local_KMS_approximation_lemma_3}.
\end{proof}

Let us now present the proof of Theorem \ref{Gibbs_implies_local_KMS_theorem}.

\begin{proof}[Proof of Theorem \ref{Gibbs_implies_local_KMS_theorem}]
Let us consider $F$ of the form \eqref{cylindrical_test_functions_1} for $n \in \N$ and $\varphi \in \mathscr{C}^{\infty}_{\mathrm{c}}(\R^{2n})$ and let us consider $G \in \mathbb{D}^{1,2}_{\BB}(\mu_0)$ (defined in \eqref{D^{1,2}_R} above), all of which we fix throughout the proof. Recalling \eqref{B_R'}--\eqref{D^{1,2}_R}, we can find $\BB' \in (0,\BB)$ such that 
\begin{equation}
\label{Theorem_2_R'_choice}
G(u) = 0\,\,\mbox{$\mu_0$-almost surely on} \,\, \mathbb{B}_{\BB'}^{c}\,.
\end{equation}
With $\BB'$ as in \eqref{Theorem_2_R'_choice}, we show that
\begin{equation}
\label{Theorem_2_delta_approximation}
\int_{H^{-s}} \{F,G\}\,d \mu^{(\delta)}= \int_{H^{-s}} \langle \nabla F,-\ii A u +\ii \nabla h^I \rangle_{\mathcal{H},\R}\, G\, d \mu^{(\delta)}
\end{equation}
for all 
\begin{equation}
\label{Theorem_2_delta_choice}
\delta \in (\BB'/\BB,1]\,.
\end{equation}
We divide the proof in two steps. In the first step, we prove \eqref{Theorem_2_delta_approximation} for $\delta \in (\BB'/\BB,1)$. Here, we need to use the differentiability properties of $\chi_{\BB}^{(\delta)}$. In the second step, we let $\delta \rightarrow 1$ and obtain \eqref{Theorem_2_delta_approximation} with $\delta=1$, which is the claim of the theorem.

\paragraph{\textbf{Step 1: Proof of \eqref{Theorem_2_delta_approximation} for $\delta \in (\BB'/\BB,1)$}}
We use the Gaussian integration by parts formula given by Corollary \ref{integration_by_parts_corollary} above. In order to rigorously justify the use of the latter, we use an approximation argument. To this end, we consider a sequence $(G_k)_{k}$ in $\mathscr{C}^{\infty}_{\mathrm{c},\mathrm{cyl}}(H^{-s})$ such that 
\begin{equation}
\label{G_k_convergence_2}
G_k \rightarrow G \quad \mbox{in} \quad \mathbb{D}^{1,2}(\mu_0)\,.
\end{equation}
We recall \eqref{well-definedness_remark_2_b} and \eqref{homogeneous_KMS_I_2**} respectively (both with $G$ replaced by $G-G_k$) and deduce that \eqref{G_k_convergence_2} implies
\begin{equation}
\label{well-definedness_remark_2_b_2}
\lim_k \int_{H^{-s}} \{F,G_k\}\,d \mu^{(\delta)} = \int_{H^{-s}} \{F,G\}\,d \mu^{(\delta)} 
\end{equation}
and
\begin{equation}
\label{homogeneous_KMS_I_2**_2}
\lim_k \int_{H^{-s}} \langle \nabla F,-\ii A u +\ii \nabla h^I \rangle_{\mathcal{H},\R}\, G_k\, d \mu^{(\delta)} =
\int_{H^{-s}} \langle \nabla F,-\ii A u +\ii \nabla h^I \rangle_{\mathcal{H},\R}\, G\, d \mu^{(\delta)} \,.
\end{equation}
We recall \eqref{local_Gibbs_measure_rigorous} and \eqref{poisson_general}, and consider for fixed $k \in \N$ the quantity
\begin{multline}
\label{Theorem_2_1}
\int_{H^{-s}} \{F,G_k\}\,\ee^{h^{I}}\chi_{\BB}^{(\delta)}(\mathcal{M})\,d \mu_0
\\
=
\sum_{j=1}^{n} \int_{H^{-s}} \bigl(\partial_j^{(1)}F\, \partial_j^{(2)}G_k - \partial_j^{(1)} G_k \,\partial_j^{(2)}F \bigr)\,\ee^{h^I} \chi_{\BB}^{(\delta)}(\mathcal{M})\,d\mu_0
\\
= \lim_m\, \sum_{j=1}^{n} \int_{H^{-s}} \bigl(\partial_j^{(1)}F\, \partial_j^{(2)}G_k- \partial_j^{(1)} G_k \,\partial_j^{(2)}F \bigr)\,\theta_m(h^I)\, \chi_{\BB}^{(\delta)}(\mathcal{M})\,d\mu_0\,.
\end{multline}
The last step in \eqref{Theorem_2_1} holds since $F, G_k \in \mathscr{C}^{\infty}_{\mathrm{c},\mathrm{cyl}}(H^{-s})$, by using Lemma \ref{exponential_approximation} (ii)--(iii), \eqref{integrability_of_weight},  and the dominated convergence theorem.

By \eqref{homogeneous_KMS_I_4_A*}, the fact that $F \in \mathscr{C}^{\infty}_{\mathrm{c},\mathrm{cyl}}(H^{-s})$, and Lemma \ref{Leibniz_rule}, we have that for all $m \in \N$, $1 \leq j \leq n$, $1 \leq i \leq 2$
\begin{equation}
\label{Theorem_2_3}
\partial_j^{(i)}F\, \theta_m(h^I)\,\chi_{\BB}^{(\delta)}(\mathcal{M}) \in \mathbb{D}^{1,2}(\mu_0)\,.
\end{equation}
For fixed $m \in \N$, we use Corollary \ref{integration_by_parts_corollary} 
to write
\begin{align}
\notag
\sum_{j=1}^{n} \int_{H^{-s}} \bigl(\partial_j^{(1)}F\, \partial_j^{(2)}G_k- \partial_j^{(1)} G_k \,\partial_j^{(2)}F \bigr)\,\theta_m(h^I)\, \chi_{\BB}^{(\delta)}(\mathcal{M})\,d\mu_0&
\\
\notag
=\sum_{j=1}^{n} \int_{H^{-s}} G_k\,\Bigl[-\partial_j^{(2)} \partial_j^{(1)}F\,\theta_m(h^I)\,\chi_{\BB}^{(\delta)}(\mathcal{M})
-\partial_j^{(1)}F \,\partial_j^{(2)}\bigl(\theta_m(h^I)\,\chi_{\BB}^{(\delta)}(\mathcal{M})\bigr)&
\\
\notag
+\partial_j^{(1)} \partial_j^{(2)}F\,\theta_m(h^I)\,\chi_{\BB}^{(\delta)}(\mathcal{M})
+\partial_j^{(2)}F \,\partial_j^{(1)}\bigl(\theta_m(h^I)\,\chi_{\BB}^{(\delta)}(\mathcal{M})\bigr)&
\\
\label{Theorem_2_4}
+\partial_j^{(1)}F\,\theta_m(h^I)\,\chi_{\BB}^{(\delta)}(\mathcal{M})\,\langle u, Ae_j^{(2)}\rangle_{\mathcal{H},\R}-
\partial_j^{(2)}F\,\theta_m(h^I)\,\chi_{\BB}^{(\delta)}(\mathcal{M})\,\langle u, Ae_j^{(1)}\rangle_{\mathcal{H},\R}\Bigr]
\,d\mu_0\,.&
\end{align}
The above application of Corollary \ref{integration_by_parts_corollary} is justified by \eqref{Theorem_2_3} and the fact that $G_k \in \mathscr{C}^{\infty}_{\mathrm{c},\mathrm{cyl}}(H^{-s})$. We use the fact that $F \in \mathscr{C}^{\infty}_{\mathrm{c},\mathrm{cyl}}(H^{-s})$, combined with Lemma \ref{Leibniz_rule}, Lemma \ref{chain_rule}, and Lemma \ref{Malliavin_derivative_M_lemma*} to write \eqref{Theorem_2_4} as
\begin{multline}
\label{Theorem_2_5}
\sum_{j=1}^{n} \int_{H^{-s}} G_k\,\Bigl[-\partial_j^{(1)}F\,\theta'_m(h^I)\,\partial_j^{(2)}h^I\,\chi_{\BB}^{(\delta)}(\mathcal{M})
\\
-2\partial_j^{(1)}F\,\theta_m(h^I)\,\bigl(\chi_{\BB}^{(\delta)}\bigr)'(\mathcal{M})\,\langle u, e_j^{(2)}\rangle_{\mathcal{H},\R}
\\
+\partial_j^{(2)}F\,\theta'_m(h^I)\,\partial_j^{(1)}h^I\,\chi_{\BB}^{(\delta)}(\mathcal{M})+2\partial_j^{(2)}F\,\theta_m(h^I)\,\bigl(\chi_{\BB}^{(\delta)}\bigr)'(\mathcal{M})\,\langle u, e_j^{(1)}\rangle_{\mathcal{H},\R}
\\
+\partial_j^{(1)}F\,\theta_m(h^I)\,\chi_{\BB}^{(\delta)}(\mathcal{M})\,\langle u, Ae_j^{(2)}\rangle_{\mathcal{H},\R}-
\partial_j^{(2)}F\,\theta_m(h^I)\,\chi_{\BB}^{(\delta)}(\mathcal{M})\,\langle u, Ae_j^{(1)}\rangle_{\mathcal{H},\R}\Bigr]
\,d\mu_0\,.
\end{multline}
By applying the same arguments as for \eqref{homogeneous_KMS_II_5}--\eqref{homogeneous_KMS_II_7} and recalling \eqref{X^I_definition}, we deduce that
\begin{multline}
\lim_m \eqref{Theorem_2_5}=
\\
\label{Theorem_2_6}
\sum_{j=1}^{n} \int_{H^{-s}} G_k\,\Bigl[-\partial_j^{(1)}F\,\partial_j^{(2)}h^I\,\ee^{h^I}\chi_{\BB}^{(\delta)}(\mathcal{M})+\partial_j^{(2)}F\,\partial_j^{(1)}h^I\,\ee^{h^I}\chi_{\BB}^{(\delta)}(\mathcal{M})
\\
+\partial_j^{(1)}F\,\ee^{h^I}\chi_{\BB}^{(\delta)}(\mathcal{M})\,\langle u, Ae_j^{(2)}\rangle_{\mathcal{H},\R}-
\partial_j^{(2)}F\,\ee^{h^I}\chi_{\BB}^{(\delta)}(\mathcal{M})\,\langle u, Ae_j^{(1)}\rangle_{\mathcal{H},\R}
\\
-2\partial_j^{(1)}F\,\ee^{h^I}\bigl(\chi_{\BB}^{(\delta)}\bigr)'(\mathcal{M})\,\langle u, e_j^{(2)}\rangle_{\mathcal{H},\R}+2\partial_j^{(2)}F\,\ee^{h^I}\chi'_{\BB}(\mathcal{M})\,\langle u, e_j^{(1)}\rangle_{\mathcal{H},\R}\Bigr]
\,d\mu_0
\\
=\int_{H^{-s}} \langle \nabla F, -\ii Au+\ii \nabla h^I \rangle_{\mathcal{H},\R}\,G_k\,d \mu_0
\\
+\sum_{j=1}^{n} \int_{H^{-s}} G_k\,\Bigl[-2\partial_j^{(1)}F\,\ee^{h^I}\bigl(\chi_{\BB}^{(\delta)}\bigr)'(\mathcal{M})\,\langle u, e_j^{(2)}\rangle_{\mathcal{H},\R}
\\
+2\partial_j^{(2)}F\,\ee^{h^I}\bigl(\chi_{\BB}^{(\delta)}\bigr)'(\mathcal{M})\,\langle u, e_j^{(1)}\rangle_{\mathcal{H},\R}\Bigr]
\,d\mu_0\,.
\end{multline}
Recalling \eqref{well-definedness_remark_2_b_2}--\eqref{homogeneous_KMS_I_2**_2},  \eqref{local_Gibbs_measure_rigorous}, \eqref{Theorem_2_1}, and \eqref{Theorem_2_6}, the claim of Step 1 follows if we show that for all $1\leq j \leq n$, we have
\begin{multline}
\label{Theorem_2_7}
\lim_k \int_{H^{-s}} G_k\,\Bigl[-2\partial_j^{(1)}F\,\ee^{h^I}\bigl(\chi_{\BB}^{(\delta)}\bigr)'(\mathcal{M})\,\langle u, e_j^{(2)}\rangle_{\mathcal{H},\R}
\\
+2\partial_j^{(2)}F\,\ee^{h^I}\bigl(\chi_{\BB}^{(\delta)}\bigr)'(\mathcal{M})\,\langle u, e_j^{(1)}\rangle_{\mathcal{H},\R}\Bigr]
\,d\mu_0=0\,.
\end{multline}
Let us fix $k \in \N$. By Definition \ref{cut-off_chi}, we have that 
\begin{multline}
\label{Theorem_2_8}
\int_{H^{-s}} G_k\,\partial_j^{(1)}F\,\ee^{h^I}\bigl(\chi_{\BB}^{(\delta)}\bigr)'(\mathcal{M})\,\langle u, e_j^{(2)}\rangle_{\mathcal{H},\R}\,d \mu_0
\\
=\int_{H^{-s}} G_k \Phi_{\BB,\BB'}^{(\delta)}(\mathcal{M})\,\partial_j^{(1)}F\,\ee^{h^I}\bigl(\chi_{\BB}^{(\delta)}\bigr)'(\mathcal{M})\,\langle u, e_j^{(2)}\rangle_{\mathcal{H},\R}\,d \mu_0\,,
\end{multline}
where $\Phi_{\BB,\BB'}^{(\delta)} \in \mathscr{C}^{\infty}(\R)$ is chosen in such a way that
\begin{equation}
\label{Theorem_2_9}
\Phi_{\BB,\BB'}^{(\delta)}(x) =
\begin{cases}
1 \text{ for } |x| \geq  \BB \delta\,, \\
0 \text{ for } |x|  \leq \BB'\,.
\end{cases}
\end{equation}
It is possible to find such a function by \eqref{Theorem_2_delta_choice}.
By H\"{o}lder's inequality, we get that the expression on the right-hand side of \eqref{Theorem_2_8} is in absolute value 
\begin{equation}
\label{Theorem_2_10}
\leq \|G_k \Phi_{\BB,\BB'}^{(\delta)}(\mathcal{M})\|_{L^2(\mu_0)}\,\|\partial_j^{(1)}F\|_{L^{\infty}(\mu_0)}\,\|\ee^{h^I} (\chi_{\BB}^{(\delta)})'(\mathcal{M})\|_{L^4(\mu_0)}\,\|\langle u, e_j^{(2)}\rangle_{\mathcal{H},\R}\|_{L^4(\mu_0)}\,.
\end{equation}
By \eqref{Theorem_2_R'_choice}, \eqref{G_k_convergence_2}, and \eqref{Theorem_2_9}, we have
\begin{equation}
\label{Theorem_2_11}
\lim_k \|G_k \Phi_{\BB,\BB'}^{(\delta)}(\mathcal{M})\|_{L^2(\mu_0)}=0\,.
\end{equation}
More precisely, for \eqref{Theorem_2_11}, we used 
\begin{equation*}
\|G_k \Phi_{\BB,\BB'}^{(\delta)}(\mathcal{M})\|_{L^2(\mu_0)} =\|(G_k-G) \Phi_{\BB,\BB'}^{(\delta)}(\mathcal{M})\|_{L^2(\mu_0)} \lesssim \|G_k-G\|_{L^2(\mu_0)}\,,
\end{equation*}
which holds by \eqref{Theorem_2_R'_choice} and \eqref{Theorem_2_9}.

On the other hand, we also know that 
\begin{equation}
\label{Theorem_2_12}
\|\partial_j^{(1)}F\|_{L^{\infty}(\mu_0)}\,\|\ee^{h^I} (\chi_{\BB}^{(\delta)})'(\mathcal{M})\|_{L^4(\mu_0)}\,\|\langle u, e_j^{(2)}\rangle_{\mathcal{H},\R}\|_{L^4(\mu_0)}<\infty\,.
\end{equation}
Namely, the first factor in \eqref{Theorem_2_12} is finite since $F \in \mathscr{C}^{\infty}_{\mathrm{c},\mathrm{cyl}}(H^{-s})$. The second one is finite by \eqref{integrability_of_weight'}. For the finiteness of the third factor, we use duality to note that 
\begin{equation*}
|\langle u, e_j^{(2)}\rangle_{\mathcal{H},\R}| \lesssim_j \|u\|_{H^{-s}}\,,
\end{equation*}
followed by Lemma \ref{Lemma_2.16_5}. From \eqref{Theorem_2_8} and \eqref{Theorem_2_10}--\eqref{Theorem_2_12}, we hence deduce that 
\begin{equation}
\label{Theorem_2_convergence_1}
\lim_k \int_{H^{-s}} G_k\,\partial_j^{(1)}F\,\ee^{h^I}\bigl(\chi_{\BB}^{(\delta)}\bigr)'(\mathcal{M})\,\langle u, e_j^{(2)}\rangle_{\mathcal{H},\R}\,d \mu_0=0\,.
\end{equation}
By analogous arguments, we obtain 
\begin{equation}
\label{Theorem_2_convergence_2}
\lim_k \int_{H^{-s}} G_k\,\partial_j^{(2)}F\,\ee^{h^I}\bigl(\chi_{\BB}^{(\delta)}\bigr)'(\mathcal{M})\,\langle u, e_j^{(1)}\rangle_{\mathcal{H},\R}\,d \mu_0=0\,.
\end{equation}
We deduce \eqref{Theorem_2_7} from \eqref{Theorem_2_convergence_1}--\eqref{Theorem_2_convergence_2}. The claim of Step 1 now follows.

\paragraph{\textbf{Step 2: Proof of \eqref{Theorem_2_delta_approximation} for $\delta=1$}}

By recalling \eqref{z_R_definition}, and applying Definition \ref{cut-off_chi}, \eqref{integrability_of_weight}
and the dominated convergence theorem, we obtain that 
\begin{equation}
\label{partition_function_convergence}
\lim_{\delta \rightarrow 1} z^{(\delta)} = z^{(1)}\,.
\end{equation}
By \eqref{local_Gibbs_measure_rigorous}, \eqref{partition_function_convergence}, and the result of Step 1, the claim follows if we show that 
\begin{equation}
\label{Step_2_A}
\lim_{\delta \rightarrow 0} \int_{H^{-s}} \{F,G\}\,\ee^{h^I} \chi_{\BB}^{(\delta)}(\mathcal{M})\,d \mu_0=
\int_{H^{-s}} \{F,G\}\,\ee^{h^I} \chi_{\BB}^{(1)}(\mathcal{M})\,d \mu_0
\end{equation}
and 
\begin{multline}
\label{Step_2_B}
\lim_{\delta \rightarrow 0} \int_{H^{-s}} \langle \nabla F,-\ii A u +\ii \nabla h^I \rangle_{\mathcal{H},\R}\, G\,\ee^{h^I} \chi_{\BB}^{(\delta)}(\mathcal{M})\,d \mu_0
\\
=\int_{H^{-s}} \langle \nabla F,-\ii A u +\ii \nabla h^I \rangle_{\mathcal{H},\R}\, G\,\ee^{h^I} \chi_{\BB}^{(1)}(\mathcal{M})\,d \mu_0\,.
\end{multline}
In order to show \eqref{Step_2_A}, we argue as in \eqref{well-definedness_remark_2_b} and use \eqref{dominated_convergence_theorem_remark_1} to observe that
\begin{equation*}
\biggl|\int_{H^{-s}} \{F,G\}\,\ee^{h^I} \bigl(\chi_{\BB}^{(\delta)}(\mathcal{M})-\chi_{\BB}^{(1)}(\mathcal{M})\bigr)\,d \mu_0\biggr|
\\
\lesssim_{F,s} \|\nabla G(u)\|_{L^2(\mu_0;H^{-s})}\,\bigl\|\ee^{h^I}\bigl(\chi_{\BB}^{(\delta)}(\mathcal{M})-\chi_{\BB}^{(1)}(\mathcal{M})\bigr)\bigr\|_{L^2(\mu_0)}\,\,\,  \mbox{as}\,\,\, \delta \rightarrow 1\,.
\end{equation*}
Similarly, in order to show \eqref{Step_2_B}, we argue as for \eqref{homogeneous_KMS_I_2**} and use \eqref{dominated_convergence_theorem_remark_1} to observe that as $\delta \rightarrow 1$
\begin{multline}
\label{Step_2_B_1}
\biggl|\int_{H^{-s}} \bigl|\langle \nabla F(u),-\ii A u +\ii \nabla h^I(u) \rangle_{\mathcal{H},\R}\, G(u)\, \ee^{h^I} \bigl(\chi_{\BB}^{(\delta)}(\mathcal{M})-\chi_{\BB}^{(1)}(\mathcal{M})\bigr) d \mu_0\biggr|
\\
\lesssim_{F,s} \bigl(1+\|\nabla h^I\|_{L^q(\mu_0;H^{-s})}\bigr)\,\|G\|_{L^2(\mu_0)}\,
\bigl\|\ee^{h^I} \bigl(\chi_{\BB}^{(\delta)}(\mathcal{M})-\chi_{\BB}^{(1)}(\mathcal{M})\bigr)\bigr\|_{L^p(\mu_0)}\rightarrow 0\,.
\end{multline}
In \eqref{Step_2_B_1}, we recall \eqref{p,q_choice}.
We hence obtain \eqref{Step_2_A}--\eqref{Step_2_B}, and the claim follows.
\end{proof}

\begin{remark}
We are crucially using the assumption that $G \in \mathbb{D}^{1,2}_{\BB}(\mu_0)$ in order to deduce \eqref{Theorem_2_7}.
\end{remark}
\begin{remark}
\label{Theorem_2_assumption_remark}
Let us comment on the difference between the assumptions on the space in which the localized function lies in Definition \ref{local_KMS_definition} and in \cite[Section 5.3]{AS21}. We note that in the latter, one considers $d=1$ and the localized function (which in \cite{AS21} is $F$) is taken to lie in the space $\mathscr{C}^2_{\mathrm{b}}(\mathcal{H})$, which is the space of twice continuously (Fr\'{e}chet) differentiable real-valued functions with bounded second derivative. 

In Definition \ref{local_KMS_definition} above, the localized function is $G$, which is assumed to be in $\mathbb{D}^{1,2}(\mu_0)$. 
When $d=1$, we show that
\begin{equation}
\label{Theorem_2_assumption_remark_1}
\mathscr{C}^1_{\mathrm{b}}(\mathcal{H}) \subset \mathbb{D}^{1,2}(\mu_0)\,.
\end{equation}
Let us note that, when $d=1$, we can take $s=0$ in Asssumption \ref{choice_of_s_assumption}, in which case
 $H^{-s}=\mathcal{H}$.
Hence, by \eqref{Theorem_2_assumption_remark_1}, we can consider $G \in \mathscr{C}^1_{\mathrm{b}}(\mathcal{H})$ satisfying the localization in $\mathcal{M}$ as for \eqref{D^{1,2}_R} in Definition \ref{local_KMS_definition}, and subsequently in Theorem \ref{Gibbs_implies_local_KMS_theorem}. 

Let us now prove \eqref{Theorem_2_assumption_remark_1}. Consider $G \in \mathscr{C}^1_{\mathrm{b}}(\mathcal{H})$. By assumption, this means that $G: \mathcal{H} \rightarrow \R$ is continuous and bounded, and for all $u \in \mathcal{H}$, there exists $DG(u) \in \mathcal{H}$ such that 
\begin{equation*}
\lim_{v \rightarrow 0} \frac{|G(u+v)-G(u)-\langle DG(u),v\rangle_{\mathcal{H},\R}|}{\|v\|_{\mathcal{H}}}=0\,,
\end{equation*}
where the map $u \in \mathcal{H} \mapsto DG(u) \in \mathcal{H}$ is continuous and bounded, in the sense that 
\begin{equation}
\label{boundedness_of_G}
\sup_{u \in \mathcal{H}} \|DG(u)\|<\infty\,.
\end{equation}
By continuity, we obtain that $G$ is $\mu_0$-measurable.

Since $(e_j^{(i)})$ is an orthonormal basis of $(\mathcal{H},\langle \cdot, \cdot \rangle_{\mathcal{H},\R})$ and $DG(u) \in \mathcal{H}$, we obtain that
\begin{equation}
\label{Theorem_2_assumption_remark_3}
DG(u)=\sum_{j=1}^{\infty} \sum_{i=1}^{2} \langle DG(u), e_j^{(i)}\rangle_{\mathcal{H},\R}\,e_j^{(i)}\,,
\end{equation}
where the sum converges in $\mathcal{H}$.
Recalling \eqref{projection_Pi_n}, we define for $n \in \N$
\begin{equation}
\label{Theorem_2_assumption_remark_4}
G_n:=G \circ \Pi_n\,.
\end{equation}
Each $G_n$ is continuous, hence $\mu_0$-measurable.
From \eqref{Theorem_2_assumption_remark_4}, \eqref{projection_Pi_n}, as well as the continuity and boundedness of $G$, we deduce that 
\begin{equation}
\label{Theorem_2_assumption_remark_5}
\lim_n |G_n(u)-G(u)|=0\,\quad \forall u \in \mathcal{H}\,,\qquad \sup_n \sup_{u \in \mathcal{H}} |G_n(u)| <\infty\,.
\end{equation}
By using \eqref{Theorem_2_assumption_remark_5} and the dominated convergence theorem, we have that
\begin{equation}
\label{G_n_convergence_remark}
G_n \rightarrow G \quad\mbox{in}\quad L^2(\mu_0)\,.
\end{equation}
From \eqref{Theorem_2_assumption_remark_4}, we have
\begin{equation}
\label{Theorem_2_assumption_remark_6}
DG_n(u)=\sum_{j=1}^{n} \sum_{i=1}^{2} \langle DG(\Pi_n(u)), e_j^{(i)} \rangle_{\mathcal{H},\R}\,e_j^{(i)}\,.
\end{equation}
From \eqref{Theorem_2_assumption_remark_3}, \eqref{Theorem_2_assumption_remark_6}, and Plancherel's theorem, we obtain that for all $u \in \mathcal{H}$
\begin{multline}
\label{Theorem_2_assumption_remark_7}
\|DG_n(u)-DG(u)\|_{\mathcal{H}}^2=\sum_{j=1}^{n} \sum_{i=1}^{2} \bigl|\langle DG(\mathrm{\Pi}_n (u))-DG(u),e_j^{(i)}\rangle_{\mathcal{H},\R}\bigr|^2
\\
+\sum_{j>n}\sum_{i=1}^{2}\bigl|\langle DG(u),e_j^{(i)}\rangle_{\mathcal{H},\R}\bigr|^2\,.
\end{multline}
Let us note that the right-hand side of \eqref{Theorem_2_assumption_remark_7} tends to zero as $n \rightarrow \infty$.
Namely, by Plancherel's theorem, \eqref{projection_Pi_n}, and the continuity of $DG$, we get that the first term is
\begin{equation}
\label{Theorem_2_assumption_remark_7_1}
\leq \|DG(\mathrm{\Pi}_n(u))-DG(u)\|_{\mathcal{H}}^2 \rightarrow 0 \quad \mbox{as} \quad n \rightarrow \infty\,.
\end{equation}
For the second term, we use $DG(u) \in \mathcal{H}$ and Plancherel's theorem to deduce that
\begin{equation}
\label{Theorem_2_assumption_remark_7_2}
\lim_n\, \sum_{j>n}\sum_{i=1}^{2}\bigl|\langle DG(u),e_j^{(i)}\rangle_{\mathcal{H},\R}\bigr|^2=0\,.
\end{equation}
From \eqref{Theorem_2_assumption_remark_7}--\eqref{Theorem_2_assumption_remark_7_2}, we deduce that for all $u \in \mathcal{H}$
\begin{equation}
\label{Theorem_2_assumption_remark_8}
\lim_n \|DG_n(u)-DG(u)\|_{\mathcal{H}}=0\,.
\end{equation}
From \eqref{Theorem_2_assumption_remark_6} and Plancherel's theorem, we deduce that for all $n \in \N$ and $u \in \mathcal{H}$, we have
\begin{equation*}
\|DG_n(u)\|_{\mathcal{H}} \leq \|DG(\mathrm{\Pi}_n(u))\|_{\mathcal{H}}\,,
\end{equation*}
which by \eqref{projection_Pi_n} and \eqref{boundedness_of_G} implies that 
\begin{equation}
\label{Theorem_2_assumption_remark_9}
\sup_{u \in \mathcal{H}} \sup_n \|DG_n(u)\|_{\mathcal{H}}<\infty\,.
\end{equation}
By \eqref{G_n_convergence_remark}, \eqref{Theorem_2_assumption_remark_8}, \eqref{Theorem_2_assumption_remark_9}, and the dominated convergence theorem, we can deduce that $G \in \mathbb{D}^{1,2}(\mu_0)$ and hence that \eqref{Theorem_2_assumption_remark_1} holds, provided that we show
\begin{equation}
\label{Theorem_2_assumption_remark_10}
G_n \in \mathbb{D}^{1,2}(\mu_0)\,,\qquad \nabla G_n=DG_n\,,
\end{equation}
for all $n \in \N$ (here $\nabla$ denotes the Malliavin derivative).
Furthermore, from  \eqref{G_n_convergence_remark}, \eqref{Theorem_2_assumption_remark_8} and \eqref{Theorem_2_assumption_remark_10}, we obtain that $DG=\nabla G$.

We now show \eqref{Theorem_2_assumption_remark_10}. Let us fix $n \in \N$. By \eqref{Theorem_2_assumption_remark_4}, \eqref{projection_Pi_n}, and \eqref{projection_map} we can write 
\begin{equation}
\label{Theorem_2_assumption_remark_11}
G_n=\psi \circ \pi_n\,,
\end{equation}
where $\psi:\R^n \rightarrow \R$ is defined by
\begin{equation}
\label{Theorem_2_assumption_remark_12}
\psi(x_1^{(1)}, \ldots, x_n^{(1)};x_1^{(2)},\ldots,x_n^{(2)})=G\Biggl(\sum_{j=1}^{n} \sum_{i=1}^{2} x_i^{(j)} e_i^{(j)}\Biggr)\,.
\end{equation}
Furthermore, since $G \in \mathscr{C}^1_{\mathrm{b}}(\mathcal{H})$, we obtain that $\psi \in \mathscr{C}^1_{\mathrm{b}}(\R^{2n})$. We can then find a sequence $(\psi_k)_k$ in $\mathscr{C}^{\infty}_{\mathrm{c}}(\R^{2n})$ such that 
\begin{equation}
\label{sequence_psi}
\supp\, \psi_k \subset \supp\, \psi\,,\qquad \psi_k \rightarrow \psi\quad\mbox{in}\quad \mathscr{C}^1(\R^{2n})\,.
\end{equation}
For $k \in \N$, we define 
\begin{equation}
\label{sequence_psi_2}
G_{n,k}:=\psi_k \circ \pi_n \in \mathscr{C}^{\infty}_{\mathrm{c},\mathrm{cyl}}(\mathcal{H})\,.
\end{equation}
By \eqref{gradient_cylindrical_F}, \eqref{Theorem_2_assumption_remark_11}, \eqref{sequence_psi}--\eqref{sequence_psi_2}, and the dominated convergence theorem, we deduce that as $k \rightarrow \infty$
\begin{equation}
\label{G_{n,k}_convergence}
G_{n,k} \rightarrow G_n \quad\mbox{in}\quad \mathbb{D}^{1,2}(\mu_0)\,.
\end{equation}
The first claim in \eqref{Theorem_2_assumption_remark_10} follows from \eqref{G_{n,k}_convergence}. Furthermore, from \eqref{sequence_psi_2} followed by \eqref{gradient_cylindrical_F}, we have that
\begin{equation*}
\nabla G_{n,k}=\sum_{j=1}^{n} \sum_{i=1}^{2} \partial_j^{(i)} \psi_k(\pi_n(u)) e_j^{(i)}\,,
\end{equation*}
which by \eqref{sequence_psi}, the dominated convergence theorem, and Plancherel's theorem converges in $L^2(\mu_0; \mathcal{H})$ to 
\begin{equation}
\label{sequence_psi_3}
\sum_{j=1}^{n} \sum_{i=1}^{2} \partial_j^{(i)} \psi(\pi_n(u)) e_j^{(i)}=DG_n(u)\,.
\end{equation}
For the last equality in \eqref{sequence_psi_3}, we used \eqref{Theorem_2_assumption_remark_11}. By \eqref{G_{n,k}_convergence}, the quantity \eqref{sequence_psi_3} is also equal to $\nabla G_n(u)$. We hence obtain the second claim in \eqref{Theorem_2_assumption_remark_10}.
\end{remark}

To conclude this section, we remark how our analysis extends to the setting of local Gibbs measures studied in the recent paper \cite{RS23}.
\begin{remark}
\label{nonlocal_quintic_NLS_remark}
In \cite{RS23}, the second and third author consider local Gibbs measures for the following nonlocal (focusing) quintic NLS on $\T$:
\begin{align}
\notag
\mathrm{i}\partial_t u + (\Delta - 1)u =&-\frac{1}{3}\,\int_{\T} \int_{\T} V(x-y)\,V(x-z)\,|u(y)|^2\,|u(z)|^2\,u(x)\,dy\,dz
\\
\label{nonlocal_quintic_NLS}
&-\frac{2}{3}\,\int_{\T} \int_{\T} \, V(x-y)\,V(y-z)\,|u(y)|^2\,|u(z)|^2\,u(x)\,dy\,dz\,.
\end{align}
Here, $V \in L^1(\T)$ is assumed to be even and real-valued \cite[Assumption 1.2 (i)]{RS23}. If we formally set $V$ to be the Dirac delta function, \eqref{nonlocal_quintic_NLS} would correspond to the focusing quintic NLS. The main aim of \cite{RS23} is to obtain local Gibbs measures arising from the Hamiltonian \eqref{nonlocal_quintic_NLS} as suitable mean-field limits of (truncated) many-body quantum Gibbs states; see \cite[Theorem 1.9]{RS23} for a precise statement. We now study these local Gibbs measures in the context of local KMS states and see that the above analysis applies. In this remark, we just outline the main ideas. For further details, we refer the reader to \cite{RS23}.

By arguing as for \cite[Lemma 1.3]{RS23}, one can show that  \eqref{nonlocal_quintic_NLS} is a Hamiltonian system with Hamiltonian
\begin{multline}
\label{nonlocal_quintic_NLS_Hamiltonian}
h(u)=\frac{1}{2}\int_{\T} \bigl(|\nabla u(x)|^2 + |u(x)|^2\bigr)\,dx  \\ 
- \frac{1}{6}\int_{\T} \int_{\T} \int_{\T} V(x-y)\,V(x-z)\,|u(x)|^2\,|u(y)|^2\,|u(z)|^2\,dx\,dy\,dz\,.
\end{multline}
Instead of taking $h^I$ as in Assumption \ref{Assumption_on_V}, by \eqref{nonlocal_quintic_NLS_Hamiltonian}, we take
\begin{equation}
\label{nonlocal_quintic_NLS_interaction}
h^I(u):= \frac{1}{6}\int_{\T} \int_{\T} \int_{\T} V(x-y)\,V(x-z)\,|u(x)|^2\,|u(y)|^2\,|u(z)|^2\,dx\,dy\,dz\,.
\end{equation}
With $h^I$ as in \eqref{nonlocal_quintic_NLS_interaction}, it is shown in \cite[Proposition 2.2 (i)]{RS23} that the local Gibbs measure \eqref{local_Gibbs_measure_rigorous} is well-defined provided that we take $\BB>0$ sufficiently small. The proof \cite[Proposition 2.2 (i)]{RS23} also shows that the analogue of Proposition \ref{Hartree_equation_Malliavin_derivative} (ii) and \eqref{integrability_of_weight'}
 hold for $h^I$ given by \eqref{nonlocal_quintic_NLS_interaction}.
 
By a suitable modification of the arguments from the proof of Proposition \ref{Hartree_equation_Malliavin_derivative} (i) given in Section \ref{Proof_of_Proposition_2.13_(i)} below, one can show that
\begin{multline}
\label{nonlocal_quintic_NLS_nabla_h^I}
\nabla h^I(u)=\frac{1}{3}\,\int_{\T} \int_{\T} V(x-y)\,V(x-z)\,|u(y)|^2\,|u(z)|^2\,u(x)\,dy\,dz
\\
+\frac{2}{3}\,\int_{\T} \int_{\T} \, V(x-y)\,V(y-z)\,|u(y)|^2\,|u(z)|^2\,u(x)\,dy\,dz\,.
\end{multline}
We note that 
\begin{equation}
\label{nonlocal_quintic_NLS_Gross-Sobolev}
h^I \in \mathbb{D}^{1,p}(\mu_0)\,\,\mbox{for all}\,\, p \in [1,\infty)\,,
\end{equation}
thus showing that \eqref{nonlocal_quintic_NLS_interaction} satisfies the analogue of Proposition \ref{Hartree_equation_Malliavin_derivative} (i) (and also rigorously justifying the calculations used to obtain \eqref{nonlocal_quintic_NLS_nabla_h^I}).

We now show \eqref{nonlocal_quintic_NLS_Gross-Sobolev}. Let $p \in [1,\infty)$ be given.
We first show that 
\begin{equation}
\label{nonlocal_quintic_NLS_Gross-Sobolev_1}
h^I \in L^p(\mu_0)\,.
\end{equation}
Let us rewrite \eqref{nonlocal_quintic_NLS_interaction} as 
\begin{equation*}
h^I(u)= \frac{1}{6}\int_{\T} (V*|u|^2)^2(x)\,|u(x)|^2 dx\,,
\end{equation*}
and use H\"{o}lder's inequality followed by Young's inequality to obtain that
\begin{equation} 
\label{nonlocal_quintic_NLS_Gross-Sobolev_1_a}
|h^I(u)| \leq \frac{1}{6}\, \|V\|_{L^1}^2\,\|u\|_{L^6}^6\,.
\end{equation}
We obtain \eqref{nonlocal_quintic_NLS_Gross-Sobolev_1} from \eqref{nonlocal_quintic_NLS_Gross-Sobolev_1_a}
and from the analogue of \eqref{nonlocal_quintic_NLS_Gross-Sobolev_1} for the local quintic NLS (given by  Proposition \ref{Hartree_equation_Malliavin_derivative} (i)).

We now show that 
\begin{equation}
\label{nonlocal_quintic_NLS_Gross-Sobolev_2}
\nabla h^I \in L^p(\mu_0;H^{-s})\,.
\end{equation}
Let us write \eqref{nonlocal_quintic_NLS_nabla_h^I} as
\begin{equation}
\label{nonlocal_quintic_NLS_nabla_h^I_1}
\nabla h^I(u)=\mathcal{N}_1(u)+\mathcal{N}_2(u)\,,
\end{equation}
where
\begin{equation}
\label{nonlocal_quintic_NLS_nabla_h^I_2}
\mathcal{N}_1(u):=\frac{1}{3}\,(V*|u|^2)^2\,u\,,\qquad \mathcal{N}_2(u):=\frac{2}{3}\,\Bigl(V* \Bigl\{|u|^2 \bigl[V*|u|^2\bigr] \Bigr\}\Bigr)u\,.
\end{equation}
Let us now choose the function $w$ as
\begin{equation}
\label{nonlocal_quintic_NLS_nabla_h^I_3}
w=\mathcal{F}^{-1}(|\widehat{u}|)\,,
\end{equation}
where $\mathcal{F}$ here denotes the Fourier transform \eqref{Fourier_transform}.
By \eqref{nonlocal_quintic_NLS_nabla_h^I_2}--\eqref{nonlocal_quintic_NLS_nabla_h^I_3} and a direct calculation, it follows that 
\begin{equation}
\label{nonlocal_quintic_NLS_nabla_h^I_4}
\bigl|\widehat{\mathcal{N}_1(u)}\bigr|+\bigl|\widehat{\mathcal{N}_2(u)}\bigr| \leq \|V\|_{L^1}^2 \,\widehat{|w|^4 w}\,.
\end{equation}
(For more details on \eqref{nonlocal_quintic_NLS_nabla_h^I_4}, we refer the reader to the proof of \cite[Lemma 2.3]{RS23}).
From \eqref{nonlocal_quintic_NLS_nabla_h^I_1}, \eqref{nonlocal_quintic_NLS_nabla_h^I_3}--\eqref{nonlocal_quintic_NLS_nabla_h^I_4}, Plancherel's theorem, and arguments analogous to the proof of \cite[Proposition 5.1 (iii)]{AS21}, we hence deduce that for some $\zeta \in (-s,\frac{1}{2})$
\begin{multline}
\label{nonlocal_quintic_NLS_nabla_h^I_5}
\|\nabla h^I\|_{L^p(\mu_0;H^{-s})} \leq \|V\|_{L^1}^2 \||w|^4w\|_{L^p(\mu_0;H^{-s})}
\\
\lesssim_{\zeta} \|V\|_{L^1}^2 \, \|\|w\|_{H^{\zeta}}^5\|_{L^p(\mu_0)}= \|V\|_{L^1}^2 \,\|\|u\|_{H^{\zeta}}^5\|_{L^p(\mu_0)}
<\infty\,.
\end{multline}
More precisely, in the second inequality of \eqref{nonlocal_quintic_NLS_nabla_h^I_5}, we used the product estimate given in \cite[Lemma B.1]{AS21}. We omit the details. In particular, we have that \eqref{nonlocal_quintic_NLS_Gross-Sobolev_2} follows from the analogous statement when $V=\delta$. We hence deduce \eqref{nonlocal_quintic_NLS_Gross-Sobolev} from \eqref{nonlocal_quintic_NLS_Gross-Sobolev_1} and \eqref{nonlocal_quintic_NLS_Gross-Sobolev_2}. In summary, the results from Proposition \ref{Hartree_equation_Malliavin_derivative} hold when $h^I$ is given by \eqref{nonlocal_quintic_NLS_interaction}.

By recalling \eqref{projection_Pi_n}, using \eqref{nonlocal_quintic_NLS_nabla_h^I_1}--\eqref{nonlocal_quintic_NLS_nabla_h^I_2}, and arguing as for \eqref{homogeneous_KMS_sum_4}, we get that for $h^I$ as in \eqref{nonlocal_quintic_NLS_interaction},

\begin{multline}
\label{homogeneous_KMS_sum_4_nonlocal_quintic}
\bigl \langle \mathrm{\Pi}_n \nabla h^I,-\ii \mathrm{\Pi}_n \nabla \chi_{\BB}^{(\delta)}(\mathcal{M})\bigr \rangle_{\mathcal{H},\R}
=\frac{2}{3}\,\bigl\langle (V*\,|u|^2)^2\mathrm{\Pi}_n u,-\ii  \bigl(\chi_{\BB}^{(\delta)}\bigr)'(\mathcal{M})  \,\mathrm{\Pi}_n u\bigr \rangle_{\mathcal{H},\R}
\\
+\frac{4}{3}\,\Big\langle V*\Bigl\{|u|^2\bigl[V*\,|u|^2\bigr]\Bigr\}\mathrm{\Pi}_n u,-\ii  \bigl(\chi_{\BB}^{(\delta)}\bigr)'\mathcal{M})  \,\mathrm{\Pi}_n u\Big\rangle_{\mathcal{H},\R}
=0\,.
\end{multline}
In \eqref{homogeneous_KMS_sum_4_nonlocal_quintic}, we used the assumption that $V$ was real-valued, as well as the fact that $(\chi_{\BB}^{(\delta)})'$ is real-valued, which follows from from Definition \ref{cut-off_chi}. By replacing \eqref{homogeneous_KMS_sum_4} with \eqref{homogeneous_KMS_sum_4_nonlocal_quintic}, the rest of the proof of Theorem \ref{homogeneous_KMS_thm} shows that the local Gibbs measure \eqref{local_Gibbs_measure_rigorous} associated with interaction \eqref{nonlocal_quintic_NLS_interaction} (and sufficiently small $\BB$) is a stationary solution to the Liouville equation in the sense of \eqref{homogeneous_KMS} for $\delta \in (0,1]$. Likewise, the proof of Theorem \ref{Gibbs_implies_local_KMS_theorem} shows that this local Gibbs measure $\mu^{(1)}$ is a local KMS state.
\end{remark}

\section{Local KMS states are local Gibbs measures}
\label{Local KMS states are local Gibbs measures}

In this section, we characterize all possible local KMS equilibrium  states of focusing NLS equations on the torus.  In particular, we prove Theorem \ref{kms_implies_gibbs_theorem}.  For reader's convenience, we recall the following notations for $R>0$,
\begin{equation}
\mathbb{B}_{R}=\{u\in H^{-s}: |\mathcal{M}(u)|<R\}, \qquad \mathbb{B}_{R}^c=H^{-s}\setminus \mathbb{B}_{R},
\end{equation}
and  
\begin{equation}
\label{}
\Drspace:=\{ G \in \Dspace:  \exists R'\in(0,R) \text{ s.t }  
G(u) = 0 \text{ for } \text{$\mu_0$-a.s. } u \in \mathbb{B}_{R'}^c \}.
\end{equation}

\begin{proposition}
\label{local_KMS_differential_equation}
Let $\mu$ be a Borel probability measure in $\mathscr{P}(H^{-s})$. Suppose that $d \mu = \rho \, d\mu_{0}$ with $\rho \in \mathbb{D}^{1,2}(\mu_{0})\cap L^4(\mu_0)$ and that $\mu$ satisfies a local KMS condition, i.e., there exists $R>0$ such that
\begin{equation*}
\label{local_KMS_condition}
    \int_{H^{-s}} \{F,G\} \, d\mu = \int_{H^{-s}} {\rm Re}\langle \nabla F(u), X(u) \rangle \,G(u) \, d\mu\,,
\end{equation*}
for any  $G \in \Drspace$ and $F \in \smoothcyl$. 
Then $\rho$ satisfies the following differential equation 
\begin{equation}
\label{Malliavin_diff_equ}
\nabla \rho(u) - \rho(u) \nabla h^I(u) = 0
\end{equation}
in the Malliavin sense for $\mu_0$-almost all $u\in \mathbb{B}_{R}$.
\end{proposition}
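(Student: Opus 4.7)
The plan is to deduce the differential equation \eqref{Malliavin_diff_equ} from the local KMS identity via a single Gaussian integration by parts shifting derivatives off $F$, followed by a two-stage density argument -- first in $G$ to obtain a pointwise almost-sure identity, then in $F$ to recover $W \equiv 0$. Substituting $d\mu = \rho\, d\mu_0$ into the local KMS condition and invoking $\{F,G\} = \langle \nabla F, -\ii\nabla G\rangle_{\mcH,\R}$ from \eqref{poisson_general}, we recast it as
\begin{equation}
\label{plan_eq_1}
\int_{H^{-s}} \langle \nabla F, -\ii\nabla G\rangle_{\mcH,\R}\,\rho\, d\mu_0 = \int_{H^{-s}} \langle \nabla F, -\ii Au + \ii\nabla h^I\rangle_{\mcH,\R}\, G\rho\, d\mu_0
\end{equation}
for every $F \in \smoothcyl$ and $G \in \Drspace$.

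Writing $F = \varphi \circ \pi_n$ as in \eqref{cylindrical_test_functions_1} and expanding both sides of \eqref{plan_eq_1} in the basis $\{e_j^{(i)}\}$ via \eqref{poisson_cylindrical}, I apply Corollary \ref{integration_by_parts_corollary} on the left-hand side to transfer each $\partial_j^{(i)}$ off $F$ onto $\rho \cdot \partial_j^{(i')}F$ (which belongs to $\mathbb{D}^{1,2}(\mu_0)$ by the second clause of Lemma \ref{Leibniz_rule}). The Leibniz rule then splits the result into three pieces: an interior piece involving $\rho\,\partial_j^{(i)}\partial_j^{(i')} F$ that cancels against its symmetric counterpart by the equality of the mixed partials of $\varphi$; a new term $\partial_j^{(i)}\rho \cdot \partial_j^{(i')}F$; and a boundary contribution $\rho\,\partial_j^{(i')}F\,\lambda_j \langle u, e_j^{(i)}\rangle_{\mcH,\R}$ that cancels exactly against the $-\ii Au$ piece of the right-hand side of \eqref{plan_eq_1}. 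Setting $W(u) := \nabla\rho(u) - \rho(u)\nabla h^I(u)$, the identity \eqref{plan_eq_1} collapses to
\begin{equation}
\label{plan_eq_2}
\int_{H^{-s}} G(u)\,\langle \nabla F(u), -\ii W(u)\rangle_{\mcH,\R}\, d\mu_0 = 0
\end{equation}
for every $F \in \smoothcyl$ and $G \in \Drspace$. Combining $\rho \in \mathbb{D}^{1,2}(\mu_0) \cap L^4(\mu_0)$, Proposition \ref{Hartree_equation_Malliavin_derivative} (i), Lemma \ref{Lemma_2.16_5}, and H\"older's inequality yields $W \in L^2(\mu_0; H^{-s})$, so the integrand in \eqref{plan_eq_2} belongs to $L^1(\mu_0)$.

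Extract the pointwise statement in two stages. First, with $F \in \smoothcyl$ fixed, the family $\{\tilde G\, \chi^{(\delta)}_{R'}(\mathcal{M}) : \tilde G \in \smoothcyl,\, \delta \in (0,1),\, R' \in (0,R)\}$ lies in $\Drspace$ by Lemmas \ref{Leibniz_rule} and \ref{Malliavin_derivative_M_lemma*}; letting first $\delta \uparrow 1$ and then $\tilde G$ range over a countable $L^2(\mu_0)$-dense subfamily upgrades \eqref{plan_eq_2} to $\langle \nabla F, -\ii W\rangle_{\mcH,\R} = 0$ for $\mu_0$-a.e.\ $u \in \mathbb{B}_{R'}$, and the union over $R' < R$ gives the same on $\mathbb{B}_R$. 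Second, vary $F$: for each $(k, i)$ choose $F_m \in \smoothcyl$ of the form $F_m(u) = \psi_m(\langle u, e_k^{(i)}\rangle_{\mcH,\R})\,\eta_m(\pi_{k}(u))$ with $\psi_m \in \mathscr{C}^\infty_{\mathrm{c}}(\R)$ satisfying $0 \leq \psi_m' \leq 1$ and $\psi_m' \to 1$ pointwise, and $\eta_m$ a cylindrical cutoff equal to $1$ on larger and larger balls in $\R^{2k}$, so that $\nabla F_m \to e_k^{(i)}$ pointwise with $\|\nabla F_m\|_{H^s}$ uniformly bounded in $m$. Intersecting countably many $\mu_0$-full measure sets and passing to the limit in the pointwise identity just obtained yields $\langle e_k^{(i)}, -\ii W\rangle_{\mcH,\R} = 0$ for $\mu_0$-a.e.\ $u \in \mathbb{B}_R$, for every $(k, i)$. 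Since $\{-\ii e_k^{(i)}\}_{k,i}$ is, up to reindexing, the orthonormal basis $\{e_k^{(i)}\}_{k,i}$ of $\mcH_\R$ whose finite linear combinations are dense in $H^s$, we conclude $W \equiv 0$ in $H^{-s}$ for $\mu_0$-a.e.\ $u \in \mathbb{B}_R$, which is \eqref{Malliavin_diff_equ}.

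The main technical obstacle lies in the rigorous justification of the integration by parts leading to \eqref{plan_eq_2}: Proposition \ref{integration_by_parts_prop} and its Corollary \ref{integration_by_parts_corollary} require one of the two arguments to be cylindrical, whereas in our setting neither $G \in \mathbb{D}^{1,2}(\mu_0)$ nor $\rho\,\partial_j^{(i')}F$ is. I plan to circumvent this by approximating $\rho$ by a sequence of cylindrical functions $\rho_\ell \in \smoothcyl$ in $\mathbb{D}^{1,2}(\mu_0)$, applying Corollary \ref{integration_by_parts_corollary} at the cylindrical level for each $\ell$, and then passing to the limit using H\"older together with $\rho \in L^4(\mu_0)$, $\nabla h^I \in \bigcap_p L^p(\mu_0; H^{-s})$ from Proposition \ref{Hartree_equation_Malliavin_derivative} (i), and $u \in \bigcap_p L^p(\mu_0; H^{-s})$ from Lemma \ref{Lemma_2.16_5}; a secondary subtlety is that the space $\Drspace$ is defined with an existential quantifier on the truncation level $R' < R$, so the promotion from $\mathbb{B}_{R'}$ to $\mathbb{B}_R$ in the first density step relies on $\mathbb{B}_R = \bigcup_{R' < R} \mathbb{B}_{R'}$.
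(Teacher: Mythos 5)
Your proposal is correct and its mechanism is the same as the paper's: a single Gaussian integration by parts (shifting derivatives \emph{off $G$} onto $\rho\,\partial_j^{(i')}F$ --- your phrase ``off $F$'' is a slip, since the mixed-partial cancellation in $\varphi$ and the boundary structure you describe only make sense the other way), followed by density in $G$ and then in $F$. The organizational difference is that the paper does not redo the integration by parts by hand: it computes $\lim_n\int_{H^{-s}}\{F,\,G\rho_n\}\,d\mu_0$ in two ways --- once via the Leibniz rule, once by applying Proposition~\ref{local_KMS_approximation_lemma} (the Gaussian global KMS identity) to $G\rho_n\in\mathbb{D}^{1,2}(\mu_0)$ --- and equates; the cylindrical-approximation bookkeeping is absorbed into that proposition. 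Your hands-on version must replay it, as you note, by approximating $\rho$ by $\rho_\ell\in\smoothcyl$ so that $\rho_\ell\,\partial_j^{(i')}F$ is cylindrical and Corollary~\ref{integration_by_parts_corollary} applies; passing to the limit is safe because $\partial_j^{(i')}F\cdot\langle u, Ae_j^{(i)}\rangle_{\mcH,\R}\in L^\infty(\mu_0)$ (compactness of $\supp\varphi$, exactly as in \eqref{homogeneous_KMS_I_2A*}), so Cauchy--Schwarz with $G\in L^2(\mu_0)$ and $\rho_\ell\to\rho$ in $L^2(\mu_0)$ suffices for every term. One tidy-up on the $F$-density step: with $F_m=\psi_m(\langle\cdot, e_k^{(i)}\rangle_{\mcH,\R})\,\eta_m(\pi_k(\cdot))$ the gradient has a cross term $\psi_m\cdot\nabla\eta_m$ that does not vanish in the limit unless $\eta_m$ widens strictly faster than $\psi_m$ grows (e.g.\ $\eta_m=\tilde\eta(\cdot/m^2)$ against $\psi_m(x)=x\varphi(x/m)$, giving a cross term of size $O(1/m)$); matched scales leave a bounded but nonvanishing contribution. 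The paper's choice $F=\psi_k(\langle\cdot,e_j^{(i)}\rangle)$ without a cylindrical cutoff is cleaner for the dominated-convergence passage, though it implicitly extends the $L^2$-identity beyond $\smoothcyl$; your extra cutoff $\eta_m$ is precisely designed to avoid that extension, so the mismatch in scaling rates is worth spelling out.
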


\begin{proof}
The proof is inspired by \cite[Proposition 4.12]{AS21} and relies on the fact that the Gaussian measure $\mu_0$ satisfies the global KMS condition in Proposition \ref{local_KMS_approximation_lemma}. Let $G\in \Drspace$ be such that for some $R'\in(0,R)$, we have  $G(u) = 0$ for $\mu_0$-almost all $u \in \mathbb{B}_{R'}^c$.  Since $\rho \in \mathbb{D}^{1,2}(\mu_{0})$, we can find $\rho_n \to \rho$ in $\mathbb{D}^{1,2}(\mu_{0})$ with $\rho_n \in \smoothcyl$. Recall here that the Poisson bracket $\{F,G\}$ makes sense and belongs to $L^2(\mu_0)$. Hence, using $\rho_n \to \rho$, Cauchy-Schwarz, and that $d \mu = \rho \, d\mu_{0}$, we have 
\begin{align*}
\lim_n \int_{H^{-s}} \{F,G\} \rho_n \, d\mu_{0} = \int_{H^{-s}} \{F,G\} \, d\mu . 
\end{align*}
Similarly, we have
\begin{align*}
\lim_n \int_{H^{-s}} \{F,\rho_n\} G(u) \, d\mu_{0} = \int_{H^{-s}} {\rm Re} \langle \nabla F, -i \nabla \rho \rangle G(u) \, d\mu_{0}. 
\end{align*}
Then, using  the Leibniz rule (Lemma \ref{Leibniz_rule}),  we obtain
\begin{align*}
\lim_n \int_{H^{-s}} \{F,G\rho_n\} \, d\mu_{0} &= \lim_n \int_{H^{-s}} \{F,G\} \rho_n \, d\mu_{0} + \lim_n \int_{H^{-s}} \{F,\rho_n\} G(u) \, d\mu_{0} \\
&= \int_{H^{-s}} \{F,G\} \, d\mu + \int_{H^{-s}} {\rm Re} \langle \nabla F, -i \nabla \rho \rangle G(u) \, d\mu_{0}. 
\end{align*}
On the other hand, recall that $\rho_n \in \smoothcyl$ which yields $G\rho_n \in \Dspace$. Therefore, we can apply   Proposition  \ref{local_KMS_approximation_lemma} and write
\begin{align*}
\lim_n \int_{H^{-s}} \{F,G\rho_n\} \, d \mu_{0} &=   \lim_n \int_{H^{-s}} {\rm Re} \langle \nabla F, X_0 \rangle G(u) \rho_n \, d\mu_{ 0} \\
&=  \int_{H^{-s}} {\rm Re} \langle \nabla F, X_0 \rangle G(u) \, d\mu.
\end{align*}
Combining the above equalities gives
\begin{align*}
\int_{H^{-s}} {\rm Re} \langle \nabla F, X_0 \rangle G(u) \, d\mu = \int_{H^{-s}} \{F,G\} \, d\mu + \int_{H^{-s}} {\rm Re} \langle \nabla F, -i \nabla \rho \rangle G(u) \, d\mu_{0}.
\end{align*}
Recalling that $\mu$ satisfies the local KMS condition for $X = X_0 + X^I$ (with $X^I=i \nabla h^I$) and using that ${\rm Re} \langle f, g \rangle \rho = {\rm Re} \langle f, g \rho \rangle$ since $\rho$ is real valued, we have for any $G\in \Drspace$
\begin{equation*}
\int_{H^{-s}} {\rm Re} \langle \nabla F,  \rho X^I-i \nabla \rho \rangle G(u) \, d\mu_{0}=0.
\end{equation*}
We choose  $G=\chi_{R'}^{(\delta)}(\mathcal{M})  \tilde G$ with $\tilde G \in \smoothcyl$ and $\chi_{R'}^{(\delta)}$ is the cutoff function in Definition \ref{cut-off_chi} such that $\delta\in(0,1)$ and $R'\in(0,R)$. Using the support properties of $\chi_{R'}^{(\delta)}$ with   Lemmas \ref{Leibniz_rule} and \ref{Malliavin_derivative_M_lemma*},  we show that any $G=\chi_{R'}^{(\delta)}(\mathcal{M}) \tilde G$ belongs to the space $\Drspace$.  Therefore, we have that for any $\tilde G\in\smoothcyl$ 
\begin{equation}
\label{sec.kmstogibbs.eq.1}
\int_{H^{-s}} {\rm Im} \langle \nabla F,  \rho \nabla h^I-\nabla \rho \rangle \chi_{R'}^{(\delta)}(\mathcal{M}) \, \tilde G(u) \, d\mu_{0}=0. 
\end{equation}
Recalling that we have, by Proposition \ref{Hartree_equation_Malliavin_derivative}, 
$h^I\in \mathbb{D}^{1,4}(\free)$, and that $\rho\in \Dspace \cap L^4(\free)$ by assumption. Hence, we have that ${\rm Re} \langle \nabla F,  \rho \nabla h^I-\nabla \rho \rangle \in L^2(\mu_0)$. 
Since the space $\smoothcyl$ is dense in $L^2(\free)$, by orthogonality we deduce from \eqref{sec.kmstogibbs.eq.1}  that
\begin{equation}
\label{sec.kmstogibbs.eq.2}
{\rm Im} \langle \nabla F,  \rho \nabla h^I-\nabla \rho \rangle \, \chi_{R'}^{(\delta)}(\mathcal{M})=0 \quad \text{ in } L^2(\free),
\end{equation}
for any $F\in\smoothcyl$. Taking, for $i \in \{1,2\}$, the function 
$$
F(\cdot)\equiv \psi_k(\langle \cdot, e_j^{(i)} \rangle)=\langle \cdot, e_j^{(i)} \rangle \, \varphi\left( \frac{\langle \cdot, e_j^{(i)} \rangle}{k}\right)
$$
with $\varphi\in C_{\rm c}^\infty(\R)$ such that $0\leq \varphi\leq 1$ and 
 $\varphi(x)=1$ on  $|x|\leq 1$ and $0$ on $|x|\geq 2$. 
This gives in  $L^2(\free)$,
\begin{equation}
\psi_k'(\langle \cdot, e_j^{(i)} \rangle) \; {\rm Im}\langle e_j^{(i)},  \rho \nabla h^I-\nabla \rho \rangle \, \chi_{R'}^{(\delta)}(\mathcal{M})=0,
\end{equation}
Letting  $k\to \infty$, we obtain by dominated convergence
\begin{equation}
\|{\rm Im} \langle e_j^{(i)},  \rho \nabla h^I-\nabla \rho \rangle \, \chi_{R'}^{(\delta)}(\mathcal{M})\|_{L^2(\mu_0)}=0. 
\end{equation}
Here we have used that  $\rho \nabla h^I-\nabla \rho \in L^2(\mu_0; H^{-s})$,  $\chi_{R'}^{(\delta)}(\mathcal{M})\in L^\infty(\free)$ and $\psi_k' \to 1$ pointwise and $\psi_k'$ is bounded uniformly in $k$.  Using the complex structure of the space $\mathcal H$ with the convention \eqref{ONB_convention}, we obtain  for all $j\in\N$
\begin{equation}
\|\langle e_j,  \rho \nabla h^I-\nabla \rho \rangle \, \chi_{R'}^{(\delta)}(\mathcal{M})\|_{L^2(\free)}=0. 
\end{equation}
Thus by monotone convergence, we  have
\begin{align*}
 \int_{H^{-s}} \sum_{j\in \N} \lambda_j^{-s} | \langle e_j,  \rho \nabla h^I-\nabla \rho \rangle  \, \chi_{R'}^{(\delta)}(\mathcal{M})|^2 \, d\mu_0 & = \int_{H^{-s}}\|\rho \nabla h^I-\nabla \rho\|_{H^{-s}}^2 
\chi_{R'}^{(\delta)}(\mathcal{M})^2 \, d\mu_0 \\ &=0. 
\end{align*}
Taking now $R' \to R$ and then $\delta \to 1$ we conclude, by dominated convergence,  that 
\begin{align*}
\lim_{\delta \to 1} \lim_{R'\to R}\int_{H^{-s}}\|\rho \nabla h^I-\nabla \rho\|_{H^{-s}}^2 
\chi_{R'}^{(\delta)}(\mathcal{M})^2 \, d\mu_0 &=\int_{|\mathcal{M}(u)|<R}\|\rho \nabla h^I-\nabla \rho\|_{H^{-s}}^2 \, d\mu_0 \\ &=0. 
\end{align*}
Hence, we have
\begin{equation*}
\nabla \rho(u) - \rho(u) \nabla h^I(u) = 0
\end{equation*}
$\mu_0$-almost  all  $u \in \mathbb{B}_R$.
\end{proof}

\medskip

In the following we appeal to a remarkable result  proved by Shigeki Aida \cite{Ai} (see also Seiichiro Kusuoka's work \cites{Kusuoka91,Kusuoka92}). It is a key argument in our analysis. Before recalling  this, we briefly comment  on the construction of Sobolev classes  on abstract Wiener spaces (or Gaussian probability spaces). There are essentially three standard classes obtained by different ways: 
\begin{itemize}
    \item[(i)] Completion of smooth cylindrical functions or cylindrical polynomials with respect to some $L^p(\mu_0)$ norms involving the gradient ($\mu_0$ is a Gaussian measure as in Proposition \ref{free_Gibbs_prop}). This yields the spaces $W^{p,r}$  sometimes called Gross-Stroock Sobolev spaces and studied for instance by Paul Malliavin in \cite{Mall97} and by Vladimir  Bogachev in \cite{Bog98}.
    \item[(ii)] Using generalized derivatives. This gives the spaces ${D}^{p,r}$ used in Aida's paper \cite{Ai}. 
    \item[(iii)] Using  Ornstein–Uhlenbeck operators and semigroups. This leads to the class ${H}^{p,r}$ studied by Shinzo Watanabe in \cite{Wat84}.  
\end{itemize} 
Unfortunately there is no consensus in the literature neither  on the notations nor on the norms of the above Sobolev classes. Here in this brief review,  we use the conventions from  \cite{Bog98}. 
 Mainly it is known for $p\in (1,\infty)$ and $r\in\mathbb{N}$  that ${H}^{p,r}$,  ${W}^{p,r}$  and ${D}^{p,r}$ are the same spaces equipped with equivalent norms. For the discussion of these issues, we refer the reader to the monographs by Vladimir  Bogachev \cite{Bog98} and  Shinzo Watanabe \cite{Wat84}.   It is worth noticing also that the Gross-Sobolev spaces $\mathbb{D}^{1,p}(\mu_0)$, we are using here, fall into the category (i) but equipped with slightly different norms compared to \cite{Bog98}.  In particular, with the exponent $s$ satisfying \eqref{eq.cond.spower}, $\mathbb{D}^{1,2}(\mu_0)$ is included or equal to the Sobolev space used by Aida in \cite{Ai}. \\

Below we reformulate in our context the result of Aida \cite[Corollary 4]{Ai}. 
\begin{proposition}[S.~Aida]
\label{Aida_0_derivative_result} 
Let $\phi:H^{-s}\to \R$ be a measurable function which is an $H^1$-continuous function, i.e., 
for any $u\in H^{-s}$ the function $\phi(u+\cdot):H^{1} \to \R$  is continuous. Let $\mathcal O$  denote the domain $\{u\in H^{-s}: \phi(u) >0\}$. Suppose that  $\mu_0(\mathcal O)>0$ and 
that $\mathcal O$ is  $H^1$-connected, i.e., for any $u\in \mathcal O$ the subset $\mathcal O(u)=\{w\in H^1: 
u+w\in \mathcal O\}$ is a connected  set of $(H^1, \|\cdot\|_{H^1})$. Assume that 
$G\in \Dspace$ such that $\nabla G=0$ for $\mu_0$-a.e. $u\in \mathcal O$, then $G$ is constant on $\mathcal O$. \end{proposition}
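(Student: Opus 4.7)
The plan is to reduce the problem to a statement about one-dimensional absolutely continuous functions via the Cameron--Martin theorem and a quasi-invariance argument, using that $\mu_0$ has $H^1$ as its Cameron--Martin space. The basic idea is that $\nabla G$ controls directional derivatives of $G$ along all $H^1$-directions, so vanishing of $\nabla G$ on $\mathcal O$ should force $G$ to be constant along any $H^1$-path lying in $\mathcal O$; $H^1$-connectedness then glues these local constancy statements into a single global value.

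My first step would be to fix a \emph{quasi-continuous} (ACL-type) version $\widetilde G$ of $G$. Concretely, I would approximate $G$ by cylindrical $G_n \in \smoothcyl$ converging to $G$ in $\Dspace$ and show, via a Cameron--Martin change of variables combined with Fubini, that for every $w \in H^1$ there exists a $\mu_0$-null set $N_w \subset H^{-s}$ such that for all $u \notin N_w$ the map $t \in \R \mapsto \widetilde G(u+tw)$ is absolutely continuous with
\begin{equation*}
\frac{d}{dt}\widetilde G(u+tw) = \langle \nabla \widetilde G(u+tw), w \rangle_{\mathcal H,\R} \quad \text{for a.e. } t.
\end{equation*}
Because $\phi$ is $H^1$-continuous, for each fixed $u$ the slice $\mathcal O(u) \subset H^1$ is an open subset of $(H^1,\|\cdot\|_{H^1})$. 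Given $w \in \mathcal O(u)$ and a segment $\{u+tw : t\in [0,1]\} \subset \mathcal O$, the Cameron--Martin/Fubini argument also lets me exchange ``$\nabla G = 0$ for $\mu_0$-a.e.~$u\in\mathcal O$'' with ``for $\mu_0$-a.e.~$u$, and a.e.~$t$ with $u+tw\in\mathcal O$, $\langle \nabla \widetilde G(u+tw),w\rangle = 0$''. Combined with ACL, this gives $\widetilde G(u+t_1 w) = \widetilde G(u+t_2 w)$ whenever $t_1,t_2$ lie in the same connected component of $\{t:u+tw\in\mathcal O\}$, for $\mu_0$-a.e.~$u$.

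Next I would upgrade this segment-wise constancy to constancy along arbitrary $H^1$-paths in $\mathcal O(u)$ by a polygonal-approximation argument. Given $u\in\mathcal O$ and $w_1,w_2\in\mathcal O(u)$, the $H^1$-connectedness hypothesis yields a continuous path $\gamma:[0,1]\to(H^1,\|\cdot\|_{H^1})$ with $\gamma(0)=w_1$, $\gamma(1)=w_2$, $\gamma(t)\in\mathcal O(u)$. Openness of $\mathcal O(u)$ in $H^1$ and compactness of $\gamma([0,1])$ allow me to cover $\gamma$ by finitely many $H^1$-balls contained in $\mathcal O(u)$, hence to replace $\gamma$ by a polygonal path with vertices $0=s_0<s_1<\cdots<s_N=1$ such that each straight segment $[\gamma(s_{i-1}),\gamma(s_i)]$ lies in $\mathcal O(u)$. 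Applying the segment constancy of the previous step to each segment in turn (after a quasi-invariance shift to land in the full-measure set where ACL holds) gives $\widetilde G(u+w_1)=\widetilde G(u+w_2)$, and hence that $\widetilde G$ is constant on each $H^1$-orbit inside $\mathcal O$.

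Finally, since $\mu_0(\mathcal O)>0$ and any two points of $\mathcal O$ can (up to $\mu_0$-null sets) be reached from one another by an $H^1$-shift combined with the $H^1$-connected orbit structure, a further Cameron--Martin/Fubini argument would transfer the orbit-wise constancy into the statement that $\widetilde G$ is $\mu_0$-a.e.\ constant on $\mathcal O$. The main obstacle I expect is precisely this coordination: the hypothesis ``$\nabla G=0$ on $\mathcal O$'' is a statement modulo $\mu_0$-null sets in $H^{-s}$, whereas the conclusion requires pointwise (or at least $H^1$-a.e.)\ statements along individual $H^1$-slices of $\mathcal O$. Handling this rigorously requires a genuinely quasi-continuous representative of $G$ and the use of capacities/$(1,2)$-quasi-continuity on Wiener space, which is the main technical contribution of Aida \cite{Ai} and of Kusuoka \cites{Kusuoka91,Kusuoka92}; I would cite these works for that step rather than reproving it.
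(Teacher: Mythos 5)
The paper does not actually prove this proposition: it is stated as a reformulation, in the present setting, of Aida's result \cite[Corollary 4]{Ai}, and is used as a black box. So the relevant comparison is between your sketch and Aida's argument, not a proof in the paper. Your outline correctly identifies the standard ingredients (the ACL property of $\mathbb{D}^{1,2}(\mu_0)$-functions along Cameron--Martin directions via quasi-invariance and Fubini, a quasi-continuous representative, and polygonal approximation of continuous paths inside the $H^1$-open slice $\mathcal O(u)$), and you are right that the coordination of direction-dependent null sets is where capacities and the Aida--Kusuoka machinery enter.

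There is, however, one genuine gap beyond what you defer to the literature: the final gluing step. Constancy of $\widetilde G$ along each $H^1$-orbit piece $\{u+w : w\in \mathcal O(u)\}$ does not yield a single constant on all of $\mathcal O$ by "a further Cameron--Martin/Fubini argument", because two generic points of $\mathcal O$ do \emph{not} differ by an element of $H^1$ --- the Cameron--Martin space $H^1$ is a $\mu_0$-null subset of $H^{-s}$, so the orbits $u+H^1$ form an uncountable partition of $\mathcal O$ into null sets, and your orbit-wise argument says nothing about how the constants on different orbits relate. Your assertion that "any two points of $\mathcal O$ can (up to $\mu_0$-null sets) be reached from one another by an $H^1$-shift" is false. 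Bridging distinct orbits is precisely the irreducibility statement (an ergodicity-type property of the $H^1$-translation action localized to the domain $\mathcal O$) that constitutes the actual content of Aida's theorem; at that point the argument becomes circular unless one imports the full result. The honest route --- and the one the paper takes --- is to cite Aida for the entire proposition rather than only for the quasi-continuity technicalities.
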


\begin{lemma}\label{lem.borelrepr.mass}
Let $\mathcal{M}(\cdot)$ be the renormalized mass in the case $d=2,3$, given by \eqref{renormalized_mass} and Lemma \ref{Wick_ordered_mass_lemma} and belonging to $L^p(\mu_0)$ for all  $p\in [1,\infty)$. There exists a Borel measurable representative of $\mathcal M$ denoted  $\widetilde{\mathcal{M}}: H^{-s} \to \mathbb R$ and there exists a Borel set  $\Omega_0$ in $H^{-s}$ 
of full measure (i.e. $\mu_0(\Omega_0)=1$) satisfying  $\Omega_0+H^1=\Omega_0$ and such that we have for all $u\in \Omega_0$ and all $w\in H^1$,
\begin{equation}\label{eq.lemborelrep.iden}
    \widetilde{\mathcal{M}}(u+w)= \widetilde{\mathcal{M}}(u)+2\Real\langle u, w\rangle_{L^2}+ \|w\|^2_{L^2} \,.
\end{equation}
Moreover, for an arbitrary $R>0$,  we can set $\widetilde{\mathcal{M}}(u)=R$ for all $u\notin \Omega_0$. 
\end{lemma}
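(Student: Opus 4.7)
The plan is to build both $\widetilde{\mathcal{M}}$ and $\Omega_0$ from the finite-dimensional Wick-ordered approximants of the renormalized mass, and then to leverage the elementary algebraic translation identity these approximants enjoy in order to deduce the Cameron--Martin-type formula \eqref{eq.lemborelrep.iden}. Concretely, I will let $P_n : H^{-s} \to H^{-s}$ denote the orthogonal projection onto $\mathrm{span}\{e_1,\ldots,e_n\}$, which acts as an orthogonal projection on each $H^{\alpha}$ since the $e_j$'s diagonalize $A$, and I will work with
$$\mathcal{M}_n(u) \,:=\, \|P_n u\|_{L^2}^2 \,-\, \E_{\mu_0}\bigl[\|P_n u\|_{L^2}^2\bigr]\,,$$
which is continuous on $H^{-s}$ and, by the construction underlying Lemma \ref{Wick_ordered_mass_lemma}, converges to $\mathcal{M}$ in $L^p(\mu_0)$ for every $p \in [1,\infty)$. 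Passing to a subsequence $(n_k)_k$, I will arrange that $\mathcal{M}_{n_k}(u) \to \mathcal{M}(u)$ for $\mu_0$-almost every $u$.

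Next, I will set
$$\Omega_0 \,:=\, \bigl\{u \in H^{-s} \,:\, \text{the sequence } (\mathcal{M}_{n_k}(u))_k \text{ converges in } \R\bigr\}\,,$$
which is manifestly a Borel set of full $\mu_0$-measure, and define $\widetilde{\mathcal M}(u) := \lim_k \mathcal{M}_{n_k}(u)$ for $u \in \Omega_0$ and $\widetilde{\mathcal M}(u) := R$ otherwise. This gives a Borel representative of $\mathcal{M}$ in $L^p(\mu_0)$ without any further work.

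The key step will be the translation formula: for every $u \in H^{-s}$ and $w \in H^1$, finite-dimensionality of the range of $P_{n_k}$ gives, after cancellation of the Wick constants (which depend only on $n_k$, not on the argument),
\begin{equation*}
\mathcal{M}_{n_k}(u+w) - \mathcal{M}_{n_k}(u) \,=\, 2\Real\langle P_{n_k}u, P_{n_k}w\rangle_{L^2} + \|P_{n_k}w\|_{L^2}^2 \,.
\end{equation*}
Using that $P_{n_k}u \to u$ in $H^{-s}$ (for every such $u$) and $P_{n_k}w \to w$ in $H^s$ (recalling $H^1 \subset H^s$ since $s \leq 1$), Parseval in the eigenbasis of $A$ will show that the right-hand side converges \emph{deterministically} to $2\Real\langle u, w\rangle_{L^2} + \|w\|_{L^2}^2$, where the pairing on the right denotes the $H^{-s}$--$H^s$ duality extending the $L^2$ inner product. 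From this deterministic convergence I will conclude that $\mathcal{M}_{n_k}(u+w)$ converges if and only if $\mathcal{M}_{n_k}(u)$ does, which simultaneously yields $\Omega_0 + H^1 = \Omega_0$ and, after passing to the limit $k \to \infty$ on $\Omega_0$, the identity \eqref{eq.lemborelrep.iden}.

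The main (mild) obstacle is ensuring that the convergence $\langle P_{n_k} u, P_{n_k} w\rangle_{L^2} \to \langle u, w\rangle_{L^2}$ holds pointwise for \emph{every} $u \in H^{-s}$ and $w \in H^1$ (and not merely $\mu_0$-almost surely in $u$): it is precisely this deterministic spectral fact—an elementary application of Parseval with the weights $\lambda_j^{\pm s}$—that upgrades the almost-sure convergence set $\Omega_0$ to a genuinely $H^1$-invariant Borel set and prevents the construction from leaking back into a measure-theoretic statement at the crucial step.
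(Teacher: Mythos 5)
Your proof is correct, and it actually takes a cleaner route than the paper's. The paper proceeds by (a) defining $\Omega$ as the convergence set of the subsequence $(\mathcal{M}_{n_j})_j$, (b) invoking the Cameron--Martin theorem to obtain $\mu_0(\Omega \cap (\Omega - w)) = 1$ for each $w \in H^1$, (c) taking a countable dense subset $\mathcal{D} \subset H^1$ and setting $\Omega_0 = \bigcap_{w \in \mathcal{D}} (\Omega \cap (\Omega - w))$, and (d) extending from $w \in \mathcal{D}$ to all $w \in H^1$ by a three-term Cauchy estimate. You bypass (b)--(d) entirely by noticing that the increment
\begin{equation*}
\mathcal{M}_{n_k}(u+w) - \mathcal{M}_{n_k}(u) = 2\,\Real\langle P_{n_k} u, P_{n_k} w\rangle_{L^2} + \|P_{n_k}w\|_{L^2}^2
\end{equation*}
converges \emph{deterministically} for every $u \in H^{-s}$ and $w \in H^1$, by absolute convergence of the dual pairing $\sum_k \widehat{u}(k)\overline{\widehat{w}(k)}$ under the hypotheses $u \in H^{-s}$, $w \in H^1 \subset H^s$ (using $s \le 1$). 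This single observation immediately gives both $\Omega_0 + H^1 = \Omega_0$ (so $\Omega_0$ can just be taken to be the paper's $\Omega$, with no intersection needed) and the identity \eqref{eq.lemborelrep.iden} by passing to the limit in the algebraic translation formula. Interestingly, the paper does make the same deterministic observation — see the remark after \eqref{eq.borelrep.3} that the inner product is well-defined since $s \le 1$ — but does not exploit it to skip the Cameron--Martin and density steps. Your argument is a genuine simplification and buys a proof that makes no appeal to quasi-invariance of the Gaussian measure.

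One small stylistic point: the claim that $\Omega_0$ is Borel is ``manifest'' because each $\mathcal{M}_{n_k}$ is continuous on $H^{-s}$, so the convergence set can be written, as the paper does, as $\bigcap_{\varepsilon \in \mathbb{Q}_{>0}} \bigcup_{N} \bigcap_{j,j'>N} \{|\mathcal{M}_{n_j} - \mathcal{M}_{n_{j'}}| < \varepsilon\}$; it would be worth spelling this out rather than asserting it.
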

\begin{proof}
Consider the sequence of Borel measurable  functions    
\begin{eqnarray*}
    \mathcal M_n: u \in H^{-s}& \longmapsto & \mathcal M_n(u)= \sum_{|k|\leq n}\left( |\hat u(k)|^2-\frac{1}{\langle k \rangle^2}\right).
\end{eqnarray*}
Since $(\mathcal M_n)_n$ converges in $L^2(\mu_0)$, there exists a subsequence $(\mathcal M_{n_j})_j$ which converges almost surely on $H^{-s}$. We denote 
\[
\begin{array}{rcl}
    u \in H^{-s}& \longmapsto &  {\mathcal M}_\infty(u)= \begin{cases}
        \lim_j \mathcal M_{n_j}(u) & \text{if convergence holds,} \quad \\
        R & \text{if not.}
    \end{cases}
\end{array}
\]
On the other hand, we know that the set of all points of convergence is a Borel set,  
\begin{eqnarray*}
    \Omega &= & \bigl\{u\in H^{-s}: (\mathcal M_{n_j}(u))_j \text{ converges }\bigr\} \\
    &= & \bigcap_{\varepsilon\in\mathbb Q, \varepsilon>0} \bigcup_{N\in\mathbb N} \bigcap_{j,j'>N} \bigl\{u\in H^{-s}: | \mathcal M_{n_j}(u)- \mathcal M_{n_{j'}}(u) |<\varepsilon\bigr\}.
\end{eqnarray*}
Hence, ${\mathcal M}_\infty$ is a Borel measurable representative of $\mathcal M$ and 
$\mu_0(\Omega)=1$.  According to the Cameron-Martin theorem, we know that the  Gaussian measures $\mu_0$ and $(T_w)_{\sharp}\mu_0$ are equivalent for any translation $T_w$ by a vector $w$ in $H^1$ (see e.g. \cite[Corollary 2.4.3]{Bog98}).  
Therefore we have for any $w\in H^1$,
$$
\mu_0(\Omega \cap \Omega-w)=1. 
$$
Since $H^1$ is separable we can find a countable dense set $\mathcal D$ in $H^1$ and 
build the following Borel set of full measure
\begin{equation}
    \Omega_0=\bigcap_{w\in \mathcal D} \Omega \cap (\Omega-w). 
\end{equation}
This definition gives that $u\in \Omega_0$ if and only if $u\in\Omega$ and $u+w\in\Omega$  for all $w\in \mathcal D$. Moreover, for any $u\in\Omega_0$ and $w\in  \mathcal D$,
\begin{equation}\label{eq.borelrep.3}
\begin{array}{cl}
    {\mathcal M}_\infty(u+w)    &=\lim_j {\mathcal M}_{n_j}(u+w)  \\
     & =\lim_j {\mathcal M}_{n_j}(u)+ 2 \Real\langle u_{n_j},w_{n_j}\rangle_{L^2} +\|w_{n_j}\|^2_{L^2} \\
      & ={\mathcal M}_\infty(u)+ 2 \Real\langle u,w\rangle_{L^2} +\|w\|^2_{L^2},
\end{array}
\end{equation}
where 
\begin{equation}
    \label{eq.borelrep.ad1}
    u_{n}=P_{n} u, \qquad w_{n}=P_{n} w,
\end{equation} 
and $P_{n}$ is the projector in \eqref{P_n}. In \eqref{eq.borelrep.3}, the inner product $\langle u, w \rangle_{L^2}$ is well defined since $s \leq 1$ by Assumption \ref{choice_of_s_assumption}.
We also have for any $x,y\in\mathcal D$, 
\begin{equation}\label{eq.borelrepr.4}
    {\mathcal M}_{n}(u+x)-{\mathcal M}_{n}(u+y)=2\Real\langle u_{n}, x-y\rangle_{L^2}
+ \|P_{n}x\|^2_{L^2}-\|P_{n}y\|^2_{L^2}. 
\end{equation}
Now, we claim that  for any $u\in\Omega_0$ and $x\in H^1$, the subsequence $({\mathcal M}_{n_j}(u+x))_j$ converges. Indeed, we have 
\begin{eqnarray}
\notag
   |{\mathcal M}_{n_j}(u+x) -{\mathcal M}_{n_{j'}}(u+x) |\leq &  |{\mathcal M}_{n_j}(u+x) -{\mathcal M}_{n_j}(u+w) | \\ 
   \label{5.14_terms}
   &+ |{\mathcal M}_{n_j}(u+w) -{\mathcal M}_{n_{j'}}(u+w) | \\ 
   \notag
   &+|{\mathcal M}_{n_{j'}}(u+w) -{\mathcal M}_{n_{j'}}(u+x) | .
\end{eqnarray}
So, choosing $w\in\mathcal D$ sufficiently close to $x$ and using the computation \eqref{eq.borelrepr.4} and the convergence \eqref{eq.borelrep.3}, 
one proves that the left-hand side of \eqref{5.14_terms} converges to $0$ as $j,j'\to \infty$. So, we conclude that for any $u\in\Omega_0$, we have $u+x\in \Omega$ for all $x\in H^1$ since the subsequence $({\mathcal M}_{n_j}(u+x))_j$ converges. Therefore from the definition of $\Omega_0$ we obtain  $u+x\in \Omega_0$ for all $x\in H^1$. This means that $\Omega_0+H^1=\Omega_0$ and we can take a Borel representative for $\mathcal M$ as  $\widetilde{\mathcal{M}}=\mathbbm{1}_{\Omega_0} \, {\mathcal M}_\infty + R \, \mathbbm{1}_{H^{-s}\setminus \Omega_0}$ which automatically satisfies the identity \eqref{eq.lemborelrep.iden}. 

\end{proof}

\begin{lemma}
\label{lem_Hconnected}
Let $R>0$ and consider the function $\phi:H^{-s}\to \R$  defined as
\begin{equation}\label{eq.connect.7}
\phi(u)=
\begin{cases}
    R-\|u\|^2_{L^2} & \text{if} \quad d=1, \\
    R-|\widetilde{\mathcal{M}}(u)| & \text{if} \quad d=2,3. 
\end{cases}
\end{equation}
Then $\phi$ is  $H^1$-continuous and the set $\mathcal O=\{u\in H^{-s}: \phi(u) >0\}$ is   $H^1$-connected  as in Proposition \ref{Aida_0_derivative_result}. 
\end{lemma}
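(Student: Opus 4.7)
My plan is to prove $H^1$-continuity of $\phi$ and then $H^1$-connectedness of the open set $\mathcal{O} = \{\phi > 0\}$ separately. The continuity is a direct consequence of the Cameron--Martin-type identity from Lemma \ref{lem.borelrepr.mass}: for $d = 2, 3$ and $u \in \Omega_0$, one has $\widetilde{\mathcal{M}}(u + w) = \widetilde{\mathcal{M}}(u) + 2\Real\langle u, w\rangle_{L^2} + \|w\|_{L^2}^2$, and each term is continuous in $w \in H^1$ since Assumption \ref{choice_of_s_assumption} gives $s \leq 1$, and hence $u \in H^{-s} \subset H^{-1}$ pairs continuously with $H^1$, while the quadratic term is continuous via $H^1 \hookrightarrow L^2$. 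For $u \notin \Omega_0$, the $H^1$-invariance $\Omega_0 + H^1 = \Omega_0$ forces $\widetilde{\mathcal{M}}(u + \cdot)$ to equal the constant $R$, so $\phi$ is constant along the translate. The $d = 1$ case follows from the analogous elementary expansion of $\|u + w\|_{L^2}^2$, with the $L^2$ complement being $H^1$-translation invariant.

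For connectedness in $d = 1$, the slice $\mathcal{O}(u) := \{w \in H^1 : u + w \in \mathcal{O}\}$ equals $\{w \in H^1 : \|u + w\|_{L^2}^2 < R\}$ and is convex by the triangle inequality, hence connected. In $d = 2, 3$, writing $a := \widetilde{\mathcal{M}}(u) \in (-R, R)$, $b(w) := 2\Real\langle u, w\rangle_{L^2}$, and $q(w) := \|w\|_{L^2}^2$, the slice becomes $\mathcal{O}(u) = \{w \in H^1 : |a + b(w) + q(w)| < R\}$, which is no longer convex (it is an annular region around $0$). I will instead prove path-connectedness by joining every $w_0 \in \mathcal{O}(u)$ to $0 \in \mathcal{O}(u)$. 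Along the straight segment $t w_0$ the function $g(t) := t^2 q(w_0) + t b(w_0) + a$ is an upward-opening parabola; its maximum on $[0, 1]$ is attained at an endpoint and lies in $(-R, R)$, so the upper bound is automatic and the only potential obstruction is that $g$ dips below $-R$ somewhere in $(0, 1)$.

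To repair this I exploit the infinite-dimensionality of $H^1$ and select $v_0 \in H^1$ satisfying $\|v_0\|_{L^2} = 1$, $\langle v_0, w_0\rangle_{L^2} = 0$, and $\langle v_0, u\rangle_{L^2} = 0$, the last pairing interpreted via the $H^1 \times H^{-1}$ duality; such a $v_0$ exists because these are only two linear constraints inside the infinite-dimensional space $H^1$. The double orthogonality yields the clean identity $f(t w_0 + \eta v_0) = g(t) + \eta^2$, so setting $\eta(t) := \sqrt{\max(0,\, -R - g(t) + \varepsilon)}$ for any $\varepsilon \in (0, \min(R + a,\, R + f(w_0)))$ produces a continuous path $\gamma(t) := t w_0 + \eta(t) v_0$ in $H^1$ with $\gamma(0) = 0$, $\gamma(1) = w_0$, and $f(\gamma(t)) = \max(g(t),\, -R + \varepsilon) \in (-R, R)$ throughout. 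The main obstacle is precisely locating this detour: the orthogonality conditions ensure that the perpendicular perturbation $\eta v_0$ contributes to $f$ only through the nonnegative quadratic term $\eta^2$, decoupled from both the radial direction along $w_0$ and from the linear drift induced by $u$, so its contribution can be tuned pointwise to lift $g$ back into $(-R, R)$ wherever the straight segment fails.
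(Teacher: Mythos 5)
Your proof is correct, and for $d=2,3$ it takes a genuinely shorter route than the paper's. Both arguments establish $H^1$-continuity in the same way, via the translation identity \eqref{eq.lemborelrep.iden} together with the $\Omega_0 + H^1 = \Omega_0$ invariance, and both prove $H^1$-connectedness by producing a path inside $\mathcal O(u)$ joining a given $w_0$ to the origin. Where the paper first connects $w_0$ by a straight segment (using openness) to a frequency truncation $w_n$, and then performs a five-case analysis depending on the sign and size of the real scalar $\lambda$ in the decomposition $w_n = \lambda u_n + w^\perp$, with the perpendicular lift designed separately in each case, you bypass both the truncation step and the case analysis altogether. You do this by choosing a fresh $v_0 \in H^1$ with $\|v_0\|_{L^2}=1$, $\Real\langle w_0, v_0\rangle_{L^2}=0$, and $\Real\langle u, v_0\rangle=0$ (the last via the $H^{-s}\times H^1$ pairing, legitimate since $s\le 1$), which yields the exactly decoupled identity $\widetilde{\mathcal{M}}(u+tw_0+\eta v_0)=g(t)+\eta^2$ with $g(t)=a+t\,b(w_0)+t^2\,q(w_0)$, so the single formula $\eta(t)=\sqrt{\max\bigl(0,\,-R-g(t)+\varepsilon\bigr)}$ handles every configuration at once: $g$ is an upward-opening parabola whose endpoint values $g(0)=a$ and $g(1)=\widetilde{\mathcal{M}}(u+w_0)$ lie in $(-R,R)$, so $g<R$ throughout, while the lift $\eta^2$ raises the value to exactly $-R+\varepsilon$ wherever $g$ would dip below $-R$, and $\eta(0)=\eta(1)=0$ for $\varepsilon$ small enough. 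This trades the paper's explicit case-by-case path formulas for a uniform one-line construction, at no loss of rigor. Two small tidy-ups: the condition on $v_0$ should consistently be stated with $\Real$ (you drop it in $\langle v_0,u\rangle_{L^2}=0$, but only the real parts are used in, or needed for, the decoupling formula), and the existence of $v_0$ is cleanest phrased as: the common kernel of two bounded real-linear functionals on $H^1$, viewed as a real Hilbert space, has finite codimension, hence contains a nonzero vector which can then be $L^2$-normalized.
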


\begin{proof}
We treat the dimension $d=1$ separately. 

\medskip\noindent
$\bullet$ \underline{$d=1$}: In this case  $\mathcal M(u)=\|u\|^2_{L^2}$ and  according  to \eqref{eq.cond.spower} we can choose $s=0$. Furthermore,  we have  $\phi(u)=R-\|u\|^2_{L^2}$  and  the set 
$$
\mathcal O=\mathbb{B}_R=\{u\in L^2(\mathbb T) : \|u\|^2_{L^2} <R\}.
$$
So, it follows that $\phi(u+\cdot):H^{1}\to \R$  is continuous since, for any $u\in L^2$ and $w\in H^1$, we can write
$$
\phi(u+w)=R-(\|u\|_{L^2}^2+ {\rm Re}\langle u, w\rangle_{L^2}+ \|w\|^2_{L^2}). 
$$
On the other hand, for any  $u\in \mathcal O$, the set 
$$
\mathcal O(u)=\{w\in H^1 : \|u+w\|^2_{L^2}<R\}
$$
is  $H^1$-connected  since it  is convex. We remark that  $\mathcal O(u)$ is open as a consequence of the continuity of the function  $w\in H^1\mapsto \phi(u+w)$.

\medskip\noindent
$\bullet$ \underline{$d=2,3$}: In this case  $\mathcal M(\cdot)$ is the renormalized mass given by \eqref{renormalized_mass} and the exponent  $s$ satisfies \eqref{eq.cond.spower}. Let $\widetilde{\mathcal{M}}$ and $\Omega_0$ be respectively the Borel function and Borel set  in Lemma \ref{lem.borelrepr.mass}. According to the proof of Lemma \ref{lem.borelrepr.mass}, we have the following convergence for all $u\in \Omega_0$,
\begin{equation}
    \widetilde{\mathcal{M}}(u)=\lim_{j\to\infty} \sum_{|k|\leq n_j}\left( |\hat u(k)|^2-\frac{1}{\langle k \rangle^2}\right),
\end{equation}
and for all $u\notin \Omega_0$ we set $\widetilde{\mathcal{M}}(u)=R$. 
 In particular, for any $u\notin \Omega_0$, the map $w\in H^1\mapsto\widetilde{\mathcal{M}}(u+w)=R$ is constant since the complement of $\Omega_0$ is also stable by translation of elements of $H^1$. For $u\in \Omega_0$ and any $w\in H^1$, we have 
\begin{equation}
   \widetilde{\mathcal{M}}(u+w)= \widetilde{\mathcal{M}}(u)+ 2 \Real \langle u, w\rangle_{L^2}+ \|w\|^2_{L^2}.
\end{equation}
This proves the continuity with respect to $w$ since  $u\in H^{-s}$ with $s\leq 1$. Hence, for any $u\in H^{-s}$, the map $w\in H^1\mapsto \widetilde{\mathcal{M}}(u+w) $ is continuous.   

\medskip
Now for  $u\in \mathcal O=\{u\in H^{-s} : |\widetilde{\mathcal{M}}(u)|<R\}\subset\Omega_0 $, consider the set
\begin{equation}
    \mathcal O(u)=\{ w\in H^1 : |\widetilde{\mathcal{M}}(u+w)|<R\}.
\end{equation}
Again  $ \mathcal O(u)$ is open since $\phi(u+\cdot)$ is $H^1$-continuous. Unlike the case $d=1$, this set is not convex. Nevertheless, we prove that $ \mathcal O(u)$ is $H^1$-path connected. First, since $ \mathcal O(u)$ is open, any $w \in\mathcal O(u)$ can be continuously connected by a path within  $ \mathcal O(u)$ to 
\begin{equation}
\label{eq.w_n}
    w_n=\sum_{|k|\leq n} \langle w, e_k\rangle_{L^2}  \, e_k
\end{equation}
for some $n\in \mathbb N$ large enough (See \eqref{eigenfunctions_torus}--\eqref{u_n} for the definition of $w_n$). In this case, the path can be taken as a straight line. Secondly, we remark that the origin $0_{H^1}$ belongs to $ \mathcal O(u)$. Here $\{e_k\}_k$ is the ONB in \eqref{eigenfunctions_torus}. Then we will build a continuous path inside $ \mathcal O(u)$ connecting the point $w_n\in \mathcal O(u)$ to $0_{H^1}$. To explain in detail this step, we distinguish several cases: 

\medskip\noindent
1) If $w_n\in \mathcal O(u)$ is orthogonal to $u$ with respect to the real inner product, i.e. $\Real\langle u,w_n\rangle_{L^2}=0$: Then the continuous path $t\in[0,1]\mapsto tw_n$ connects the two points $0_{H^1}$ and $w_n$ without leaving $\mathcal O(u)$. In fact, we check by \eqref{eq.lemborelrep.iden} that for all $t\in[0,1]$,
$$
\widetilde{\mathcal{M}}(u+t w_n)=\widetilde{\mathcal{M}}(u)+t^2 \|w_n\|^2_{L^2}\in (-R,R),
$$
since $\widetilde{\mathcal{M}}(u)$ and $\widetilde{\mathcal{M}}(u+w_n)$ belong to  $(-R,R)$ and $t\mapsto \widetilde{\mathcal{M}}(u+t w_n)$ is an increasing function. 
 
\medskip\noindent
2) If $w_n\in \mathcal O(u)$ is parallel to $u_n=\sum_{|k|\leq n} \langle u, e_k\rangle_{L^2}  \,e_k$, such that there exists $\lambda \in [-1,+\infty)$ and $\lambda\neq 0$,  satisfying $w_n=\lambda u_n$:  Then, in this case, we  can write 
    $$
 \widetilde{\mathcal{M}}(u+t w_n)=    \widetilde{\mathcal{M}}(u)+ t \lambda (2  + t \lambda ) \|u_n\|^2_{L^2}.
    $$
Analysing the function $t\mapsto  \widetilde{\mathcal{M}}(u+t w_n)$, we see that it is monotone on $[0,1]$  whenever 
$\lambda\in[-1,+\infty) $. Therefore,  the continuous path  $t\in [0,1]\mapsto t w_n$ satisfies  $\widetilde{\mathcal{M}}(u+t w_n)\in (-R,R)$ for all $t\in [0,1]$. 

\medskip\noindent
3) If  $w_n\in \mathcal O(u)$ is parallel to $u_n=\sum_{|k|\leq n} \langle u, e_k\rangle_{L^2}  \,e_k$, such that there exists $\lambda \in (-\infty,-2]$,  satisfying $w_n=\lambda u_n$: In this case  we do not have  monotonicity of the function $t\in [0,1]\mapsto t \lambda (2+t\lambda)$. So, we need to use a specific curve instead of a straight line. First, we remark that there exists $w^\perp\in H^1$ which is orthogonal to $u$ (i.e., $\Real\langle w^\perp,u\rangle=0$) and $w^\perp\neq0$. Then we construct the following path
\begin{equation}\label{eq.path.connect}
  \begin{array}{ccc}
 \gamma : [0,1] & \longrightarrow & H^1     \\
     t & \longmapsto  & f(t) w^\perp+ t w_n
\end{array}   
\end{equation}
where 
\[ f(t) =
  \begin{cases}
    \frac{\|u_n\|_{L^2}}{\|w^\perp\|_{L^2}} \sqrt{-t \lambda (2+ t\lambda )}     & \quad \text{ if } \, t \lambda (2+t\lambda)<0,\\
    0 & \quad \text{ if } \, t \lambda (2+t\lambda) \geq 0.
  \end{cases}
\]
For all $t\in[0,1]$, we have
$$
\widetilde{\mathcal{M}}(u+\gamma(t))=\widetilde{\mathcal{M}}(u)+ t\lambda (2+t\lambda) \|u_n\|^2_{L^2}+ f(t)^2 \|w^\perp\|^2_{L^2}.
$$
Hence,  we can check that for all $t\in [0,1]$ such that  $t \lambda (2+t\lambda)<0$, we have
$$
\widetilde{\mathcal{M}}(u+\gamma(t))= \widetilde{\mathcal{M}}(u)\in (-R,R),
$$
while in the other case when $t \lambda (2+t\lambda)\geq 0$ with $t\in [0,1]$, we have
$$
-R< \widetilde{\mathcal{M}}(u) \leq \widetilde{\mathcal{M}}(u+\gamma(t))= 
\widetilde{\mathcal{M}}(u)+ t \lambda (2+t\lambda) \|u_n\|^2_{L^2} \leq \widetilde{\mathcal{M}}(u+w_n)<R. 
$$
Since $f(0)=f(1)=0$, $\gamma$ is a continuous path staying  inside  $ \mathcal O(u)$ and satisfying $\gamma(0)=0_{H^1}$ and $\gamma(1)=w_n$. 

\medskip\noindent
4) If  $w_n\in \mathcal O(u)$ is parallel to $u_n=\sum_{|k|\leq n} \langle u, e_k\rangle_{L^2}  \,e_k$, such that there exists $\lambda \in (-2,-1)$,  satisfying $w_n=\lambda u_n$: This case is similar to (3) but we need to adjust the argument further. 
We remark that there exists $t_0\in (0,1)$ such that we have for all $t\in [t_0,1]$,
\begin{equation}
    \label{eq.connect.lem.1}
    \widetilde{\mathcal{M}}(u)+t\lambda (2+ t\lambda) \|u_n\|^2_{L^2} \in (-R,R),
\end{equation}
with the function $t\mapsto t\lambda (2+ t\lambda) $ increasing over $[t_0,1]$. Indeed, since $\mathcal{M}_0(u+w_n)\in (-R,R)$, we just need to take $t_0$ close enough to $1$.  Let 
\begin{equation}
    \label{eq.connect.lem.2}
\theta (t)= \frac{1}{\|w^\perp\|_{L^2}} \sqrt{R-\widetilde{\mathcal{M}}(u)- t\lambda (2+t\lambda) \|u_n\|^2_{L^2} }.
\end{equation}
Then we check that $\theta$ is a decreasing continuous function with $\theta(t) >0 $ for all $t\in [t_0,1]$. So, we choose a path $\gamma : t\in [0,1] \mapsto  f(t) w^\perp+ t w_n$ such that $f$ is given as 
\begin{align*}
     f(t) = 
  \begin{cases}
  \displaystyle 
    \frac{\|u_n\|_{L^2}}{\|w^{\perp}\|_{L^2}} \sqrt{-t \lambda (2+ t\lambda )}     & \quad \text{ if } \, t \leq t_0,\vspace{.1in}\\ \displaystyle 
     \frac{(1-t)\theta(t)} {(1-t_0) \theta(t_0)} f(t_0) & \quad \text{ if } \, t > t_0.
  \end{cases}
\end{align*}
Hence, $f$ is continuous with $f(1)=f(0)=0$ and satisfies, by Lemma \ref{lem.borelrepr.mass} and \eqref{eq.lemborelrep.iden}, for all $t\in [0,t_0]$
$$
\widetilde{\mathcal{M}}(u+\gamma(t))=\widetilde{\mathcal{M}}(u) \in (-R,R). 
$$
Moreover,  we claim that for  $t\in [t_0,1]$, 
\begin{equation}
    \label{eq.hconnect.3}
    -R<\widetilde{\mathcal{M}}(u+\gamma(t))=\widetilde{\mathcal{M}}(u)+t\lambda (2+ t\lambda) \|u_n\|^2_{L^2} + f(t)^2 \|w^\perp\|^2_{L^2}<R.
\end{equation}
In fact, by \eqref{eq.connect.lem.1}, we  have $f(t_0)<\theta(t_0)$. This shows that for all $t\in [t_0,1]$,
$$
f(t)< \theta(t). 
$$
The latter inequality implies that $\widetilde{\mathcal{M}}(u+\gamma(t))<R$ for all $t\in[t_0,1]$. On the other hand, by \eqref{eq.connect.lem.1} we have for all $t\in [t_0,1]$,
$$
-R< \widetilde{\mathcal{M}}(u)+t\lambda (2+ t\lambda) \|u_n\|^2_{L^2}. 
$$
This shows that $\gamma$ connects continuously $0_{H^1}$ to $w_n$ within  $\mathcal O(u)$.

\medskip\noindent
5) The general case is amenable to one of the above situations: In fact, take $w_n$ as before and decompose it into a sum of parallel and  orthogonal components with respect to $u_n$. This leads to 
$$
w_n=\lambda u_n+ w^\perp,
$$
for some $\lambda \in \mathbb R$  and $w^\perp\in H^1$. The cases $\lambda=0$ or $w^\perp=0$ are covered by the above discussion. So, we can assume $\lambda\neq 0$ and $w^\perp\neq 0$ and consider a path of the form $\gamma(t)=\lambda t u_n+ f(t) w^\perp$. Knowing that $\widetilde{\mathcal{M}}(u+w_n)$ and $\widetilde{\mathcal{M}}(u)$ are in $(-R,R)$,  we look for a continuous function $f$ such that for all $t\in[0,1]$,
\begin{equation}
    \label{eq.connect.5}
\widetilde{\mathcal{M}}(u+\gamma(t))=\widetilde{\mathcal{M}}(u)+ t\lambda (2+t\lambda) \|u_n\|^2_{L^2}+
f(t)^2 \|w^\perp\|_{L^2}^2 \in (-R,R) \,,
\end{equation}
with  $f(0)=0$ and $f(1)=1$. 
Following the same arguments as in the cases 1)--4), we can construct such a function $f$ according to the value of $\lambda$ and the behaviour of the function $t\in [0,1]\mapsto t\lambda (2+t\lambda)$. The latter is increasing when $\lambda \geq 0$, decreasing when $-1 \leq \lambda \leq 0$, and has a global minimum at $-\frac{1}{\lambda}\in (0,1)$ when $\lambda \in (-\infty, -1)$.  
But instead of giving an explicit construction of $f$ as before, we provide a rough argument justifying of the existence of such a function. Set 
$$
F_1(t)=\frac{-R-\widetilde{\mathcal{M}}(u)-\lambda t (2+\lambda t)\|u_n\|^2_{L^2}}{\|w^\perp\|^2_{L^2}}
$$
and 
$$
 F_2(t)=\frac{R-\widetilde{\mathcal{M}}(u)-\lambda t (2+\lambda t)\|u_n\|^2_{L^2}}{\|w^\perp\|^2_{L^2}}.
$$
Therefore we  require  for all $t\in [0,1]$
$$
F_1(t)< f(t)^2<F_2(t).
$$
In order to prove the existence of $f$ satisfying \eqref{eq.connect.5}, it is enough to show that  we have for all $t\in [0,1]$,
\begin{equation}
    \label{eq.connect.6}
    F_1(t) < F_2(t), \quad F_1(0)<0< F_2(0), \quad F_1(1)<1< F_2(1),  \; \text{ and } \; F_2(t)>0. 
\end{equation}
The first inequality follows immediately by construction of $F_1$ and $F_2$. Using  the fact that $\widetilde{\mathcal{M}}(u+w_n)$ and $\widetilde{\mathcal{M}}(u)$ belong to $(-R,R)$, we check the second and third inequalities in \eqref{eq.connect.6}. For the last statement $F_2(t)>0$, one needs  to take into account the variation of the function $t\in [0,1]\mapsto t\lambda (2+t\lambda)$. Accordingly, we have 
\begin{itemize}
    \item  If $\lambda > 0$ then $\min_{t\in[0,1]} F_2(t)=F_2(1)>0$, \vspace{.05in}
    \item If $-1 \leq \lambda \leq 0$ then  $\min_{t\in[0,1]} F_2(t)=F_2(0)>0$,  \vspace{.05in}
    \item If $-2 < \lambda < -1$ then $\min_{t\in[0,1]} F_2(t)= F_2(0)>0$, \vspace{.05in}
    \item If $\lambda \leq -2$ then $\min_{t\in[0,1]} F_2(t)=F_2(1)>0$. 
\end{itemize}
This  shows that  $ \mathcal O(u)$ is $H^1$-path connected. 

\end{proof}

\begin{lemma}
For all $R>0$, we have  
    \begin{equation}\label{eq.mes_diff_zero}
        \mu_0(\{u\in H^{-s}: \phi(u) >0\})=\mu_0(\{u\in H^{-s}: |\mathcal M(u)|<R\}) \neq 0,
    \end{equation}
where  $\phi$ is the Borel measurable function given in \eqref{eq.connect.7}.
\end{lemma}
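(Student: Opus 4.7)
The plan is to handle the two claims in \eqref{eq.mes_diff_zero} separately: first the equality, then the strict positivity.

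For the equality, the two sets involved are $\{|\widetilde{\mathcal M}(u)|<R\}$ (resp.\ $\{\|u\|_{L^2}^2<R\}$ for $d=1$) on the one hand, and $\{|\mathcal M(u)|<R\}$ on the other. When $d=1$ we can take $\mathcal M(u)=\|u\|_{L^2}^2$ directly and the two sets coincide pointwise, noting that $\|u\|_{L^2}^2\ge 0$ so the absolute value is immaterial. When $d=2,3$, Lemma~\ref{lem.borelrepr.mass} provides a Borel representative $\widetilde{\mathcal M}$ of $\mathcal M$, meaning $\widetilde{\mathcal M}=\mathcal M$ in $L^2(\mu_0)$, hence $\mu_0$-almost surely; therefore $\{|\widetilde{\mathcal M}|<R\}$ and $\{|\mathcal M|<R\}$ differ only by a $\mu_0$-null set, and the equality of measures follows.

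For positivity, I would decompose the (Wick-ordered) mass into a finite-dimensional head and an independent tail. For $n\in\mathbb N$, set
\[
\mathcal M_n(u):=\sum_{|k|\le n}\Bigl(|\widehat u(k)|^2-\varepsilon_d\lambda_k^{-1}\Bigr),\qquad\text{with }\varepsilon_d=0\text{ if }d=1,\ \varepsilon_d=1\text{ if }d=2,3,
\]
so that $\mathcal M_n$ depends only on the Gaussian Fourier modes $\{\widehat u(k)\}_{|k|\le n}$ while $\mathcal M-\mathcal M_n$ depends only on the modes with $|k|>n$; the two are therefore independent under $\mu_0$. By the construction of $\mathcal M$ recalled in Lemma~\ref{Wick_ordered_mass_lemma} (or, for $d=1$, by $L^2$-convergence of $\sum_k|\widehat u(k)|^2$), one has $\mathcal M-\mathcal M_n\to 0$ in $L^2(\mu_0)$, so Chebyshev's inequality yields $\mu_0(|\mathcal M-\mathcal M_n|<R/2)>1/2$ for all $n$ sufficiently large. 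Independence then gives
\[
\mu_0\bigl(|\mathcal M|<R\bigr)\ \ge\ \mu_0\bigl(|\mathcal M_n|<R/2\bigr)\cdot \mu_0\bigl(|\mathcal M-\mathcal M_n|<R/2\bigr),
\]
and it suffices to show that the first factor on the right is strictly positive for some such $n$.

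The first factor is strictly positive because $\mathcal M_n$ is a non-constant polynomial in the finitely many independent (real and imaginary parts of the) complex Gaussian variables $\widehat u(k)$, $|k|\le n$, whose joint law is a non-degenerate Gaussian on a finite-dimensional space and thus has full support. The range of $\mathcal M_n$ is the interval $[-\sum_{|k|\le n}\varepsilon_d\lambda_k^{-1},\infty)$, so $0$ lies in the interior of this range for any $n\ge 1$ (and is already in the range for $d=1$). The preimage under $\mathcal M_n$ of the open interval $(-R/2,R/2)$ (or $[0,R/2)$ in $d=1$) is then a non-empty open subset of $\mathbb C^{\#\{|k|\le n\}}$, hence carries positive mass for the non-degenerate Gaussian law of the finitely many modes. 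The only point requiring any care is to confirm this last "open set $\Rightarrow$ positive measure" step, which I view as the main (minor) technical obstacle; it follows either by direct computation of the density of $\mathcal M_n$ (a translated sum of scaled $\chi^2_2$ random variables, whose density is smooth and strictly positive on the interior of its support) or, more abstractly, from the fact that finite-dimensional non-degenerate Gaussians assign positive mass to every non-empty open set.
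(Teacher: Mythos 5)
Your proof is correct and takes essentially the same structural approach as the paper's -- split the (Wick-ordered) mass into a finite-dimensional head and an independent tail, use independence to reduce to two separate positivity claims, and argue that the head has positive probability of being small because it is a non-constant polynomial in finitely many non-degenerate Gaussians. The one genuine difference is in how the tail is controlled. You invoke only Chebyshev's inequality from the $L^2(\mu_0)$-convergence $\mathcal M_n\to\mathcal M$, which gives $\mu_0(|\mathcal M-\mathcal M_n|\geq R/2)\to 0$ and hence a tail probability less than $1/2$ for $n$ large. The paper instead cites Lemma~\ref{app.lem.partition_funct}, which obtains the same conclusion ($\mathbb P_\omega(|\text{tail}|>R/2)<1$ for $M$ large) via Bernstein's inequality (Lemma~\ref{Bernstein's_inequality}), yielding exponential decay in $M$. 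For the purposes of this lemma the exponential bound is unnecessary overkill; your Chebyshev argument is shorter and more elementary. The authors' choice is motivated by the fact that the Bernstein-type concentration machinery was already assembled for the much harder large deviation estimate \eqref{Bourgain_large_deviation_estimate}, so citing the existing Lemma~\ref{app.lem.partition_funct} is the path of least resistance. One small bonus of your write-up: you explicitly address the set-equality $\mu_0(\{\phi>0\})=\mu_0(\{|\mathcal M|<R\})$, noting that $\phi(u)>0$ iff $|\widetilde{\mathcal M}(u)|<R$ and that $\widetilde{\mathcal M}=\mathcal M$ $\mu_0$-a.s.\ by Lemma~\ref{lem.borelrepr.mass}; the paper takes this for granted and only proves the strict positivity.
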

\begin{proof}
We discuss only the case $d=2,3$ ($d=1$ is simpler).  We know, by Lemma \ref{Wick_ordered_mass_lemma} and the computation \eqref{Integral_n_Cauchy}, that  
$\mathcal{M}\in L^2(\mu_0)$ and $\|\mathcal{M}\|_{L^2(\mu_0)}\neq 0$. 
To prove \eqref{eq.mes_diff_zero} for all $R>0$, we use Lemma \ref{app.lem.partition_funct} based on  Bernstein’s inequality. Indeed, we have 
\begin{eqnarray*}
    \mu_0(\{u\in H^{-s}: |\mathcal M(u)|< R\})&=&\mu_0\left( \left|\sum_{\ell\in\Z^d} \frac{|\langle u,e_\ell\rangle_{L^2}|^2-1}{\langle \ell\rangle^2}\right|<R \right)\\
    &=& \mathbb{P}_{\omega}\left(  \left|\sum_{\ell\in\Z^d} \frac{|g_\ell(\omega)|^2-1}{\langle \ell\rangle^2}\right|<R\right)
    >0. 
\end{eqnarray*}
See Appendix \ref{Malliavin derivative calculations and normalizability of the Gibbs measure for the focusing Hartree equation} for the above notations $\mathbb P_\omega$ and the complex Gaussian random variables $g_\ell(\omega)$. 
\end{proof}

 \medskip
 \noindent
{\bf Proof of Theorem \ref{kms_implies_gibbs_theorem}:} Collecting  Proposition \ref{local_KMS_differential_equation}, Proposition \ref{Aida_0_derivative_result} and Lemma \ref{lem_Hconnected}, we are now able to prove our third main Theorem \ref{kms_implies_gibbs_theorem}. 
\begin{proof}[Proof of Theorem \ref{kms_implies_gibbs_theorem}]
Let $\chi^{(\delta)}_R$ be the cutoff function in \eqref{chi} for $R>0$ and $\delta\in(0,1)$. Using  the Leibniz rule Lemma \ref{Leibniz_rule}  and the chain rule Lemma  \ref{chain_rule}, we have
\begin{equation}
\label{eq:nabla:kmscondition}
\nabla \left(e^{-h^I}  \chi_R^{(\delta)}(\mathcal{M})  ~\varrho \right)= \left(-
\nabla h^I ~  \varrho +\nabla \varrho \right) e^{-h^I} ~ \chi_R^{(\delta)}(\mathcal{M})+ \varrho ~
 e^{-h^I}  \big(\chi_R^{(\delta)}\big)'(\mathcal{M}) \nabla \mathcal{M}.
\end{equation}
Here we have also used Lemma \ref{exponential_Malliavin_differentiability} with H\"older's inequality to see that the identity \eqref{eq:nabla:kmscondition} holds true in $L^1(\mu_0,H^{-s})$. 
The support properties of $\chi_R^{(\delta)}$ imply that   $\big(\chi_R^{(\delta)}\big)'(\mathcal{M})=0$  for all $u\in \mathbb{B}_{\delta R}$. Hence,  using the differential equation \eqref{Malliavin_diff_equ}, we obtain  for $\mu_0$-almost all $u\in \mathbb{B}_{\delta R}$,
\begin{equation}
\nabla \left(e^{-h^I} \chi_R^{(\delta)}(\mathcal{M}) ~\varrho \right)(u)= 0.
\end{equation}
Since by Lemma \ref{lem_Hconnected} the set $\mathbb{B}_{\delta R}$ satisfies the assumptions of Proposition  \ref{Aida_0_derivative_result}, we deduce that  for $\mu_0$-almost all $u\in \mathbb{B}_{\delta R}$,
\begin{equation}
e^{-h^I(u)} \varrho(u)= e^{-h^I(u)} \varrho(u) ~ \chi_R^{(\delta)}(\mathcal{M}(u))= c,
\end{equation}
for some constant $c\geq 0$. Thus, we conclude that for $\mu_0$-almost all $u\in \mathbb{B}_{R}=\cup_{\delta\in(0,1)\cap \mathbb Q} \mathbb{B}_{\delta R}$
\begin{equation}
\varrho(u)= c e^{h^I(u)},
\end{equation}
with $c$ a non-negative constant.
\end{proof}

\appendix

\section{Wick ordering and Malliavin derivative calculations}
\label{Malliavin derivative calculations and normalizability of the Gibbs measure for the focusing Hartree equation}

\subsection{Review of Wick ordering}
\label{Review of Wick ordering}
Throughout the appendix, we consider $A$ as in Assumption \ref{A_choice}. We also fix $s$ as in Assumption \eqref{choice_of_s_assumption} (recall Lemma \ref{A_choice_lemma}). We give a brief review of the facts from Wick ordering that are used in the analysis. For notational simplicity, in the appendix, we slightly modify our convention from \eqref{Hilbert_space_ONB}--\eqref{e_j_choice} and \eqref{ONB_convention} above.  In particular, we consider eigenfunctions that are indexed by elements of $\Z^d$ instead of by $\N$. Namely, for $k \in \Z^d$, we let 
\begin{equation}
\label{eigenfunctions_torus}
e_k(x) \equiv \ee^{2\pi \ii k \cdot x}\,, \qquad f_k(x) \equiv \ii \ee^{2\pi \ii k \cdot x}\,.
\end{equation}
With notation as in \eqref{eigenfunctions_torus}, for $n \in \N$ we define $P_n:H^{-s} \rightarrow \mathrm{span}_{\mathbb{C}} \{(e_k)_{|k| \leq n}\}$ as
\begin{equation} 
\label{P_n}
P_n=\sum_{|k| \leq n} |e_k \rangle \langle e_k|\,.
\end{equation}
With notation as in \eqref{P_n}, we henceforth use the shorthand notation 
\begin{equation}
\label{u_n}
u_n=P_n u\,.
\end{equation}
When performing the calculations in the sequel, we don't work with the measure $\mu_0$ from Proposition \ref{free_Gibbs_prop} above directly. Instead, under a suitable identification, we work with random Fourier series.
To this end,  
we introduce the probability space $(\mathbb{C}^\mathbb{N}, \mathcal{G}, \mathbb{P})$. Here, $\mathcal{G}$ is the product sigma-algebra and $\nu:=\bigotimes_{k \in \N} \nu_{k}$ with $\nu_{k}(z):=\frac{1}{\pi} \,\ee^{-|z|^2} d z$, where $d z$ is the Lebesgue measure on $\mathbb{C}$. We denote the probability space $\mathbb{C}^\mathbb{N}$ by $\Omega$ and its elements by $\omega \equiv (g_k(\omega))_{k \in \mathbb{N}}$.

With the setup as above, we view the Gaussian measure $\mu_{0}$ as the probability measure induced by the map

\begin{equation} 
\label{map_varphi_omega}
\omega \in \Omega \mapsto u(x) \;\equiv\; u^{\omega}(x)=
\sum_{k \in \mathbb{Z}^{d}} \frac{{g}_{k}(\omega)}{\langle k \rangle}\,\ee^{2 \pi \mathrm{i} k \cdot x}\,,
\end{equation}
where in \eqref{map_varphi_omega} and in the sequel, we use the convention that 
\begin{equation}
\label{Japanese_bracket}
\langle k \rangle:=\sqrt{4\pi^2|k|^2+1}\,.
\end{equation}
In what follows, we sometimes write $\mathbb{P}_{\omega} \equiv \mathbb{P}$ and $\mathbb{E}_{\omega}[\cdot] \equiv \mathbb{E}[\cdot]$ for expectation with respect to $\mathbb{P}$. The random variable \eqref{map_varphi_omega} is often referred to as the \emph{classical free field}. For $s$ as in Assumption \eqref{choice_of_s_assumption}, one has
\begin{equation}
\label{classical_free_field}
\mathbb{E}_{\omega} \Bigl[\|u^{\omega}\|_{H^{-s}(\T^d)}^2 \Bigr]
=\mathbb{E}_{\omega} \Biggl[\sum_{k \in \Z^d} \frac{|g_k(\omega)|^2}{\langle k \rangle^2}\,\langle k \rangle^{-2s}\Biggr] 
= \sum_{k \in \Z^d} \langle k \rangle^{-2-2s}<\infty\,.
\end{equation}
For a more detailed discussion about the above identification, we refer the reader to \cite[Remark 1.3]{FKSS17}. Throughout the sequel, we use the two different formulations for $\mu_{0}$ interchangeably, without further comment.

Recalling \eqref{Japanese_bracket}, we define the density
\begin{multline}
\label{sigma_{n,beta}}
\sigma_{n}:=\int_{H^{-s}} |u_n(x)|^2\,d \mu_{0}=\mathbb{E}_{\omega} \Biggl[\sum_{|k| \leq n} \frac{|g_k(\omega)|^2}{\langle k \rangle^2} \Biggr]=\sum_{|k| \leq n} \frac{1}{\langle k \rangle^2}
\\
\sim
\begin{cases}
\log n\quad  &\mbox{if } d=2
\\
n\quad  &\mbox{if } d=3\,.
\end{cases}
\end{multline}
Note that \eqref{sigma_{n,beta}} is independent of $x$ (since as $A$ is given by Assumption \ref{A_choice}).
With notation as in \eqref{u_n}--\eqref{sigma_{n,beta}}, for fixed $n$, the Wick-ordering of $|u_n(x)|^2$ is the random variable 
\begin{equation}
\label{:|u_n|^2:}
:|u_n(x)|^2: \;\equiv\; |u_n(x)|^2-\sigma_{n}=\Biggl|\sum_{|k| \leq n} \widehat{u}(k)e_k(x)\Biggr|^2-\sum_{|k| \leq n} \frac{1}{\langle k \rangle^2}\,.
\end{equation}
We first consider the random variable obtained by integrating \eqref{:|u_n|^2:}
\begin{equation}
\label{Integral_n}
:\|u_n\|_{L^2}^2: \;\equiv\; \int_{\T^d} :|u_n|^2: \, d x=\sum_{|k| \leq n} \biggl(|\widehat{u}(k)|^2-\frac{1}{\langle k \rangle^2}\biggr)\,.
\end{equation}
We can now give a rigorous definition of \eqref{renormalized_mass} when $d=2,3$.
\begin{lemma}
\label{Wick_ordered_mass_lemma}
For all $p \in [1,\infty)$, the sequence \eqref{Integral_n} is Cauchy in $L^p(\mu_{0})$. In particular, the random variable 
\begin{equation}
\label{Wick_ordered_mass}
:\|u\|_{L^2}^2: \;\equiv\; \lim_{n \rightarrow \infty} :\|u_n\|_{L^2}^2:\,,
\end{equation}
is defined as a limit in $L^p(\mu_{0})$. The limit is independent of $p$. 
\end{lemma}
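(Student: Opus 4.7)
The plan is to first establish the Cauchy property in $L^2(\mu_0)$ by an explicit variance calculation, and then upgrade to arbitrary $p \in [1,\infty)$ via hypercontractivity. Independence of the limit from $p$ will then follow from the usual nesting of $L^p$ spaces under a probability measure.

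Under the identification \eqref{map_varphi_omega}, we have $\widehat{u}(k) = g_k(\omega)/\langle k \rangle$, where the $(g_k)_{k \in \Z^d}$ are i.i.d.\ standard complex Gaussians, so $\mathbb{E}[|g_k|^2]=1$. Substituting into \eqref{Integral_n}, we obtain the representation
\begin{equation}
\label{Integral_n_Cauchy}
:\|u_n\|_{L^2}^2: \;=\; \sum_{|k|\leq n} \frac{|g_k(\omega)|^2 - 1}{\langle k \rangle^2}\,.
\end{equation}
First I would show that $(:\|u_n\|_{L^2}^2:)_n$ is Cauchy in $L^2(\mu_0)$. Since the $(|g_k|^2 - 1)_k$ are independent, mean-zero, and satisfy $\mathbb{E}[(|g_k|^2 - 1)^2] = \mathbb{E}[|g_k|^4] - 1 = 1$, the $L^2$ difference for $n<m$ computes as
\begin{equation*}
\bigl\|:\|u_m\|_{L^2}^2:\, - :\|u_n\|_{L^2}^2:\bigr\|_{L^2(\mu_0)}^2 \;=\; \sum_{n<|k|\leq m} \frac{1}{\langle k\rangle^4}\,,
\end{equation*}
which tends to $0$ as $n,m \to \infty$ because $\sum_{k\in\Z^d} \langle k\rangle^{-4} < \infty$ for $d=2,3$ (indeed for $d\leq 3$); this is the key dimensional restriction being exploited here.

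The main remaining step is to upgrade the Cauchy property to $L^p(\mu_0)$ for all $p\in[1,\infty)$. The essential observation is that each $:\|u_n\|_{L^2}^2:$ lives in the second (homogeneous) Gaussian Wiener chaos: it is a polynomial of degree $2$ in the Gaussian variables $(g_k)$ with vanishing mean. By Nelson's hypercontractivity theorem, applicable in the form of \cite[Lemma 5.2]{AS21} or \cite[Theorem I.22]{Sim74} (the same result invoked in the proof of Lemma \ref{Lemma_2.16_5}), every element $X$ of the second chaos satisfies $\|X\|_{L^p(\mu_0)} \lesssim_p \|X\|_{L^2(\mu_0)}$ for all $p\in[2,\infty)$. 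Since $:\|u_m\|_{L^2}^2:\,-\,:\|u_n\|_{L^2}^2:$ remains in the second chaos, applying this bound gives
\begin{equation*}
\bigl\|:\|u_m\|_{L^2}^2:\, - :\|u_n\|_{L^2}^2:\bigr\|_{L^p(\mu_0)} \;\lesssim_p\; \Biggl(\sum_{n<|k|\leq m} \frac{1}{\langle k\rangle^4}\Biggr)^{1/2}\xrightarrow[n,m\to\infty]{} 0\,,
\end{equation*}
which is the Cauchy property in $L^p(\mu_0)$. For $p\in[1,2)$, the conclusion follows immediately from the embedding $L^2(\mu_0) \hookrightarrow L^p(\mu_0)$ under the probability measure.

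Finally, the independence of the limit from $p$ is a routine matter: if $p\leq q$, then the Cauchy sequence converges in both $L^p$ and $L^q$; moreover the embedding $L^q(\mu_0)\hookrightarrow L^p(\mu_0)$ together with uniqueness of $L^p$-limits forces the two limits to coincide almost everywhere, hence to define the same random variable $:\|u\|_{L^2}^2:$. I do not anticipate significant obstacles in this proof; the only mild subtlety is ensuring that the hypercontractive estimate is applied to an object in a fixed (second) Wiener chaos, which is clear from \eqref{Integral_n_Cauchy}.
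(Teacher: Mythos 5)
Your proposal is correct. The $L^2(\mu_0)$ Cauchy estimate is identical to the paper's: both reduce to $\sum_{n<|k|\leq m}\langle k\rangle^{-4}\to 0$, which converges for $d\leq 3$. The only difference is in the upgrade to general $p$. The paper computes higher moments directly via Wick's theorem (``An analogous argument using Wick's theorem shows that \eqref{Integral_n} is Cauchy in $L^m(\mu_0)$ for every $m\in\N^*$'') and then interpolates by H\"older, whereas you invoke Nelson's hypercontractivity for the second Wiener chaos to deduce $\|X\|_{L^p}\lesssim_p\|X\|_{L^2}$ directly. Both routes are standard and essentially equivalent — hypercontractivity for polynomial chaos is itself proved via the Wick/chaos expansion — but your version is slightly more modular and avoids the combinatorial bookkeeping of higher Wick moments, at the cost of explicitly verifying that the relevant object sits in a fixed chaos, which you correctly do. One small factual point: your identity $\mathbb{E}[(|g_k|^2-1)^2]=\mathbb{E}[|g_k|^4]-1=1$ is right for the normalization $\mathbb{E}|g_k|^2=1$ (so $\mathbb{E}|g_k|^4=2$); it is worth noting that $\mathbb{E}[(|g_k|^2-1)^2]=\mathbb{E}[|g_k|^4]-2\mathbb{E}[|g_k|^2]+1$, which reduces to the stated expression precisely because $\mathbb{E}|g_k|^2=1$, matching the paper's looser statement that this quantity is $O(1)$.
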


\begin{proof}
 We consider $m,n \in \N$ with $m>n$ and observe that 
\begin{multline}
\label{Integral_n_Cauchy}
\int \Bigl|:\|u_n\|_{L^2}^2:-:\|u_m\|_{L^2}^2:\Bigr|^2\,d \mu_{0}
\\
=\int \Biggl|\sum_{n<|k|\leq m} \biggl(|\widehat{u}(k)|^2-\frac{1}{\langle k \rangle^2}\biggr)\Biggr|^2\,d \mu_{0}
\\
=\mathbb{E}_{\omega} \Biggl[\Biggl|\sum_{n<|k| \leq m} \frac{|g_k(\omega)|^2-1}{\langle k \rangle^2}\Biggr|^2\Biggr]
\sim \sum_{n<|k| \leq m} \frac{1}{\langle k \rangle^4} \rightarrow 0\, \quad \mbox{as } n \rightarrow \infty\,.
\end{multline}
In \eqref{Integral_n_Cauchy}, we used the fact that for all $k \in \N$, we have 
\begin{equation*}
\mathbb{E}_{\omega}\bigl[|g_k|^2-1\bigr]=0\,,\quad \mathbb{E}_{\omega}\bigl[\bigl||g_k|^2-1\bigr|^2\bigr]=O(1)\,.
\end{equation*}
An analogous argument using Wick's theorem shows that \eqref{Integral_n} is Cauchy in $L^m(\mu_0)$ for every $m \in \N^*$. The claim in $L^p(\mu_0)$ follows by H\"{o}lder's inequality.
\end{proof}

\subsection{Malliavin derivative of the renormalized mass}
\label{Proof_of_Lemma_2.10}
We now prove Lemma \ref{Malliavin_derivative_M_lemma}.
\begin{proof}[Proof of Lemma \ref{Malliavin_derivative_M_lemma}]
We prove the claim when $d=2,3$. The argument when $d=1$ is simpler and is obtained by a minor modification of the one given below. We omit the details. 

Let us fix $p \in [1,\infty)$. The fact that $:\|u\|_{L^2}^2:$ belongs to $L^p(\mu_0)$ follows immediately from Lemma \ref{Wick_ordered_mass_lemma}. Let us now show that
\begin{equation*}
\nabla (:\|u\|_{L^2}^2:) \in L^p(\mu_0;H^{-s}) 
\end{equation*}
and that the identity \eqref{Wick_ordered_mass_Malliavin_derivative} holds. To this end, we consider $\pi_n:H^{-s} \rightarrow \mathbb{R}^{2n}$ given by\footnote{This is a slight change of convention for earlier, in accordance with \eqref{eigenfunctions_torus}.}
\begin{equation}
\label{pi_n}
\pi_n(u):=\Bigl(\bigl(\langle u, e_k \rangle_{L^2,\mathbb{R}}\bigr)_{|k| \leq n}, \bigl(\langle u, f_k \rangle_{L^2,\mathbb{R}}\bigr)_{|k| \leq n}\Bigr)\,.
\end{equation}
Note that \eqref{pi_n} is a suitable modification of the projection map \eqref{projection_map}; we slightly abuse notation by using the same symbol.

By \eqref{eigenfunctions_torus}, we have that for all $k \in \Z^d$
\begin{equation}
\label{e_k_inner_product}
\langle u,e_k \rangle_{L^2,\mathbb{R}}
=\mathrm{Re}\, \widehat{u}(k)\,.
\end{equation}
Moreover,
\begin{equation}
\label{f_k_inner_product}
\langle u,f_k \rangle_{L^2,\mathbb{R}}
=\mathrm{Re}\, \biggl[-\mathrm{i}\,\int_{\mathbb{T}^d} u(x)\, \mathrm{e}^{-2\pi \mathrm{i} k \cdot x}\,dx\biggr]=\mathrm{Re}\,\Bigl[-\mathrm{i}\,\widehat{u}(k)\Bigr]=\mathrm{Im}\, \widehat{u}(k)\,.
\end{equation}
In particular, letting $a_k := \mathrm{Re}\,\widehat{u}(k)$ and $b_k:=\mathrm{Im}\,\widehat{u}(k)$, we can rewrite \eqref{pi_n} as  
\begin{equation}
\label{pi_n_rewritten}
\pi_n(u)=\Bigl(\bigl(\mathrm{Re}\,\widehat{u}(k)\bigr)_{|k| \leq n}, \bigl(\mathrm{Im}\,\widehat{u}(k)\bigr)_{|k| \leq n}\Bigr)
\equiv \bigl((a_k)_{|k| \leq n}, (b_k)_{|k| \leq n}\bigr)\,.
\end{equation}
Using \eqref{e_k_inner_product}--\eqref{f_k_inner_product}, we can write \eqref{Integral_n} as 
\begin{equation}
\label{Integral_n_rewritten}
:\|u_n\|_{L^2}^2:=\sum_{|k| \leq n} \biggl(\mathrm{Re}\, \widehat{u}(k)^2+\mathrm{Im}\, \widehat{u}(k)^2-\frac{1}{\langle k \rangle^2}\biggr)\equiv \sum_{|k| \leq n} \biggl(a_k^2+b_k^2-\frac{1}{\langle k \rangle^2}\biggr)\,.
\end{equation}
We now use an approximation argument and \eqref{Integral_n_rewritten} to obtain that $:\|u_n\|_{L^2}^2: \in \mathbb{D}^{1,p}(\mu_0)$ with
\begin{equation}
\label{Integral_n_Malliavin_derivative}
\nabla \bigl(:\|u_n\|_{L^2}^2:\bigr)=2\sum_{|k| \leq n} \bigl(\mathrm{Re}\, \widehat{u}(k)e_k+\mathrm{Im} \, \widehat{u}(k)\,\ii e_k\bigr)=2 \sum_{|k|\leq n} \widehat{u}(k)e_k=2u_n\,.
\end{equation}

Let us explain how to apply an approximation argument to obtain the first equality in \eqref{Integral_n_Malliavin_derivative} in more detail. We define $\varphi: \R^{|k| \leq n} \times \R^{|k| \leq n} \rightarrow \R$ by
\begin{equation}
\label{Appendix_varphi}
\varphi\bigl((a_k)_{|k| \leq n}, (b_k)_{|k| \leq n}\bigr):=\sum_{|k| \leq n} \biggl(a_k^2+b_k^2-\frac{1}{\langle k \rangle^2}\biggr)\,.
\end{equation}
In particular, from \eqref{pi_n_rewritten}--\eqref{Integral_n_rewritten} and \eqref{Appendix_varphi}, we can write
\begin{equation}
\label{Appendix_varphi_2}
:\|u_n\|_{L^2}^2:=\varphi \circ \pi_n\,.
\end{equation}
We need to justify the first equality in \eqref{Integral_n_Malliavin_derivative} as $\varphi$ is smooth, but not compactly-supported. To this end, we consider
\begin{equation}
\label{Phi_choice}
\Phi \in \mathscr{C}^{\infty}_{\mathrm{c}}(\R^{|k| \leq n} \times \R^{|k| \leq n})\,,\,\,\,\Phi=1\,\,\, \text{near the origin}\,,\,\,\,\Phi_M:=\Phi(\cdot/M)\,.
\end{equation}
For $M \in \N$, we let
\begin{equation}
\label{Appendix_varphi_3}
\varphi_M\bigl((a_k)_{|k| \leq n}, (b_k)_{|k| \leq n}\bigr):=\varphi \bigl((a_k)_{|k| \leq n}, (b_k)_{|k| \leq n}\bigr)\,\Phi_M\bigl((a_k)_{|k| \leq n}, (b_k)_{|k| \leq n}\bigr)\,.
\end{equation}
Note that then $\varphi_M \in \mathscr{C}^{\infty}_{\mathrm{c}}(\R^{|k| \leq n} \times \R^{|k| \leq n})$.
By using the fact that 
\begin{equation}
\label{finite_moments}
\mathbb{E}_{\mu_0} [|\widehat{u}(k)|^q]<\infty \quad  \forall k \in \N \quad \forall q \in [1,\infty)\,,
\end{equation}
\eqref{Appendix_varphi}, \eqref{Phi_choice}, \eqref{Appendix_varphi_3}, and the dominated convergence theorem, we obtain that
\begin{equation}
\label{Appendix_varphi_4}
\lim_M \varphi_M \circ \pi_n=\varphi \circ \pi_n \,\, \text{in} \,\, L^p(\mu_0)\,.
\end{equation}
Now, for fixed $M$, $\varphi_M \circ \pi_n$ is a cylindrical function as in Definition \ref{cylindrical_test_functions}, and by \eqref{gradient_cylindrical_F}, \eqref{Appendix_varphi}, \eqref{Appendix_varphi_3}, we have
\begin{multline}
\label{Appendix_varphi_5}
\nabla (\varphi_M \circ \pi_n)=2u_n \,\Phi_M(\pi_n(u))
\\
+:\|u_n\|_{L^2}^2:\,\sum_{|k| \leq n} \bigl(\partial_j^{(1)}\Phi_M(\pi_n(u))e_k+\, \partial_j^{(2)}\Phi_M(\pi_n(u)) \ii e_k\bigr)\,.
\end{multline}
Here, we also recalled \eqref{pi_n_rewritten} and argued as for the last equality in \eqref{Integral_n_Malliavin_derivative}.
By using the dominated convergence theorem and \eqref{Phi_choice}, we obtain that the right-hand side of \eqref{Appendix_varphi_5} converges to the right-hand side in \eqref{Integral_n_Malliavin_derivative} in $L^p(\mu_0;H^{-s})$ as $M \rightarrow \infty$. We hence justify the first equality in \eqref{Integral_n_Malliavin_derivative} and deduce that $:\|u_n\|_{L^2}^2: \in \mathbb{D}^{1,p}(\mu_0)$ .

The claim now follows from \eqref{Wick_ordered_mass}, \eqref{Integral_n_Malliavin_derivative}, provided that we show
\begin{equation}
\label{P_n(u)_convergence_a}
\lim_{n \rightarrow \infty} \|u_n-u\|_{L^p(\mu_0;H^{-s})}=0\,.
\end{equation}
By using hypercontractivity (see \cite[Lemma 5.2]{AS21} or \cite[Theorem I.22]{Sim74}), \eqref{P_n(u)_convergence_a} follows from
\begin{equation}
\label{P_n(u)_convergence}
\lim_{n \rightarrow \infty} \|u_n-u\|_{L^2(\mu_0;H^{-s})}=0\,.
\end{equation}
In order to see that \eqref{P_n(u)_convergence} holds, we argue analogously as for \eqref{classical_free_field}
to deduce that
\begin{align}
\notag
\|u_n-u\|_{L^2(\mu_0;H^{-s})}^2&=\mathbb{E}_{\omega} \Biggl[\sum_{|k|>n} \frac{|g_k(\omega)|^2}{\langle k \rangle^2}\,\langle k \rangle^{-2s}\Biggr] 
\\
\label{P_n(u)_convergence_proof}
&=\sum_{|k|>n} \langle k \rangle^{-2-2s} \rightarrow 0\, \quad \mbox{as } n \rightarrow \infty\,,
\end{align}
by Assumption \ref{choice_of_s_assumption}.
\end{proof}

\subsection{Proof of Proposition \ref{Hartree_equation_Malliavin_derivative} (i)}
\label{Proof_of_Proposition_2.13_(i)}

We now prove Proposition \ref{Hartree_equation_Malliavin_derivative} (i). 

\begin{proof}[Proof of \ref{Hartree_equation_Malliavin_derivative} (i)] 
Throughout, let us fix $p \in [1,\infty)$.
The case $d=1$ follows from the proof of \cite[Proposition 5.1 (i), (iii)]{AS21}. We note that the analysis in this section of \cite{AS21} is independent of the sign of the nonlinearity.

Let us consider now the case $d=2,3$. 
Recalling \eqref{:|u_n|^2:}, we define the truncated nonlinearity for $n \in \N$
\begin{equation}
\label{A_h^I}
h_{n}^I(u):=\frac{1}{4}\,\int_{\mathbb{T}^d} \,\int_{\mathbb{T}^d} \,:|u_n(x)|^2:\,V(x-y)\,:|u_n(y)|^2:\,d x\,dy\,.
\end{equation}
In \cite[Appendix B, Proof of (5.9)]{AS21}, it is shown that the sequence $(h^I_{n})_n$ in \eqref{A_h^I} is Cauchy in $L^p(\mu_{0})$. We rigorously define $h^{I}$ as the limit of this sequence (and note that the limit is independent of $p$). In the proof of  \cite[Proposition 5.1 (ii)]{AS21}, one considers $V$ such that $\widehat{V} \geq 0$, but the argument follows without this assumption in an analogous way. We refer the reader to the aforementioned work for details. In particular, we deduce that $h^{I} \in L^p(\mu_{0})$.

We now show that $\nabla h^{I} \in L^p(\mu_{0};H^{-s})$. 
Let us first show that for $n \in \N$, we have
\begin{equation}
\label{Hartree_equation_Malliavin_derivative_1}
h_{n}^I \in \mathbb{D}^{1,p}(\mu_0)\,,\qquad \nabla h_{n}^I (u)=P_n \Bigl[u_n\, \bigl(V*\,:|u_n|^2:\,\bigr)\Bigr]\,.
\end{equation}
In order to prove \eqref{Hartree_equation_Malliavin_derivative_1}, let us first note with notation as in \eqref{:|u_n|^2:} and \eqref{pi_n_rewritten}, we can write
\begin{multline}
\label{star}
:|u_n(x)|^2:=\Biggl|\sum_{|k| \leq n} \widehat{u} (k)\,\mathrm{e}^{2\pi \mathrm{i} k \cdot x}\Biggr|^2-\sigma_{n}
=\Biggl|\sum_{|k| \leq n} (a_k+\mathrm{i} b_k) \mathrm{e}^{2\pi \mathrm{i} k \cdot x}\Biggr|^2-\sigma_{n}
\\
=\sum_{|k_1|,|k_2| \leq n} (a_{k_1}+\mathrm{i} b_{k_1}) \,(a_{k_2}-\mathrm{i} b_{k_2})\,\mathrm{e}^{2\pi \mathrm{i} (k_1-k_2)\cdot x}-\sigma_{n} \equiv I_{n,x}\,. 
\end{multline}
From \eqref{star}, we can rewrite \eqref{A_h^I} as
\begin{multline}
\label{h_n^I_rewritten_1}
h_n^I(u)=\frac{1}{4} \,\int_{\mathbb{T}^d} \,\int_{\mathbb{T}^d} \Biggl(\sum_{|k_1|,|k_2| \leq n} (a_{k_1}+\mathrm{i} b_{k_1}) \,(a_{k_2}-\mathrm{i} b_{k_2})\,\mathrm{e}^{2\pi \mathrm{i} (k_1-k_2)\cdot x}-\sigma_{n} \Biggr)\,
\\
\times
V(x-y)\,
\Biggl(\sum_{|\ell_1|,|\ell_2| \leq n} (a_{\ell_1}+\mathrm{i} b_{\ell_1}) \,(a_{\ell_2}-\mathrm{i} b_{\ell_2})\,\mathrm{e}^{2\pi \mathrm{i} (\ell_1-\ell_2)\cdot y}-\sigma_{n} \Biggr)\, dx\, dy\,.
\end{multline}
By Parseval's identity, and assumption that $V$ is even, we can rewrite \eqref{h_n^I_rewritten_1} as
\begin{multline}
\label{h_n^I_rewritten_2}
h_n^I(u)=\frac{1}{4}\,\mathop{\sum_{|k_1|,|k_2|,|\ell_1|,|\ell_2| \leq n}}_{k_1-k_2+\ell_1-\ell_2=0} (a_{k_1}+\mathrm{i}b_{k_1})\,(a_{k_2}-\mathrm{i}b_{k_2}) \,(a_{\ell_1}+\mathrm{i}b_{\ell_1})\,(a_{\ell_2}-\mathrm{i} b_{\ell_2})\,\widehat{V}(k_1-k_2)
\\
+\frac{1}{2} \sigma_n \widehat{V}(0) \,\sum_{|k| \leq n} (a_k^2+b_k^2)-\frac{1}{4} \sigma_n^2 \widehat{V}(0)
\\
=:\psi\bigl((a_{k_1})_{|k_1| \leq n}, (a_{k_2})_{|k_2| \leq n},(a_{\ell_1})_{|\ell_1| \leq n},(a_{\ell_2})_{|\ell_2| \leq n}\bigr)\,.
\end{multline}
Analogously as in in \eqref{Appendix_varphi}, we note that $\psi: \R^{|k_1| \leq n} \times \R^{|k_2| \leq n} \times \R^{|\ell_1| \leq n} \times \R^{|\ell_2| \leq n} \rightarrow \R$ given by \eqref{h_n^I_rewritten_2} is smooth, but not compactly supported. In order to compute the Malliavin derivative, we use an approximation argument as for \eqref{Integral_n_Malliavin_derivative} above. Similarly as in \eqref{Phi_choice}, we take
\begin{multline}
\label{Psi_choice}
\Psi \in \mathscr{C}^{\infty}_{\mathrm{c}}(\R^{|k_1| \leq n} \times \R^{|k_2| \leq n} \times \R^{|\ell_1| \leq n} \times \R^{|\ell_2| \leq n})\,,\,\,\,
\Psi=1\,\,\, \text{near the origin}\,,
\\
\Psi_M:=\Psi(\cdot/M)\,.
\end{multline}
Similarly as in \eqref{Appendix_varphi_3}, we define 
\begin{equation}
\label{Appendix_psi_3}
\psi_M:=\psi \, \Psi_M \in \mathscr{C}^{\infty}_{\mathrm{c}}(\R^{|k_1| \leq n} \times \R^{|k_2| \leq n} \times \R^{|\ell_1| \leq n} \times \R^{|\ell_2| \leq n})\,,\quad M \in \N\,.
\end{equation}
By \eqref{h_n^I_rewritten_2} and \eqref{pi_n_rewritten}, we can write
\begin{equation}
\label{h_n^I_rewritten_3}
h_n^I=\psi \circ \pi_n\,.
\end{equation}

By using \eqref{finite_moments}, \eqref{h_n^I_rewritten_2}--\eqref{Appendix_psi_3}, the fact that $\widehat{V} \in \ell^{\infty}(\Z^d)$, the dominated convergence theorem, and arguing analogously\footnote{The same argument applies for any polynomial in the variables $a_m,b_m$. We only need this result in two cases, namely for the truncated Wick-ordered mass and for the truncated interaction.} as for \eqref{Integral_n_Malliavin_derivative}, we get that $h_n^I$ is the $L^p(\mu_0)$ limit of $\psi_M \circ \pi_n$ as $M \rightarrow \infty$ and $\nabla h_n^I \in L^p(\mu_0;H^{-s})$ exists and is obtained as the $L^p(\mu_0;H^{-s})$ limit of $\nabla (\psi_M \circ \pi_n)$ as $M \rightarrow \infty$. For fixed $M \in \N$, we can compute $\nabla (\psi_M \circ \pi_n)$ as in \eqref{gradient_cylindrical_F} by \eqref{Appendix_psi_3}. In summary, we obtain that the approximation arguments as for \eqref{Integral_n_Malliavin_derivative} allow us to compute the Malliavin derivative in \eqref{h_n^I_rewritten_1}--\eqref{h_n^I_rewritten_2} by differentiating with respect to the $a_m, b_m$ as if the expressions were smooth compactly supported functions of these variables.

Let us fix $x \in \mathbb{T}^d$. We recall \eqref{pi_n_rewritten} and \eqref{star} to compute for $z \in \mathbb{T}^d$ the quantity 
\begin{multline}
\label{nabla_I_{n,y}}
\sum_{|m| \leq n} \frac{\partial I_{n,x}}{\partial a_{m}} \,\mathrm{e}^{2\pi \mathrm{i} m \cdot z}+\mathrm{i} \sum_{|m| \leq n} \frac{\partial I_{n,x}}{\partial b_{m}} \,\mathrm{e}^{2\pi \mathrm{i} m \cdot z}
\\
=\sum_{|m| \leq n} \sum_{|k| \leq n} \Bigl[(a_k+\mathrm{i}b_k)\mathrm{e}^{2\pi \mathrm{i} (k-m) \cdot x} +(a_k-\mathrm{i} b_k)\, \mathrm{e}^{2\pi \mathrm{i}(m-k) \cdot x}\Bigr]\,\mathrm{e}^{2\pi \mathrm{i} m \cdot z}
\\
+
\mathrm{i}\,\sum_{|m| \leq n} \sum_{|k| \leq n} \Bigl[-\mathrm{i}(a_k+\mathrm{i}b_k)\mathrm{e}^{2\pi \mathrm{i} (k-m) \cdot x} +\mathrm{i}(a_k-\mathrm{i} b_k)\, \mathrm{e}^{2\pi \mathrm{i}(m-k) \cdot x}\Bigr]\,\mathrm{e}^{2\pi \mathrm{i} m \cdot z}
\\
=2 \sum_{|k|,|m| \leq n}(a_k + \mathrm{i} b_k)\, \mathrm{e}^{2\pi \mathrm{i} (k-m) \cdot x}\,\mathrm{e}^{2 \pi \mathrm{i} m \cdot z}=2 \sum_{|m| \leq n} u_n(x)\,\mathrm{e}^{2\pi \mathrm{i} m \cdot (z-x)}\,.
\end{multline} 
For the last equality in \eqref{nabla_I_{n,y}}, we summed in $|k| \leq n$ using 
\begin{equation*}
\sum_{|k| \leq n} (a_k+\mathrm{i}b_k)\,\mathrm{e}^{2\pi \mathrm{i} k \cdot x}=u_n(x)\,.
\end{equation*}
We now use \eqref{h_n^I_rewritten_1} and \eqref{nabla_I_{n,y}} to write for $z \in \T^d$
\begin{multline}
\label{integral_F}
\nabla h_n^I(u)(z)=\sum_{|\ell| \leq n}  \biggl\{\int_{\mathbb{T}^d} u_n(x)\,\bigl(V*\,:|u_n|^2:\,\bigr)(x)\,\mathrm{e}^{-2\pi \mathrm{i} \ell \cdot x}\, dx\biggr\}\,\mathrm{e}^{2 \pi \mathrm{i} \ell \cdot z}
\\
=P_n\Bigl[u_n \,\bigl(V*\,:|u_n|^2:\,\bigr)\Bigr](z)\,.
\end{multline}
We hence prove \eqref{Hartree_equation_Malliavin_derivative_1}.

Using \eqref{Hartree_equation_Malliavin_derivative_1} and Assumption \ref{Assumption_on_V}, we now use Wick's theorem, hypercontractivity, and argue analogously as in the proof of \cite[Proposition 5.1 (2)]{AS21} to deduce that $\nabla h^I \in L^p(\mu_{0})$. As before, the fact that $\widehat{V}$ is no longer assumed to be nonnegative does not change the argument. 
By the limiting procedures summarized above, we always obtain that $h^I:H^{-s} \rightarrow \R$ is Borel measurable.
This completes the proof of Proposition \ref{Hartree_equation_Malliavin_derivative} (i).

\end{proof}

\begin{remark}
\label{formula_for_Malliavin_derivative_Hartree}
The analysis of  \cite[Proposition 5.1 (2)]{AS21} and \cite[Section 3]{Soh19}, based on Wick's theorem shows us that the limit of \eqref{Hartree_equation_Malliavin_derivative_1} in $L^p(\mu_0;H^{-s})$ is explicitly equal to 
\begin{equation}
\label{nabla_h^I_formula}
\bigl(V*\,:|u|^2:\,\bigr)u \equiv \nabla h^I(u)\,.
\end{equation}
\end{remark}

\section{Proof of \eqref{Bourgain_large_deviation_estimate} and conclusion of the proof of Proposition \ref{Hartree_equation_Malliavin_derivative}}
\label{Proof of Bourgain's large deviation estimate}

In this section, our goal is to prove the large deviation estimate given by  \eqref{Bourgain_large_deviation_estimate}. This is the main step in the proof of Proposition \ref{Hartree_equation_Malliavin_derivative} (ii) above. 
We divide the proof into several steps.

\subsection{Rewriting the interacting term in the Hamiltonian} 
\label{Rewriting the interacting term in the Hamiltonian} 
As in \cite[Section 1 (4)]{Bou97}, we first rewrite \eqref{h^I} as 

\begin{equation}
\label{h^I_rewritten}
h^I(u)=\frac{1}{4} \,\sum_{k \in \Z^d \setminus \{0\}} \widehat{|u|^2}(k)\, \widehat{|u|^2}(-k)\,\widehat{V}(k)+\frac{1}{4} \,\widehat{V}(0)\,\Bigl(:\|u\|_{L^2}^2:\Bigr)^2\,.
\end{equation}
In order to show \eqref{h^I_rewritten}, we first show that for $n \in \N$, we can rewrite \eqref{A_h^I} as 
\begin{equation}
\label{h^I_rewritten_n}
h_{n}^I(u)=\frac{1}{4} \,\sum_{k \in \Z^d \setminus \{0\}} \widehat{|u_n|^2}(k)\, \widehat{|u_n|^2}(-k)\,\widehat{V}(k)+\frac{1}{4} \,\widehat{V}(0)\,\Bigl(:\|u_n\|_{L^2}^2:\Bigr)^2\,.
\end{equation}
Here, we recall the convention \eqref{u_n}.

Let us note that \eqref{h^I_rewritten_n} implies \eqref{h^I_rewritten} since 
we know that $h_{n}^I(u)$ and $:\|u_n\|_{L^2}^2:$ converge to $h^I(u)$ and $:\|u\|_{L^2}^2:$ in $L^p(\mu_{0})$ for all $p \in [1,\infty)$ as $n \rightarrow \infty$  by using Lemma \ref{Wick_ordered_mass_lemma} (i) and Proposition \ref{Hartree_equation_Malliavin_derivative} (i) respectively. In particular, we get that the sum
\begin{equation*}
\sum_{k \in \Z^d \setminus \{0\}} \widehat{|u|^2}(k)\, \widehat{|u|^2}(-k)\,\widehat{V}(k)
\end{equation*}
is the $L^p(\mu_{0})$ limit as $n \rightarrow \infty$ of the (finite) sums
\begin{equation*}
\sum_{k \in \Z^d \setminus \{0\}} \widehat{|u_n|^2}(k)\, \widehat{|u_n|^2}(-k)\,\widehat{V}(k)\,.
\end{equation*}
(Alternatively, one can check this claim directly by using the assumption \eqref{V_assumption}).

We now show \eqref{h^I_rewritten_n}. Using \eqref{:|u_n|^2:}, we can write \eqref{A_h^I} as 
\begin{multline}
\label{h^I_rewritten_n_proof}
\frac{1}{4} \int_{\T^d} \int_{\T^d} \bigl(|u_n(x)|^2-\sigma_{n} \bigr) \,V(x-y)\, \bigl(|u_n(y)|^2-\sigma_{n}\bigr)\, dx\, dy
\\
=\frac{1}{4} \int_{\T^d} \int_{\T^d} |u_n(x)|^2 \,V(x-y)\, |u_n(y)|^2\, dx\, dy
\\
+\frac{1}{4}\, \widehat{V}(0)\,\biggl(\sigma_{n}^2-2\sigma_{n}\int_{\T^d} |u_n(x)|^2\,dx \biggr)
\\
=\frac{1}{4}\, \sum_{k \in \Z^d}  \widehat{|u_n|^2}(k)\, \widehat{|u_n|^2}(-k)\,\widehat{V}(k)+\frac{1}{4} \,\widehat{V}(0)\biggl(\sigma_{n}^2-2\sigma_{n} \int_{\T^d} |u_n(x)|^2\,dx \biggr)
\\
=\frac{1}{4}\, \sum_{k \in \Z^d \setminus \{0\}}  \widehat{|u_n|^2}(k)\, \widehat{|u_n|^2}(-k)\,\widehat{V}(k)
\\
+\frac{1}{4}\, \widehat{V}(0)\biggl(\sigma_{n}^2-2\sigma_{n} \int_{\T^d} |u_n(x)|^2\,dx+ \biggl\{\int_{\T^d} |u_n(x)|^2\,dx\biggr\}^2 \biggr)
\\
=\frac{1}{4} \,\sum_{k \in \Z^d \setminus \{0\}} \widehat{|u_n|^2}(k)\, \widehat{|u_n|^2}(-k)\,\widehat{V}(k)+\frac{1}{4} \,\widehat{V}(0)\,\Bigl(:\|u_n\|_{L^2}^2:\Bigr)^2\,.
\end{multline}
Note that for the second equality in \eqref{h^I_rewritten_n_proof}, we used Parseval's theorem, and for the third equality, we considered the terms in the sum with $k \neq 0$ and the term with $k=0$ separately. Finally, for the fourth equality, we recalled \eqref{Integral_n}. We hence obtain \eqref{h^I_rewritten_n}, which in turn implies \eqref{h^I_rewritten}. 

Using \eqref{h^I_rewritten}, we note that \eqref{Bourgain_large_deviation_estimate} follows if we show that, given $C_{\mathrm{Big}}>0$ arbitrarily large, for all $\lambda>1$, we have\footnote{Alternatively, we just need to consider $\lambda>0$ bounded away from zero. We henceforth fix $\lambda>1$ in what follows.}
\begin{multline}
\label{Bourgain_large_deviation_estimate_rewritten}
\mathbb{P}_{\omega} \Biggl(\sum_{k \in \Z^d \setminus \{0\}} \widehat{|u^{\omega}|^2}(k)\, \widehat{|u^{\omega}|^2}(-k)\,\widehat{V}(k)>\lambda\,\,\bigcap \,\, 
\Biggl| \sum_{k \in \Z^d} \frac{|g_k(\omega)|^2-1}{\langle k \rangle^2}\Biggr| \leq \BB \Biggr)
\\
\lesssim_{\BB,C_{\mathrm{Big}}} \ee^{-C_{\mathrm{Big}} \lambda}\,.
\end{multline}
Here, we recall \eqref{map_varphi_omega}. The rest of the appendix is devoted to the proof of \eqref{Bourgain_large_deviation_estimate_rewritten}.

\subsection{Concentration inequalities}
\label{Concentration inequalities}

In this section, we summarize the concentration inequalities that we use in the sequel to prove \eqref{Bourgain_large_deviation_estimate_rewritten}. For a detailed explanation and proofs of the results in this section, we refer the reader to \cite{Ver12} and \cite{Rudelson_Vershynin}, as well as \cite{Boucheron_Lugosi_Massart}.

We say that a random variable $X$ is \emph{subgaussian} if
\begin{equation}
\label{Psi_2}
\|X\|_{\psi_2}:=\mathop{\sup}_{p \geq 1} p^{-\frac{1}{2}} \,\bigl(\mathbb{E} |X|^p\bigr)^{\frac{1}{p}}<\infty\,.
\end{equation}
In particular complex Gaussian random variables are subgaussian. The following estimate holds for subgaussian random variables.
\begin{lemma}[Hoeffding's Inequality]
\label{Hoeffding's_inequality}
Let $(X_{\ell})_{\ell \in \Z}$ be a sequence of independent centred subgaussian random variables such that there exists $M>0$ with the property that $\|X_{\ell}\|_{\psi_2} \leq M$ for all $\ell$. Let $(a_{\ell})_{\ell \in \Z}$ be a sequence of complex numbers. There exists a universal constant $c>0$ such that for all $\varrho \geq 0$, we have
\begin{equation*}
\mathbb{P} \Biggl(\Bigl|\sum_{\ell} a_{\ell} X_{\ell}\Bigr| > \varrho \Biggr) \lesssim \exp\Biggl\{-\frac{c \varrho^2}{M^2 \|a\|_{\ell^2}^2}\Biggr\}\,.
\end{equation*}
\end{lemma}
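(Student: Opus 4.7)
The plan is to follow the standard Chernoff/Cram\'er approach: bound the moment generating function of the real and imaginary parts of $S := \sum_\ell a_\ell X_\ell$, use independence to factor the expectation, and then optimize the Markov bound over the auxiliary parameter. The only input required beyond independence is the moment generating function (MGF) estimate for a centred subgaussian random variable, which is equivalent to \eqref{Psi_2} up to universal constants.

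First, I would show that \eqref{Psi_2} implies the MGF bound
\begin{equation*}
\mathbb{E}\,\ee^{tX_\ell} \leq \ee^{C M^2 t^2}, \qquad \forall t \in \mathbb{R},
\end{equation*}
for some universal $C>0$ (and the analogous bound for $\ee^{tY_\ell}$ where $Y_\ell$ is the real or imaginary part of $a_\ell X_\ell$). This follows by expanding the exponential in a power series, applying \eqref{Psi_2} to control the absolute moments $\mathbb{E}|X_\ell|^p$ by $(p M^2)^{p/2}$ (after discarding odd moments using a symmetrization argument or by centredness combined with the $p=1$ bound), using Stirling to compare with $p!$, and summing the geometric-type series that results.

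Next, split $S = \Real(S) + \ii \, \Imm(S)$, so that the event $\{|S|>\varrho\}$ is contained in $\{|\Real(S)| > \varrho/\sqrt{2}\}\cup\{|\Imm(S)| > \varrho/\sqrt{2}\}$. For each real component, write it as a sum $\sum_\ell b_\ell X_\ell$ with real $b_\ell$ satisfying $\sum_\ell b_\ell^2 \leq \|a\|_{\ell^2}^2$, apply the single-variable MGF bound together with independence to obtain
\begin{equation*}
\mathbb{E}\,\ee^{t \sum_\ell b_\ell X_\ell} \leq \prod_\ell \ee^{C M^2 b_\ell^2 t^2} \leq \ee^{C M^2 \|a\|_{\ell^2}^2 t^2},
\end{equation*}
and then apply Markov's inequality to $\ee^{t S}$. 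Optimizing in $t>0$ and combining the upper bounds for both components yields the claimed subgaussian tail.

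The one delicate issue is ensuring that the centredness assumption, together with \eqref{Psi_2}, really does give the symmetric MGF bound with a constant depending only on $\|X_\ell\|_{\psi_2}$ (so that the factor $\ee^{t \mathbb{E} X_\ell}$ vanishes harmlessly); this is a standard fact, and the only care needed is to track that the constant $c$ appearing in the final bound is universal, independent of the individual coefficients $a_\ell$ and of the cardinality of the index set. Once this MGF-to-tail passage is established, the conclusion follows from an elementary optimization.
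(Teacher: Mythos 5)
The paper does not prove this lemma itself; it cites \cite[Proposition 5.10]{Ver12} (noting that the extension from finite sums to the series form is immediate), and the proof in that reference is exactly the Chernoff/MGF argument you outline. So your proposal supplies the proof that the paper declines to spell out, and it is the right one.

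One small imprecision worth fixing: in your second step you reduce $\Real(S)$ to ``a sum $\sum_\ell b_\ell X_\ell$ with real $b_\ell$,'' but in the setting where this lemma is applied (see \eqref{X_j_definition}--\eqref{a_j_definition}) the $X_\ell$ are complex-valued, so $\Real(a_\ell X_\ell)=\Real(a_\ell)\Real(X_\ell)-\Imm(a_\ell)\Imm(X_\ell)$ is not of the form $b_\ell X_\ell$ with $b_\ell\in\R$. The cleaner route is to apply the MGF bound directly to the real random variables $Y_\ell:=\Real(a_\ell X_\ell)$: these are centred, independent, and satisfy $\|Y_\ell\|_{\psi_2}\le|a_\ell|\,\|X_\ell\|_{\psi_2}\le|a_\ell|M$ (since $|\Real(a_\ell X_\ell)|\le|a_\ell||X_\ell|$ pointwise), whence $\mathbb{E}\,\ee^{t\sum_\ell Y_\ell}\le\ee^{CM^2\|a\|_{\ell^2}^2 t^2}$ and the Chernoff optimization goes through verbatim; likewise for $\Imm(S)$. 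With that adjustment the argument is complete and the constant $c$ is manifestly universal.
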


We say that a random variable $X$ is \emph{subexponential} if
\begin{equation}
\label{Psi_1}
\|X\|_{\psi_1}:=\mathop{\sup}_{p \geq 1} p^{-1} \,\bigl(\mathbb{E} |X|^p\bigr)^{\frac{1}{p}}<\infty\,.
\end{equation}
The following estimate holds for subexponential random variables.
\begin{lemma}[Bernstein's Inequality]
\label{Bernstein's_inequality}
Let $(X_{\ell})_{\ell \in \Z}$ be a sequence of independent centred subexponential random variables such that there exists $M>0$ with the property that $\|X_{\ell}\|_{\psi_1} \leq \Xi$ for all $\ell$. Let $(a_{\ell})_{\ell \in \Z}$ be a sequence of complex numbers. There exists a universal constant $c>0$ such that for all $\varrho \geq 0$, we have
\begin{equation*}
\mathbb{P} \Biggl(\Bigl|\sum_{\ell} a_{\ell} X_{\ell}\Bigr| > \varrho \Biggr) \lesssim \exp\Biggl\{-c \,\mathrm{min} \Biggl(\frac{\varrho^2}{\Xi^2 \|a\|_{\ell^2}^2},\frac{\varrho}{\Xi \|a\|_{\ell^{\infty}}}\Biggr)\Biggr\}\,.
\end{equation*}
\end{lemma}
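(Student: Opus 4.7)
The plan is to establish the tail bound via the standard Chernoff/Cram\'er exponential moment method, which reduces the problem to controlling the moment generating function (MGF) of each $X_\ell$. The starting point is Markov's inequality applied to $\ee^{tS}$ and $\ee^{-tS}$, where $S := \sum_{\ell} a_\ell X_\ell$, giving
\begin{equation*}
\mathbb{P}(|S|>\varrho) \leq \ee^{-t\varrho}\bigl(\mathbb{E}[\ee^{tS}] + \mathbb{E}[\ee^{-tS}]\bigr)
\end{equation*}
for any $t>0$. By the independence of the $X_\ell$, this factorizes as $\prod_\ell \mathbb{E}[\ee^{\pm t a_\ell X_\ell}]$, so the task is to estimate the MGF of a single centred subexponential random variable.

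The key one-variable lemma I would prove first is the following: if $Y$ is centred with $\|Y\|_{\psi_1} \leq \Xi$, then there exist universal constants $c_1, c_2>0$ such that $\mathbb{E}[\ee^{\tau Y}] \leq \exp(c_1 \tau^2 \Xi^2)$ whenever $|\tau| \leq c_2/\Xi$. This follows from the Taylor expansion $\ee^{\tau Y} = 1 + \tau Y + \sum_{p\geq 2} (\tau Y)^p/p!$, using $\mathbb{E}[Y]=0$ to kill the linear term and the bound $\mathbb{E}|Y|^p \leq (p\Xi)^p$ (which is just the definition \eqref{Psi_1}) to control the remainder by a convergent geometric series in $\tau \Xi$, provided $|\tau| \Xi$ is small enough.

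Applying this bound to each factor $Y = a_\ell X_\ell$ (which still satisfies $\|a_\ell X_\ell\|_{\psi_1} \leq |a_\ell|\Xi$), and imposing the uniform smallness condition $|t|\,\|a\|_{\ell^\infty} \leq c_2/\Xi$, I obtain
\begin{equation*}
\mathbb{E}[\ee^{\pm tS}] \leq \exp\bigl(c_1 t^2 \Xi^2 \|a\|_{\ell^2}^2\bigr),
\end{equation*}
and therefore $\mathbb{P}(|S|>\varrho) \lesssim \exp(-t\varrho + c_1 t^2 \Xi^2 \|a\|_{\ell^2}^2)$ for all such $t$. The remaining step is to optimize in $t$. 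The unconstrained minimizer is $t^* = \varrho/(2c_1 \Xi^2 \|a\|_{\ell^2}^2)$, yielding the subgaussian bound $\exp(-c\varrho^2/(\Xi^2 \|a\|_{\ell^2}^2))$, provided $t^* \leq c_2/(\Xi \|a\|_{\ell^\infty})$. If this constraint is violated — i.e., if $\varrho$ is so large that the quadratic regime is no longer admissible — one instead chooses the boundary value $t = c_2/(\Xi \|a\|_{\ell^\infty})$, which yields the subexponential bound $\exp(-c\varrho/(\Xi \|a\|_{\ell^\infty}))$. Taking the worse of the two exponents gives the $\min$ appearing in the statement.

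The main (minor) obstacle is the MGF lemma, since getting the quadratic-in-$\tau$ behaviour near the origin requires both the centering assumption and careful bookkeeping of the constants in the Taylor tail so that the restriction $|\tau| \leq c_2/\Xi$ can be made explicit; the optimization step and the Chernoff bound are otherwise routine. Since this is a textbook result, I would simply cite \cite{Ver12} or \cite{Boucheron_Lugosi_Massart} for the MGF lemma and spell out only the Chernoff-plus-optimization argument, as the authors of the paper appear to do.
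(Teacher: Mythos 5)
Your argument is the standard Chernoff/moment-generating-function proof of Bernstein's inequality and it is correct. Note, though, that the paper gives no proof at all here --- it simply cites \cite[Prop.~5.16]{Ver12} and \cite[\S 2.8]{Boucheron_Lugosi_Massart}, and both of those references prove the result by exactly the route you outline --- so your account is more detailed than the paper's, not a different method.

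One point to tidy up. The coefficients $a_\ell$ are allowed to be complex while the $X_\ell$ are real, so $S = \sum_\ell a_\ell X_\ell$ is complex and $\mathbb{E}[\ee^{tS}]$ is not the nonnegative real quantity your Chernoff step requires; your one-variable MGF lemma also implicitly needs $Y$ real, since for complex $Y$ the bound $\mathbb{E}[\ee^{\tau Y}]\le\exp(c_1\tau^2\Xi^2)$ does not even parse as an inequality between reals. The fix is routine: observe that $|S|>\varrho$ forces $|\Real\, S| > \varrho/\sqrt{2}$ or $|\Imm\, S| > \varrho/\sqrt{2}$, that $\Real\, S = \sum_\ell \Real(a_\ell)\, X_\ell$ and $\Imm\, S = \sum_\ell \Imm(a_\ell)\, X_\ell$ are real linear combinations with $\|\Real\, a\|_{\ell^p}, \|\Imm\, a\|_{\ell^p} \le \|a\|_{\ell^p}$ for $p\in\{2,\infty\}$, and then run your Chernoff-plus-optimization argument on each of these two real sums. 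This only changes constants, so the stated bound is unaffected; the centering, the moment bound $\mathbb{E}|Y|^p \le (p\Xi)^p$, the geometric-series control of the Taylor tail, the factorization by independence, and the two-regime optimization in $t$ are otherwise exactly right.
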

We can deduce Lemmas \ref{Hoeffding's_inequality}--\ref{Bernstein's_inequality} from \cite[Proposition 5.10]{Ver12} and \cite[Proposition 5.16]{Ver12}. The aforementioned results are stated for finite sums, but can be extended to the form above by the same proofs. We also refer the reader to \cite[Sections 2.6 and 2.8]{Boucheron_Lugosi_Massart} for further details.

Let us note one further concentration inequality.

\begin{lemma}[The Hanson-Wright Inequality]
\label{Hanson_Wright_inequality}
Let $X=(X_1,\ldots,X_n)$ be a random vector whose entries are independent centred subgaussian random variables.
Denote by $\Xi:= \max_{1 \leq \ell \leq n} \|X_{\ell}\|_{\psi_2}$. Let $\mathcal{A}$ be an $n \times n$ matrix. There exists a uniform constant such that for all $\varrho \geq 0$, we have
\begin{equation*}
\mathbb{P} \biggl(\Bigl|\overline{X}^T\cdot \mathcal{A} \cdot X - \mathbb{E} \Bigl[\overline{X}^T\cdot \mathcal{A} \cdot X \Bigr]\Bigr|>\varrho\biggr) \lesssim  \exp\Biggl\{-c\,\min\ \Biggl(\frac{\varrho^2}{\Xi^4\|\mathcal{A}\|_{\mathrm{HS}}^2},\frac{\varrho}{\Xi\|\mathcal{A}\|}\Biggr)\Biggr\}\,.
\end{equation*}
Above,  $\|\mathcal{A}\|_{\mathrm{HS}}=\sqrt{\mathrm{Tr}(\mathcal{A}^*\mathcal{A})}$ denotes the Hilbert-Schmidt (or Frobenius) norm and $\|\mathcal{A}\|$ denotes the operator norm of $\mathcal{A}$.
\end{lemma}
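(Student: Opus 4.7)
The plan is to follow the standard decoupling approach (Rudelson--Vershynin). First I would split the centred quadratic form into diagonal and off-diagonal parts:
\begin{equation*}
\overline{X}^T \mathcal{A} X - \mathbb{E}\bigl[\overline{X}^T \mathcal{A} X\bigr] = \sum_i a_{ii}\bigl(|X_i|^2 - \mathbb{E}|X_i|^2\bigr) + \sum_{i\neq j} a_{ij} \overline{X}_i X_j =: S_{\mathrm{diag}} + S_{\mathrm{off}},
\end{equation*}
and bound $\mathbb{P}(|S_{\mathrm{diag}}|>\varrho/2)$ and $\mathbb{P}(|S_{\mathrm{off}}|>\varrho/2)$ separately, each by the right-hand side. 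For $S_{\mathrm{diag}}$, I would observe that $|X_i|^2-\mathbb{E}|X_i|^2$ is centred and subexponential with $\|\cdot\|_{\psi_1}\lesssim \|X_i\|_{\psi_2}^2\leq \Xi^2$ (a one-line Orlicz-norm computation from \eqref{Psi_2}--\eqref{Psi_1}). Since $\|(a_{ii})\|_{\ell^2}\leq \|\mathcal{A}\|_{\mathrm{HS}}$ and $\|(a_{ii})\|_{\ell^\infty}\leq \|\mathcal{A}\|$, Bernstein's inequality (Lemma \ref{Bernstein's_inequality}) with $\Xi^2$ in place of $\Xi$ yields exactly the claimed mixed tail.

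The main work goes into $S_{\mathrm{off}}$. I would first bound its Laplace transform. The decoupling step: introduce i.i.d.\ Bernoulli$(1/2)$ selectors $\delta_i$ independent of $X$, so that $\mathbb{E}_\delta[\delta_i(1-\delta_j)]=1/4$ for $i\neq j$ and $0$ for $i=j$. Jensen's inequality in the form $\exp(\mathbb{E}_\delta(\cdot))\leq \mathbb{E}_\delta\exp(\cdot)$ then allows one to replace $S_{\mathrm{off}}$ inside the exponent by a sum of the shape $4\sum_{i,j} a_{ij}\overline{X}_i X'_j$, where $X'$ is an independent copy of $X$ (after relabelling the $\delta_i=1$ and $\delta_j=0$ coordinates). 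This is the standard decoupling lemma, at the cost of a harmless multiplicative constant in the exponent.

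Next I would condition on $X$ and view $\sum_j (\mathcal{A}^*X)_j X'_j$ as a linear form in the independent subgaussian entries of $X'$. Hoeffding's inequality (Lemma \ref{Hoeffding's_inequality}), read at the level of the moment generating function, gives
\begin{equation*}
\mathbb{E}_{X'}\Bigl[\exp\bigl(\lambda\textstyle\sum_j (\mathcal{A}^*X)_j X'_j\bigr)\Bigr] \leq \exp\bigl(C\lambda^2 \Xi^2 \|\mathcal{A}^*X\|_{\ell^2}^2\bigr).
\end{equation*}
Integrating in $X$ and invoking the subgaussian chaos bound $\mathbb{E}_X \exp(t\|\mathcal{A}^*X\|_{\ell^2}^2)\lesssim \exp(Ct\,\Xi^2\|\mathcal{A}\|_{\mathrm{HS}}^2)$, valid for $t\leq c/(\Xi^2\|\mathcal{A}\|^2)$, I obtain
\begin{equation*}
\mathbb{E}\bigl[\exp(\lambda S_{\mathrm{off}})\bigr] \leq \exp\bigl(C'\lambda^2 \Xi^4 \|\mathcal{A}\|_{\mathrm{HS}}^2\bigr), \qquad |\lambda|\leq c/(\Xi^2\|\mathcal{A}\|).
\end{equation*}
A Chernoff bound and optimization in $\lambda$ within this admissible range produce the subgaussian regime $\varrho^2/(\Xi^4\|\mathcal{A}\|_{\mathrm{HS}}^2)$ for small $\varrho$ and the subexponential regime $\varrho/(\Xi^2\|\mathcal{A}\|)$ for large $\varrho$; replacing $\Xi^2$ by the looser $\Xi$ in the subexponential term (valid for the stated universal constant) matches the statement.

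The main obstacle is the subgaussian chaos bound $\mathbb{E}_X\exp(t\|\mathcal{A}^*X\|_{\ell^2}^2)\leq \exp(Ct\Xi^2\|\mathcal{A}\|_{\mathrm{HS}}^2)$ in the admissible range of $t$. For true Gaussians this is a direct eigenvalue computation on $\mathcal{A}\mathcal{A}^*$; extending it to general subgaussian $X$ requires comparing the moments of $\sum_j |(\mathcal{A}^*X)_j|^2$ to those in the Gaussian case, which one can do by expanding and using the independence and $\psi_2$-control of the $X_i$'s (equivalently, a net argument on the unit ball in $\ell^2$ combined with Hoeffding's inequality applied to each linear form $\sum_i a_{ij}\overline{X}_i$). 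All the other steps are soft manipulations of Laplace transforms.
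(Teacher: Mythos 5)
The paper does not actually prove this lemma: it simply cites \cite[Theorem 1.1]{Rudelson_Vershynin} for the real case and \cite[Section 3.1]{Rudelson_Vershynin} for the complex extension. Your proposal is therefore a reconstruction of that cited proof, and its overall architecture — diagonal/off-diagonal split, Bernstein for the diagonal, decoupling and Laplace-transform optimization for the off-diagonal — is exactly right.

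Two steps need more care, however. First, the ``subgaussian chaos bound'' $\mathbb{E}_X\exp(t\|\mathcal{A}^*X\|_{\ell^2}^2)\lesssim\exp(Ct\Xi^2\|\mathcal{A}\|_{\mathrm{HS}}^2)$ is itself a Hanson--Wright-type statement applied to the PSD form $\mathcal{A}\mathcal{A}^*$. Your suggested ``net argument on the unit ball plus Hoeffding'' would not produce this: a net argument controls $\|\mathcal{A}^*X\|$ at scale $\sqrt{n}\,\|\mathcal{A}\|$, not $\|\mathcal{A}\|_{\mathrm{HS}}$, and the coordinates $(\mathcal{A}^*X)_j$ are not independent, so Bernstein does not directly apply to $\sum_j|(\mathcal{A}^*X)_j|^2$ either. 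The device used in the cited reference is a Gaussianization lemma at the MGF level: $\mathbb{E}_X\exp(\lambda\langle a,X\rangle)\leq\mathbb{E}_g\exp(C\lambda\langle a,g\rangle)$ for every fixed vector $a$ and a standard Gaussian $g$, applied once in each copy of the decoupled chaos, after which the Gaussian eigenvalue computation is exact. Your remark about ``expanding and comparing moments'' is pointing at this, but the argument is not spelled out, and the net-argument alternative is not a valid substitute. Second, the statement concerns a Hermitian form $\overline{X}^T\mathcal{A}X$ with complex subgaussian entries, and your sketch never complexifies: one has to write each $X_\ell$ as a pair of real coordinates and rewrite the form as a real quadratic form in $\R^{2n}$, checking that the operator and Hilbert--Schmidt norms of the resulting $2n\times 2n$ real matrix are comparable to those of $\mathcal{A}$; this is the content of \cite[Section 3.1]{Rudelson_Vershynin} that the paper explicitly invokes. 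Finally, a minor point: the paper's bound has $\Xi$ (not $\Xi^2$) in the subexponential term, and you treat the replacement $\Xi^2\rightsquigarrow\Xi$ as a weakening; that is only correct when $\Xi\geq 1$. It is true here because the $g_k$ are standardized, but as a general statement the $\Xi^2$ form from Rudelson--Vershynin is the sharp one.
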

Lemma \ref{Hanson_Wright_inequality} in the above form can be deduced from \cite[Theorem 1.1]{Rudelson_Vershynin}
and \cite[Section 3.1]{Rudelson_Vershynin}.

\subsection{Proof of \eqref{Bourgain_large_deviation_estimate_rewritten} when $d=3$}
In this section, we prove \eqref{Bourgain_large_deviation_estimate_rewritten} when $d=3$. In Section \ref{Appendix_2D_proof} below, we explain how to modify the arguments to prove \eqref{Bourgain_large_deviation_estimate_rewritten} when $d=2$.

Let us henceforth fix $d=3$. In all of the sums below, $k$ is an element of $\Z^3$. Note that there exists $\bb>0$ such that for all $K>0$, we have
\begin{equation}
\label{C_0_choice}
\sum_{|k| \leq K} \frac{1}{\langle k \rangle^2} \leq \bb K\,.
\end{equation}
With $\bb$ as in \eqref{C_0_choice}, we consider 
\begin{equation}
\label{C_1_choice}
\bbb \geq \bb+\BB\,.
\end{equation}

In Step 1, we show a concentration estimate with a truncated Wick-ordered mass. The reason for the precise choice of parameters will become apparent in Steps 5 and 6 below.
With $\bbb>0$ as in \eqref{C_1_choice}, we show that there exists $\bbbb \sim \bbb$ such that for all $\lambda>0$, we have
\begin{multline}
\label{33}
\mathbb{P}_{\omega} \Biggl(\sum_{|k| \leq M} \frac{|g_k(\omega)|^2}{\langle k \rangle^2} >2\bbb\, \mathrm{max}\{M,\lambda^{\frac{1}{3}}\}\,\,\bigcap \,\, 
\Biggl| \sum_{k \in \Z^3} \frac{|g_k(\omega)|^2-1}{\langle k \rangle^2}\Biggr| \leq \BB \Biggr)
\\
\lesssim_{\BB} \ee^{-\bbbb\,\mathrm{max}\{M^3,\lambda\}}\,.
\end{multline}
Estimates of the type \eqref{33} will be used in the proof of  \eqref{Bourgain_large_deviation_estimate_rewritten}. Moreover, from \eqref{33}, we note that we need to consider two different regimes depending on which of the terms $M^3$ and $\lambda$ is larger (which are studied in Steps 5 and 6 respectively).
\paragraph{\textbf{Step 1: Proof of \eqref{33}.}}
We now show \eqref{33}. When estimating this probability, we consider $\omega \in \Omega$ such that 
\begin{equation}
\label{29}
\Biggl|\sum_{m \in \Z^3} \frac{|g_m(\omega)|^2-1}{\langle m \rangle^2}\Biggr| \leq \BB\,, 
\end{equation}
Let $\tilde{g}_k \sim_d g_k$ be independent copies of the Gaussian random variables $g_k$. In particular, the $\tilde{g}_k$ are independent and no longer satisfy the constraint given by  \eqref{29}. For these $\tilde{g}_k$, we note that
\begin{equation}
\label{31}
\mathbb{P}_{\omega} \Biggl(\Biggl|\sum_{|k| > M} \frac{|\tilde{g}_k(\omega)|^2-1}{\langle k \rangle^2}\Biggr| \geq \bbb M \Biggr) \lesssim \ee^{-\bbbb M^3} \quad \text{for } \bbbb \sim \bbb\,.
\end{equation}
We obtain estimate \eqref{31} by using Lemma \ref{Bernstein's_inequality}. More precisely, we note that $|\tilde{g}_k|^2-1$ are independent, centred subexponential random variables, i.e. they satisfy
\begin{equation}
\label{Psi_1_bound}
\bigl\||\tilde{g}_k|^2-1\bigr\|_{\psi_1} \lesssim 1\,,
\end{equation}
uniformly in $k$, where we recall \eqref{Psi_1} (see also \cite[Lemma 5.14]{Ver12}).
We consider the sequence $(b_k)$ given by 
\begin{equation}
\label{Sequence_bk}
b_k:=\frac{1}{\langle k \rangle^2}\,\mathbbm{1}_{|k|>M}\,.
\end{equation}
From \eqref{Sequence_bk}, we deduce that 
\begin{equation}
\label{Sequence_bk_l2}
\|b\|_{\ell^2}^2=\sum_{|k|>M} \frac{1}{\langle k \rangle^4} \sim \int_{|x|>M} \frac{1}{|x|^4}\, dx \sim \frac{1}{M}
\end{equation}
and
\begin{equation}
\label{Sequence_bk_linfty}
\|b\|_{\ell^{\infty}}^2 \sim \frac{1}{M^2}\,.
\end{equation}
Using \eqref{Psi_1_bound} and \eqref{Sequence_bk_l2}--\eqref{Sequence_bk_linfty} in Lemma \ref{Bernstein's_inequality}, we deduce that for a universal constant $c>0$
\begin{equation}
\label{31b}
\mathbb{P}_{\omega} \Biggl(\Biggl|\sum_{|k| > M} \frac{|\tilde{g}_k|^2-1}{\langle k \rangle^2}\Biggr| \geq \bbb M \Biggr) \lesssim \ee^{-c\, \mathrm{min} \{(\bbb M)^2\cdot M, (\bbb M) \cdot M^2\}}=\ee^{-c \bbb M^3}\,,
\end{equation}
which implies \eqref{31}.
In \eqref{31b}, we assumed that $\bbb \geq 1$, which we can do from \eqref{C_1_choice}, by considering $\BB \geq 1$. We make this assumption in the sequel.

We now explain how we can deduce \eqref{33} from a variant of \eqref{31}. Let us consider the event that
\begin{equation}
\label{33_event}
\sum_{|k| \leq M} \frac{|g_k(\omega)|^2}{\langle k \rangle^2} >2\bbb\, \mathrm{max}\{M,\lambda^{\frac{1}{3}}\}\,\,\, \text{and } \,\, 
\Biggl| \sum_{k \in \Z^3} \frac{|g_k(\omega)|^2-1}{\langle k \rangle^2}\Biggr| \leq \BB\,.
\end{equation}
 By \eqref{C_0_choice} and \eqref{29}, followed by \eqref{C_1_choice} and the fact that $\lambda>1$, we have that for $\omega$ satisfying \eqref{33_event}, the following estimate holds.
\begin{multline}
\label{30}
\sum_{|k| \leq \mathrm{max}\{M,\lambda^{1/3}\}} \frac{|g_k(\omega)|^2}{\langle k \rangle^2}=\sum_{|k| \leq \mathrm{max}\{M,\lambda^{1/3}\}} \frac{1}{\langle k \rangle^2}+\sum_{|k| \leq \mathrm{max}\{M,\lambda^{1/3}\}} \frac{|g_k(\omega)|^2-1}{\langle k \rangle^2} 
\\
\leq \bb \,\mathrm{max}\{M,\lambda^{1/3}\} + \BB-\sum_{|k|>\mathrm{max}\{M,\lambda^{1/3}\}} \frac{|g_k(\omega)|^2-1}{\langle k \rangle^2}
\\
\leq \bbb \,\mathrm{max}\{M,\lambda^{1/3}\} -\sum_{|k|>\mathrm{max}\{M,\lambda^{1/3}\}} \frac{|g_k(\omega)|^2-1}{\langle k \rangle^2}
\,.
\end{multline}
Moreover for $\omega$ satisfying \eqref{33_event}, we have that 
\begin{equation}
\label{33_b} 
2\bbb \,\mathrm{max}\{M,\lambda^{\frac{1}{3}}\}<\sum_{|k| \leq \mathrm{max}\{M,\lambda^{1/3}\}} \frac{|g_k(\omega)|^2}{\langle k \rangle^2}\,.
\end{equation}
Combining \eqref{30} and \eqref{33_b}, it follows that for $\omega$ as in \eqref{33_event}, we have
\begin{equation}
\label{33_event_b}
\sum_{|k|>\mathrm{max}\{M,\lambda^{1/3}\}} \frac{|g_k(\omega)|^2-1}{\langle k \rangle^2}<-\bbb \,\mathrm{max}\{M,\lambda^{\frac{1}{3}}\}\,.
\end{equation}
From \eqref{31} and \eqref{33_event_b} (with suitably modified notation), we deduce that the left-hand side of \eqref{33} is
\begin{equation}
\label{33_proof}
\leq \mathbb{P}_{\omega} \Biggl(\Biggl|\sum_{|k|>\mathrm{max}\{M,\lambda^{1/3}\}} \frac{|g_k(\omega)|^2-1}{\langle k \rangle^2}\Biggr|>\bbb \,\mathrm{max}\{M,\lambda^{\frac{1}{3}}\}\Biggr) \lesssim \ee^{-\bbbb\,\mathrm{max}\{M^3,\lambda\}}
\end{equation}
for suitable $\bbbb \sim \bbb$. It is important to note that in \eqref{33_proof}, we can treat the $g_k$ as independent random variables. We hence deduce \eqref{33}.

\paragraph{\textbf{Step 2: A dyadic decomposition.}} We now perform a dyadic decomposition in \eqref{Bourgain_large_deviation_estimate_rewritten}. Once we do this, we estimate each dyadic piece separately, using suitable concentration inequalities. The precise estimates are done in Steps 5 and 6 below. The regimes studied in these cases are determined by inequality \eqref{33} from Step 1.

In what follows, we write $M=2^\ell, \ell \in \N_0$ for a dyadic integer. Moreover, for $k \in \Z^3$, we write $|k| \simeq M$ if 
\begin{equation}
\label{k_sim_M}
M/2 \leq |k| <M\,. 
\end{equation}
Finally, with $\lambda>1$ fixed as before, we consider a sequence $(\lambda_{1,M})_M$ of positive numbers indexed by dyadic integers $M$ of the form
\begin{equation}
\label{lambda_1M_choice}
\lambda_{1,M}= \frac{C\lambda}{(\log M)^2}\,, 
\end{equation}
where the constant $C>0$ in \eqref{lambda_1M_choice} is chosen such that
\begin{equation}
\label{lambda_1M_condition}
\sum_{M\, \mathrm{dyadic}} \lambda_{1,M} = \lambda\,.
\end{equation}
From \eqref{lambda_1M_condition} and a union bound, it follows that the left-hand side of \eqref{Bourgain_large_deviation_estimate_rewritten} is 
\begin{multline}
\label{35b}
\leq \sum_{M\,\mathrm{dyadic}} \mathbb{P}_{\omega} \Biggl(\sum_{|k| \simeq M} \widehat{|u^{\omega}|^2}(k)\, \widehat{|u^{\omega}|^2}(-k)\,\widehat{V}(k)>\lambda_{1,M}
\\
\bigcap \,\, 
\Biggl| \sum_{m \in \Z^3} \frac{|g_m(\omega)|^2-1}{\langle m \rangle^2}\Biggr| \leq \BB \Biggr)\,.
\end{multline}
For $M$ a dyadic integer, let us write 
\begin{equation}
\label{v_M_definition}
v_M:=\mathop{\mathrm{max}}_{|k| \simeq M} |\widehat{V}(k)|\,.
\end{equation}
By Assumption \ref{Assumption_on_V}, we have 
\begin{equation}
\label{36}
v_M \lesssim M^{-2-\varepsilon}\,.
\end{equation}
By \eqref{36} and the pigenhole principle, for a fixed dyadic integer $M$, the condition
\begin{equation}
\label{36b}
\sum_{|k| \simeq M} \widehat{|u^{\omega}|^2}(k)\, \widehat{|u^{\omega}|^2}(-k)\,\widehat{V}(k)>\lambda_{1,M}
\end{equation}
implies that there exists $k \in \Z^3$ with $|k| \simeq M$ such that 
\begin{equation}
\label{37}
\bigl|\widehat{|u^{\omega}|^2}(k)\bigr| \geq \biggl(\frac{\bbo \lambda_{1,M}}{M^3 v_M}\biggr)^{\frac{1}{2}}
\end{equation}
for some constant $\bbo>0$ (depending on the implied constant in \eqref{36}).
Therefore, 
\begin{multline}
\label{37b}
\mathbb{P}_{\omega} \Biggl(\sum_{|k| \simeq M} \widehat{|u^{\omega}|^2}(k)\, \widehat{|u^{\omega}|^2}(-k)\,\widehat{V}(k)>\lambda_{1,M}
\\
\bigcap \,\, 
\Biggl| \sum_{k \in \Z^3} \frac{|g_k(\omega)|^2-1}{\langle k \rangle^2}\Biggr| \leq \BB \Biggr)
\\
\leq \sum_{|k| \simeq M} \mathbb{P}_{\omega} \Biggl(\bigl|\widehat{|u^{\omega}|^2}(k)\bigr| \geq \biggl(\frac{\bbo \lambda_{1,M}}{M^3 v_M}\biggr)^{\frac{1}{2}}\,\,\bigcap \,\, 
\Biggl| \sum_{m \in \Z^3} \frac{|g_m(\omega)|^2-1}{\langle m \rangle^2}\Biggr| \leq \BB \Biggr)\,.
\end{multline}
From \eqref{35b} and \eqref{37b}, it follows that given $M$ a dyadic integer and $k \in \Z^3$ with $|k| \simeq M$, we need to estimate
\begin{equation}
\label{38}
\mathbb{P}_{\omega} \Biggl(\bigl|\widehat{|u^{\omega}|^2}(k)\bigr| \geq \biggl(\frac{\bbo \lambda_{1,M}}{M^3 v_M}\biggr)^{\frac{1}{2}}\,\,\bigcap \,\, 
\Biggl| \sum_{m \in \Z^3} \frac{|g_m(\omega)|^2-1}{\langle m \rangle^2}\Biggr| \leq \BB \Biggr)\,.
\end{equation}
Let us first recall \eqref{map_varphi_omega} and write
\begin{equation}
\label{38b}
\widehat{|u^{\omega}|^2}(k)=\sum_{j \in \Z^3} \frac{g_j(\omega)\,\overline{g_{j-k}(\omega)}}{\langle j \rangle \,\langle j-k \rangle}\,.
\end{equation}
We note that, in the sum \eqref{38b} we have by \eqref{k_sim_M} that
\begin{equation}
\label{38b2}
\mathrm{max} \{|j|,|j-k|\} \geq \frac{|k|}{2} \geq \frac{M}{4}\,.
\end{equation}
From \eqref{38b}--\eqref{38b2}, it follows that for $k \in \Z^3$ with $|k| \simeq M$, we have
\begin{equation}
\label{39}
\bigl|\widehat{|u^{\omega}|^2}(k)\bigr| \leq \mathop{\sum_{K \geq M/2}}_{K\,\mathrm{dyadic}} 
\Biggl|\sum_{j:\, \mathrm{max}\{|j|,|j-k|\} \simeq K} \frac{g_j(\omega)\,\overline{g_{j-k}(\omega)}}{\langle j \rangle\,\langle j-k \rangle}\Biggr|\,.
\end{equation}
Let us now fix $\lambda_2>0$ and a dyadic integer\footnote{This is a slight abuse of the notation given in earlier sections, but it is more consistent with \cite{Bou97} and the rest of the appendix. We emphasize that, throughout the appendix, $\lambda_2$ is a positive parameter and not an eigenvalue of $A$.}
\begin{equation}
\label{K_and_M_condition}
K \geq M/2\,.
\end{equation}

Our goal is to estimate
\begin{equation}
\label{39b}
\mathbb{P}_{\omega} \Biggl(\Biggl|\sum_{j:\, \mathrm{max}\{|j|,|j-k|\} \simeq K} \frac{g_j(\omega)\,\overline{g_{j-k}(\omega)}}{\langle j \rangle\,\langle j-k \rangle}\Biggr|>\lambda_2\,\,\bigcap \,\, 
\Biggl| \sum_{m \in \Z^3} \frac{|g_m(\omega)|^2-1}{\langle m \rangle^2}\Biggr| \leq \BB \Biggr)\,.
\end{equation}
We recall that throughout $|k| \simeq M$.
Let us show that 
\begin{equation}
\label{53}
\eqref{39b} \lesssim_{\bboo} \ee^{-\bboo K^3}+ \ee^{-\bbooo \lambda_2^2 K}\,,
\end{equation}
where $\bboo>0$ can be chosen arbitrarily large\footnote{with lower threshold depending on $\BB$; see \eqref{48b} and \eqref{48b2} below.} and 
\begin{equation}
\label{c_1_c_2}
\bbooo \sim \frac{1}{\bboo}\,.
\end{equation}
\paragraph{\textbf{Step 3: Proof of \eqref{53}}}

We now show \eqref{53}. Let us first note that by symmetry, it suffices to consider the contributions to \eqref{39b} where $|j| \geq |j-k|$ and $|j| \simeq K$. The contributions where $|j-k| \geq |j|$ and $|j-k| \simeq K$ are estimated analogously. We consider two cases depending on the relative sizes of $K$ and $M$ (recalling \eqref{K_and_M_condition}).
\paragraph{\textbf{\emph{Case 1:} $M \leq K/4$}}

In this case, we will show the required bound if we drop the condition \eqref{29}. In other words, we estimate 
\begin{equation}
\label{40}
\mathbb{P}_{\omega} \Biggl(\Biggl|\sum_{|j| \simeq K\,,|j| \geq |j-k|} \frac{g_j(\omega)\,\overline{g_{j-k}(\omega)}}{\langle j \rangle\,\langle j-k \rangle}\Biggr|>\lambda_2
\Biggr)\,.
\end{equation}
In \eqref{40}, we are considering 
\begin{equation*}
|k| \simeq M \leq 2K\,, \quad |j| \simeq K\,,
\end{equation*}
which implies that 
\begin{equation}
\label{40b}
|k|<M\,.
\end{equation}
Here, we recalled the convention \eqref{k_sim_M}.  Note that \eqref{40b} implies 
\begin{equation}
\label{40c}
4|k|<4M\leq K \leq 2|j|\,, 
\end{equation}
from where we deduce $|k|<\frac{1}{2}|j|$. Hence, by the triangle inequality we have
\begin{equation}
\label{41}
|j-k| > \frac{1}{2}|j| \geq \frac{1}{4}K\,.
\end{equation}
Before proceeding, we note a general fact about matrices. Given a matrix $\mathcal{A}=(\mathcal{A}_{ij})$ its operator norm $\|\mathcal{A}\|$ satisfies 
\begin{equation}
\label{matrix_bound}
\|\mathcal{A}\| \leq \sqrt{\mathcal{A}_{\mathrm{row}} \mathcal{A}_{\mathrm{col}}}\,,
\end{equation}
where 
\begin{equation}
\label{A_row_col}
\mathcal{A}_{\mathrm{row}}:=\mathop{\mathrm{sup}}_{i} \sum_j |\mathcal{A}_{ij}|\,,\quad \mathcal{A}_{\mathrm{col}}:=\mathop{\mathrm{sup}}_{j} \sum_i |\mathcal{A}_{ij}|\,.
\end{equation}
We note that \eqref{matrix_bound} follows from the Cauchy-Schwarz inequality. More precisely, we have
\begin{multline*}
\Biggl(\sum_i \bigl|\sum_{j} \mathcal{A}_{ij}v_j\bigr|^2\Biggr)^{1/2} \leq \Biggl(\sum_i \Biggl[\sum_j |\mathcal{A}_{ij}|\Biggr]\Biggl[\sum_j |\mathcal{A}_{ij}| |v_j|^2\Biggr]\Biggr)^{1/2}
\\
\leq \sqrt{\mathcal{A}_{\mathrm{row}}}\, \Biggl(\sum_j \sum_i |\mathcal{A}_{ij}|\,|v_j|^2 \Biggr)^{1/2} \leq 
\sqrt{\mathcal{A}_{\mathrm{row}}}\,\sqrt{\mathcal{A}_{\mathrm{col}}}\,\Biggl(\sum_{j} |v_j|^2\Biggr)^{1/2}\,,
\end{multline*}
which implies \eqref{matrix_bound}.

We now consider the matrix $\mathcal{A}$ given by
\begin{equation}
\label{43}
\mathcal{A}_{ij}:=\frac{1}{\langle j \rangle \langle j-k \rangle}\, \delta_{i,j-k}\,\mathbbm{1}_{|j| \simeq K}\, \mathbbm{1}_{|j| \geq |j-k|}\,.
\end{equation}
Here, $\delta_{\cdot,\cdot}$ denotes the Kronecker delta function. We can consider \eqref{43} as a $cK \times cK$ matrix for some (universal) constant $c>0$. Using $|j| \simeq K$ and \eqref{41} we have 
\begin{equation}
\label{43b}
|j| \sim |j-k| \sim K\,.
\end{equation}
Using \eqref{43b} and recalling \eqref{A_row_col}, we have 
\begin{equation}
\label{43c}
\mathcal{A}_{\mathrm{row}} \lesssim \frac{1}{K^2}\,,\quad \mathcal{A}_{\mathrm{col}} \lesssim \frac{1}{K^2}\,.
\end{equation}
Combining \eqref{matrix_bound} and \eqref{43c}, it follows that 
\begin{equation}
\label{44}
\|\mathcal{A}\| \lesssim \frac{1}{K^2}\,.
\end{equation}
Using \eqref{43b}, we have
\begin{equation}
\label{45}
\|\mathcal{A}\|_{\mathrm{HS}}^2 \lesssim \sum_{|j| \simeq K} \frac{1}{K^4} \lesssim \frac{1}{K}\,.
\end{equation}
Let us now write
\begin{equation}
\label{45b}
\sum_{|j| \simeq K\,,|j| \geq |j-k|} \frac{g_j(\omega)\,\overline{g_{j-k}(\omega)}}{\langle j \rangle\,\langle j-k \rangle}
=\overline{X}^T\cdot \mathcal{A} \cdot X\,,
\end{equation}
where $\mathcal{A}$ is the matrix given by \eqref{43} and $X$ is a random vector $X=(g_{\ell})$ for a suitably chosen finite set of indices $\ell$.
Since $k \neq 0$, we have that 
\begin{equation}
\label{45c}
\mathbb{E} \Bigl[\overline{X}^T\cdot \mathcal{A} \cdot X \Bigr]=0\,.
\end{equation}
From \eqref{45b}--\eqref{45c}, we have that
\begin{equation}
\label{45d}
\eqref{40} \leq \mathbb{P}_{\omega} \biggl(\Bigl|\overline{X}^T\cdot \mathcal{A} \cdot X - \mathbb{E} \Bigl[\overline{X}^T\cdot \mathcal{A} \cdot X \Bigr]\Bigr|>\lambda_2\biggr)\,.
\end{equation}
By Lemma \ref{Hanson_Wright_inequality}, \eqref{44}, and \eqref{45}, we deduce that 
\begin{equation}
\label{46}
\eqref{45d} \lesssim \mathrm{exp} \biggl[ - \tilde{c} \,\mathrm{min} \biggl\{\frac{\lambda_2^2}{\|A\|_{\mathrm{HS}}^2}, \frac{\lambda_2}{\|A\|}\biggr\}\biggr] \leq \mathrm{exp} \Bigl[ - c \,\mathrm{min} \{\lambda_2^2 K, \lambda_2 K^2\}\Bigr]\,,
\end{equation}
for some universal constants $\tilde{c}, c>0$.

We now rewrite the upper bound \eqref{46}. Let $C_{\mathrm{large}} \gg 1$ denote a large constant\footnote{not to be confused with $C_{\mathrm{Big}}$ from \eqref{Bourgain_large_deviation_estimate_rewritten}.}.

\begin{itemize}
\item If $\lambda_2 \geq C_{\mathrm{large}}\, K$, then $\mathrm{min} \{\lambda_2^2 K, \lambda_2 K^2\}=\lambda_2K^2 \geq C_{\mathrm{large}}\,K^3.$
\item If $\lambda_2<C_{\mathrm{large}}\,K$, then $\mathrm{min} \{\lambda_2^2 K, \lambda_2 K^2\}>\frac{\lambda_2^2K}{C_{\mathrm{large}}}$. The latter inequality holds automatically if $\lambda_2^2K  \leq \lambda_2K^2$. Otherwise, we note that $\lambda_2 K^2>\frac{\lambda_2^2 K}{C_{\mathrm{large}}}$.
\end{itemize}
In particular, from \eqref{46}, we obtain that
\begin{equation}
\label{47}
\mathbb{P}_{\omega} \Biggl(\Biggl|\sum_{|j| \simeq K\,,|j| \geq |j-k|} \frac{g_j(\omega)\,\overline{g_{j-k}(\omega)}}{\langle j \rangle\,\langle j-k \rangle}\Biggr|>\lambda_2 \Biggr) \lesssim_{\bboo} \ee^{-\bboo K^3}+\ee^{-\bbooo \lambda_2^2 K}\,,
\end{equation}
where $\bboo>0$ can be chosen arbitrarily large and $\bbooo \sim \frac{1}{\bboo}$.

\paragraph{\textbf{\emph{Case 2:} $M > K/4$}} 
Recalling that $K \geq M/2$, we that in this case we have $M \sim K$. Hence,
\begin{equation}
\label{41b}
|j-k| \leq |j|+|k| \lesssim K + M \lesssim K\,.
\end{equation}
In particular, in this case we no longer have \eqref{41}, which was used in the earlier calculations. 
We write
\begin{multline}
\label{48}
\mathbb{P}_{\omega} \Biggl(\Biggl|\sum_{|j| \simeq K\,,\,|j| \geq |j-k|} \frac{g_j(\omega)\,\overline{g_{j-k}(\omega)}}{\langle j \rangle\,\langle j-k \rangle}\Biggr|>\lambda_2\,\,\bigcap \,\, 
\Biggl| \sum_{m \in \Z^3} \frac{|g_m(\omega)|^2-1}{\langle m \rangle^2}\Biggr| \leq \BB 
\Biggr) \leq
\\
\mathbb{P}_{\omega} \Biggl(\Biggl|\sum_{|j| \simeq K\,,\,|j| \geq |j-k|>K/4} \frac{g_j(\omega)\,\overline{g_{j-k}(\omega)}}{\langle j \rangle\,\langle j-k \rangle}\Biggr|>\frac{\lambda_2}{2}\,\,\bigcap \,\, 
\Biggl| \sum_{m \in \Z^3} \frac{|g_m(\omega)|^2-1}{\langle m \rangle^2}\Biggr| \leq \BB
\Biggr)
\\
+\mathbb{P}_{\omega} \Biggl(\Biggl|\sum_{|j| \simeq K\,,\,|j| \geq |j-k|\,,\,|j-k| \leq K/4} \frac{g_j(\omega)\,\overline{g_{j-k}(\omega)}}{\langle j \rangle\,\langle j-k \rangle}\Biggr|>\frac{\lambda_2}{2}\,\,
\\
\bigcap \,\, \Biggl| \sum_{m \in \Z^3} \frac{|g_m(\omega)|^2-1}{\langle m \rangle^2}\Biggr| \leq \BB
\Biggr)=:I+II\,.
\end{multline}

The term $I$ in \eqref{48} is estimated analogously as for \eqref{47} (because $|j-k| \gtrsim K$). We now estimate the term $II$ in \eqref{48}. Now we have to use the truncation of the Wick-ordered mass condition \eqref{29}. 

Before proceeding, let us first note that
\begin{equation}
\label{50}
\mathbb{P}_{\omega} \Biggl(\sum_{|\ell| \leq K/4} \frac{|g_{\ell}(\omega)|^2}{\langle \ell \rangle^2} \geq \bbbbb K\, \,\,\bigcap \,\, 
\Biggl| \sum_{m \in \Z^3} \frac{|g_m(\omega)|^2-1}{\langle m \rangle^2}\Biggr| \leq \BB \Biggr)
\lesssim \ee^{-\bbbbbb K^3}
\end{equation}
with
\begin{equation}
\label{48b}
\bbbbb>2\bb+4\BB
\end{equation}
for $\bb$ as in \eqref{C_0_choice} above and
\begin{equation}
\label{C_4_choice}
\bbbbbb \sim \bbbbb\,.
\end{equation} 
In order to show \eqref{50}, suppose that 
\begin{equation}
\label{49}
\sum_{|\ell| \leq K/4} \frac{|g_{\ell}(\omega)|^2}{\langle \ell \rangle^2} \geq \bbbbb K \quad  \text{and }\quad \Biggl| \sum_{m \in \Z^3} \frac{|g_m(\omega)|^2-1}{\langle m \rangle^2}\Biggr| \leq \BB\,.
\end{equation}
Then, we have 
\begin{equation}
\label{49b}
\sum_{|\ell| \leq K/4} \frac{|g_{\ell}(\omega)|^2-1}{\langle \ell \rangle^2} \geq \bbbbb K-\bb K > \frac{1}{2}\bbbbb K\,.
\end{equation}
Here, we recalled \eqref{C_0_choice} and \eqref{48b}. Using \eqref{48b} and \eqref{49}--\eqref{49b}, we deduce that
\begin{equation}
\label{49c}
\Biggl|\sum_{|\ell| > K/4} \frac{|g_{\ell}(\omega)|^2-1}{\langle \ell \rangle^2}\Biggr|  \gtrsim \bbbbb K\,.
\end{equation}
From \eqref{49c}, it follows that 
\begin{multline*}
\mathbb{P}_{\omega} \Biggl(\sum_{|\ell| \leq K/4} \frac{|g_{\ell}(\omega)|^2}{\langle \ell \rangle^2} \geq \bbbbb K\, \,\,\bigcap \,\, 
\Biggl| \sum_{m \in \Z^3} \frac{|g_m(\omega)|^2-1}{\langle m \rangle^2}\Biggr| \leq \BB \Biggr)
\\
\leq \mathbb{P}_{\omega} \Biggl(\Biggl|\sum_{|\ell| > K/4} \frac{|g_{\ell}(\omega)|^2-1}{\langle \ell \rangle^2}\Biggr|  \gtrsim \bbbbb K\Biggr) \lesssim \ee^{-\bbbbbb K^3}
\end{multline*}
with $C_4$ as in \eqref{C_4_choice}.
For the last inequality, we used \eqref{31}. We hence obtain \eqref{50}. In the sequel, we apply the following more general result which follows from the same proof.
\begin{equation}
\label{51}
\mathbb{P}_{\omega} \Biggl(\sum_{\ell \in S} \frac{|g_{\ell}(\omega)|^2}{\langle \ell \rangle^2} \geq \bbbbb K\, \,\,\bigcap \,\, 
\Biggl| \sum_{m \in \Z^3} \frac{|g_m(\omega)|^2-1}{\langle m \rangle^2}\Biggr| \leq \BB \Biggr)
\lesssim \ee^{-\bbbbbb K^3}\,,
\end{equation}
whenever $S \subset \Z^3$ is a set contained in the ball $|\ell| \leq K/4$.

We now bound $II$ from \eqref{48} as follows.
\begin{multline}
\label{52}
II \leq \mathbb{P}_{\omega} \Biggl(\Biggl|\sum_{|j| \simeq K\,,\,|j| \geq |j-k|\,,\,|j-k| \leq K/4} \frac{g_j(\omega)\,\overline{g_{j-k}(\omega)}}{\langle j \rangle\,\langle j-k \rangle}\Biggr|>\frac{\lambda_2}{2}\,\, 
\bigcap\,\,
\\
\sum_{|j| \simeq K\,,\,|j| \geq |j-k|\,,\,|j-k| \leq K/4} \frac{|g_{j-k}(\omega)|^2}{\langle j-k\rangle^2} \geq \bbbbb K\,\, \bigcap \,\, \Biggl| \sum_{m \in \Z^3} \frac{|g_m(\omega)|^2-1}{\langle m \rangle^2}\Biggr| \leq \BB
\Biggr)
\\
+ \mathbb{P}_{\omega} \Biggl(\Biggl|\sum_{|j| \simeq K\,,\,|j| \geq |j-k|\,,\,|j-k| \leq K/4} \frac{g_j(\omega)\,\overline{g_{j-k}(\omega)}}{\langle j \rangle\,\langle j-k \rangle}\Biggr|>\frac{\lambda_2}{2}\,\,
\bigcap \,\,
\\
\sum_{|j| \simeq K\,,\,|j| \geq |j-k|\,,\,|j-k| \leq K/4} \frac{|g_{j-k}(\omega)|^2}{\langle j-k\rangle^2} < \bbbbb K\,\,
\bigcap \,\, \Biggl| \sum_{m \in \Z^3} \frac{|g_m(\omega)|^2-1}{\langle m \rangle^2}\Biggr| \leq \BB
\Biggr)
\\=:II_1+II_2\,.
\end{multline}
By \eqref{51}, we have that
\begin{multline}
\label{II_1_bound}
II_1 \leq  \mathbb{P}_{\omega} \Biggl(\sum_{|j| \simeq K\,,\,|j| \geq |j-k|\,,\,|j-k| \leq K/4} \frac{|g_{j-k}(\omega)|^2}{\langle j-k\rangle^2} \geq \bbbbb K
\\
\,\, \bigcap \,\, \Biggl| \sum_{m \in \Z^3} \frac{|g_m(\omega)|^2-1}{\langle m \rangle^2}\Biggr| \leq \BB
\Biggr) \lesssim \ee^{-\bbbbbb K^3}\,.
\end{multline}

In order to estimate $II_2$, we need to use a different argument. The first observation is that for all $|j_1|, |j_2| \simeq K$ we have $j_1 \neq j_2-k$. This is true by the triangle inequality because
\begin{equation*}
|j_1-j_2| \leq |j_1|+|j_2| < 2K \leq M/2 \leq |k|\,.
\end{equation*}
Here, we recalled the convention \eqref{k_sim_M}. In particular, it follows that all of the random variables
\begin{equation*}
(g_j)_{|j| \simeq K}\,,\quad (g_{j-k})_{|j| \simeq K}
\end{equation*}
are pairwise independent.
We note that
\begin{multline}
\label{II_2_bound}
II_2 \leq \mathbb{P}_{\omega} \Biggl(\Biggl|\sum_{|j| \simeq K\,,\,|j| \geq |j-k|\,,\,|j-k| \leq K/4} \frac{g_j(\omega)\,\overline{g_{j-k}(\omega)}}{\langle j \rangle\,\langle j-k \rangle}\Biggr|>\frac{\lambda_2}{2}\,\,
\bigcap \,\,
\\
\sum_{|j| \simeq K\,,\,|j| \geq |j-k|\,,\,|j-k| \leq K/4} \frac{|g_{j-k}(\omega)|^2}{\langle j-k\rangle^2} < \bbbbb K
\Biggr)
\\
=\mathbb{P}_{\omega} \Biggl(\Biggl|\sum_{j \in \Z^3}a_j X_j\Biggr|>\frac{\lambda_2}{2}\,\,\bigcap\,\, \sum_{j \in \Z^3} |a_j|^2 <\bbbbb K
\Biggr)\,,
\end{multline}
where 
\begin{equation}
\label{X_j_definition}
X_j:=\frac{g_j(\omega)}{\langle j \rangle}\,\mathbbm{1}_{|j| \simeq K}\,\mathbbm{1}_{|j| \geq |j-k|}\,\mathbbm{1}_{|j-k| \leq K/4}
\end{equation}
and
\begin{equation}
\label{a_j_definition}
a_j:=\frac{\overline{g_{j-k}(\omega)}}{\langle j-k \rangle}\,\mathbbm{1}_{|j| \simeq K}\,\mathbbm{1}_{|j| \geq |j-k|}\,\mathbbm{1}_{|j-k| \leq K/4}\,.
\end{equation}
We note that the random variables \eqref{X_j_definition} are centred, subgaussian, and satisfy
\begin{equation}
\label{X_j_bound}
\|X_j\|_{\psi_2} \leq \bigg\|\frac{g_j}{\langle j \rangle}\, \mathbbm{1}_{|j| \simeq K} \biggr\|_{\psi_2} \sim \frac{1}{K}\,.
\end{equation}
In \eqref{X_j_bound}, we recall \eqref{Psi_2}.
Using Lemma \ref{Hoeffding's_inequality}, \eqref{a_j_definition}, and \eqref{X_j_bound}, it follows that 
\begin{equation}
\label{II_2_bound_2}
\eqref{II_2_bound} \lesssim \ee^{-c \frac{\lambda_2^2 K}{\bbbbb}}
\end{equation}
for some universal constant $c>0$. The above application of Lemma \ref{Hoeffding's_inequality} is justified by the aforementioned independence\footnote{In \cite{Bou97} this is referred to as a decoupling argument.}. Here, we treat the $X_j$ in \eqref{X_j_definition} as the subgaussian random variables and the $a_j$ in \eqref{a_j_definition} as the coefficients.
Using \eqref{48b}--\eqref{C_4_choice}, \eqref{II_1_bound}, and \eqref{II_2_bound_2}, we deduce that
\begin{equation}
\label{Case2_bound}
\eqref{48}  \lesssim \ee^{-\bboo K^3}+\ee^{-\bbooo \lambda_2^2 K}\,,
\end{equation}
where 
\begin{equation}
\label{48b2}
\bboo \sim \bbbbb>0 
\end{equation}
can be chosen arbitrarily large and $\bbooo \sim \frac{1}{\bbbbbb} \sim \frac{1}{\bboo}$. We hence deduce \eqref{53} from \eqref{47} and \eqref{Case2_bound}. Note that $\bboo \sim \bbbbb$ is chosen large enough depending on $\BB$ by \eqref{48b}.
\paragraph{\textbf{Step 4: Choosing the parameters and putting the dyadic decomposition together}}
We now choose the parameters precisely and put the dyadic decomposition from Step 3 together.
Let us choose  $\lambda_{2,M}$ as the right-hand side of \eqref{37}, i.e.\
\begin{equation}
\label{55}
\lambda_{2,M}:=\biggl(\frac{\bbo \lambda_{1,M}}{M^3 v_M}\biggr)^{\frac{1}{2}}\,.
\end{equation}
Recalling \eqref{Bourgain_large_deviation_estimate_rewritten}, \eqref{35b}, \eqref{37}, \eqref{38}, and \eqref{39}, it follows that we need to bound
\begin{multline}
\label{dyadic_sum}
\sum_{M} \sum_{|k| \simeq M} \mathbb{P}_{\omega} \Biggl(\sum_{K \geq M/2} \Biggl|\sum_{j:\, \mathrm{max}\{|j|,|j-k|\} \simeq K} \frac{g_j(\omega)\,\overline{g_{j-k}(\omega)}}{\langle j \rangle\,\langle j-k \rangle}\Biggr|>\lambda_{2,M}\,\,
\\
\bigcap \,\, 
\Biggl| \sum_{m \in \Z^3} \frac{|g_m(\omega)|^2-1}{\langle m \rangle^2}\Biggr| \leq \BB \Biggr) 
\\
\lesssim \sum_{M} M^3 \mathop{\mathrm{max}}_{|k| \simeq M} \Biggl[ \mathbb{P}_{\omega} \Biggl(\sum_{K \geq M/2} \Biggl|\sum_{j:\, \mathrm{max}\{|j|,|j-k|\} \simeq K} \frac{g_j(\omega)\,\overline{g_{j-k}(\omega)}}{\langle j \rangle\,\langle j-k \rangle}\Biggr|>\lambda_{2,M}\,\,
\\
\bigcap \,\, 
\Biggl| \sum_{m \in \Z^3} \frac{|g_m(\omega)|^2-1}{\langle m \rangle^2}\Biggr| \leq \BB\Biggr)\Biggr]\,.
\end{multline}
In \eqref{dyadic_sum} and in the sequel, $M$ and $K$ are always summed over sets of dyadic integers.

For the remainder of the proof, we estimate \eqref{dyadic_sum}. In Step 5 below, we estimate the contribution to \eqref{dyadic_sum} when $M \geq \lambda^{1/3}$. For this contribution, we can directly use the estimate \eqref{53} proved in Step 3 with suitably chosen $\lambda_2$ depending on $M$ and $K$ and obtain the wanted bound in \eqref{Bourgain_large_deviation_estimate_rewritten}. In Step 6 below, we estimate the contribution to \eqref{dyadic_sum} when $M<\lambda^{1/3}$. Here, we also apply the estimate \eqref{33} proved in Step 1. Intuitively, the distinction between these two cases comes from the bound on the right-hand side of \eqref{33}. Alternatively, we can view this as being dictated by the first term on the right-hand side of \eqref{53}.

\paragraph{\textbf{Step 5: Estimating the contribution to \eqref{dyadic_sum} when $M \geq \lambda^{1/3}$}}

Given $\gamma>0$ small, let us define the sequence $(\alpha_K)_{K\,\mathrm{dyadic}}$ as
\begin{equation}
\label{56}
\alpha_K:=\frac{\mathcal{C}_{\gamma}}{K^{\gamma}}\,,
\end{equation}
where the constant $\mathcal{C}_{\gamma}>0$ in \eqref{56} is chosen such that 
\begin{equation}
\label{56b}
\sum_{K\,\mathrm{dyadic}} \alpha_K=1\,.
\end{equation}
From \eqref{56b}, by using a union bound, followed by \eqref{53}, we get that
\begin{multline}
\label{59}
\sum_{M \geq \lambda^{1/3}} M^3 \mathop{\mathrm{max}}_{|k| \simeq M} \Biggl[\mathbb{P}_{\omega} \Biggl(\sum_{K \geq M/2}\Biggl|\sum_{j:\, \mathrm{max}\{|j|,|j-k|\} \simeq K} \frac{g_j(\omega)\,\overline{g_{j-k}(\omega)}}{\langle j \rangle\,\langle j-k \rangle}\Biggr|>\lambda_{2,M}\,\,
\\
\bigcap \,\, 
\Biggl| \sum_{m \in \Z^3} \frac{|g_m(\omega)|^2-1}{\langle m \rangle^2}\Biggr| \leq \BB\Biggr)\Biggr]
\\
\leq 
\sum_{M \geq \lambda^{1/3}} M^3 \mathop{\mathrm{max}}_{|k| \simeq M} \Biggl[\sum_{K \geq M/2} \mathbb{P}_{\omega} \Biggl( \Biggl|\sum_{j:\, \mathrm{max}\{|j|,|j-k|\} \simeq K} \frac{g_j(\omega)\,\overline{g_{j-k}(\omega)}}{\langle j \rangle\,\langle j-k \rangle}\Biggr|>\alpha_K \lambda_{2,M}\,\,
\\
\bigcap \,\, 
\Biggl| \sum_{m \in \Z^3} \frac{|g_m(\omega)|^2-1}{\langle m \rangle^2}\Biggr| \leq \BB\Biggr)\Biggr]
\\
\lesssim \sum_{M \geq \lambda^{1/3}} M^3 \sum_{K \geq M/2} \ee^{-\bboo K^3}+\sum_{M \geq \lambda^{1/3}} M^3 \sum_{K \geq M/2} \ee^{-\bbooo \alpha_K^2 \lambda_{2,M}^2 K}\,.
\end{multline}
Let us recall that in \eqref{59} as in \eqref{53} we are taking $\bboo>0$ sufficiently large (depending on $\BB$) and $\bbooo \sim \frac{1}{\bboo}$ as in \eqref{c_1_c_2}.

We estimate each term in \eqref{59} separately. For the first term, we note that
\begin{multline}
\label{60}
\sum_{M \geq \lambda^{1/3}} M^3 \sum_{K \geq M/2} \ee^{-\bboo K^3} \lesssim_{\bboo}
\sum_{M \geq \lambda^{1/3}} M^3 \sum_{K \geq M/2} \ee^{-\frac{\bboo}{2} K^3}\,\frac{1}{K^4} 
\\
\lesssim \sum_{M \geq \lambda^{1/3}} \frac{1}{M}\,\ee^{-\frac{\bboo}{16} M^3} 
\leq \ee^{-\frac{\bboo}{16} \lambda} \sum_{M \geq \lambda^{1/3}} \frac{1}{M} \lesssim \ee^{-\frac{\bboo}{16} \lambda}\,.
\end{multline}

Let us now estimate the second term in \eqref{59}. By using \eqref{lambda_1M_choice}, \eqref{55}, and \eqref{56}, followed by \eqref{36}, we first note that
\begin{equation}
\label{57}
\alpha_K^2 \lambda_{2,M}^2 K=\alpha_K^2 \frac{\bbo \lambda_{1,M}}{M^3v_M} K \geq c \,\biggl(\frac{K}{M}\biggr)^{1-2\gamma}  \,\frac{M^{\varepsilon-2\gamma}}{(\log M)^2}\,\lambda\,.
\end{equation}
In \eqref{57}, $c>0$ is a constant depending on $\gamma$ (through \eqref{56}).

In the sequel, we assume that $\gamma>0$ is chosen sufficiently small such that
\begin{equation}
\label{58}
\gamma<\mathrm{min}\Bigl\{\frac{\varepsilon}{3},\frac{1}{2}\Bigr\}=\frac{\varepsilon}{3}\,.
\end{equation}
Moreover, given a fixed large constant $C_{\mathrm{large}}>0$, we can find a dyadic integer $M_0 \equiv M_0(C_{\mathrm{large}},\varepsilon,\gamma)$ such that 
\begin{equation}
\label{61}
\frac{M^{\varepsilon-3\gamma}}{(\log M)^2} \geq C_{\mathrm{large}}\,\,\,\mbox{whenever} \,\,\,M \geq M_0\,.
\end{equation}
The choice of $M_0$ as in \eqref{61} is possible by \eqref{58}. In order to estimate the second term in \eqref{59}, it suffices to separately estimate
\begin{equation}
\label{62}
J_1:=\sum_{\lambda^{1/3} \leq M < M_0} M^3 \sum_{K \geq M/2} \exp\biggl\{-\bbooo \,\biggl(\frac{K}{M}\biggr)^{1-2\gamma}  \,\frac{M^{\varepsilon-2\gamma}}{(\log M)^2}\,\lambda\biggr\}
\end{equation}
and
\begin{equation}
\label{63}
J_2:=\sum_{M \geq \max\{M_0,\lambda^{1/3}\}} M^3 \sum_{K \geq M/2} \exp\biggl\{-\bbooo \,\biggl(\frac{K}{M}\biggr)^{1-2\gamma} \,\frac{ M^{\varepsilon-2\gamma}}{(\log M)^2}\,\lambda\biggr\}\,.
\end{equation}
Note that in \eqref{59} and \eqref{62}--\eqref{63} the constants $\bbooo$ are not necessarily the same, but they differ by a multiplicative constant coming from the constant $c$ in \eqref{57}.
We write them in this way for simplicity of notation.

In order to estimate \eqref{62}, we use the elementary inequality 
\begin{equation}
\label{auxuliary_inequality}
\ee^{-x} \lesssim \frac{1}{x}
\end{equation} for $x>0$ 
and write
\begin{multline}
\label{62b}
J_1 \lesssim_{\bbooo} \sum_{\lambda^{1/3} \leq M < M_0} M^3 \sum_{K \geq M/2}  \exp\biggl\{-\frac{\bbooo}{2} \,\biggl(\frac{K}{M}\biggr)^{1-2\gamma} \, \frac{M^{\varepsilon-2\gamma}}{(\log M)^2}\,\lambda\biggr\}
\\
\times
\biggl(\frac{M}{K}\biggr)^{1-2\gamma}\, \frac{(\log M)^2}{M^{\varepsilon-2\gamma}}\, \frac{1}{\lambda}\,.
\end{multline}
For $M$ and $K$ as in the sum \eqref{62}, the expression in the exponent above satisfies
\begin{equation}
\label{62c}
\biggl(\frac{K}{M}\biggr)^{1-2\gamma}  \, \frac{M^{\varepsilon-2\gamma}}{(\log M)^2}\,\lambda \geq \biggl(\frac{1}{2}\biggr)^{1-2\gamma}\,\frac{\lambda^{1+\frac{\varepsilon-2\gamma}{3}}}{(\log M_0)^2}\,.
\end{equation}
Here, we also recalled \eqref{58}.
Combining \eqref{62b}--\eqref{62c}, and recalling that $\lambda>1$, it follows that for
\begin{equation}
\label{tilde_c_2}
\tilde{c}_2 =\frac{\bbooo}{2} \biggl(\frac{1}{2}\biggr)^{1-2\gamma} \sim \bbooo\,,
\end{equation}
we have 
\begin{multline}
\label{64}
J_1 \lesssim_{\bbooo} \Biggl(\sum_{\lambda^{1/3} \leq M < M_0} M^{3} \,\frac{(\log M)^2}{M^{\varepsilon-2\gamma}} \sum_{K \geq M/2} \biggl(\frac{M}{K}\biggr)^{1-2\gamma}\Biggr)
\\
\times \exp\biggl\{-\lambda^{1+\frac{\varepsilon-2\gamma}{3}}\,\frac{\tilde{c}_2}{(\log M_0)^2}\biggr\}
\\
\lesssim_{\varepsilon,\gamma,M_0} \exp\biggl\{-\lambda^{1+\frac{\varepsilon-2\gamma}{3}}\,\frac{\tilde{c}_2}{(\log M_0)^2}\biggr\}
\lesssim_{\bbooo,M_0,C_{\mathrm{large}}} \ee^{-\bbooo C_{\mathrm{large}} \lambda}\,.
\end{multline}
In order to deduce the last inequality in \eqref{64}, we recalled \eqref{58} and \eqref{tilde_c_2}.

Let us now estimate \eqref{63}. We use \eqref{61} and \eqref{auxuliary_inequality} to obtain that for $\tilde{c}_2$ as in \eqref{tilde_c_2}, we have
\begin{multline}
\label{65}
J_2 \leq \sum_{M \geq \max\{M_0,\lambda^{1/3}\}} M^3 \sum_{K \geq M/2} \exp\biggl\{-\bbooo \,\biggl(\frac{K}{M}\biggr)^{1-2\gamma} C_{\mathrm{large}} \lambda M^{\gamma} \biggr\}
\\
\lesssim_{\bbooo} 
\sum_{M \geq \max\{M_0,\lambda^{1/3}\}} M^3 \exp\Bigl\{-\tilde{c}_2 C_{\mathrm{large}} \lambda M^{\gamma}\Bigr\} \sum_{K \geq M/2}
\biggl(\frac{M}{K}\biggr)^{1-2\gamma}\frac{1}{C_{\mathrm{large}} \lambda M^{\gamma}}
\\
\lesssim_{\gamma}
\sum_{M \geq \max\{M_0,\lambda^{1/3}\}} M^3 \exp\Bigl\{-\tilde{c}_2 C_{\mathrm{large}} \lambda M^{\gamma}\Bigr\} 
\\
\lesssim_{\tilde{c}_2,C_{\mathrm{large}},\gamma}
\sum_{M \geq \max\{M_0,\lambda^{1/3}\}} M^3 \exp\biggl\{-\frac{\tilde{c}_2}{2} C_{\mathrm{large}} \lambda M^{\gamma}\biggr\} \,\biggl(\frac{1}{\tilde{c}_2 C_{\mathrm{large}}  \lambda M^{\gamma}}\biggr)^{4/\gamma}
\\
\lesssim_{\tilde{c}_2,C_{\mathrm{large}},\gamma} \exp\biggl\{-\frac{\tilde{c}_2}{2} C_{\mathrm{large}} \lambda \biggr\} \sum_{M \geq \max\{M_0,\lambda^{1/3}\}} \frac{1}{M} \lesssim \ee^{-\frac{\tilde{c}_2}{2} C_{\mathrm{large}} \lambda}\,.
\end{multline}
In order to obtain the penultimate line of \eqref{65}, we used the inequality $\ee^{-x} \lesssim_{\gamma} x^{-\frac{4}{\gamma}}$ for $x>0$.

We now combine \eqref{60}, \eqref{64}, and \eqref{65} to conclude that 
\begin{equation}
\label{Big_Case_1}
\eqref{59} \lesssim_{\bboo,C_{\mathrm{large}},\varepsilon,\gamma} \ee^{-\frac{\bboo}{16} \lambda} + \ee^{-\bbooo C_{\mathrm{large}} \lambda}+\ee^{-\frac{\tilde{c}_2}{2} C_{\mathrm{large}} \lambda}\,.
\end{equation}
Here, $\bboo$ can be chosen arbitrarily large and the other parameters are given by \eqref{c_1_c_2} and \eqref{tilde_c_2}.
Now, given $C_{\mathrm{Big}}>0$ large, we first choose 
\begin{equation}
\label{parameter_choice_1_Big_Case_1}
\bboo>16C_{\mathrm{Big}}\,.
\end{equation} 
We then take $C_{\mathrm{large}}>0$ sufficiently large such that 
\begin{equation}
\label{parameter_choice_2_Big_Case_1}
\bbooo C_{\mathrm{large}}>C_{\mathrm{Big}}\,,\quad \frac{\tilde{c}_2}{2} \,C_{\mathrm{large}}>C_{\mathrm{Big}}\,.
\end{equation}
Substituting this into \eqref{Big_Case_1}, we obtain
\begin{equation}
\label{Big_Case_1_bound}
\eqref{59} \lesssim_{C_{\mathrm{Big}}} \ee^{-C_{\mathrm{Big}} \lambda}\,.
\end{equation}
This is an acceptable upper bound.
\paragraph{\textbf{Step 6:  Estimating the contribution to \eqref{dyadic_sum} when $M < \lambda^{1/3}$}}
We are interested in estimating
\begin{multline}
\label{Big_Case_2}
\sum_{M<\lambda^{1/3}} M^3 \mathop{\mathrm{max}}_{|k| \simeq M} \Biggl[ \mathbb{P}_{\omega} \Biggl(\sum_{K \geq M/2} \Biggl|\sum_{j:\, \mathrm{max}\{|j|,|j-k|\} \simeq K} \frac{g_j(\omega)\,\overline{g_{j-k}(\omega)}}{\langle j \rangle\,\langle j-k \rangle}\Biggr|>\lambda_{2,M}\,\,
\\
\bigcap \,\, 
\Biggl| \sum_{m \in \Z^3} \frac{|g_m(\omega)|^2-1}{\langle m \rangle^2}\Biggr| \leq \BB\Biggr)\Biggr]
\\
\leq 
\sum_{M<\lambda^{1/3}} M^3 \mathop{\mathrm{max}}_{|k| \simeq M} \Biggl[ \mathbb{P}_{\omega} \Biggl(\sum_{\frac{M}{2} 
\leq K<\frac{\lambda^{1/3}}{2}}\, \sum_{j:\, \mathrm{max}\{|j|,|j-k|\} \simeq K} \Biggl|\frac{g_j(\omega)\,\overline{g_{j-k}(\omega)}}{\langle j \rangle\,\langle j-k \rangle}\Biggr|>\frac{\lambda_{2,M}}{2}\,\,
\\
\bigcap \,\, 
\Biggl| \sum_{m \in \Z^3} \frac{|g_m(\omega)|^2-1}{\langle m \rangle^2}\Biggr| \leq \BB\Biggr)\Biggr]
\\
+\sum_{M<\lambda^{1/3}} M^3 \mathop{\mathrm{max}}_{|k| \simeq M} \Biggl[\sum_{K \geq \frac{\lambda^{1/3}}{2}}  \mathbb{P}_{\omega} \Biggl(\Biggl|\sum_{j:\, \mathrm{max}\{|j|,|j-k|\} \simeq K} \frac{g_j(\omega)\,\overline{g_{j-k}(\omega)}}{\langle j \rangle\,\langle j-k \rangle}\Biggr|>\frac{\alpha_K \lambda_{2,M}}{2}\,\,
\\
\bigcap \,\, 
\Biggl| \sum_{m \in \Z^3} \frac{|g_m(\omega)|^2-1}{\langle m \rangle^2}\Biggr| \leq \BB\Biggr)\Biggr]=:L_1+L_2\,.
\end{multline}
For \eqref{Big_Case_2}, we recalled \eqref{56b} and used a union bound.

Before we start, let us note that in the regime $M<\lambda^{1/3}$, we have by  \eqref{55}  followed by \eqref{lambda_1M_choice} and \eqref{36} that
\begin{equation}
\label{68}
\lambda_{2,M}=\biggl(\frac{\bbo \lambda_{1,M}}{M^3 v_M}\biggr)^{\frac{1}{2}} \gtrsim \biggl(\frac{\lambda}{M^{1-\varepsilon}\, (\log M)^2} \biggr)^{\frac{1}{2}} \gtrsim_{\varepsilon} \biggl(\frac{\lambda}{M^{1-\frac{\varepsilon}{2}}}\biggr)^{\frac{1}{2}}>\lambda^{\frac{1}{3}+\frac{\varepsilon}{12}}\,.
\end{equation}
We now estimate $L_1$ given in \eqref{Big_Case_2}. By symmetry, it suffices to consider the contribution where $|j| \geq |j-k|$. The other contribution is estimated analogously. Let us note that
\begin{multline}
\label{68b}
\sum_{M/2 \leq K<\lambda^{1/3}/2} \mathop{\sum_{|j| \simeq K}}_{|j| \geq |j-k|}\biggl|\frac{g_j(\omega)\,\overline{g_{j-k}(\omega)}}{\langle j \rangle\,\langle j-k \rangle}\biggr| \leq \sum_{|j| \leq \lambda^{1/3}/2} 
\biggl|\frac{g_j(\omega)\,\overline{g_{j-k}(\omega)}}{\langle j \rangle\,\langle j-k \rangle}\biggr| 
\\
\leq \Biggl(\sum_{|j| \leq \lambda^{1/3}/2} \frac{|g_j(\omega)|^2}{\langle j \rangle^2} \Biggr)^{1/2} \, \Biggl(\sum_{|j| \leq \lambda^{1/3}/2}  \frac{|g_{j-k}(\omega)|^2}{\langle j-k \rangle^2} \Biggr)^{1/2} \leq \sum_{|\ell| \leq 2 \lambda^{1/3}} \frac{|g_{\ell}(\omega)|^2}{\langle \ell \rangle^2}\,.
\end{multline}
In the second line of \eqref{68b}, we first used the Cauchy-Schwarz inequality and then noted that for $|j| \leq \lambda^{1/3}/2$ and $|k| \simeq M \leq \lambda^{1/3}$ we have $|j-k| \leq 2\lambda^{1/3}$.
By a suitable modification of \eqref{33} proved in Step 1, it follows that we have
\begin{equation}
\label{68c}
\mathbb{P}_{\omega} \Biggl( \sum_{|\ell| \leq 2 \lambda^{1/3}} \frac{|g_{\ell}(\omega)|^2}{\langle \ell \rangle^2}>\tilde{C}_1 \lambda^{1/3}\,\,\bigcap \,\, 
\Biggl| \sum_{m \in \Z^3} \frac{|g_m(\omega)|^2-1}{\langle m \rangle^2}\Biggr| \leq \BB \Biggr)
\lesssim_{\BB} \ee^{-\tilde{C}_2\lambda}\,,
\end{equation}
where 
\begin{equation}
\label{68d}
\tilde{C}_1 \sim \tilde{C}_2 \sim \bbb\,,
\end{equation}
for $\bbb$ as in \eqref{C_1_choice}. From \eqref{68b}--\eqref{68c}, we deduce that
\begin{multline}
\label{33*}
\mathbb{P}_{\omega} \Biggl(\sum_{M/2 \leq K<\lambda^{1/3}/2}\, \sum_{|j| \simeq K:\,|j| \geq |j-k|}\biggl|\frac{g_j(\omega)\,\overline{g_{j-k}(\omega)}}{\langle j \rangle\,\langle j-k \rangle}\biggr| \geq \tilde{C}_1 \lambda^{1/3}\,\,
\\
\bigcap \,\, 
\Biggl| \sum_{m \in \Z^3} \frac{|g_m(\omega)|^2-1}{\langle m \rangle^2}\Biggr| \leq \BB \Biggr) \lesssim \ee^{-\tilde{C}_2 \lambda}\,,
\end{multline}
for constants as in \eqref{68d}.

We now recall \eqref{68} and use \eqref{33*} with $\tilde{C}_1 \sim \lambda^{\varepsilon/12}$ (which is admissible in \eqref{68d} for $\lambda$ large enough) to deduce that, for some universal constant $c>0$, we have
\begin{multline}
\label{69}
L_1 \lesssim \sum_{M<\lambda^{1/3}} M^3\, \ee^{-c \lambda^{1+\frac{\varepsilon}{12}}} \leq \ee^{-c \lambda^{1+\frac{\varepsilon}{12}}}\, \sum_{M <\lambda^{1/3}} M^3 \lesssim \ee^{-c \lambda^{1+\frac{\varepsilon}{12}}}
\\
\lesssim \ee^{-\frac{c}{2} \lambda^{1+\frac{\varepsilon}{12}}} \lesssim_{C_{\mathrm{Big}}} \ee^{-C_{\mathrm{Big}} \lambda}\,,
\end{multline}
for $C_{\mathrm{Big}}>0$ arbitrarily large. In \eqref{69}, we used 
\begin{equation}
\label{dyadic_sum_M_lambda^{1/3}}
\sum_{M <\lambda^{1/3}} M^3 \lesssim \lambda\,.
\end{equation}

We now estimate $L_2$ given in \eqref{Big_Case_2}. Arguing analogously as for \eqref{59}, it follows that we need to estimate
\begin{equation}
\label{70}
\sum_{M < \lambda^{1/3}} M^3 \sum_{K \geq \lambda^{1/3}/2} \ee^{-\bboo K^3}+\sum_{M < \lambda^{1/3}} M^3 \sum_{K \geq \lambda^{1/3}/2} \ee^{-\bbooo \alpha_K^2 \lambda_{2,M}^2 K}=:L_{2,1}+L_{2,2}\,.
\end{equation}
In \eqref{70}, the choice of parameters is the same as in \eqref{59}, i.e.\, we take $\bboo$ large and $\bbooo \sim \frac{1}{\bboo}$ as in \eqref{c_1_c_2} and recall \eqref{lambda_1M_choice}, \eqref{36}, \eqref{55}, and \eqref{56}.

Let us first estimate $L_{2,1}$ in \eqref{70}. We observe that
\begin{multline}
\label{71}
L_{2,1} \lesssim_{\bboo} \sum_{M < \lambda^{1/3}} M^3 \sum_{K \geq \lambda^{1/3}/2} \ee^{-\frac{\bboo}{2} K^3} \frac{1}{K} \leq \ee^{-\frac{\bboo}{16} \lambda}\, \sum_{M < \lambda^{1/3}} M^3\, \sum_{K \geq \lambda^{1/3}/2}  \frac{1}{K}
\\
\lesssim_{\bboo} \ee^{-\frac{\bboo}{16} \lambda}\,\lambda
\lesssim \ee^{-\frac{\bboo}{20}\lambda}\,,
\end{multline}
which is an acceptable upper bound since $\bboo$ can be chosen arbitrarily large.
In \eqref{71}, we again used \eqref{dyadic_sum_M_lambda^{1/3}}.

We now estimate $L_{2,2}$ in \eqref{70}. Recalling \eqref{57} and arguing analogously as for \eqref{62b}, we get that for $\bbooo \sim \frac{1}{\bboo}$
\begin{multline}
\label{72}
L_{2,2} \leq \sum_{M < \lambda^{1/3}} M^3 \sum_{K \geq \lambda^{1/3}/2} \exp\biggl\{-\bbooo \,\biggl(\frac{K}{M}\biggr)^{1-2\gamma} \frac{M^{\varepsilon-2\gamma}}{(\log M)^2}\,\lambda\biggr\}
\\
\lesssim 
\sum_{M < \lambda^{1/3}} M^3 \sum_{K \geq \lambda^{1/3}/2} \exp\biggl\{-\frac{\bbooo}{2} \,\biggl(\frac{K}{M}\biggr)^{1-2\gamma} \frac{M^{\varepsilon-2\gamma}}{(\log M)^2} \,\lambda \biggr\}
\\
\times \frac{1}{\bbooo}\,
\biggl(\frac{M}{K}\biggr)^{1-2\gamma}\,\frac{(\log M)^2}{M^{\varepsilon-2\gamma}}\,\frac{1}{\lambda}
\\
\lesssim_{\bbooo} 
\sum_{M < \lambda^{1/3}} M^{4-2\gamma}\, (\log M)^2\sum_{K \geq \lambda^{1/3}/2} \exp\biggl\{-\frac{\bbooo}{2} \,K^{1-2\gamma}
\,\frac{M^{-1+\varepsilon}}{(\log M)^2}\,\lambda\biggr\}
\\
\times K^{-1+2\gamma}\,.
\end{multline}
Let us now analyse the exponent in \eqref{72}. For $M<\lambda^{1/3}$ and $K \geq \lambda^{1/3}/2$, we have that 
\begin{equation}
\label{72b}
(\log M)^2 <  (\log \lambda^{1/3})^2 \lesssim_{\varepsilon, \gamma} \lambda^{\frac{\varepsilon-2\gamma}{6}}\,,\quad M^{-1+\varepsilon}>\lambda^{\frac{-1+\varepsilon}{3}}\,,\quad K^{1-2\gamma} \gtrsim_{\gamma} \lambda^{\frac{1-2\gamma}{3}}\,.
\end{equation}
Here, we recalled \eqref{58}.
Putting everything together, we get from \eqref{72b} that 
\begin{equation}
\label{72c}
K^{1-2\gamma}\,\frac{M^{-1+\varepsilon}}{(\log M)^2}\,\lambda \gtrsim_{\varepsilon,\gamma} \lambda^{1+\frac{\varepsilon-2\gamma}{6}}\,.
\end{equation}
Substituting \eqref{72c} into \eqref{72}, we get that
\begin{multline}
\label{73}
L_{2,2} \lesssim \ee^{-\tilde{c}_2 \lambda^{1+\frac{\varepsilon-2\gamma}{6}}} \,\sum_{M < \lambda^{1/3}} M^{4-2\gamma}\, (\log M)^2\sum_{K \geq \lambda^{1/3}/2} K^{-1+2\gamma}
\\
\lesssim \ee^{-\tilde{c}_2 \lambda^{1+\frac{\varepsilon-2\gamma}{6}}} \, \lambda^{\frac{4-2\gamma}{3}}\,(\log \lambda^{\frac{1}{3}})^3 \lesssim \ee^{-\frac{\tilde{c}_2}{2} \lambda^{1+\frac{\varepsilon-2\gamma}{6}}} \lesssim_{C_{\mathrm{Big}}} \ee^{-C_{\mathrm{Big}} \lambda}\,,
\end{multline}
for $C_{\mathrm{Big}}>0$ arbitrarily large. In the above calculation, we recalled \eqref{58}. 
In conclusion, the contribution to \eqref{dyadic_sum} when $M < \lambda^{1/3}$ satisfies the wanted upper bound by \eqref{69}, \eqref{71}, and \eqref{73}. This completes the proof of \eqref{Bourgain_large_deviation_estimate_rewritten} when $d=3$.

\subsection{Proof of \eqref{Bourgain_large_deviation_estimate_rewritten} when $d=2$}
\label{Appendix_2D_proof}

We now explain the proof of \eqref{Bourgain_large_deviation_estimate_rewritten} when $d=2$. The argument is similar to the one when $d=3$ given above. We outline the main differences. Now, instead of as in \eqref{C_0_choice}, the constant $\bb>0$ is taken such that  for all $K>0$, we have
\begin{equation}
\label{C_0_choice*}
\sum_{|k| \leq K} \frac{1}{\langle k \rangle^2} \leq \bb \log K\,.
\end{equation}
We consider $\bbb$ as in \eqref{C_1_choice}, for $\bb$ as in \eqref{C_0_choice*}.

Let us note that, when $d=2$, instead of \eqref{31}, we have 
\begin{equation}
\label{31*}
\mathbb{P}_{\omega} \Biggl(\Biggl|\sum_{|k| > M} \frac{|\tilde{g}_k(\omega)|^2-1}{\langle k \rangle^2}\Biggr| \geq \bbb \log M \Biggr) \lesssim \ee^{-\bbbb M^2 \log M} \quad \text{for } \bbbb \sim \bbb\,.
\end{equation}
In order to obtain \eqref{31*}, we use Lemma \ref{Bernstein's_inequality} with $(b_k)$ given as in \eqref{Sequence_bk} (but now in two dimensions). The only change is that instead of \eqref{Sequence_bk_l2}, we now have
\begin{equation}
\label{Sequence_bk_l2*}
\|b\|_{\ell^2}^2 \sim \frac{1}{M^2}\,.
\end{equation}
Using \eqref{Psi_1_bound}, \eqref{Sequence_bk_linfty} (which carry over to $d=2$), and \eqref{31*} in Lemma \ref{Bernstein's_inequality}, we deduce that for a universal constant $c>0$
\begin{multline*}
\mathbb{P}_{\omega} \Biggl(\Biggl|\sum_{|k| > M} \frac{|\tilde{g}_k|^2-1}{\langle k \rangle^2}\Biggr| \geq \bbb \log M \Biggr) 
\\
\lesssim \ee^{-c\, \mathrm{min} \{(\bbb \log M)^2\cdot M^2, (\bbb \log M) \cdot M^2\}}=\ee^{-c \bbb M^2 \log M}\,,
\end{multline*}
which implies \eqref{31*}.

\paragraph{\emph{\textbf{Modifications to Step 3}}}
We now explain the modifications needed for the analogue of Step 3 above. Let us consider $\lambda_2>0$, dyadic integers $K,M$ with $K \geq M/2$ (as in \eqref{K_and_M_condition}) and $k \in \Z^2$ with $|k| \simeq M$. 
We show the following modification of \eqref{53}.
\begin{multline}
\label{(2.35)*-(2.36)*}
\mathbb{P}_{\omega} \Biggl(\Biggl|\sum_{j:\, \mathrm{max}\{|j|,|j-k|\} \simeq K} \frac{g_j(\omega)\,\overline{g_{j-k}(\omega)}}{\langle j \rangle\,\langle j-k \rangle}\Biggr|>\lambda_2\,\,\bigcap \,\, 
\Biggl| \sum_{m \in \Z^2} \frac{|g_m(\omega)|^2-1}{\langle m \rangle^2}\Biggr| \leq \BB\Biggr)
\\
\lesssim_{\bboo} \ee^{-\bboo K^2}+ \ee^{-\frac{\bbooo \lambda_2^2 K^2}{\log K}}\,,
\end{multline}
where $\bboo>0$ can be chosen arbitrarily large and $\bbooo \sim \frac{1}{\bboo}$ (as in \eqref{c_1_c_2}).
As in Step 3 above, we consider two different cases, depending on the relative sizes of $M$ and $K$.

\paragraph{\textbf{\emph{Case 1:} $M \leq K/4$}}
We argue analogously as for $d=3$. The matrix $\mathcal{A}$ defined by \eqref{43} still satisfies \eqref{44}. The estimate \eqref{45} is replaced by 
\begin{equation*}
\|\mathcal{A}\|_{\mathrm{HS}}^2 \lesssim \sum_{|j| \simeq K} \frac{1}{K^4} \lesssim \frac{1}{K^2}\,.
\end{equation*}
Using Lemma \ref{Hanson_Wright_inequality} as for \eqref{45d}--\eqref{46}, we obtain that
\begin{equation}
\label{(2.52)*}
\mathbb{P}_{\omega} \Biggl(\Biggl|\sum_{|j| \simeq K\,,|j| \geq |j-k|} \frac{g_j(\omega)\,\overline{g_{j-k}(\omega)}}{\langle j \rangle\,\langle j-k \rangle}\Biggr|>\lambda_2\Biggr) \lesssim \mathrm{exp} \Bigl[ - c \,\mathrm{min} \{\lambda_2^2 K^2, \lambda_2 K^2\}\Bigr]\,,
\end{equation}
for some universal constant $c>0$. This left hand side of \eqref{(2.52)*} is the expression that we need to estimate by \eqref{40}.

We now rewrite \eqref{(2.52)*}. Let $C_{\mathrm{large}} \gg 1$ be a large constant.

\begin{itemize}
\item If $\lambda_2 \geq C_{\mathrm{large}}$, then $\mathrm{min} \{\lambda_2^2 K^2, \lambda_2 K^2\}=\lambda_2K^2 \geq C_{\mathrm{large}}\,K^2.$
\item If $\lambda_2<C_{\mathrm{large}}$, then $\mathrm{min} \{\lambda_2^2 K^2, \lambda_2 K^2\}>\frac{\lambda_2^2K^2}{C_{\mathrm{large}}}$. 
In order to see the latter inequality, we note that $\lambda_2^2 K^2>\frac{\lambda_2^2 K^2}{C_{\mathrm{large}}}$. Furthermore, 
since $\lambda_2<C_{\mathrm{large}}$, we have 
$\lambda_2 K^2=\frac{\lambda_2^2 K^2}{\lambda_2}>\frac{\lambda_2^2 K^2}{C_{\mathrm{large}}}$.

\end{itemize}
In particular, from \eqref{(2.52)*}, we obtain that
\begin{equation}
\label{47*}
\mathbb{P}_{\omega} \Biggl(\Biggl|\sum_{|j| \simeq K\,,|j| \geq |j-k|} \frac{g_j(\omega)\,\overline{g_{j-k}(\omega)}}{\langle j \rangle\,\langle j-k \rangle}\Biggr|>\lambda_2 \Biggr) \lesssim_{\bboo} \ee^{-\bboo K^2}+\ee^{-\bbooo \lambda_2^2 K^2}\,,
\end{equation}
where $\bboo>0$ can be chosen arbitrarily large and $\bbooo \sim \frac{1}{\bboo}$.

\paragraph{\textbf{\emph{Case 2:} $M > K/4$}}
In this case, we do the splitting $I^*+II^*$, with $I^*$ and $II^*$ defined analogously as in \eqref{48} above (except that now the summations are over subsets of $\Z^2$). As in the analysis when $d=3$, the term $I^*$ is estimated analogously as \eqref{47*}. We hence need to estimate 
\begin{multline}
\label{48*}
II^* \equiv \mathbb{P}_{\omega} \Biggl(\Biggl|\sum_{|j| \simeq K\,,\,|j| \geq |j-k|\,,\,|j-k| \leq K/4} \frac{g_j(\omega)\,\overline{g_{j-k}(\omega)}}{\langle j \rangle\,\langle j-k \rangle}\Biggr|>\frac{\lambda_2}{2}\,\,
\\
\bigcap \,\, \Biggl| \sum_{m \in \Z^2} \frac{|g_m(\omega)|^2-1}{\langle m \rangle^2}\Biggr| \leq \BB
\Biggr)\,.
\end{multline}
We modify \eqref{50} and note that for $d=2$, we have
\begin{multline}
\label{50*}
\mathbb{P}_{\omega} \Biggl(\sum_{|\ell| \leq K/4} \frac{|g_{\ell}(\omega)|^2}{\langle \ell \rangle^2} \geq \bbbbb \log K\, \,\,\bigcap \,\, 
\Biggl| \sum_{m \in \Z^2} \frac{|g_m(\omega)|^2-1}{\langle m \rangle^2}\Biggr| \leq \BB \Biggr)
\\
\lesssim \ee^{-\bbbbbb K^2\log K}\,,
\end{multline}
with choice of parameters as in \eqref{C_0_choice*}, \eqref{48b}--\eqref{C_4_choice}.

To prove \eqref{50*}, suppose that 
\begin{equation}
\label{49*}
\sum_{|\ell| \leq K/4} \frac{|g_{\ell}(\omega)|^2}{\langle \ell \rangle^2} \geq \bbbbb \log K \quad  \text{and }\quad \Biggl| \sum_{m \in \Z^2} \frac{|g_m(\omega)|^2-1}{\langle m \rangle^2}\Biggr| \leq \BB\,.
\end{equation}
Arguing analogously as for \eqref{49c}, we obtain from \eqref{49*} that
\begin{equation}
\label{49c*}
\Biggl|\sum_{|\ell| > K/4} \frac{|g_{\ell}(\omega)|^2-1}{\langle \ell \rangle^2}\Biggr|  \gtrsim \bbbbb \log K\,.
\end{equation}
From \eqref{49c*}, we hence have
\begin{multline*}
\mathbb{P}_{\omega} \Biggl(\sum_{|\ell| \leq K/4} \frac{|g_{\ell}(\omega)|^2}{\langle \ell \rangle^2} \geq \bbbbb \log K\, \,\,\bigcap \,\, 
\Biggl| \sum_{m \in \Z^2} \frac{|g_m(\omega)|^2-1}{\langle m \rangle^2}\Biggr| \leq \BB \Biggr)
\\
\leq \mathbb{P}_{\omega} \Biggl(\Biggl|\sum_{|\ell| > K/4} \frac{|g_{\ell}(\omega)|^2-1}{\langle \ell \rangle^2}\Biggr|  \gtrsim \bbbbb \log K\Biggr) \lesssim \ee^{-\bbbbbb K^2 \log K}
\end{multline*}
with $C_4$ as in \eqref{C_4_choice}.
For the last inequality, we used \eqref{31*}. We hence obtain \eqref{50*}. The generalization analogous to \eqref{51} also holds, i.e.\
\begin{equation}
\label{51*}
\mathbb{P}_{\omega} \Biggl(\sum_{\ell \in S} \frac{|g_{\ell}(\omega)|^2}{\langle \ell \rangle^2} \geq \bbbbb \log K\, \,\,\bigcap \,\, 
\Biggl| \sum_{m \in \Z^2} \frac{|g_m(\omega)|^2-1}{\langle m \rangle^2}\Biggr| \leq \BB \Biggr)
\lesssim \ee^{-\bbbbbb K^2 \log K}\,,
\end{equation}
whenever $S \subset \Z^2$ is a set contained in the ball $|\ell| \leq K/4$.
Analogously as in \eqref{52}, we estimate \eqref{48*} as
\begin{multline}
\label{52*}
II^* \leq \mathbb{P}_{\omega} \Biggl(\Biggl|\sum_{|j| \simeq K\,,\,|j| \geq |j-k|\,,\,|j-k| \leq K/4} \frac{g_j(\omega)\,\overline{g_{j-k}(\omega)}}{\langle j \rangle\,\langle j-k \rangle}\Biggr|>\frac{\lambda_2}{2}\,\, 
\bigcap\,\,
\\
\sum_{|j| \simeq K\,,\,|j| \geq |j-k|\,,\,|j-k| \leq K/4} \frac{|g_{j-k}(\omega)|^2}{\langle j-k\rangle^2} \geq \bbbbb \log K\,\, \bigcap \,\, \Biggl| \sum_{m \in \Z^2} \frac{|g_m(\omega)|^2-1}{\langle m \rangle^2}\Biggr| \leq \BB
\Biggr)
\\
+ \mathbb{P}_{\omega} \Biggl(\Biggl|\sum_{|j| \simeq K\,,\,|j| \geq |j-k|\,,\,|j-k| \leq K/4} \frac{g_j(\omega)\,\overline{g_{j-k}(\omega)}}{\langle j \rangle\,\langle j-k \rangle}\Biggr|>\frac{\lambda_2}{2}\,\,
\bigcap \,\,
\\
\sum_{|j| \simeq K\,,\,|j| \geq |j-k|\,,\,|j-k| \leq K/4} \frac{|g_{j-k}(\omega)|^2}{\langle j-k\rangle^2} < \bbbbb \log K\,\,
\bigcap \,\, \Biggl| \sum_{m \in \Z^2} \frac{|g_m(\omega)|^2-1}{\langle m \rangle^2}\Biggr| \leq \BB
\Biggr)
\\=:II_1^*+II_2^*\,.
\end{multline}
From \eqref{51*}, we have 
\begin{multline}
\label{II_1_bound*}
II_1^* \leq  \mathbb{P}_{\omega} \Biggl(\sum_{|j| \simeq K\,,\,|j| \geq |j-k|\,,\,|j-k| \leq K/4} \frac{|g_{j-k}(\omega)|^2}{\langle j-k\rangle^2} \geq \bbbbb \log K
\\
\,\, \bigcap \,\, \Biggl| \sum_{m \in \Z^2} \frac{|g_m(\omega)|^2-1}{\langle m \rangle^2}\Biggr| \leq \BB
\Biggr) \lesssim \ee^{-\bbbbbb K^2 \log K}\,.
\end{multline}
Recalling \eqref{X_j_definition}--\eqref{a_j_definition}, and arguing analogously as for \eqref{II_2_bound}, we get that 
\begin{equation}
\label{II_2_bound*}
II_2^* \leq \mathbb{P}_{\omega} \Biggl(\Biggl|\sum_{j \in \Z^2}a_j X_j\Biggr|>\frac{\lambda_2}{2}\,\,\bigcap\,\, \sum_{j \in \Z^2} |a_j|^2 <\bbbbb \log K
\Biggr)\,.
\end{equation}
Using Lemma \ref{Hoeffding's_inequality}, \eqref{X_j_bound}, and arguing as for \eqref{II_2_bound_2}, we obtain 
\begin{equation}
\label{II_2_bound_2*}
\eqref{II_2_bound*} \lesssim \ee^{-c \frac{\lambda_2^2 K^2}{\bbbbb \log K}}
\end{equation}
for some universal constant $c>0$.

Using \eqref{48b}--\eqref{C_4_choice}, \eqref{II_1_bound*}, and \eqref{II_2_bound_2*}, we deduce that
\begin{equation}
\label{Case2_bound*}
\eqref{48*}  \lesssim \ee^{-\bboo K^2 \log K}+\ee^{-\frac{\bbooo \lambda_2^2 K^2}{\log K}}\,,
\end{equation}
where $\bboo \sim \bbbbb>0$ can be chosen arbitrarily large and $\bbooo \sim \frac{1}{\bbbbbb} \sim \frac{1}{\bboo}$.
We hence deduce \eqref{(2.35)*-(2.36)*} from \eqref{47*} and \eqref{Case2_bound*}.

\paragraph{\emph{\textbf{Modifications to Step 4}}}
We define $\lambda_{1,M}$ as in \eqref{lambda_1M_choice}--\eqref{lambda_1M_condition} and $v_M$ as in \eqref{v_M_definition}. By Assumption \ref{Assumption_on_V}, instead of \eqref{36}, we now have 
\begin{equation}
\label{36*}
v_M \lesssim M^{-\varepsilon}\,.
\end{equation}
Let $\lambda_{1,M}$ be defined as in \eqref{lambda_1M_choice}--\eqref{lambda_1M_condition}.
Instead of \eqref{55}, we define 
\begin{equation}
\label{55*}
\lambda_{2,M}:=\biggl(\frac{\bbo \lambda_{1,M}}{M^2 v_M}\biggr)^{\frac{1}{2}}\,.
\end{equation}

Using the above choice of parameters, and arguing as for \eqref{dyadic_sum}, our goal is to estimate

\begin{multline}
\label{dyadic_sum*}
\sum_{M} M^2 \mathop{\mathrm{max}}_{|k| \simeq M} \Biggl[ \mathbb{P}_{\omega} \Biggl(\sum_{K \geq M/2} \Biggl|\sum_{j:\, \mathrm{max}\{|j|,|j-k|\} \simeq K} \frac{g_j(\omega)\,\overline{g_{j-k}(\omega)}}{\langle j \rangle\,\langle j-k \rangle}\Biggr|>\lambda_{2,M}\,\,
\\
\bigcap \,\, 
\Biggl| \sum_{m \in \Z^3} \frac{|g_m(\omega)|^2-1}{\langle m \rangle^2}\Biggr| \leq \BB\Biggr)\Biggr]\,.
\end{multline}
We now estimate \eqref{dyadic_sum*}. When $d=2$, we split \eqref{dyadic_sum*} into contributions when $M 
\geq \lambda^{1/2}$ and $M<\lambda^{1/2}$. This allows us to modify the arguments from Steps 5 and 6 when $d=3$. Intuitively, the splitting is dictated by the scaling of the bound in \eqref{(2.35)*-(2.36)*} (in particular by the first term).

\paragraph{\emph{\textbf{Modifications to Step 5}}} Recall that we are considering $M \geq \lambda^{1/2}$.
We take $\alpha_K$ as in \eqref{56}--\eqref{56b}. We apply \eqref{56b} followed by \eqref{(2.35)*-(2.36)*} and argue as for \eqref{59} to deduce that 
\begin{multline}
\label{59*}
\sum_{M \geq \lambda^{1/2}} M^2 \mathop{\mathrm{max}}_{|k| \simeq M} \Biggl[\mathbb{P}_{\omega} \Biggl(\sum_{K \geq M/2}\Biggl|\sum_{j:\, \mathrm{max}\{|j|,|j-k|\} \simeq K} \frac{g_j(\omega)\,\overline{g_{j-k}(\omega)}}{\langle j \rangle\,\langle j-k \rangle}\Biggr|>\lambda_{2,M}\,\,
\\
\bigcap \,\, 
\Biggl| \sum_{m \in \Z^2} \frac{|g_m(\omega)|^2-1}{\langle m \rangle^2}\Biggr| \leq \BB\Biggr)\Biggr]
\\
\lesssim \sum_{M \geq \lambda^{1/2}} M^2 \sum_{K \geq M/2} \ee^{-\bboo K^2}+\sum_{M \geq \lambda^{1/2}} M^2 \sum_{K \geq M/2} \ee^{-\frac{\bbooo \alpha_K^2 \lambda_{2,M}^2 K^2}{\log K}}\,,
\end{multline}
for $\bboo>0$ sufficiently large and $\bbooo \sim \frac{1}{\bboo}$.

By arguing analogously as for \eqref{60}, we estimate the first term in \eqref{59*} as
\begin{equation}
\label{60*}
\sum_{M \geq \lambda^{1/2}} M^2 \sum_{K \geq M/2} \ee^{-\bboo K^2} \lesssim_{\bboo} \ee^{-\frac{\bboo}{8} \lambda}\,.
\end{equation}
In order to estimate the second term in \eqref{59*}, we note that by \eqref{lambda_1M_choice}, \eqref{55*}, and \eqref{56}, followed by \eqref{36*}, we have that
\begin{equation}
\label{57*}
\frac{\alpha_K^2 \lambda_{2,M}^2 K^2}{\log K} \geq  \frac{\tilde{c}}{K^{2\gamma} \log K}\, \frac{K^2}{M^{2-\varepsilon} (\log M)^2}\,\lambda \geq c  \,\biggl(\frac{K}{M}\biggr)^{2-3\gamma} \frac{M^{\varepsilon-3\gamma}}{(\log M)^2} \,\lambda,
\end{equation}
for some constants $c,\tilde{c}>0$ depending on $\gamma$. In the sequel, instead of \eqref{58}, we assume that 
\begin{equation}
\label{58*}
\gamma<\mathrm{min}\Bigl\{\frac{\varepsilon}{4},\frac{2}{3}\Bigr\}=\frac{\varepsilon}{4}\,.
\end{equation}
We modify \eqref{61} when $d=2$. Namely, given $C_{\mathrm{large}}>0$, by \eqref{58*} we can find a dyadic integer 
$M_0 \equiv M_0(C_{\mathrm{large}},\varepsilon,\gamma)$ such that 
\begin{equation}
\label{61*}
\frac{M^{\varepsilon-4\gamma}}{(\log M)^2} \geq C_{\mathrm{large}}\,\,\,\mbox{whenever} \,\,\,M \geq M_0\,.
\end{equation}
Arguing as in \eqref{62}--\eqref{63}, it suffices to estimate
\begin{equation}
\label{62*}
J_1^*:=\sum_{\lambda^{1/2} \leq M < M_0} M^2 \sum_{K \geq M/2} \exp\biggl\{-\bbooo \,\biggl(\frac{K}{M}\biggr)^{2-3\gamma} \frac{M^{\varepsilon-3\gamma}}{(\log M)^2} \,\lambda\biggr\}
\end{equation}
and
\begin{equation}
\label{63*}
J_2^*:=\sum_{M \geq \max\{M_0,\lambda^{1/2}\}} M^2 \sum_{K \geq M/2}  \exp\biggl\{-\bbooo \,\biggl(\frac{K}{M}\biggr)^{2-3\gamma} \frac{M^{\varepsilon-3\gamma}}{(\log M)^2} \,\lambda\biggr\}\,.
\end{equation}
Arguing analogously as for \eqref{64}, and using \eqref{58*}, we estimate \eqref{62*} as

\begin{equation}
\label{64*}
J_1^*
\lesssim_{\bbooo,\varepsilon,\gamma,M_0} \exp\biggl\{-\lambda^{1+\frac{\varepsilon-3\gamma}{2}}\,\frac{\tilde{c}_2}{(\log M_0)^2}\biggr\}
\lesssim_{\bbooo,M_0,C_{\mathrm{large}}} \ee^{-\bbooo C_{\mathrm{large}} \lambda}\,,
\end{equation}
where in \eqref{64*}, instead of \eqref{tilde_c_2}, we take
\begin{equation}
\label{tilde_c_2*}
\tilde{c}_2 =\frac{\bbooo}{2} \biggl(\frac{1}{2}\biggr)^{2-3\gamma} \sim \bbooo\,.
\end{equation}
Using \eqref{61*} and arguing similarly as for \eqref{65}, we estimate \eqref{63*} as
\begin{multline}
\label{65*}
J_2^* \leq \sum_{M \geq \max\{M_0,\lambda^{1/2}\}} M^2 \sum_{K \geq M/2} \exp\biggl\{-\bbooo \,\biggl(\frac{K}{M}\biggr)^{2-3\gamma} C_{\mathrm{large}} \lambda M^{\gamma} \biggr\}
\\
\lesssim \ee^{-\frac{\tilde{c}_2}{2} C_{\mathrm{large}} \lambda}\,,
\end{multline}
with $\tilde{c}_2$ as in \eqref{tilde_c_2*}. Combining \eqref{60*}, \eqref{64*}, and \eqref{65*}, we conclude that 
\begin{equation}
\label{Big_Case_1*}
\eqref{59*} \lesssim_{\bboo,C_{\mathrm{large}},\varepsilon,\gamma} \ee^{-\frac{\bboo}{16} \lambda} + \ee^{-\bbooo C_{\mathrm{large}} \lambda}+\ee^{-\frac{\tilde{c}_2}{2} C_{\mathrm{large}} \lambda}\,.
\end{equation}
By choosing the parameters analogously to \eqref{parameter_choice_1_Big_Case_1}--\eqref{parameter_choice_2_Big_Case_1}, we obtain that \eqref{Big_Case_1*} implies
\begin{equation}
\label{Big_Case_1_bound*}
\eqref{59*} \lesssim_{C_{\mathrm{Big}}} \ee^{-C_{\mathrm{Big}} \lambda}\,.
\end{equation}

\paragraph{\emph{\textbf{Modifications to Step 6}}} Recall that we are considering $M < \lambda^{1/2}$.
Similarly to \eqref{Big_Case_2}, we are interested in estimating
\begin{multline}
\label{Big_Case_2*}
\sum_{M<\lambda^{1/2}} M^2 \mathop{\mathrm{max}}_{|k| \simeq M} \Biggl[ \mathbb{P}_{\omega} \Biggl(\sum_{\frac{M}{2}
\leq K<\frac{\lambda^{1/2}}{2}}\, \sum_{j:\, \mathrm{max}\{|j|,|j-k|\} \simeq K} \Biggl|\frac{g_j(\omega)\,\overline{g_{j-k}(\omega)}}{\langle j \rangle\,\langle j-k \rangle}\Biggr|>\frac{\lambda_{2,M}}{2}\,\,
\\
\bigcap \,\, 
\Biggl| \sum_{m \in \Z^2} \frac{|g_m(\omega)|^2-1}{\langle m \rangle^2}\Biggr| \leq \BB\Biggr)\Biggr]
\\
+\sum_{M<\lambda^{1/2}} M^2 \mathop{\mathrm{max}}_{|k| \simeq M} \Biggl[\sum_{K \geq \frac{\lambda^{1/2}}{2}}  \mathbb{P}_{\omega} \Biggl(\Biggl|\sum_{j:\, \mathrm{max}\{|j|,|j-k|\} \simeq K} \frac{g_j(\omega)\,\overline{g_{j-k}(\omega)}}{\langle j \rangle\,\langle j-k \rangle}\Biggr|>\frac{\alpha_K \lambda_{2,M}}{2}\,\,
\\
\bigcap \,\, 
\Biggl| \sum_{m \in \Z^2} \frac{|g_m(\omega)|^2-1}{\langle m \rangle^2}\Biggr| \leq B\Biggr)\Biggr]=:L_1^*+L_2^*\,.
\end{multline}
When $d=2$, instead of \eqref{68}, we have
\begin{equation}
\label{68*}
\lambda_{2,M}=\biggl(\frac{\bbo \lambda_{1,M}}{M^2 v_M}\biggr)^{\frac{1}{2}} \gtrsim \biggl(\frac{\lambda}{M^{2-\varepsilon}\, (\log M)^2} \biggr)^{\frac{1}{2}} \gtrsim_{\varepsilon} \biggl(\frac{\lambda}{M^{2-\frac{\varepsilon}{2}}}\biggr)^{\frac{1}{2}}>\lambda^{\frac{\varepsilon}{8}}\,.
\end{equation}
Here, we used  \eqref{lambda_1M_choice}, \eqref{36*}, \eqref{55*}, and the assumption that $M<\lambda^{1/2}$.

By arguing as for \eqref{68b} and using \eqref{68*}, we obtain that
\begin{equation}
\label{68b*}
L_1^* \lesssim \lambda\, \mathbb{P}_{\omega} \Biggl( \sum_{|\ell| \leq 2\lambda^{1/2}} \frac{|g_{\ell}(\omega)|^2}{\langle \ell \rangle^2} \gtrsim \lambda^{\varepsilon/8}\,\,
\bigcap \,\, 
\Biggl| \sum_{m \in \Z^2} \frac{|g_m(\omega)|^2-1}{\langle m \rangle^2}\Biggr| \leq \BB\Biggr)\,.
\end{equation}
For \eqref{68b*}, we also used $\sum_{M<\lambda^{1/2}} M^2 \lesssim \lambda$. Let us now estimate the probability in \eqref{68b*}.
Suppose that $\omega$ satisfies
\begin{equation*}
\sum_{|\ell| \leq 2\lambda^{1/2}} \frac{|g_{\ell}(\omega)|^2}{\langle \ell \rangle^2} \gtrsim \lambda^{\varepsilon/8}\,,\qquad
\Biggl| \sum_{m \in \Z^2} \frac{|g_m(\omega)|^2-1}{\langle m \rangle^2}\Biggr|\leq \BB\,.
\end{equation*}
We then obtain 
\begin{equation}
\label{68c*}
\lambda^{\varepsilon/8} \lesssim \sum_{|\ell| \leq 2\lambda^{1/2}} \frac{1}{\langle \ell \rangle^2} + \sum_{|\ell| \leq 2\lambda^{1/2}} \frac{|g_{\ell}(\omega)|^2-1}{\langle \ell \rangle^2} \lesssim \log \lambda + \BB - \sum_{|\ell| > 2\lambda^{1/2}} \frac{|g_{\ell}(\omega)|^2-1}{\langle \ell \rangle^2}\,.
\end{equation}
From \eqref{68c*}, we obtain that
\begin{multline}
\label{68d*}
\mathbb{P}_{\omega} \Biggl( \sum_{|\ell| \leq 2\lambda^{1/2}} \frac{|g_{\ell}(\omega)|^2}{\langle \ell \rangle^2} \gtrsim \lambda^{\varepsilon/8}\,\,
\bigcap \,\, 
\Biggl| \sum_{m \in \Z^2} \frac{|g_m(\omega)|^2-1}{\langle m \rangle^2}\Biggr| \leq \BB\Biggr)
\\
\leq \mathbb{P}_{\omega} \Biggl( \Biggl|\sum_{|\ell| > 2\lambda^{1/2}} \frac{|g_{\ell}(\omega)|^2-1}{\langle \ell \rangle^2} \Biggr|\gtrsim_\BB \lambda^{\varepsilon/8}
\Biggr)\,.
\end{multline}
We note that 
\begin{equation}
\label{68e*}
\eqref{68d*} \lesssim \ee^{-c \lambda^{1+\frac{\varepsilon}{8}}}\,,
\end{equation}
for some constant $c>0$ depending on $\BB$.
In order to obtain \eqref{68e*}, we use Lemma \ref{Bernstein's_inequality} with 
\begin{equation*}
X_{\ell}=(|g_{\ell}(\omega)|^2-1)\,\mathbbm{1}_{|\ell|>2\lambda^{1/2}}\,,\quad b_{\ell}=\frac{1}{\langle \ell \rangle^2}\, \mathbbm{1}_{|\ell|>2\lambda^{1/2}}\,.
\end{equation*}
We note that then $\|b\|_{\ell^{\infty}} \sim \|b\|_{\ell^2}^2 \sim \frac{1}{\lambda}$, and \eqref{68e*} follows.
Combining \eqref{68b*}, \eqref{68d*}, and \eqref{68e*}, we deduce that 
\begin{equation}
\label{69*}
L_1^* \lesssim \ee^{-\frac{c}{2} \lambda^{1+\frac{\varepsilon}{8}}}  \lesssim_{C_{\mathrm{Big}}} \ee^{-C_{\mathrm{Big}} \lambda}\,,
\end{equation}
for $C_{\mathrm{Big}}>0$ arbitrarily large.

In order to estimate $L_2^*$ in \eqref{Big_Case_2*}, by arguing as for \eqref{70}, it suffices to estimate
\begin{multline}
\label{70*}
\sum_{M < \lambda^{1/2}} M^2 \sum_{K \geq \lambda^{1/2}/2} \ee^{-\bboo K^2}+\sum_{M < \lambda^{1/2}} M^2 \sum_{K \geq \lambda^{1/2}/2} \ee^{-\bbooo \alpha_K^2 \lambda_{2,M}^2 K^2/ \log K}
\\
=:L_{2,1}^*+L_{2,2}^*\,.
\end{multline}
with $\bboo>0$ large and $\bbooo \sim \frac{1}{\bboo}$. When $d=2$, we use \eqref{(2.35)*-(2.36)*} instead of \eqref{53}, and thus arrive at the bound on the right-hand side of \eqref{70*}.
By arguing analogously as in \eqref{71}, we obtain that 
\begin{equation}
\label{71*}
L_{2,1}^* \lesssim  \ee^{-\frac{\bboo}{10}\lambda}\,,
\end{equation}
which is an acceptable upper bound.

We now estimate $L_{2,2}^*$ in \eqref{70*}. We argue as for \eqref{72} but use \eqref{57*} instead of \eqref{57} and write
\begin{multline}
\label{72*}
L_{2,2}^* \leq \sum_{M < \lambda^{1/2}} M^2 \sum_{K \geq \lambda^{1/2}/2} \exp\biggl\{-\bbooo \,\biggl(\frac{K}{M}\biggr)^{2-3\gamma} \frac{M^{\varepsilon-3\gamma}}{(\log M)^2} \,\lambda\biggr\}
\\
\lesssim_{\bbooo} 
\sum_{M < \lambda^{1/2}} M^{4-3\gamma}\, (\log M)^2\sum_{K \geq \lambda^{1/2}/2} \exp\biggl\{-\frac{\bbooo}{2} \,\biggl(\frac{K}{M}\biggr)^{2-3\gamma} \frac{M^{\varepsilon-3\gamma}}{(\log M)^2} \,\lambda\biggr\}
\\
\times K^{-2+3\gamma}\,.
\end{multline}
For $M<\lambda^{1/2}$ and $K \geq \lambda^{1/2}/2$, we have that
\begin{equation}
\label{72b*}
(\log M)^2 \lesssim_{\varepsilon, \gamma} \lambda^{\frac{\varepsilon-3\gamma}{4}}\,,\quad M^{-2+\varepsilon}>\lambda^{\frac{-2+\varepsilon}{2}}\,,\quad K^{2-3\gamma} \gtrsim_{\gamma} \lambda^{\frac{2-3\gamma}{2}}\,.
\end{equation}
Above, we recall \eqref{58*}.
Substituting \eqref{72b*} into \eqref{72*}, and recalling \eqref{tilde_c_2*}, we obtain that
\begin{multline}
\label{73*}
L_{2,2}^* \lesssim  \ee^{-\tilde{c}_2 \lambda^{1+\frac{\varepsilon-3\gamma}{4}}} \,\sum_{M < \lambda^{1/3}} M^{4-3\gamma}\, (\log M)^2\sum_{K \geq \lambda^{1/2}/2} K^{-2+3\gamma}
\\
\lesssim \ee^{-\frac{\tilde{c}_2}{2} \lambda^{1+\frac{\varepsilon-3\gamma}{4}}} \lesssim_{C_{\mathrm{Big}}} \ee^{-C_{\mathrm{Big}} \lambda}\,,
\end{multline}
for $C_{\mathrm{Big}}>0$ arbitrarily large. This is possible by \eqref{58*}. The contribution to \eqref{dyadic_sum*} when $M < \lambda^{1/2}$ satisfies the wanted upper bound by \eqref{69*}, \eqref{71*}, and \eqref{73*}. This completes the proof of \eqref{Bourgain_large_deviation_estimate_rewritten} when $d=2$.

\subsection{Positivity of the partition function when $d=2,3$}
\label{Appendix_B_Partition_function}

In this section, we prove that $z^{(\delta)} \equiv z^{(\delta)}_{\BB}$ defined in \eqref{z_R_definition} is positive when $d=2,3$, as was claimed in Proposition \ref{Hartree_equation_Malliavin_derivative} (iii). We can deduce this by recalling Definition \ref{cut-off_chi} and by using the following result. 
\begin{lemma}
    \label{app.lem.partition_funct}
Let $d=2,3$. For all $R>0$, we have
\begin{equation*}
\mathbb{P}_{\omega} \Biggl(\Biggl| \sum_{\ell \in \Z^d} \frac{|g_{\ell}(\omega)|^2-1}{\langle \ell \rangle^2}\Biggr| \leq R\Biggr)>0
\end{equation*}
\end{lemma}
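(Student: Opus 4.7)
\medskip
\noindent
\textbf{Proof plan for Lemma \ref{app.lem.partition_funct}.}
The idea is to decouple the truncated Wick-ordered mass into a finite-dimensional "head" (which lives on finitely many Gaussians and is therefore easy to control by a continuity/support argument) and an $L^2$-small "tail" (controlled by Chebyshev), and to exploit the independence of the two pieces. Write
\[
X(\omega) := \sum_{\ell \in \Z^d} \frac{|g_\ell(\omega)|^2 - 1}{\langle \ell \rangle^2},\qquad X_N := \sum_{|\ell|\le N}\frac{|g_\ell|^2-1}{\langle \ell \rangle^2},\qquad Y_N := X - X_N,
\]
so that the series defining $X$ converges in $L^2(\mathbb{P}_\omega)$ (and a.s.) by the computation \eqref{Integral_n_Cauchy} in Lemma \ref{Wick_ordered_mass_lemma}. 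By construction $X_N$ and $Y_N$ depend on disjoint families of the independent $g_\ell$, so $X_N\perp Y_N$.

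\medskip
\noindent
For the tail, Chebyshev's inequality gives
\[
\mathbb{P}_\omega\bigl(|Y_N|>R/2\bigr)\;\le\; \frac{4}{R^2}\,\|Y_N\|_{L^2(\mathbb{P}_\omega)}^2 \;\lesssim\; \frac{1}{R^2}\sum_{|\ell|>N}\frac{1}{\langle \ell\rangle^4},
\]
which tends to $0$ as $N\to\infty$ since the latter sum is finite (this is the same summability used in \eqref{Integral_n_Cauchy}). Thus we can (and will) fix $N$ so large that $\mathbb{P}_\omega(|Y_N|\le R/2)\ge 1/2$.

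\medskip
\noindent
The main step (and really the only nontrivial one) is to show that
\[
\mathbb{P}_\omega\bigl(|X_N|\le R/2\bigr)>0.
\]
The plan is a soft, topological support argument: $X_N$ is a polynomial, hence continuous, function of the finite family $(g_\ell)_{|\ell|\le N}\in \C^{K_N}$, and the law of this family is the standard complex Gaussian measure on $\C^{K_N}$, which has a strictly positive density with respect to Lebesgue measure. Therefore it suffices to exhibit one point in $\C^{K_N}$ at which $|X_N|<R/2$; continuity of $X_N$ will then provide a nonempty open neighbourhood, which must have positive $\mathbb{P}_\omega$-measure. Concretely, set $g_\ell = 0$ for all $0<|\ell|\le N$ and choose $g_0\in\C$ with $|g_0|^2 = 1+ a_N$, where $a_N := \sum_{0<|\ell|\le N}\langle\ell\rangle^{-2}$; then
\[
X_N \;=\; \frac{(1+a_N)-1}{\langle 0\rangle^2} \;-\; \sum_{0<|\ell|\le N}\frac{1}{\langle\ell\rangle^2} \;=\; a_N-a_N \;=\; 0,
\]
so in particular $|X_N|<R/2$ at this point, giving the desired open neighbourhood.

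\medskip
\noindent
Finally, combining the two steps via independence,
\[
\mathbb{P}_\omega(|X|\le R)\;\ge\;\mathbb{P}_\omega(|X_N|\le R/2)\,\mathbb{P}_\omega(|Y_N|\le R/2)\;>\;0,
\]
which is the claim. I expect no real obstacle: the decoupling and Chebyshev steps are routine, and the positivity of the finite-dimensional Gaussian density reduces the substantive claim to the explicit evaluation above, which exhibits a zero of $X_N$ by a rank-one choice of the $g_\ell$'s. One could equivalently argue via an intermediate-value theorem (letting $|g_0|$ vary from $0$ to $\infty$ with the other $g_\ell$ set to $0$), but the explicit choice is cleaner.
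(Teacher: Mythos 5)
Your proof is correct and uses the same overall scaffolding as the paper's — split the series into a finite head $X_N$ and an independent tail $Y_N$, control each factor separately, and multiply via independence — but it implements both steps with more elementary tools. For the tail, the paper invokes Bernstein's inequality (Lemma \ref{Bernstein's_inequality}) to get an exponential bound; you use Chebyshev and the $L^2$ convergence from \eqref{Integral_n_Cauchy}, which is all that is needed since one only requires the tail probability to be strictly less than $1$, not exponentially small. For the finite head, the paper argues termwise: each $|g_\ell|^2-1$ can individually be made smaller than $\tfrac{R}{2(2M+1)^d}$ with positive probability, and then one sums over the finitely many $\ell$; your argument instead exhibits an explicit point $(g_\ell)_{|\ell|\le N}$ at which $X_N=0$ (using $g_0$ to cancel the deterministic contribution $-a_N$), then leverages continuity of $X_N$ and the strict positivity of the finite-dimensional Gaussian density to get an open set of positive measure. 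Both are valid; your version is a touch cleaner because it avoids any quantitative bookkeeping, while the paper's keeps the bound in the same Bernstein framework used throughout Appendix B. One small stylistic point: your support argument relies on the fact that the $(g_\ell)_{|\ell|\le N}$ are genuinely i.i.d.\ (not subject to a reality constraint), which holds here because $u^\omega$ is a complex field — it is worth stating this explicitly so a reader does not worry about hidden symmetries in the Gaussian coordinates.
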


\begin{proof}
Let us first consider the case $d=3$. 
Let $M \in \N^{+}$ be given. 
We apply Lemma \ref{Bernstein's_inequality} $X_\ell=|g_{\ell}|^2-1 \in \psi_1$ and $a_{\ell}=\frac{1}{\langle \ell \rangle^2}\,\mathbf{1}_{|\ell|>M}$. Then, we have 
\begin{equation}
\label{d=3_a_bounds}
\|a\|_{\ell^2}^2 \sim \frac{1}{M}\,,\qquad \|a\|_{\ell^{\infty}} \sim \frac{1}{M^2}\,.
\end{equation}
Using Lemma \ref{Bernstein's_inequality} and \eqref{d=3_a_bounds}, we get that
\begin{equation}
\label{d=3_Bernstein_inequality}
\mathbb{P}_{\omega} \Biggl(\Biggl| \sum_{|\ell| >M} \frac{|g_{\ell}(\omega)|^2-1}{\langle \ell \rangle^2}\Biggr| > \frac{R}{2}\Biggr)\lesssim \exp\Bigl\{-C \,\mathrm{min} \bigl(R^2M,RM^2\bigr)\Bigr\}\,.
\end{equation}
Note that in \eqref{d=3_Bernstein_inequality}, the constant $C$ and the implied constant are independent of $M,R$.
From \eqref{d=3_Bernstein_inequality}, we obtain that
\begin{equation}
\label{d=3_Bernstein_inequality_2}
\mathbb{P}_{\omega} \Biggl(\Biggl| \sum_{|\ell| >M} \frac{|g_{\ell}(\omega)|^2-1}{\langle \ell \rangle^2}\Biggr| > \frac{R}{2}\Biggr)<1
\end{equation}
for $M$ sufficiently large depending on $R$. Let us henceforth fix $M$ satisfying \eqref{d=3_Bernstein_inequality_2}.
In particular, for such $M$, we have 
\begin{equation}
\label{d=3_Bernstein_inequality_3}
\mathbb{P}_{\omega} \Biggl(\Biggl| \sum_{|\ell| >M} \frac{|g_{\ell}(\omega)|^2-1}{\langle \ell \rangle^2}\Biggr| \leq \frac{R}{2}\Biggr)>0\,.
\end{equation}
We note that 
\begin{equation}
\label{d=3_Bernstein_inequality_4}
\mathbb{P}_{\omega} \Biggl(\Biggl| \sum_{|\ell| \leq M} \frac{|g_{\ell}(\omega)|^2-1}{\langle \ell \rangle^2}\Biggr| \leq \frac{R}{2}\Biggr)>0\,,
\end{equation}
as we are considering a finite sum of the random variables\footnote{More precisely, we know that for every $|\ell| \leq M$ we have 
\begin{equation*}
\mathbb{P}_{\omega} \Biggl(\Biggl| \frac{|g_{\ell}(\omega)|^2-1}{\langle \ell \rangle^2}\Biggr| \leq \frac{R}{2 (2M+1)^3}\Biggr)>0\
\end{equation*}
and we use the triangle inequality as in the step that follows.}.
Using the triangle inequality, the independence of the $g_{\ell}$, and \eqref{d=3_Bernstein_inequality_3}--\eqref{d=3_Bernstein_inequality_4}, we deduce that
\begin{multline*}
\mathbb{P}_{\omega} \Biggl(\Biggl| \sum_{\ell \in \Z^3} \frac{|g_{\ell}(\omega)|^2-1}{\langle \ell \rangle^2}\Biggr| \leq R\Biggr) 
\\
\geq \mathbb{P}_{\omega} \Biggl(\Biggl| \sum_{|\ell|  \leq M} \frac{|g_{\ell}(\omega)|^2-1}{\langle \ell \rangle^2}\Biggr| \leq \frac{R}{2} \bigcap \sum_{|\ell| >M} \frac{|g_{\ell}(\omega)|^2-1}{\langle \ell \rangle^2}\Biggr| \leq \frac{R}{2}\Biggr)
\\
=\mathbb{P}_{\omega} \Biggl(\Biggl| \sum_{|\ell|  \leq M} \frac{|g_{\ell}(\omega)|^2-1}{\langle \ell \rangle^2}\Biggr| \leq \frac{R}{2}\Biggr) \, \mathbb{P}_{\omega} \Biggl(\Biggl| \sum_{|\ell| >M} \frac{|g_{\ell}(\omega)|^2-1}{\langle \ell \rangle^2}\Biggr| \leq \frac{R}{2}\Biggr)>0\,.
\end{multline*}
This concludes the proof when $d=3$. The proof when $d=2$ is analogous. Instead of \eqref{d=3_a_bounds}, we have 
\begin{equation*}
\|a\|_{\ell^2}^2 \sim \frac{1}{M^2}\,,\qquad \|a\|_{\ell^{\infty}} \sim \frac{1}{M^2}\,.
\end{equation*}
Hence, instead of \eqref{d=3_Bernstein_inequality}, we have
\begin{equation*}
\mathbb{P}_{\omega} \Biggl(\Biggl| \sum_{|\ell| >M} \frac{|g_{\ell}(\omega)|^2-1}{\langle \ell \rangle^2}\Biggr| > \frac{R}{2}\Biggr)\lesssim \exp\Bigl\{-C \,\mathrm{min} \bigl(R^2M^2,RM^2\bigr)\Bigr\}\,.
\end{equation*}
The claim now follows as before.
\end{proof}

\end{document}